\definecolor{citegreen}{rgb}{0.2,0.2,0.6}
\global\let\figforTeXisloaded=\relax\fi
\def\ctr@ln@m#1{\ifx#1\undefined\else%
    \immediate\write16{*** Fig4TeX WARNING : \string#1 already defined.}\fi}
\def\ctr@ld@f#1#2{\ctr@ln@m#2#1#2}
\def\ctr@ln@w#1#2{\ctr@ln@m#2\csname#1\endcsname#2}
{\catcode`\/=0 \catcode`/\=12 /ctr@ld@f/gdef/BS@{\}}
\ctr@ld@f\def\ctr@lcsn@m#1{\expandafter\ifx\csname#1\endcsname\relax\else%
    \immediate\write16{*** Fig4TeX WARNING : \BS@\expandafter\string#1\space already defined.}\fi}
\ctr@ld@f\edef\colonc@tcode{\the\catcode`\:}
\ctr@ld@f\edef\semicolonc@tcode{\the\catcode`\;}
\ctr@ld@f\def\t@stc@tcodech@nge{{\let\c@tcodech@nged=\z@%
    \ifnum\colonc@tcode=\the\catcode`\:\else\let\c@tcodech@nged=\@ne\fi%
    \ifnum\semicolonc@tcode=\the\catcode`\;\else\let\c@tcodech@nged=\@ne\fi%
    \ifx\c@tcodech@nged\@ne%
    \immediate\write16{}
    \immediate\write16{!!!=============================================================!!!}
    \immediate\write16{ Fig4TeX WARNING:}
    \immediate\write16{ The category code of some characters has been changed, which will}
    \immediate\write16{ result in an error (message "Runaway argument?").}
    \immediate\write16{ This probably comes from another package that changed the category}
    \immediate\write16{ code after Fig4TeX was loaded. If that proves to be exact, the}
    \immediate\write16{ solution is to exchange the loading commands on top of your file}
    \immediate\write16{ so that Fig4TeX is loaded last. For example, in LaTeX, we should}
    \immediate\write16{ say :}
    \immediate\write16{\BS@ usepackage[french]{babel}}
    \immediate\write16{\BS@ usepackage{fig4tex}}
    \immediate\write16{!!!=============================================================!!!}
    \immediate\write16{}
    \fi}}
\ctr@ld@f\def\FigforTeX{F\kern-.05em i\kern-.05em g\kern-.1em\raise-.14em\hbox{4}\kern-.19em\TeX}
\ctr@ld@f\def\W@rnmesoldA#1{\W@rnmesold}
\ctr@ld@f\def\W@rnmesoldAB#1(#2){\W@rnmesold}
\ctr@ld@f\def\W@rnmesold{%
    \immediate\write16{}
    \immediate\write16{!!!=============================================================!!!}
    \immediate\write16{ Fig4TeX WARNING:}
    \immediate\write16{ The file to be compiled is not compatible with the current version}
    \immediate\write16{ of Fig4TeX. To fix that, upgrade the source file (mainly change \BS@ ps*}
    \immediate\write16{ macros by \BS@ fig* macros), or use fig4tex184.tex instead (\BS@ input fig4tex184}
    \immediate\write16{ or \BS@ usepackage{fig4tex184}).}
    \immediate\write16{!!!=============================================================!!!}
    \immediate\write16{}}
\ctr@ln@m\psbeginfig\let\psbeginfig\W@rnmesoldA
\ctr@ln@m\psset\let\psset\W@rnmesoldAB
\ctr@ln@m\pssetdefault\let\pssetdefault\W@rnmesoldAB
\ctr@ln@m\pssetupdate\let\pssetupdate\W@rnmesoldA
\ctr@ln@w{newdimen}\epsil@n\epsil@n=0.00005pt
\ctr@ln@w{newdimen}\Cepsil@n\Cepsil@n=0.005pt
\ctr@ln@w{newdimen}\dcq@\dcq@=254pt
\ctr@ln@w{newdimen}\PI@\PI@=3.141592pt
\ctr@ln@w{newdimen}\DemiPI@deg\DemiPI@deg=90pt
\ctr@ln@w{newdimen}\PI@deg\PI@deg=180pt
\ctr@ln@w{newdimen}\DePI@deg\DePI@deg=360pt
\ctr@ld@f\chardef\t@n=10
\ctr@ld@f\chardef\c@nt=100
\ctr@ld@f\chardef\@lxxiv=74
\ctr@ld@f\chardef\@xci=91
\ctr@ld@f\mathchardef\@nMnCQn=9949
\ctr@ld@f\chardef\@vi=6
\ctr@ld@f\chardef\@xxx=30
\ctr@ld@f\chardef\@lvi=56
\ctr@ld@f\chardef\@@lxxi=71
\ctr@ld@f\chardef\@lxxxv=85
\ctr@ld@f\mathchardef\@@mmmmlxviii=4068
\ctr@ld@f\mathchardef\@ccclx=360
\ctr@ld@f\mathchardef\@dccxx=720
\ctr@ln@w{newcount}\p@rtent \ctr@ln@w{newcount}\f@ctech \ctr@ln@w{newcount}\result@tent
\ctr@ln@w{newdimen}\v@lmin \ctr@ln@w{newdimen}\v@lmax \ctr@ln@w{newdimen}\v@leur
\ctr@ln@w{newdimen}\result@t\ctr@ln@w{newdimen}\result@@t
\ctr@ln@w{newdimen}\mili@u \ctr@ln@w{newdimen}\c@rre \ctr@ln@w{newdimen}\delt@
\ctr@ld@f\def\degT@rd{0.017453 }  
\ctr@ld@f\def\rdT@deg{57.295779 } 
\ctr@ln@m\v@leurseule
{\catcode`p=12 \catcode`t=12 \gdef\v@leurseule#1pt{#1}}
\ctr@ld@f\def\repdecn@mb#1{\expandafter\v@leurseule\the#1\space}
\ctr@ld@f\def\arct@n#1(#2,#3){{\v@lmin=#2\v@lmax=#3%
    \maxim@m{\mili@u}{-\v@lmin}{\v@lmin}\maxim@m{\c@rre}{-\v@lmax}{\v@lmax}%
    \delt@=\mili@u\m@ech\mili@u%
    \ifdim\c@rre>\@nMnCQn\mili@u\divide\v@lmax\tw@\c@lATAN\v@leur(\z@,\v@lmax)
    \else%
    \maxim@m{\mili@u}{-\v@lmin}{\v@lmin}\maxim@m{\c@rre}{-\v@lmax}{\v@lmax}%
    \m@ech\c@rre%
    \ifdim\mili@u>\@nMnCQn\c@rre\divide\v@lmin\tw@
    \maxim@m{\mili@u}{-\v@lmin}{\v@lmin}\c@lATAN\v@leur(\mili@u,\z@)%
    \else\c@lATAN\v@leur(\delt@,\v@lmax)\fi\fi%
    \ifdim\v@lmin<\z@\v@leur=-\v@leur\ifdim\v@lmax<\z@\advance\v@leur-\PI@%
    \else\advance\v@leur\PI@\fi\fi%
    \global\result@t=\v@leur}#1=\result@t}
\ctr@ld@f\def\m@ech#1{\ifdim#1>1.646pt\divide\mili@u\t@n\divide\c@rre\t@n\m@ech#1\fi}
\ctr@ld@f\def\c@lATAN#1(#2,#3){{\v@lmin=#2\v@lmax=#3\v@leur=\z@\delt@=\tw@ pt%
    \un@iter{0.785398}{\v@lmax<}%
    \un@iter{0.463648}{\v@lmax<}%
    \un@iter{0.244979}{\v@lmax<}%
    \un@iter{0.124355}{\v@lmax<}%
    \un@iter{0.062419}{\v@lmax<}%
    \un@iter{0.031240}{\v@lmax<}%
    \un@iter{0.015624}{\v@lmax<}%
    \un@iter{0.007812}{\v@lmax<}%
    \un@iter{0.003906}{\v@lmax<}%
    \un@iter{0.001953}{\v@lmax<}%
    \un@iter{0.000976}{\v@lmax<}%
    \un@iter{0.000488}{\v@lmax<}%
    \un@iter{0.000244}{\v@lmax<}%
    \un@iter{0.000122}{\v@lmax<}%
    \un@iter{0.000061}{\v@lmax<}%
    \un@iter{0.000030}{\v@lmax<}%
    \un@iter{0.000015}{\v@lmax<}%
    \global\result@t=\v@leur}#1=\result@t}
\ctr@ld@f\def\un@iter#1#2{%
    \divide\delt@\tw@\edef\dpmn@{\repdecn@mb{\delt@}}%
    \mili@u=\v@lmin%
    \ifdim#2\z@%
      \advance\v@lmin-\dpmn@\v@lmax\advance\v@lmax\dpmn@\mili@u%
      \advance\v@leur-#1pt%
    \else%
      \advance\v@lmin\dpmn@\v@lmax\advance\v@lmax-\dpmn@\mili@u%
      \advance\v@leur#1pt%
    \fi}
\ctr@ld@f\def\c@ssin#1#2#3{\expandafter\ifx\csname COS@\number#3\endcsname\relax\c@lCS{#3pt}%
    \expandafter\xdef\csname COS@\number#3\endcsname{\repdecn@mb\result@t}%
    \expandafter\xdef\csname SIN@\number#3\endcsname{\repdecn@mb\result@@t}\fi%
    \edef#1{\csname COS@\number#3\endcsname}\edef#2{\csname SIN@\number#3\endcsname}}
\ctr@ld@f\def\c@lCS#1{{\mili@u=#1\p@rtent=\@ne%
    \relax\ifdim\mili@u<\z@\red@ng<-\else\red@ng>+\fi\f@ctech=\p@rtent%
    \relax\ifdim\mili@u<\z@\mili@u=-\mili@u\f@ctech=-\f@ctech\fi\c@@lCS}}
\ctr@ld@f\def\c@@lCS{\v@lmin=\mili@u\c@rre=-\mili@u\advance\c@rre\DemiPI@deg\v@lmax=\c@rre%
    \mili@u\@@lxxi\mili@u\divide\mili@u\@@mmmmlxviii%
    \edef\v@larg{\repdecn@mb{\mili@u}}\mili@u=-\v@larg\mili@u%
    \edef\v@lmxde{\repdecn@mb{\mili@u}}%
    \c@rre\@@lxxi\c@rre\divide\c@rre\@@mmmmlxviii%
    \edef\v@largC{\repdecn@mb{\c@rre}}\c@rre=-\v@largC\c@rre%
    \edef\v@lmxdeC{\repdecn@mb{\c@rre}}%
    \fctc@s\mili@u\v@lmin\global\result@t\p@rtent\v@leur%
    \let\t@mp=\v@larg\let\v@larg=\v@largC\let\v@largC=\t@mp%
    \let\t@mp=\v@lmxde\let\v@lmxde=\v@lmxdeC\let\v@lmxdeC=\t@mp%
    \fctc@s\c@rre\v@lmax\global\result@@t\f@ctech\v@leur}
\ctr@ld@f\def\fctc@s#1#2{\v@leur=#1\relax\ifdim#2<\@lxxxv\p@\cosser@h\else\sinser@t\fi}
\ctr@ld@f\def\cosser@h{\advance\v@leur\@lvi\p@\divide\v@leur\@lvi%
    \v@leur=\v@lmxde\v@leur\advance\v@leur\@xxx\p@%
    \v@leur=\v@lmxde\v@leur\advance\v@leur\@ccclx\p@%
    \v@leur=\v@lmxde\v@leur\advance\v@leur\@dccxx\p@\divide\v@leur\@dccxx}
\ctr@ld@f\def\sinser@t{\v@leur=\v@lmxdeC\p@\advance\v@leur\@vi\p@%
    \v@leur=\v@largC\v@leur\divide\v@leur\@vi}
\ctr@ld@f\def\red@ng#1#2{\relax\ifdim\mili@u#1#2\DemiPI@deg\advance\mili@u#2-\PI@deg%
    \p@rtent=-\p@rtent\red@ng#1#2\fi}
\ctr@ld@f\def\pr@c@lCS#1#2#3{\ctr@lcsn@m{COS@\number#3 }%
    \expandafter\xdef\csname COS@\number#3\endcsname{#1}%
    \expandafter\xdef\csname SIN@\number#3\endcsname{#2}}
\pr@c@lCS{1}{0}{0}
\pr@c@lCS{0.7071}{0.7071}{45}\pr@c@lCS{0.7071}{-0.7071}{-45}
\pr@c@lCS{0}{1}{90}          \pr@c@lCS{0}{-1}{-90}
\pr@c@lCS{-1}{0}{180}        \pr@c@lCS{-1}{0}{-180}
\pr@c@lCS{0}{-1}{270}        \pr@c@lCS{0}{1}{-270}
\ctr@ld@f\def\invers@#1#2{{\v@leur=#2\maxim@m{\v@lmax}{-\v@leur}{\v@leur}%
    \f@ctech=\@ne\m@inv@rs%
    \multiply\v@leur\f@ctech\edef\v@lv@leur{\repdecn@mb{\v@leur}}%
    \p@rtentiere{\p@rtent}{\v@leur}\v@lmin=\p@\divide\v@lmin\p@rtent%
    \inv@rs@\multiply\v@lmax\f@ctech\global\result@t=\v@lmax}#1=\result@t}
\ctr@ld@f\def\m@inv@rs{\ifdim\v@lmax<\p@\multiply\v@lmax\t@n\multiply\f@ctech\t@n\m@inv@rs\fi}
\ctr@ld@f\def\inv@rs@{\v@lmax=-\v@lmin\v@lmax=\v@lv@leur\v@lmax%
    \advance\v@lmax\tw@ pt\v@lmax=\repdecn@mb{\v@lmin}\v@lmax%
    \delt@=\v@lmax\advance\delt@-\v@lmin\ifdim\delt@<\z@\delt@=-\delt@\fi%
    \ifdim\delt@>\epsil@n\v@lmin=\v@lmax\inv@rs@\fi}
\ctr@ld@f\def\minim@m#1#2#3{\relax\ifdim#2<#3#1=#2\else#1=#3\fi}
\ctr@ld@f\def\maxim@m#1#2#3{\relax\ifdim#2>#3#1=#2\else#1=#3\fi}
\ctr@ld@f\def\p@rtentiere#1#2{#1=#2\divide#1by65536 }
\ctr@ld@f\def\r@undint#1#2{{\v@leur=#2\divide\v@leur\t@n\p@rtentiere{\p@rtent}{\v@leur}%
    \v@leur=\p@rtent pt\global\result@t=\t@n\v@leur}#1=\result@t}
\ctr@ld@f\def\sqrt@#1#2{{\v@leur=#2%
    \minim@m{\v@lmin}{\p@}{\v@leur}\maxim@m{\v@lmax}{\p@}{\v@leur}%
    \f@ctech=\@ne\m@sqrt@\sqrt@@%
    \mili@u=\v@lmin\advance\mili@u\v@lmax\divide\mili@u\tw@\multiply\mili@u\f@ctech%
    \global\result@t=\mili@u}#1=\result@t}
\ctr@ld@f\def\m@sqrt@{\ifdim\v@leur>\dcq@\divide\v@leur\c@nt\v@lmax=\v@leur%
    \multiply\f@ctech\t@n\m@sqrt@\fi}
\ctr@ld@f\def\sqrt@@{\mili@u=\v@lmin\advance\mili@u\v@lmax\divide\mili@u\tw@%
    \c@rre=\repdecn@mb{\mili@u}\mili@u%
    \ifdim\c@rre<\v@leur\v@lmin=\mili@u\else\v@lmax=\mili@u\fi%
    \delt@=\v@lmax\advance\delt@-\v@lmin\ifdim\delt@>\epsil@n\sqrt@@\fi}
\ctr@ld@f\def\extrairelepremi@r#1\de#2{\expandafter\lepremi@r#2@#1#2}
\ctr@ld@f\def\lepremi@r#1,#2@#3#4{\def#3{#1}\def#4{#2}\ignorespaces}
\ctr@ld@f\def\@cfor#1:=#2\do#3{%
  \edef\@fortemp{#2}%
  \ifx\@fortemp\empty\else\@cforloop#2,\@nil,\@nil\@@#1{#3}\fi}
\ctr@ln@m\@nextwhile
\ctr@ld@f\def\@cforloop#1,#2\@@#3#4{%
  \def#3{#1}%
  \ifx#3\Fig@nnil\let\@nextwhile=\Fig@fornoop\else#4\relax\let\@nextwhile=\@cforloop\fi%
  \@nextwhile#2\@@#3{#4}}

\ctr@ld@f\def\@ecfor#1:=#2\do#3{%
  \def\@@cfor{\@cfor#1:=}%
  \edef\@@@cfor{#2}%
  \expandafter\@@cfor\@@@cfor\do{#3}}
\ctr@ld@f\def\Fig@nnil{\@nil}
\ctr@ld@f\def\Fig@fornoop#1\@@#2#3{}
\ctr@ln@m\list@@rg
\ctr@ld@f\def\trtlis@rg#1#2{\def\list@@rg{#1}%
    \@ecfor\p@rv@l:=\list@@rg\do{\expandafter#2\p@rv@l|}}
\ctr@ld@f\def\trtlis@rgtok#1{\let@xte={}\let\n@xt\addt@t@xt\addt@t@xt #1}
\ctr@ln@m\M@cro
\ctr@ln@m\n@xt
\ctr@ld@f\def\addt@t@xt#1{\if#1|\let\n@xt\relax\else%
    \if#1,\expandafter\M@cro\the\let@xte|\let@xte={}%
    \else\let@xte=\expandafter{\the\let@xte #1}\fi\fi\n@xt}
\ctr@ln@w{newbox}\b@xvisu
\ctr@ln@w{newtoks}\let@xte
\ctr@ln@w{newif}\ifitis@K
\ctr@ln@w{newcount}\s@mme
\ctr@ln@w{newcount}\l@mbd@un \ctr@ln@w{newcount}\l@mbd@de
\ctr@ln@w{newcount}\superc@ntr@l\superc@ntr@l=\@ne        
\ctr@ln@w{newcount}\typec@ntr@l\typec@ntr@l=\superc@ntr@l 
\ctr@ln@w{newdimen}\v@lX  \ctr@ln@w{newdimen}\v@lY  \ctr@ln@w{newdimen}\v@lZ
\ctr@ln@w{newdimen}\v@lXa \ctr@ln@w{newdimen}\v@lYa \ctr@ln@w{newdimen}\v@lZa
\ctr@ln@w{newdimen}\unit@\unit@=\p@ 
\ctr@ld@f\def\unit@util{pt}
\ctr@ld@f\def\ptT@ptps{0.996264 }
\ctr@ld@f\def\ptpsT@pt{1.00375 }
\ctr@ld@f\def\ptT@unit@{1} 
\ctr@ld@f\def\setunit@#1{\def\unit@util{#1}\setunit@@#1:\invers@{\result@t}{\unit@}%
    \edef\ptT@unit@{\repdecn@mb\result@t}}
\ctr@ld@f\def\setunit@@#1#2:{\ifcat#1a\unit@=\@ne#1#2\else\unit@=#1#2\fi}
\ctr@ld@f\def\d@fm@cdim#1#2{{\v@leur=#2\v@leur=\ptT@unit@\v@leur\xdef#1{\repdecn@mb\v@leur}}}
\ctr@ln@w{newif}\ifBdingB@x\BdingB@xtrue
\ctr@ln@w{newdimen}\c@@rdXmin \ctr@ln@w{newdimen}\c@@rdYmin  
\ctr@ln@w{newdimen}\c@@rdXmax \ctr@ln@w{newdimen}\c@@rdYmax
\ctr@ld@f\def\b@undb@x#1#2{\ifBdingB@x%
    \relax\ifdim#1<\c@@rdXmin\global\c@@rdXmin=#1\fi%
    \relax\ifdim#2<\c@@rdYmin\global\c@@rdYmin=#2\fi%
    \relax\ifdim#1>\c@@rdXmax\global\c@@rdXmax=#1\fi%
    \relax\ifdim#2>\c@@rdYmax\global\c@@rdYmax=#2\fi\fi}
\ctr@ld@f\def\b@undb@xP#1{{\Figg@tXY{#1}\b@undb@x{\v@lX}{\v@lY}}}
\ctr@ld@f\def\ellBB@x#1;#2,#3(#4,#5,#6){{\s@uvc@ntr@l\et@tellBB@x%
    \setc@ntr@l{2}\figptell-2::#1;#2,#3(#4,#6)\b@undb@xP{-2}%
    \figptell-2::#1;#2,#3(#5,#6)\b@undb@xP{-2}%
    \c@ssin{\C@}{\S@}{#6}\v@lmin=\C@ pt\v@lmax=\S@ pt%
    \mili@u=#3\v@lmin\delt@=#2\v@lmax\arct@n\v@leur(\delt@,\mili@u)%
    \mili@u=-#3\v@lmax\delt@=#2\v@lmin\arct@n\c@rre(\delt@,\mili@u)%
    \v@leur=\rdT@deg\v@leur\advance\v@leur-\DePI@deg%
    \c@rre=\rdT@deg\c@rre\advance\c@rre-\DePI@deg%
    \v@lmin=#4pt\v@lmax=#5pt%
    \loop\ifdim\v@leur<\v@lmax\ifdim\v@leur>\v@lmin%
    \edef\@ngle{\repdecn@mb\v@leur}\figptell-2::#1;#2,#3(\@ngle,#6)%
    \b@undb@xP{-2}\fi\advance\v@leur\PI@deg\repeat%
    \loop\ifdim\c@rre<\v@lmax\ifdim\c@rre>\v@lmin%
    \edef\@ngle{\repdecn@mb\c@rre}\figptell-2::#1;#2,#3(\@ngle,#6)%
    \b@undb@xP{-2}\fi\advance\c@rre\PI@deg\repeat%
    \resetc@ntr@l\et@tellBB@x}\ignorespaces}
\ctr@ld@f\def\initb@undb@x{\c@@rdXmin=\maxdimen\c@@rdYmin=\maxdimen%
    \c@@rdXmax=-\maxdimen\c@@rdYmax=-\maxdimen}
\ctr@ld@f\def\c@ntr@lnum#1{%
    \relax\ifnum\typec@ntr@l=\@ne%
    \ifnum#1<\z@%
    \immediate\write16{*** Forbidden point number (#1). Abort.}\end\fi\fi%
    \set@bjc@de{#1}}
\ctr@ln@m\objc@de
\ctr@ld@f\def\set@bjc@de#1{\edef\objc@de{@BJ\ifnum#1<\z@ M\romannumeral-#1\else\romannumeral#1\fi}}
\s@mme=\m@ne\loop\ifnum\s@mme>-19
  \set@bjc@de{\s@mme}\ctr@lcsn@m\objc@de\ctr@lcsn@m{\objc@de T}
\advance\s@mme\m@ne\repeat
\s@mme=\@ne\loop\ifnum\s@mme<6
  \set@bjc@de{\s@mme}\ctr@lcsn@m\objc@de\ctr@lcsn@m{\objc@de T}
\advance\s@mme\@ne\repeat
\ctr@ld@f\def\setc@ntr@l#1{\ifnum\superc@ntr@l>#1\typec@ntr@l=\superc@ntr@l%
    \else\typec@ntr@l=#1\fi}
\ctr@ld@f\def\resetc@ntr@l#1{\global\superc@ntr@l=#1\setc@ntr@l{#1}}
\ctr@ld@f\def\s@uvc@ntr@l#1{\edef#1{\the\superc@ntr@l}}
\ctr@ln@m\c@lproscal
\ctr@ld@f\def\c@lproscalDD#1[#2,#3]{{\Figg@tXY{#2}%
    \edef\Xu@{\repdecn@mb{\v@lX}}\edef\Yu@{\repdecn@mb{\v@lY}}\Figg@tXY{#3}%
    \global\result@t=\Xu@\v@lX\global\advance\result@t\Yu@\v@lY}#1=\result@t}
\ctr@ld@f\def\c@lproscalTD#1[#2,#3]{{\Figg@tXY{#2}\edef\Xu@{\repdecn@mb{\v@lX}}%
    \edef\Yu@{\repdecn@mb{\v@lY}}\edef\Zu@{\repdecn@mb{\v@lZ}}%
    \Figg@tXY{#3}\global\result@t=\Xu@\v@lX\global\advance\result@t\Yu@\v@lY%
    \global\advance\result@t\Zu@\v@lZ}#1=\result@t}
\ctr@ld@f\def\c@lprovec#1{%
    \det@rmC\v@lZa(\v@lX,\v@lY,\v@lmin,\v@lmax)%
    \det@rmC\v@lXa(\v@lY,\v@lZ,\v@lmax,\v@leur)%
    \det@rmC\v@lYa(\v@lZ,\v@lX,\v@leur,\v@lmin)%
    \Figv@ctCreg#1(\v@lXa,\v@lYa,\v@lZa)}
\ctr@ld@f\def\det@rm#1[#2,#3]{{\Figg@tXY{#2}\Figg@tXYa{#3}%
    \delt@=\repdecn@mb{\v@lX}\v@lYa\advance\delt@-\repdecn@mb{\v@lY}\v@lXa%
    \global\result@t=\delt@}#1=\result@t}
\ctr@ld@f\def\det@rmC#1(#2,#3,#4,#5){{\global\result@t=\repdecn@mb{#2}#5%
    \global\advance\result@t-\repdecn@mb{#3}#4}#1=\result@t}
\ctr@ld@f\def\getredf@ctDD#1(#2,#3){{\maxim@m{\v@lXa}{-#2}{#2}\maxim@m{\v@lYa}{-#3}{#3}%
    \maxim@m{\v@lXa}{\v@lXa}{\v@lYa}
    \ifdim\v@lXa>\@xci pt\divide\v@lXa\@xci%
    \p@rtentiere{\p@rtent}{\v@lXa}\advance\p@rtent\@ne\else\p@rtent=\@ne\fi%
    \global\result@tent=\p@rtent}#1=\result@tent\ignorespaces}
\ctr@ld@f\def\getredf@ctTD#1(#2,#3,#4){{\maxim@m{\v@lXa}{-#2}{#2}\maxim@m{\v@lYa}{-#3}{#3}%
    \maxim@m{\v@lZa}{-#4}{#4}\maxim@m{\v@lXa}{\v@lXa}{\v@lYa}%
    \maxim@m{\v@lXa}{\v@lXa}{\v@lZa}
    \ifdim\v@lXa>\@lxxiv pt\divide\v@lXa\@lxxiv%
    \p@rtentiere{\p@rtent}{\v@lXa}\advance\p@rtent\@ne\else\p@rtent=\@ne\fi%
    \global\result@tent=\p@rtent}#1=\result@tent\ignorespaces}
\ctr@ln@m\getredf@ctB
\ctr@ld@f\def\getredf@ctBDD#1{\getredf@ctDD#1(\v@lX,\v@lY)}
\ctr@ld@f\def\getredf@ctBTD#1{\getredf@ctTD#1(\v@lX,\v@lY,\v@lZ)}
\ctr@ld@f\def\FigptintercircB@zDD#1:#2:#3,#4[#5,#6,#7,#8]{{\s@uvc@ntr@l\et@tfigptintercircB@zDD%
    \setc@ntr@l{2}\figvectPDD-1[#5,#8]\Figg@tXY{-1}\getredf@ctDD\f@ctech(\v@lX,\v@lY)%
    \mili@u=#4\unit@\divide\mili@u\f@ctech\c@rre=\repdecn@mb{\mili@u}\mili@u%
    \figptBezierDD-5::#3[#5,#6,#7,#8]%
    \v@lmin=#3\p@\v@lmax=\v@lmin\advance\v@lmax0.1\p@%
    \loop\edef\T@{\repdecn@mb{\v@lmax}}\figptBezierDD-2::\T@[#5,#6,#7,#8]%
    \figvectPDD-1[-5,-2]\n@rmeucCDD{\delt@}{-1}\ifdim\delt@<\c@rre\v@lmin=\v@lmax%
    \advance\v@lmax0.1\p@\repeat%
    \loop\mili@u=\v@lmin\advance\mili@u\v@lmax%
    \divide\mili@u\tw@\edef\T@{\repdecn@mb{\mili@u}}\figptBezierDD-2::\T@[#5,#6,#7,#8]%
    \figvectPDD-1[-5,-2]\n@rmeucCDD{\delt@}{-1}\ifdim\delt@>\c@rre\v@lmax=\mili@u%
    \else\v@lmin=\mili@u\fi\v@leur=\v@lmax\advance\v@leur-\v@lmin%
    \ifdim\v@leur>\epsil@n\repeat\figptcopyDD#1:#2/-2/%
    \resetc@ntr@l\et@tfigptintercircB@zDD}\ignorespaces}
\ctr@ln@m\figptinterlines
\ctr@ld@f\def\inters@cDD#1:#2[#3,#4;#5,#6]{{\s@uvc@ntr@l\et@tinters@cDD%
    \setc@ntr@l{2}\vecunit@{-1}{#4}\vecunit@{-2}{#6}%
    \Figg@tXY{-1}\setc@ntr@l{1}\Figg@tXYa{#3}%
    \edef\A@{\repdecn@mb{\v@lX}}\edef\B@{\repdecn@mb{\v@lY}}%
    \v@lmin=\B@\v@lXa\advance\v@lmin-\A@\v@lYa%
    \Figg@tXYa{#5}\setc@ntr@l{2}\Figg@tXY{-2}%
    \edef\C@{\repdecn@mb{\v@lX}}\edef\D@{\repdecn@mb{\v@lY}}%
    \v@lmax=\D@\v@lXa\advance\v@lmax-\C@\v@lYa%
    \delt@=\A@\v@lY\advance\delt@-\B@\v@lX%
    \invers@{\v@leur}{\delt@}\edef\v@ldelta{\repdecn@mb{\v@leur}}%
    \v@lXa=\A@\v@lmax\advance\v@lXa-\C@\v@lmin%
    \v@lYa=\B@\v@lmax\advance\v@lYa-\D@\v@lmin%
    \v@lXa=\v@ldelta\v@lXa\v@lYa=\v@ldelta\v@lYa%
    \setc@ntr@l{1}\Figp@intregDD#1:{#2}(\v@lXa,\v@lYa)%
    \resetc@ntr@l\et@tinters@cDD}\ignorespaces}
\ctr@ld@f\def\inters@cTD#1:#2[#3,#4;#5,#6]{{\s@uvc@ntr@l\et@tinters@cTD%
    \setc@ntr@l{2}\figvectNVTD-1[#4,#6]\figvectNVTD-2[#6,-1]\figvectPTD-1[#3,#5]%
    \r@pPSTD\v@leur[-2,-1,#4]\edef\v@lcoef{\repdecn@mb{\v@leur}}%
    \figpttraTD#1:{#2}=#3/\v@lcoef,#4/\resetc@ntr@l\et@tinters@cTD}\ignorespaces}
\ctr@ld@f\def\r@pPSTD#1[#2,#3,#4]{{\Figg@tXY{#2}\edef\Xu@{\repdecn@mb{\v@lX}}%
    \edef\Yu@{\repdecn@mb{\v@lY}}\edef\Zu@{\repdecn@mb{\v@lZ}}%
    \Figg@tXY{#3}\v@lmin=\Xu@\v@lX\advance\v@lmin\Yu@\v@lY\advance\v@lmin\Zu@\v@lZ%
    \Figg@tXY{#4}\v@lmax=\Xu@\v@lX\advance\v@lmax\Yu@\v@lY\advance\v@lmax\Zu@\v@lZ%
    \invers@{\v@leur}{\v@lmax}\global\result@t=\repdecn@mb{\v@leur}\v@lmin}%
    #1=\result@t}
\ctr@ln@m\n@rminf
\ctr@ld@f\def\n@rminfDD#1#2{{\Figg@tXY{#2}\maxim@m{\v@lX}{\v@lX}{-\v@lX}%
    \maxim@m{\v@lY}{\v@lY}{-\v@lY}\maxim@m{\global\result@t}{\v@lX}{\v@lY}}%
    #1=\result@t}
\ctr@ld@f\def\n@rminfTD#1#2{{\Figg@tXY{#2}\maxim@m{\v@lX}{\v@lX}{-\v@lX}%
    \maxim@m{\v@lY}{\v@lY}{-\v@lY}\maxim@m{\v@lZ}{\v@lZ}{-\v@lZ}%
    \maxim@m{\v@lX}{\v@lX}{\v@lY}\maxim@m{\global\result@t}{\v@lX}{\v@lZ}}%
    #1=\result@t}
\ctr@ln@m\n@rmeucC
\ctr@ld@f\def\n@rmeucCDD#1#2{\Figg@tXY{#2}\divide\v@lX\f@ctech\divide\v@lY\f@ctech%
    #1=\repdecn@mb{\v@lX}\v@lX\v@lX=\repdecn@mb{\v@lY}\v@lY\advance#1\v@lX}
\ctr@ld@f\def\n@rmeucCTD#1#2{\Figg@tXY{#2}%
    \divide\v@lX\f@ctech\divide\v@lY\f@ctech\divide\v@lZ\f@ctech%
    #1=\repdecn@mb{\v@lX}\v@lX\v@lX=\repdecn@mb{\v@lY}\v@lY\advance#1\v@lX%
    \v@lX=\repdecn@mb{\v@lZ}\v@lZ\advance#1\v@lX}
\ctr@ln@m\n@rmeucSV
\ctr@ld@f\def\n@rmeucSVDD#1#2{{\Figg@tXY{#2}%
    \v@lXa=\repdecn@mb{\v@lX}\v@lX\v@lYa=\repdecn@mb{\v@lY}\v@lY%
    \advance\v@lXa\v@lYa\sqrt@{\global\result@t}{\v@lXa}}#1=\result@t}
\ctr@ld@f\def\n@rmeucSVTD#1#2{{\Figg@tXY{#2}\v@lXa=\repdecn@mb{\v@lX}\v@lX%
    \v@lYa=\repdecn@mb{\v@lY}\v@lY\v@lZa=\repdecn@mb{\v@lZ}\v@lZ%
    \advance\v@lXa\v@lYa\advance\v@lXa\v@lZa\sqrt@{\global\result@t}{\v@lXa}}#1=\result@t}
\ctr@ln@m\n@rmeuc
\ctr@ld@f\def\n@rmeucDD#1#2{{\Figg@tXY{#2}\getredf@ctDD\f@ctech(\v@lX,\v@lY)%
    \divide\v@lX\f@ctech\divide\v@lY\f@ctech%
    \v@lXa=\repdecn@mb{\v@lX}\v@lX\v@lYa=\repdecn@mb{\v@lY}\v@lY%
    \advance\v@lXa\v@lYa\sqrt@{\global\result@t}{\v@lXa}%
    \global\multiply\result@t\f@ctech}#1=\result@t}
\ctr@ld@f\def\n@rmeucTD#1#2{{\Figg@tXY{#2}\getredf@ctTD\f@ctech(\v@lX,\v@lY,\v@lZ)%
    \divide\v@lX\f@ctech\divide\v@lY\f@ctech\divide\v@lZ\f@ctech%
    \v@lXa=\repdecn@mb{\v@lX}\v@lX%
    \v@lYa=\repdecn@mb{\v@lY}\v@lY\v@lZa=\repdecn@mb{\v@lZ}\v@lZ%
    \advance\v@lXa\v@lYa\advance\v@lXa\v@lZa\sqrt@{\global\result@t}{\v@lXa}%
    \global\multiply\result@t\f@ctech}#1=\result@t}
\ctr@ln@m\vecunit@
\ctr@ld@f\def\vecunit@DD#1#2{{\Figg@tXY{#2}\getredf@ctDD\f@ctech(\v@lX,\v@lY)%
    \divide\v@lX\f@ctech\divide\v@lY\f@ctech%
    \Figv@ctCreg#1(\v@lX,\v@lY)\n@rmeucSV{\v@lYa}{#1}%
    \invers@{\v@lXa}{\v@lYa}\edef\v@lv@lXa{\repdecn@mb{\v@lXa}}%
    \v@lX=\v@lv@lXa\v@lX\v@lY=\v@lv@lXa\v@lY%
    \Figv@ctCreg#1(\v@lX,\v@lY)\multiply\v@lYa\f@ctech\global\result@t=\v@lYa}}
\ctr@ld@f\def\vecunit@TD#1#2{{\Figg@tXY{#2}\getredf@ctTD\f@ctech(\v@lX,\v@lY,\v@lZ)%
    \divide\v@lX\f@ctech\divide\v@lY\f@ctech\divide\v@lZ\f@ctech%
    \Figv@ctCreg#1(\v@lX,\v@lY,\v@lZ)\n@rmeucSV{\v@lYa}{#1}%
    \invers@{\v@lXa}{\v@lYa}\edef\v@lv@lXa{\repdecn@mb{\v@lXa}}%
    \v@lX=\v@lv@lXa\v@lX\v@lY=\v@lv@lXa\v@lY\v@lZ=\v@lv@lXa\v@lZ%
    \Figv@ctCreg#1(\v@lX,\v@lY,\v@lZ)\multiply\v@lYa\f@ctech\global\result@t=\v@lYa}}
\ctr@ld@f\def\vecunitC@TD[#1,#2]{\Figg@tXYa{#1}\Figg@tXY{#2}%
    \advance\v@lX-\v@lXa\advance\v@lY-\v@lYa\advance\v@lZ-\v@lZa\c@lvecunitTD}
\ctr@ld@f\def\vecunitCV@TD#1{\Figg@tXY{#1}\c@lvecunitTD}
\ctr@ld@f\def\c@lvecunitTD{\getredf@ctTD\f@ctech(\v@lX,\v@lY,\v@lZ)%
    \divide\v@lX\f@ctech\divide\v@lY\f@ctech\divide\v@lZ\f@ctech%
    \v@lXa=\repdecn@mb{\v@lX}\v@lX%
    \v@lYa=\repdecn@mb{\v@lY}\v@lY\v@lZa=\repdecn@mb{\v@lZ}\v@lZ%
    \advance\v@lXa\v@lYa\advance\v@lXa\v@lZa\sqrt@{\v@lYa}{\v@lXa}%
    \invers@{\v@lXa}{\v@lYa}\edef\v@lv@lXa{\repdecn@mb{\v@lXa}}%
    \v@lX=\v@lv@lXa\v@lX\v@lY=\v@lv@lXa\v@lY\v@lZ=\v@lv@lXa\v@lZ}
\ctr@ln@m\figgetangle
\ctr@ld@f\def\figgetangleDD#1[#2,#3,#4]{\ifGR@cri{\s@uvc@ntr@l\et@tfiggetangleDD\setc@ntr@l{2}%
    \figvectPDD-1[#2,#3]\figvectPDD-2[#2,#4]\vecunit@{-1}{-1}%
    \c@lproscalDD\delt@[-2,-1]\figvectNVDD-1[-1]\c@lproscalDD\v@leur[-2,-1]%
    \arct@n\v@lmax(\delt@,\v@leur)\v@lmax=\rdT@deg\v@lmax%
    \ifdim\v@lmax<\z@\advance\v@lmax\DePI@deg\fi\xdef#1{\repdecn@mb{\v@lmax}}%
    \resetc@ntr@l\et@tfiggetangleDD}\ignorespaces\fi}
\ctr@ld@f\def\figgetangleTD#1[#2,#3,#4,#5]{\ifGR@cri{\s@uvc@ntr@l\et@tfiggetangleTD\setc@ntr@l{2}%
    \figvectPTD-1[#2,#3]\figvectPTD-2[#2,#5]\figvectNVTD-3[-1,-2]%
    \figvectPTD-2[#2,#4]\figvectNVTD-4[-3,-1]%
    \vecunit@{-1}{-1}\c@lproscalTD\delt@[-2,-1]\c@lproscalTD\v@leur[-2,-4]%
    \arct@n\v@lmax(\delt@,\v@leur)\v@lmax=\rdT@deg\v@lmax%
    \ifdim\v@lmax<\z@\advance\v@lmax\DePI@deg\fi\xdef#1{\repdecn@mb{\v@lmax}}%
    \resetc@ntr@l\et@tfiggetangleTD}\ignorespaces\fi}    
\ctr@ld@f\def\figgetdist#1[#2,#3]{\ifGR@cri{\s@uvc@ntr@l\et@tfiggetdist\setc@ntr@l{2}%
    \figvectP-1[#2,#3]\n@rmeuc{\v@lX}{-1}\v@lX=\ptT@unit@\v@lX\xdef#1{\repdecn@mb{\v@lX}}%
    \resetc@ntr@l\et@tfiggetdist}\ignorespaces\fi}
\ctr@ld@f\def\figget#1=#2[#3]{\keln@mun#1|%
    \def\n@mref{a}\ifx\l@debut\n@mref\figgetangle#2[#3]\else
    \def\n@mref{d}\ifx\l@debut\n@mref\figgetdist#2[#3]\else
    \W@rnmeskwd{figget}{#1}\fi\fi\ignorespaces}
\ctr@ld@f\def\Figg@tT#1{\c@ntr@lnum{#1}%
    {\expandafter\expandafter\expandafter\extr@ctT\csname\objc@de\endcsname:%
     \ifnum\B@@ltxt=\z@\ptn@me{#1}\else\csname\objc@de T\endcsname\fi}}
\ctr@ld@f\def\extr@ctT#1,#2,#3/#4:{\def\B@@ltxt{#3}}
\ctr@ld@f\def\Figg@tXY#1{\c@ntr@lnum{#1}%
    \expandafter\expandafter\expandafter\extr@ctC\csname\objc@de\endcsname:}
\ctr@ln@m\extr@ctC
\ctr@ld@f\def\extr@ctCDD#1/#2,#3,#4:{\v@lX=#2\v@lY=#3}
\ctr@ld@f\def\extr@ctCTD#1/#2,#3,#4:{\v@lX=#2\v@lY=#3\v@lZ=#4}
\ctr@ld@f\def\Figg@tXYa#1{\c@ntr@lnum{#1}%
    \expandafter\expandafter\expandafter\extr@ctCa\csname\objc@de\endcsname:}
\ctr@ln@m\extr@ctCa
\ctr@ld@f\def\extr@ctCaDD#1/#2,#3,#4:{\v@lXa=#2\v@lYa=#3}
\ctr@ld@f\def\extr@ctCaTD#1/#2,#3,#4:{\v@lXa=#2\v@lYa=#3\v@lZa=#4}
\ctr@ln@m\t@xt@
\ctr@ld@f\def\figinit#1{\t@stc@tcodech@nge\initpr@lim\Figinit@#1,:\initpss@ttings\ignorespaces}
\ctr@ld@f\def\Figinit@#1,#2:{\setunit@{#1}\def\t@xt@{#2}\ifx\t@xt@\empty\else\Figinit@@#2:\fi}
\ctr@ld@f\def\Figinit@@#1#2:{\if#12 \else\Figs@tproj{#1}\initTD@\fi}
\ctr@ln@w{newif}\ifTr@isDim
\ctr@ld@f\def\UnD@fined{UNDEFINED}
\ctr@ln@m\@utoFN
\ctr@ln@m\@utoFInDone
\ctr@ln@m\disob@unit
\ctr@ld@f\def\initpr@lim{\initb@undb@x\figsetmark{}\figsetptname{$A_{##1}$}\def\Sc@leFact{1}%
    \initDD@\figsetroundcoord{yes}\GR@critrue\expandafter\setupd@te\D@FTupdate:%
    \edef\disob@unit{\UnD@fined}\edef\t@rgetpt{\UnD@fined}\gdef\@utoFInDone{1}\gdef\@utoFN{0}}
\ctr@ld@f\def\initDD@{\Tr@isDimfalse%
    \ifPDFm@ke%
     \let\Ps@rcerc=\Ps@rcercBz%
     \let\Ps@rell=\Ps@rellBz%
    \fi
    \let\c@lDCUn=\c@lDCUnDD%
    \let\c@lDCDeux=\c@lDCDeuxDD%
    \let\c@ldefproj=\relax%
    \let\c@lproscal=\c@lproscalDD%
    \let\c@lprojSP=\relax%
    \let\extr@ctC=\extr@ctCDD%
    \let\extr@ctCa=\extr@ctCaDD%
    \let\extr@ctCF=\extr@ctCFDD%
    \let\Figp@intreg=\Figp@intregDD%
    \let\Figpts@xes=\Figpts@xesDD%
    \let\getredf@ctB=\getredf@ctBDD%
    \let\n@rmeucSV=\n@rmeucSVDD\let\n@rmeuc=\n@rmeucDD\let\n@rmeucC\n@rmeucCDD\let\n@rminf=\n@rminfDD%
    \let\pr@dMatV=\pr@dMatVDD%
    \let\Q@@xes=\Q@@xesDD%
    \let\vecunit@=\vecunit@DD%
    \let\figcoord=\figcoordDD%
    \let\figgetangle=\figgetangleDD%
    \let\figpt=\figptDD%
    \let\figptBezier=\figptBezierDD%
    \let\figptbary=\figptbaryDD%
    \let\figptcirc=\figptcircDD%
    \let\figptcircumcenter=\figptcircumcenterDD%
    \let\figptcopy=\figptcopyDD%
    \let\figptcurvcenter=\figptcurvcenterDD%
    \let\figptell=\figptellDD%
    \let\figptendnormal=\figptendnormalDD%
    \let\figptinterlineplane=\figptinterlineplaneDD%
    \let\figptinterlines=\inters@cDD%
    \let\figptorthocenter=\figptorthocenterDD%
    \let\figptorthoprojline=\figptorthoprojlineDD%
    \let\figptorthoprojplane=\figptorthoprojplaneDD%
    \let\figptrot=\figptrotDD%
    \let\figptscontrol=\figptscontrolDD%
    \let\figptsintercirc=\figptsintercircDD%
    \let\figptsinterlinell=\figptsinterlinellDD%
    \let\figptsorthoprojline=\figptsorthoprojlineDD%
    \let\figptorthoprojplane=\figptorthoprojplaneDD%
    \let\figptsrot=\figptsrotDD%
    \let\figptssym=\figptssymDD%
    \let\figptstra=\figptstraDD%
    \let\figptsym=\figptsymDD%
    \let\figpttraC=\figpttraCDD%
    \let\figpttra=\figpttraDD%
    \let\figptvisilimSL=\figptvisilimSLDD%
    \let\figsetobdist=\figsetobdistDD%
    \let\figsettarget=\figsettargetDD%
    \let\figsetview=\figsetviewDD%
    \let\figvectDBezier=\figvectDBezierDD%
    \let\figvectN=\figvectNDD%
    \let\figvectNV=\figvectNVDD%
    \let\figvectP=\figvectPDD%
    \let\figvectU=\figvectUDD%
    \let\figdrawarccircP=\Q@arccircPDD%
    \let\figdrawarccirc=\Q@arccircDD%
    \let\figdrawarcell=\Q@arcellDD%
    \let\figdrawarcellPA=\Q@arcellPADD%
    \let\figdrawarrowBezier=\Q@arrowBezierDD%
    \let\figdrawarrowcircP=\Q@arrowcircPDD%
    \let\figdrawarrowcirc=\Q@arrowcircDD%
    \let\figdrawarrowhead=\Q@arrowheadDD%
    \let\figdrawarrow=\Q@arrowDD%
    \let\figdrawBezier=\Q@BezierDD%
    \let\figdrawcirc=\Q@circDD%
    \let\figdrawcurve=\Q@curveDD%
    \let\figdrawnormal=\Q@normalDD%
    }
\ctr@ld@f\def\initTD@{\Tr@isDimtrue\initb@undb@xTD\newt@rgetptfalse\newdis@bfalse%
    \let\c@lDCUn=\c@lDCUnTD%
    \let\c@lDCDeux=\c@lDCDeuxTD%
    \let\c@ldefproj=\c@ldefprojTD%
    \let\c@lproscal=\c@lproscalTD%
    \let\extr@ctC=\extr@ctCTD%
    \let\extr@ctCa=\extr@ctCaTD%
    \let\extr@ctCF=\extr@ctCFTD%
    \let\Figp@intreg=\Figp@intregTD%
    \let\Figpts@xes=\Figpts@xesTD%
    \let\getredf@ctB=\getredf@ctBTD%
    \let\n@rmeucSV=\n@rmeucSVTD\let\n@rmeuc=\n@rmeucTD\let\n@rmeucC\n@rmeucCTD\let\n@rminf=\n@rminfTD%
    \let\pr@dMatV=\pr@dMatVTD%
    \let\Q@@xes=\Q@@xesTD%
    \let\vecunit@=\vecunit@TD%
    \let\figcoord=\figcoordTD%
    \let\figgetangle=\figgetangleTD%
    \let\figpt=\figptTD%
    \let\figptBezier=\figptBezierTD%
    \let\figptbary=\figptbaryTD%
    \let\figptcirc=\figptcircTD%
    \let\figptcircumcenter=\figptcircumcenterTD%
    \let\figptcopy=\figptcopyTD%
    \let\figptcurvcenter=\figptcurvcenterTD%
    \let\figptinterlineplane=\figptinterlineplaneTD%
    \let\figptinterlines=\inters@cTD%
    \let\figptorthocenter=\figptorthocenterTD%
    \let\figptorthoprojline=\figptorthoprojlineTD%
    \let\figptorthoprojplane=\figptorthoprojplaneTD%
    \let\figptrot=\figptrotTD%
    \let\figptscontrol=\figptscontrolTD%
    \let\figptsintercirc=\figptsintercircTD%
    \let\figptsorthoprojline=\figptsorthoprojlineTD%
    \let\figptsorthoprojplane=\figptsorthoprojplaneTD%
    \let\figptsrot=\figptsrotTD%
    \let\figptssym=\figptssymTD%
    \let\figptstra=\figptstraTD%
    \let\figptsym=\figptsymTD%
    \let\figpttraC=\figpttraCTD%
    \let\figpttra=\figpttraTD%
    \let\figptvisilimSL=\figptvisilimSLTD%
    \let\figsetobdist=\figsetobdistTD%
    \let\figsettarget=\figsettargetTD%
    \let\figsetview=\figsetviewTD%
    \let\figvectDBezier=\figvectDBezierTD%
    \let\figvectN=\figvectNTD%
    \let\figvectNV=\figvectNVTD%
    \let\figvectP=\figvectPTD%
    \let\figvectU=\figvectUTD%
    \let\figdrawarccircP=\Q@arccircPTD%
    \let\figdrawarccirc=\Q@arccircTD%
    \let\figdrawarcell=\Q@arcellTD%
    \let\figdrawarcellPA=\Q@arcellPATD%
    \let\figdrawarrowBezier=\Q@arrowBezierTD%
    \let\figdrawarrowcircP=\Q@arrowcircPTD%
    \let\figdrawarrowcirc=\Q@arrowcircTD%
    \let\figdrawarrowhead=\Q@arrowheadTD%
    \let\figdrawarrow=\Q@arrowTD%
    \let\figdrawBezier=\Q@BezierTD%
    \let\figdrawcirc=\Q@circTD%
    \let\figdrawcurve=\Q@curveTD%
    }
\ctr@ld@f\def\un@v@ilable#1{\immediate\write16{*** The macro #1 is not available in the current context.}}
\ctr@ld@f\def\figinsert#1{{\def\t@xt@{#1}\relax%
    \ifx\t@xt@\empty\ifnum\@utoFInDone>\z@\Figinsert@\DefGIfilen@me,:\fi%
    \else\expandafter\FiginsertNu@#1 :\fi}\ignorespaces}
\ctr@ld@f\def\FiginsertNu@#1 #2:{\def\t@xt@{#1}\relax\ifx\t@xt@\empty\def\t@xt@{#2}%
    \ifx\t@xt@\empty\ifnum\@utoFInDone>\z@\Figinsert@\DefGIfilen@me,:\fi%
    \else\FiginsertNu@#2:\fi\else\expandafter\FiginsertNd@#1 #2:\fi}
\ctr@ld@f\def\FiginsertNd@#1#2:{\ifcat#1a\Figinsert@#1#2,:\else%
    \ifnum\@utoFInDone>\z@\Figinsert@\DefGIfilen@me,#1#2,:\fi\fi}
\ctr@ln@m\Sc@leFact
\ctr@ld@f\def\Figinsert@#1,#2:{\def\t@xt@{#2}\ifx\t@xt@\empty\xdef\Sc@leFact{1}\else%
    \X@rgdeux@#2\xdef\Sc@leFact{\@rgdeux}\fi%
    \Figdisc@rdLTS{#1}{\t@xt@}\@psfgetbb{\t@xt@}%
    \v@lX=\@psfllx\p@\v@lX=\ptpsT@pt\v@lX\v@lX=\Sc@leFact\v@lX%
    \v@lY=\@psflly\p@\v@lY=\ptpsT@pt\v@lY\v@lY=\Sc@leFact\v@lY%
    \b@undb@x{\v@lX}{\v@lY}%
    \v@lX=\@psfurx\p@\v@lX=\ptpsT@pt\v@lX\v@lX=\Sc@leFact\v@lX%
    \v@lY=\@psfury\p@\v@lY=\ptpsT@pt\v@lY\v@lY=\Sc@leFact\v@lY%
    \b@undb@x{\v@lX}{\v@lY}%
    \ifPDFm@ke\Figinclud@PDF{\t@xt@}{\Sc@leFact}\else%
    \v@lX=\c@nt pt\v@lX=\Sc@leFact\v@lX\edef\F@ct{\repdecn@mb{\v@lX}}%
    \ifx\TeXturesonMacOSltX\special{postscriptfile #1 vscale=\F@ct\space hscale=\F@ct}%
    \else\includegraphics{#1}\fi\fi%
    \message{[\t@xt@]}\ignorespaces}
\ctr@ld@f\def\Figdisc@rdLTS#1#2{\expandafter\Figdisc@rdLTS@#1 :#2}
\ctr@ld@f\def\Figdisc@rdLTS@#1 #2:#3{\def#3{#1}\relax\ifx#3\empty\expandafter\Figdisc@rdLTS@#2:#3\fi}
\ctr@ld@f\def\figinsertE#1{\FiginsertE@#1,:\ignorespaces}
\ctr@ld@f\def\FiginsertE@#1,#2:{{\def\t@xt@{#2}\ifx\t@xt@\empty\xdef\Sc@leFact{1}\else%
    \X@rgdeux@#2\xdef\Sc@leFact{\@rgdeux}\fi%
    \Figdisc@rdLTS{#1}{\t@xt@}\pdfximage{\t@xt@}%
    \setbox\Gb@x=\hbox{\pdfrefximage\pdflastximage}%
    \v@lX=\z@\v@lY=-\Sc@leFact\dp\Gb@x\b@undb@x{\v@lX}{\v@lY}%
    \advance\v@lX\Sc@leFact\wd\Gb@x\advance\v@lY\Sc@leFact\dp\Gb@x%
    \advance\v@lY\Sc@leFact\ht\Gb@x\b@undb@x{\v@lX}{\v@lY}%
    \v@lX=\Sc@leFact\wd\Gb@x\pdfximage width \v@lX {\t@xt@}%
    \rlap{\pdfrefximage\pdflastximage}\message{[\t@xt@]}}\ignorespaces}
\ctr@ld@f\def\X@rgdeux@#1,{\edef\@rgdeux{#1}}
\ctr@ln@m\figpt
\ctr@ld@f\def\figptDD#1:#2(#3,#4){\ifGR@cri\c@ntr@lnum{#1}%
    {\v@lX=#3\unit@\v@lY=#4\unit@\Fig@dmpt{#2}{\z@}}\ignorespaces\fi}
\ctr@ld@f\def\Fig@dmpt#1#2{\def\t@xt@{#1}\ifx\t@xt@\empty\def\B@@ltxt{\z@}%
    \else\expandafter\gdef\csname\objc@de T\endcsname{#1}\def\B@@ltxt{\@ne}\fi%
    \expandafter\xdef\csname\objc@de\endcsname{\ifitis@vect@r\C@dCl@svect%
    \else\C@dCl@spt\fi,\z@,\B@@ltxt/\the\v@lX,\the\v@lY,#2}}
\ctr@ld@f\def\C@dCl@spt{P}
\ctr@ld@f\def\C@dCl@svect{V}
\ctr@ln@m\c@@rdYZ
\ctr@ln@m\c@@rdY
\ctr@ld@f\def\figptTD#1:#2(#3,#4){\ifGR@cri\c@ntr@lnum{#1}%
    \def\c@@rdYZ{#4,0,0}\extrairelepremi@r\c@@rdY\de\c@@rdYZ%
    \extrairelepremi@r\c@@rdZ\de\c@@rdYZ%
    {\v@lX=#3\unit@\v@lY=\c@@rdY\unit@\v@lZ=\c@@rdZ\unit@\Fig@dmpt{#2}{\the\v@lZ}%
    \b@undb@xTD{\v@lX}{\v@lY}{\v@lZ}}\ignorespaces\fi}
\ctr@ln@m\Figp@intreg
\ctr@ld@f\def\Figp@intregDD#1:#2(#3,#4){\c@ntr@lnum{#1}%
    {\result@t=#4\v@lX=#3\v@lY=\result@t\Fig@dmpt{#2}{\z@}}\ignorespaces}
\ctr@ld@f\def\Figp@intregTD#1:#2(#3,#4){\c@ntr@lnum{#1}%
    \def\c@@rdYZ{#4,\z@,\z@}\extrairelepremi@r\c@@rdY\de\c@@rdYZ%
    \extrairelepremi@r\c@@rdZ\de\c@@rdYZ%
    {\v@lX=#3\v@lY=\c@@rdY\v@lZ=\c@@rdZ\Fig@dmpt{#2}{\the\v@lZ}%
    \b@undb@xTD{\v@lX}{\v@lY}{\v@lZ}}\ignorespaces}
\ctr@ln@m\figptBezier
\ctr@ld@f\def\figptBezierDD#1:#2:#3[#4,#5,#6,#7]{\ifGR@cri{\s@uvc@ntr@l\et@tfigptBezierDD%
    \FigptBezier@#3[#4,#5,#6,#7]\Figp@intregDD#1:{#2}(\v@lX,\v@lY)%
    \resetc@ntr@l\et@tfigptBezierDD}\ignorespaces\fi}
\ctr@ld@f\def\figptBezierTD#1:#2:#3[#4,#5,#6,#7]{\ifGR@cri{\s@uvc@ntr@l\et@tfigptBezierTD%
    \FigptBezier@#3[#4,#5,#6,#7]\Figp@intregTD#1:{#2}(\v@lX,\v@lY,\v@lZ)%
    \resetc@ntr@l\et@tfigptBezierTD}\ignorespaces\fi}
\ctr@ld@f\def\FigptBezier@#1[#2,#3,#4,#5]{\setc@ntr@l{2}%
    \edef\T@{#1}\v@leur=\p@\advance\v@leur-#1pt\edef\UNmT@{\repdecn@mb{\v@leur}}%
    \figptcopy-4:/#2/\figptcopy-3:/#3/\figptcopy-2:/#4/\figptcopy-1:/#5/%
    \l@mbd@un=-4 \l@mbd@de=-\thr@@\p@rtent=\m@ne\c@lDecast%
    \l@mbd@un=-4 \l@mbd@de=-\thr@@\p@rtent=-\tw@\c@lDecast%
    \l@mbd@un=-4 \l@mbd@de=-\thr@@\p@rtent=-\thr@@\c@lDecast\Figg@tXY{-4}}
\ctr@ln@m\c@lDCUn
\ctr@ld@f\def\c@lDCUnDD#1#2{\Figg@tXY{#1}\v@lX=\UNmT@\v@lX\v@lY=\UNmT@\v@lY%
    \Figg@tXYa{#2}\advance\v@lX\T@\v@lXa\advance\v@lY\T@\v@lYa%
    \Figp@intregDD#1:(\v@lX,\v@lY)}
\ctr@ld@f\def\c@lDCUnTD#1#2{\Figg@tXY{#1}\v@lX=\UNmT@\v@lX\v@lY=\UNmT@\v@lY\v@lZ=\UNmT@\v@lZ%
    \Figg@tXYa{#2}\advance\v@lX\T@\v@lXa\advance\v@lY\T@\v@lYa\advance\v@lZ\T@\v@lZa%
    \Figp@intregTD#1:(\v@lX,\v@lY,\v@lZ)}
\ctr@ld@f\def\c@lDecast{\relax\ifnum\l@mbd@un<\p@rtent\c@lDCUn{\l@mbd@un}{\l@mbd@de}%
    \advance\l@mbd@un\@ne\advance\l@mbd@de\@ne\c@lDecast\fi}
\ctr@ld@f\def\figptmap#1:#2=#3/#4/#5/{\ifGR@cri{\s@uvc@ntr@l\et@tfigptmap%
    \setc@ntr@l{2}\figvectP-1[#4,#3]\Figg@tXY{-1}%
    \pr@dMatV/#5/\figpttra#1:{#2}=#4/1,-1/%
    \resetc@ntr@l\et@tfigptmap}\ignorespaces\fi}
\ctr@ln@m\pr@dMatV
\ctr@ld@f\def\pr@dMatVDD/#1,#2;#3,#4/{\v@lXa=#1\v@lX\advance\v@lXa#2\v@lY%
    \v@lYa=#3\v@lX\advance\v@lYa#4\v@lY\Figv@ctCreg-1(\v@lXa,\v@lYa)}
\ctr@ld@f\def\pr@dMatVTD/#1,#2,#3;#4,#5,#6;#7,#8,#9/{%
    \v@lXa=#1\v@lX\advance\v@lXa#2\v@lY\advance\v@lXa#3\v@lZ%
    \v@lYa=#4\v@lX\advance\v@lYa#5\v@lY\advance\v@lYa#6\v@lZ%
    \v@lZa=#7\v@lX\advance\v@lZa#8\v@lY\advance\v@lZa#9\v@lZ%
    \Figv@ctCreg-1(\v@lXa,\v@lYa,\v@lZa)}
\ctr@ln@m\figptbary
\ctr@ld@f\def\figptbaryDD#1:#2[#3;#4]{\ifGR@cri{\edef\list@num{#3}\extrairelepremi@r\p@int\de\list@num%
    \s@mme=\z@\@ecfor\c@ef:=#4\do{\advance\s@mme\c@ef}%
    \edef\listec@ef{#4,0}\extrairelepremi@r\c@ef\de\listec@ef%
    \Figg@tXY{\p@int}\divide\v@lX\s@mme\divide\v@lY\s@mme%
    \multiply\v@lX\c@ef\multiply\v@lY\c@ef%
    \@ecfor\p@int:=\list@num\do{\extrairelepremi@r\c@ef\de\listec@ef%
           \Figg@tXYa{\p@int}\divide\v@lXa\s@mme\divide\v@lYa\s@mme%
           \multiply\v@lXa\c@ef\multiply\v@lYa\c@ef%
           \advance\v@lX\v@lXa\advance\v@lY\v@lYa}%
    \Figp@intregDD#1:{#2}(\v@lX,\v@lY)}\ignorespaces\fi}
\ctr@ld@f\def\figptbaryTD#1:#2[#3;#4]{\ifGR@cri{\edef\list@num{#3}\extrairelepremi@r\p@int\de\list@num%
    \s@mme=\z@\@ecfor\c@ef:=#4\do{\advance\s@mme\c@ef}%
    \edef\listec@ef{#4,0}\extrairelepremi@r\c@ef\de\listec@ef%
    \Figg@tXY{\p@int}\divide\v@lX\s@mme\divide\v@lY\s@mme\divide\v@lZ\s@mme%
    \multiply\v@lX\c@ef\multiply\v@lY\c@ef\multiply\v@lZ\c@ef%
    \@ecfor\p@int:=\list@num\do{\extrairelepremi@r\c@ef\de\listec@ef%
           \Figg@tXYa{\p@int}\divide\v@lXa\s@mme\divide\v@lYa\s@mme\divide\v@lZa\s@mme%
           \multiply\v@lXa\c@ef\multiply\v@lYa\c@ef\multiply\v@lZa\c@ef%
           \advance\v@lX\v@lXa\advance\v@lY\v@lYa\advance\v@lZ\v@lZa}%
    \Figp@intregTD#1:{#2}(\v@lX,\v@lY,\v@lZ)}\ignorespaces\fi}
\ctr@ld@f\def\figptbaryR#1:#2[#3;#4]{\ifGR@cri{%
    \v@leur=\z@\@ecfor\c@ef:=#4\do{\maxim@m{\v@lmax}{\c@ef pt}{-\c@ef pt}%
    \ifdim\v@lmax>\v@leur\v@leur=\v@lmax\fi}%
    \ifdim\v@leur<\p@\f@ctech=\@M\else\ifdim\v@leur<\t@n\p@\f@ctech=\@m\else%
    \ifdim\v@leur<\c@nt\p@\f@ctech=\c@nt\else\ifdim\v@leur<\@m\p@\f@ctech=\t@n\else%
    \f@ctech=\@ne\fi\fi\fi\fi%
    \def\listec@ef{0}%
    \@ecfor\c@ef:=#4\do{\sc@lec@nvRI{\c@ef pt}\edef\listec@ef{\listec@ef,\the\s@mme}}%
    \extrairelepremi@r\c@ef\de\listec@ef\figptbary#1:#2[#3;\listec@ef]}\ignorespaces\fi}
\ctr@ld@f\def\sc@lec@nvRI#1{\v@leur=#1\p@rtentiere{\s@mme}{\v@leur}\advance\v@leur-\s@mme\p@%
    \multiply\v@leur\f@ctech\p@rtentiere{\p@rtent}{\v@leur}%
    \multiply\s@mme\f@ctech\advance\s@mme\p@rtent}
\ctr@ln@m\figptcirc
\ctr@ld@f\def\figptcircDD#1:#2:#3;#4(#5){\ifGR@cri{\s@uvc@ntr@l\et@tfigptcircDD%
    \c@lptellDD#1:{#2}:#3;#4,#4(#5)\resetc@ntr@l\et@tfigptcircDD}\ignorespaces\fi}
\ctr@ld@f\def\figptcircTD#1:#2:#3,#4,#5;#6(#7){\ifGR@cri{\s@uvc@ntr@l\et@tfigptcircTD%
    \setc@ntr@l{2}\c@lExtAxes#3,#4,#5(#6)\figptellP#1:{#2}:#3,-4,-5(#7)%
    \resetc@ntr@l\et@tfigptcircTD}\ignorespaces\fi}
\ctr@ln@m\figptcircumcenter
\ctr@ld@f\def\figptcircumcenterDD#1:#2[#3,#4,#5]{\ifGR@cri{\s@uvc@ntr@l\et@tfigptcircumcenterDD%
    \setc@ntr@l{2}\figvectNDD-5[#3,#4]\figptbaryDD-3:[#3,#4;1,1]%
                  \figvectNDD-6[#4,#5]\figptbaryDD-4:[#4,#5;1,1]%
    \resetc@ntr@l{2}\inters@cDD#1:{#2}[-3,-5;-4,-6]%
    \resetc@ntr@l\et@tfigptcircumcenterDD}\ignorespaces\fi}
\ctr@ld@f\def\figptcircumcenterTD#1:#2[#3,#4,#5]{\ifGR@cri{\s@uvc@ntr@l\et@tfigptcircumcenterTD%
    \setc@ntr@l{2}\figvectNTD-1[#3,#4,#5]%
    \figvectPTD-3[#3,#4]\figvectNVTD-5[-1,-3]\figptbaryTD-3:[#3,#4;1,1]%
    \figvectPTD-4[#4,#5]\figvectNVTD-6[-1,-4]\figptbaryTD-4:[#4,#5;1,1]%
    \resetc@ntr@l{2}\inters@cTD#1:{#2}[-3,-5;-4,-6]%
    \resetc@ntr@l\et@tfigptcircumcenterTD}\ignorespaces\fi}
\ctr@ln@m\figptcopy
\ctr@ld@f\def\figptcopyDD#1:#2/#3/{\ifGR@cri{\Figg@tXY{#3}%
    \Figp@intregDD#1:{#2}(\v@lX,\v@lY)}\ignorespaces\fi}
\ctr@ld@f\def\figptcopyTD#1:#2/#3/{\ifGR@cri{\Figg@tXY{#3}%
    \Figp@intregTD#1:{#2}(\v@lX,\v@lY,\v@lZ)}\ignorespaces\fi}
\ctr@ln@m\figptcurvcenter
\ctr@ld@f\def\figptcurvcenterDD#1:#2:#3[#4,#5,#6,#7]{\ifGR@cri{\s@uvc@ntr@l\et@tfigptcurvcenterDD%
    \setc@ntr@l{2}\c@lcurvradDD#3[#4,#5,#6,#7]\edef\Sprim@{\repdecn@mb{\result@t}}%
    \figptBezierDD-1::#3[#4,#5,#6,#7]\figpttraDD#1:{#2}=-1/\Sprim@,-5/%
    \resetc@ntr@l\et@tfigptcurvcenterDD}\ignorespaces\fi}
\ctr@ld@f\def\figptcurvcenterTD#1:#2:#3[#4,#5,#6,#7]{\ifGR@cri{\s@uvc@ntr@l\et@tfigptcurvcenterTD%
    \setc@ntr@l{2}\figvectDBezierTD -5:1,#3[#4,#5,#6,#7]%
    \figvectDBezierTD -6:2,#3[#4,#5,#6,#7]\vecunit@TD{-5}{-5}%
    \edef\Sprim@{\repdecn@mb{\result@t}}\figvectNVTD-1[-6,-5]%
    \figvectNVTD-5[-5,-1]\c@lproscalTD\v@leur[-6,-5]%
    \invers@{\v@leur}{\v@leur}\v@leur=\Sprim@\v@leur\v@leur=\Sprim@\v@leur%
    \figptBezierTD-1::#3[#4,#5,#6,#7]\edef\Sprim@{\repdecn@mb{\v@leur}}%
    \figpttraTD#1:{#2}=-1/\Sprim@,-5/\resetc@ntr@l\et@tfigptcurvcenterTD}\ignorespaces\fi}
\ctr@ld@f\def\c@lcurvradDD#1[#2,#3,#4,#5]{{\figvectDBezierDD -5:1,#1[#2,#3,#4,#5]%
    \figvectDBezierDD -6:2,#1[#2,#3,#4,#5]\vecunit@DD{-5}{-5}%
    \edef\Sprim@{\repdecn@mb{\result@t}}\figvectNVDD-5[-5]\c@lproscalDD\v@leur[-6,-5]%
    \invers@{\v@leur}{\v@leur}\v@leur=\Sprim@\v@leur\v@leur=\Sprim@\v@leur%
    \global\result@t=\v@leur}}
\ctr@ln@m\figptell
\ctr@ld@f\def\figptellDD#1:#2:#3;#4,#5(#6,#7){\ifGR@cri{\s@uvc@ntr@l\et@tfigptell%
    \c@lptellDD#1::#3;#4,#5(#6)\figptrotDD#1:{#2}=#1/#3,#7/%
    \resetc@ntr@l\et@tfigptell}\ignorespaces\fi}
\ctr@ld@f\def\c@lptellDD#1:#2:#3;#4,#5(#6){\c@ssin{\C@}{\S@}{#6}\v@lmin=\C@ pt\v@lmax=\S@ pt%
    \v@lmin=#4\v@lmin\v@lmax=#5\v@lmax%
    \edef\Xc@mp{\repdecn@mb{\v@lmin}}\edef\Yc@mp{\repdecn@mb{\v@lmax}}%
    \setc@ntr@l{2}\figvectC-1(\Xc@mp,\Yc@mp)\figpttraDD#1:{#2}=#3/1,-1/}
\ctr@ld@f\def\figptellP#1:#2:#3,#4,#5(#6){\ifGR@cri{\s@uvc@ntr@l\et@tfigptellP%
    \setc@ntr@l{2}\figvectP-1[#3,#4]\figvectP-2[#3,#5]%
    \v@leur=#6pt\c@lptellP{#3}{-1}{-2}\figptcopy#1:{#2}/-3/%
    \resetc@ntr@l\et@tfigptellP}\ignorespaces\fi}
\ctr@ln@m\@ngle
\ctr@ld@f\def\c@lptellP#1#2#3{\edef\@ngle{\repdecn@mb\v@leur}\c@ssin{\C@}{\S@}{\@ngle}%
    \figpttra-3:=#1/\C@,#2/\figpttra-3:=-3/\S@,#3/}
\ctr@ln@m\figptendnormal
\ctr@ld@f\def\figptendnormalDD#1:#2:#3,#4[#5,#6]{\ifGR@cri{\s@uvc@ntr@l\et@tfigptendnormal%
    \Figg@tXYa{#5}\Figg@tXY{#6}%
    \advance\v@lX-\v@lXa\advance\v@lY-\v@lYa%
    \setc@ntr@l{2}\Figv@ctCreg-1(\v@lX,\v@lY)\vecunit@{-1}{-1}\Figg@tXY{-1}%
    \delt@=#3\unit@\maxim@m{\delt@}{\delt@}{-\delt@}\edef\l@ngueur{\repdecn@mb{\delt@}}%
    \v@lX=\l@ngueur\v@lX\v@lY=\l@ngueur\v@lY%
    \delt@=\p@\advance\delt@-#4pt\edef\l@ngueur{\repdecn@mb{\delt@}}%
    \figptbaryR-1:[#5,#6;#4,\l@ngueur]\Figg@tXYa{-1}%
    \advance\v@lXa\v@lY\advance\v@lYa-\v@lX%
    \setc@ntr@l{1}\Figp@intregDD#1:{#2}(\v@lXa,\v@lYa)\resetc@ntr@l\et@tfigptendnormal}%
    \ignorespaces\fi}
\ctr@ld@f\def\figptexcenter#1:#2[#3,#4,#5]{\ifGR@cri{\let@xte={-}
    \Figptexinsc@nter#1:#2[#3,#4,#5]}\ignorespaces\fi}
\ctr@ld@f\def\figptincenter#1:#2[#3,#4,#5]{\ifGR@cri{\let@xte={}
    \Figptexinsc@nter#1:#2[#3,#4,#5]}\ignorespaces\fi}
\ctr@ld@f
\ctr@ld@f\def\Figptexinsc@nter#1:#2[#3,#4,#5]{%
    \figgetdist\LA@[#4,#5]\figgetdist\LB@[#3,#5]\figgetdist\LC@[#3,#4]%
    \figptbaryR#1:{#2}[#3,#4,#5;\the\let@xte\LA@,\LB@,\LC@]}
\ctr@ln@m\figptinterlineplane
\ctr@ld@f\def\figptinterlineplaneDD{\un@v@ilable{figptinterlineplane}}
\ctr@ld@f\def\figptinterlineplaneTD#1:#2[#3,#4;#5,#6]{\ifGR@cri{\s@uvc@ntr@l\et@tfigptinterlineplane%
    \setc@ntr@l{2}\figvectPTD-1[#3,#5]\vecunit@TD{-2}{#6}%
    \r@pPSTD\v@leur[-2,-1,#4]\edef\v@lcoef{\repdecn@mb{\v@leur}}%
    \figpttraTD#1:{#2}=#3/\v@lcoef,#4/\resetc@ntr@l\et@tfigptinterlineplane}\ignorespaces\fi}
\ctr@ln@m\figptorthocenter
\ctr@ld@f\def\figptorthocenterDD#1:#2[#3,#4,#5]{\ifGR@cri{\s@uvc@ntr@l\et@tfigptorthocenterDD%
    \setc@ntr@l{2}\figvectNDD-3[#3,#4]\figvectNDD-4[#4,#5]%
    \resetc@ntr@l{2}\inters@cDD#1:{#2}[#5,-3;#3,-4]%
    \resetc@ntr@l\et@tfigptorthocenterDD}\ignorespaces\fi}
\ctr@ld@f\def\figptorthocenterTD#1:#2[#3,#4,#5]{\ifGR@cri{\s@uvc@ntr@l\et@tfigptorthocenterTD%
    \setc@ntr@l{2}\figvectNTD-1[#3,#4,#5]%
    \figvectPTD-2[#3,#4]\figvectNVTD-3[-1,-2]%
    \figvectPTD-2[#4,#5]\figvectNVTD-4[-1,-2]%
    \resetc@ntr@l{2}\inters@cTD#1:{#2}[#5,-3;#3,-4]%
    \resetc@ntr@l\et@tfigptorthocenterTD}\ignorespaces\fi}
\ctr@ln@m\figptorthoprojline
\ctr@ld@f\def\figptorthoprojlineDD#1:#2=#3/#4,#5/{\ifGR@cri{\s@uvc@ntr@l\et@tfigptorthoprojlineDD%
    \setc@ntr@l{2}\figvectPDD-3[#4,#5]\figvectNVDD-4[-3]\resetc@ntr@l{2}%
    \inters@cDD#1:{#2}[#3,-4;#4,-3]\resetc@ntr@l\et@tfigptorthoprojlineDD}\ignorespaces\fi}
\ctr@ld@f\def\figptorthoprojlineTD#1:#2=#3/#4,#5/{\ifGR@cri{\s@uvc@ntr@l\et@tfigptorthoprojlineTD%
    \setc@ntr@l{2}\figvectPTD-1[#4,#3]\figvectPTD-2[#4,#5]\vecunit@TD{-2}{-2}%
    \c@lproscalTD\v@leur[-1,-2]\edef\v@lcoef{\repdecn@mb{\v@leur}}%
    \figpttraTD#1:{#2}=#4/\v@lcoef,-2/\resetc@ntr@l\et@tfigptorthoprojlineTD}\ignorespaces\fi}
\ctr@ln@m\figptorthoprojplane
\ctr@ld@f\def\figptorthoprojplaneDD{\un@v@ilable{figptorthoprojplane}}
\ctr@ld@f\def\figptorthoprojplaneTD#1:#2=#3/#4,#5/{\ifGR@cri{\s@uvc@ntr@l\et@tfigptorthoprojplane%
    \setc@ntr@l{2}\figvectPTD-1[#3,#4]\vecunit@TD{-2}{#5}%
    \c@lproscalTD\v@leur[-1,-2]\edef\v@lcoef{\repdecn@mb{\v@leur}}%
    \figpttraTD#1:{#2}=#3/\v@lcoef,-2/\resetc@ntr@l\et@tfigptorthoprojplane}\ignorespaces\fi}
\ctr@ld@f\def\figpthom#1:#2=#3/#4,#5/{\ifGR@cri{\s@uvc@ntr@l\et@tfigpthom%
    \setc@ntr@l{2}\figvectP-1[#4,#3]\figpttra#1:{#2}=#4/#5,-1/%
    \resetc@ntr@l\et@tfigpthom}\ignorespaces\fi}
\ctr@ld@f\def\figptinv#1:#2=#3/#4,#5/{\ifGR@cri{\s@uvc@ntr@l\et@tfigptinv%
    \setc@ntr@l{2}\figvectP-1[#4,#3]\Figg@tXY{-1}%
    \getredf@ctB\f@ctech\n@rmeucC{\delt@}{-1}%
    \delt@=\ptT@unit@\delt@\delt@=\ptT@unit@\delt@%
    \invers@{\delt@}{\delt@}\multiply\f@ctech\f@ctech\divide\delt@\f@ctech%
    \delt@=#5\delt@\edef\v@lcoef{\repdecn@mb{\delt@}}\figpttra#1:{#2}=#4/\v@lcoef,-1/%
    \resetc@ntr@l\et@tfigptinv}\ignorespaces\fi}
\ctr@ln@m\figptrot
\ctr@ld@f\def\figptrotDD#1:#2=#3/#4,#5/{\ifGR@cri{\s@uvc@ntr@l\et@tfigptrotDD%
    \c@ssin{\C@}{\S@}{#5}\setc@ntr@l{2}\figvectPDD-1[#4,#3]\Figg@tXY{-1}%
    \v@lXa=\C@\v@lX\advance\v@lXa-\S@\v@lY%
    \v@lYa=\S@\v@lX\advance\v@lYa\C@\v@lY%
    \Figv@ctCreg-1(\v@lXa,\v@lYa)\figpttraDD#1:{#2}=#4/1,-1/%
    \resetc@ntr@l\et@tfigptrotDD}\ignorespaces\fi}
\ctr@ld@f\def\figptrotTD#1:#2=#3/#4,#5,#6/{\ifGR@cri{\s@uvc@ntr@l\et@tfigptrotTD%
    \c@ssin{\C@}{\S@}{#5}%
    \setc@ntr@l{2}\figptorthoprojplaneTD-3:=#4/#3,#6/\figvectPTD-2[-3,#3]%
    \n@rmeucTD\v@leur{-2}\ifdim\v@leur<\Cepsil@n\Figg@tXYa{#3}\else%
    \edef\v@lcoef{\repdecn@mb{\v@leur}}\figvectNVTD-1[#6,-2]%
    \Figg@tXYa{-1}\v@lXa=\v@lcoef\v@lXa\v@lYa=\v@lcoef\v@lYa\v@lZa=\v@lcoef\v@lZa%
    \v@lXa=\S@\v@lXa\v@lYa=\S@\v@lYa\v@lZa=\S@\v@lZa\Figg@tXY{-2}%
    \advance\v@lXa\C@\v@lX\advance\v@lYa\C@\v@lY\advance\v@lZa\C@\v@lZ%
    \Figg@tXY{-3}\advance\v@lXa\v@lX\advance\v@lYa\v@lY\advance\v@lZa\v@lZ\fi%
    \Figp@intregTD#1:{#2}(\v@lXa,\v@lYa,\v@lZa)\resetc@ntr@l\et@tfigptrotTD}\ignorespaces\fi}
\ctr@ln@m\figptsym
\ctr@ld@f\def\figptsymDD#1:#2=#3/#4,#5/{\ifGR@cri{\s@uvc@ntr@l\et@tfigptsymDD%
    \resetc@ntr@l{2}\figptorthoprojlineDD-5:=#3/#4,#5/\figvectPDD-2[#3,-5]%
    \figpttraDD#1:{#2}=#3/2,-2/\resetc@ntr@l\et@tfigptsymDD}\ignorespaces\fi}
\ctr@ld@f\def\figptsymTD#1:#2=#3/#4,#5/{\ifGR@cri{\s@uvc@ntr@l\et@tfigptsymTD%
    \resetc@ntr@l{2}\figptorthoprojplaneTD-3:=#3/#4,#5/\figvectPTD-2[#3,-3]%
    \figpttraTD#1:{#2}=#3/2,-2/\resetc@ntr@l\et@tfigptsymTD}\ignorespaces\fi}
\ctr@ln@m\figpttra
\ctr@ld@f\def\figpttraDD#1:#2=#3/#4,#5/{\ifGR@cri{\Figg@tXYa{#5}\v@lXa=#4\v@lXa\v@lYa=#4\v@lYa%
    \Figg@tXY{#3}\advance\v@lX\v@lXa\advance\v@lY\v@lYa%
    \Figp@intregDD#1:{#2}(\v@lX,\v@lY)}\ignorespaces\fi}
\ctr@ld@f\def\figpttraTD#1:#2=#3/#4,#5/{\ifGR@cri{\Figg@tXYa{#5}\v@lXa=#4\v@lXa\v@lYa=#4\v@lYa%
    \v@lZa=#4\v@lZa\Figg@tXY{#3}\advance\v@lX\v@lXa\advance\v@lY\v@lYa%
    \advance\v@lZ\v@lZa\Figp@intregTD#1:{#2}(\v@lX,\v@lY,\v@lZ)}\ignorespaces\fi}
\ctr@ln@m\figpttraC
\ctr@ld@f\def\figpttraCDD#1:#2=#3/#4,#5/{\ifGR@cri{\v@lXa=#4\unit@\v@lYa=#5\unit@%
    \Figg@tXY{#3}\advance\v@lX\v@lXa\advance\v@lY\v@lYa%
    \Figp@intregDD#1:{#2}(\v@lX,\v@lY)}\ignorespaces\fi}
\ctr@ld@f\def\figpttraCTD#1:#2=#3/#4,#5,#6/{\ifGR@cri{\v@lXa=#4\unit@\v@lYa=#5\unit@\v@lZa=#6\unit@%
    \Figg@tXY{#3}\advance\v@lX\v@lXa\advance\v@lY\v@lYa\advance\v@lZ\v@lZa%
    \Figp@intregTD#1:{#2}(\v@lX,\v@lY,\v@lZ)}\ignorespaces\fi}
\ctr@ld@f\def\figptsaxes#1:#2(#3){\ifGR@cri{\an@lys@xes#3,:\ifx\t@xt@\empty%
    \ifTr@isDim\Figpts@xes#1:#2(0,#3,0,#3,0,#3)\else\Figpts@xes#1:#2(0,#3,0,#3)\fi%
    \else\Figpts@xes#1:#2(#3)\fi}\ignorespaces\fi}
\ctr@ln@m\Figpts@xes
\ctr@ld@f\def\Figpts@xesDD#1:#2(#3,#4,#5,#6){%
    \s@mme=#1\figpttraC\the\s@mme:$x$=#2/#4,0/%
    \advance\s@mme\@ne\figpttraC\the\s@mme:$y$=#2/0,#6/}
\ctr@ld@f\def\Figpts@xesTD#1:#2(#3,#4,#5,#6,#7,#8){%
    \s@mme=#1\figpttraC\the\s@mme:$x$=#2/#4,0,0/%
    \advance\s@mme\@ne\figpttraC\the\s@mme:$y$=#2/0,#6,0/%
    \advance\s@mme\@ne\figpttraC\the\s@mme:$z$=#2/0,0,#8/}
\ctr@ld@f\def\figptsmap#1=#2/#3/#4/{\ifGR@cri{\s@uvc@ntr@l\et@tfigptsmap%
    \setc@ntr@l{2}\def\list@num{#2}\s@mme=#1%
    \@ecfor\p@int:=\list@num\do{\figvectP-1[#3,\p@int]\Figg@tXY{-1}%
    \pr@dMatV/#4/\figpttra\the\s@mme:=#3/1,-1/\advance\s@mme\@ne}%
    \resetc@ntr@l\et@tfigptsmap}\ignorespaces\fi}
\ctr@ln@m\figptscontrol
\ctr@ld@f\def\figptscontrolDD#1[#2,#3,#4,#5]{\ifGR@cri{\s@uvc@ntr@l\et@tfigptscontrolDD\setc@ntr@l{2}%
    \v@lX=\z@\v@lY=\z@\Figtr@nptDD{-5}{#2}\Figtr@nptDD{2}{#5}%
    \divide\v@lX\@vi\divide\v@lY\@vi%
    \Figtr@nptDD{3}{#3}\Figtr@nptDD{-1.5}{#4}\Figp@intregDD-1:(\v@lX,\v@lY)%
    \v@lX=\z@\v@lY=\z@\Figtr@nptDD{2}{#2}\Figtr@nptDD{-5}{#5}%
    \divide\v@lX\@vi\divide\v@lY\@vi\Figtr@nptDD{-1.5}{#3}\Figtr@nptDD{3}{#4}%
    \s@mme=#1\advance\s@mme\@ne\Figp@intregDD\the\s@mme:(\v@lX,\v@lY)%
    \figptcopyDD#1:/-1/\resetc@ntr@l\et@tfigptscontrolDD}\ignorespaces\fi}
\ctr@ld@f\def\figptscontrolTD#1[#2,#3,#4,#5]{\ifGR@cri{\s@uvc@ntr@l\et@tfigptscontrolTD\setc@ntr@l{2}%
    \v@lX=\z@\v@lY=\z@\v@lZ=\z@\Figtr@nptTD{-5}{#2}\Figtr@nptTD{2}{#5}%
    \divide\v@lX\@vi\divide\v@lY\@vi\divide\v@lZ\@vi%
    \Figtr@nptTD{3}{#3}\Figtr@nptTD{-1.5}{#4}\Figp@intregTD-1:(\v@lX,\v@lY,\v@lZ)%
    \v@lX=\z@\v@lY=\z@\v@lZ=\z@\Figtr@nptTD{2}{#2}\Figtr@nptTD{-5}{#5}%
    \divide\v@lX\@vi\divide\v@lY\@vi\divide\v@lZ\@vi\Figtr@nptTD{-1.5}{#3}\Figtr@nptTD{3}{#4}%
    \s@mme=#1\advance\s@mme\@ne\Figp@intregTD\the\s@mme:(\v@lX,\v@lY,\v@lZ)%
    \figptcopyTD#1:/-1/\resetc@ntr@l\et@tfigptscontrolTD}\ignorespaces\fi}
\ctr@ld@f\def\Figtr@nptDD#1#2{\Figg@tXYa{#2}\v@lXa=#1\v@lXa\v@lYa=#1\v@lYa%
    \advance\v@lX\v@lXa\advance\v@lY\v@lYa}
\ctr@ld@f\def\Figtr@nptTD#1#2{\Figg@tXYa{#2}\v@lXa=#1\v@lXa\v@lYa=#1\v@lYa\v@lZa=#1\v@lZa%
    \advance\v@lX\v@lXa\advance\v@lY\v@lYa\advance\v@lZ\v@lZa}
\ctr@ld@f\def\figptscontrolcurve#1,#2[#3]{\ifGR@cri{\s@uvc@ntr@l\et@tfigptscontrolcurve%
    \def\list@num{#3}\extrairelepremi@r\Ak@\de\list@num%
    \extrairelepremi@r\Ai@\de\list@num\extrairelepremi@r\Aj@\de\list@num%
    \s@mme=#1\figptcopy\the\s@mme:/\Ai@/%
    \setc@ntr@l{2}\figvectP -1[\Ak@,\Aj@]%
    \@ecfor\Ak@:=\list@num\do{\advance\s@mme\@ne\figpttra\the\s@mme:=\Ai@/\curv@roundness,-1/%
       \figvectP -1[\Ai@,\Ak@]\advance\s@mme\@ne\figpttra\the\s@mme:=\Aj@/-\curv@roundness,-1/%
       \advance\s@mme\@ne\figptcopy\the\s@mme:/\Aj@/%
       \edef\Ai@{\Aj@}\edef\Aj@{\Ak@}}\advance\s@mme-#1\divide\s@mme\thr@@%
       \xdef#2{\the\s@mme}%
    \resetc@ntr@l\et@tfigptscontrolcurve}\ignorespaces\fi}
\ctr@ln@m\figptsintercirc
\ctr@ld@f\def\figptsintercircDD#1[#2,#3;#4,#5]{\ifGR@cri{\s@uvc@ntr@l\et@tfigptsintercircDD%
    \setc@ntr@l{2}\let\c@lNVintc=\c@lNVintcDD\Figptsintercirc@#1[#2,#3;#4,#5]%
    \resetc@ntr@l\et@tfigptsintercircDD}\ignorespaces\fi}
\ctr@ld@f\def\figptsintercircTD#1[#2,#3;#4,#5;#6]{\ifGR@cri{\s@uvc@ntr@l\et@tfigptsintercircTD%
    \setc@ntr@l{2}\let\c@lNVintc=\c@lNVintcTD\vecunitC@TD[#2,#6]%
    \Figv@ctCreg-3(\v@lX,\v@lY,\v@lZ)\Figptsintercirc@#1[#2,#3;#4,#5]%
    \resetc@ntr@l\et@tfigptsintercircTD}\ignorespaces\fi}
\ctr@ld@f\def\Figptsintercirc@#1[#2,#3;#4,#5]{\figvectP-1[#2,#4]%
    \vecunit@{-1}{-1}\delt@=\result@t\f@ctech=\result@tent%
    \s@mme=#1\advance\s@mme\@ne\figptcopy#1:/#2/\figptcopy\the\s@mme:/#4/%
    \ifdim\delt@=\z@\else%
    \v@lmin=#3\unit@\v@lmax=#5\unit@\v@leur=\v@lmin\advance\v@leur\v@lmax%
    \ifdim\v@leur>\delt@%
    \v@leur=\v@lmin\advance\v@leur-\v@lmax\maxim@m{\v@leur}{\v@leur}{-\v@leur}%
    \ifdim\v@leur<\delt@%
    \divide\v@lmin\f@ctech\divide\v@lmax\f@ctech\divide\delt@\f@ctech%
    \v@lmin=\repdecn@mb{\v@lmin}\v@lmin\v@lmax=\repdecn@mb{\v@lmax}\v@lmax%
    \invers@{\v@leur}{\delt@}\advance\v@lmax-\v@lmin%
    \v@lmax=-\repdecn@mb{\v@leur}\v@lmax\advance\delt@\v@lmax\delt@=.5\delt@%
    \v@lmax=\delt@\multiply\v@lmax\f@ctech%
    \edef\t@ille{\repdecn@mb{\v@lmax}}\figpttra-2:=#2/\t@ille,-1/%
    \delt@=\repdecn@mb{\delt@}\delt@\advance\v@lmin-\delt@%
    \sqrt@{\v@leur}{\v@lmin}\multiply\v@leur\f@ctech\edef\t@ille{\repdecn@mb{\v@leur}}%
    \c@lNVintc\figpttra#1:=-2/-\t@ille,-1/\figpttra\the\s@mme:=-2/\t@ille,-1/\fi\fi\fi}
\ctr@ld@f\def\c@lNVintcDD{\Figg@tXY{-1}\Figv@ctCreg-1(-\v@lY,\v@lX)} 
\ctr@ld@f\def\c@lNVintcTD{{\Figg@tXY{-3}\v@lmin=\v@lX\v@lmax=\v@lY\v@leur=\v@lZ%
    \Figg@tXY{-1}\c@lprovec{-3}\vecunit@{-3}{-3}
    \Figg@tXY{-1}\v@lmin=\v@lX\v@lmax=\v@lY%
    \v@leur=\v@lZ\Figg@tXY{-3}\c@lprovec{-1}}} 
\ctr@ln@m\figptsinterlinell
\ctr@ld@f\def\figptsinterlinellDD#1[#2,#3,#4,#5;#6,#7]{\ifGR@cri{\s@uvc@ntr@l\et@tfigptsinterlinellDD%
    \figptcopy#1:/#6/\s@mme=#1\advance\s@mme\@ne\figptcopy\the\s@mme:/#7/%
    \v@lmin=#3\unit@\v@lmax=#4\unit@
    \setc@ntr@l{2}\figptbaryDD-4:[#6,#7;1,1]\figptsrotDD-3=-4,#7/#2,-#5/
    \Figg@tXY{-3}\Figg@tXYa{#2}\advance\v@lX-\v@lXa\advance\v@lY-\v@lYa
    \figvectP-1[-3,-2]\Figg@tXYa{-1}\figvectP-3[-4,#7]\Figptsint@rLE{#1}
    \resetc@ntr@l\et@tfigptsinterlinellDD}\ignorespaces\fi}
\ctr@ld@f\def\figptsinterlinellP#1[#2,#3,#4;#5,#6]{\ifGR@cri{\s@uvc@ntr@l\et@tfigptsinterlinellP%
    \figptcopy#1:/#5/\s@mme=#1\advance\s@mme\@ne\figptcopy\the\s@mme:/#6/\setc@ntr@l{2}%
    \figvectP-1[#2,#3]\vecunit@{-1}{-1}\v@lmin=\result@t
    \figvectP-2[#2,#4]\vecunit@{-2}{-2}\v@lmax=\result@t
    \figptbary-4:[#5,#6;1,1]
    \figvectP-3[#2,-4]\c@lproscal\v@lX[-3,-1]\c@lproscal\v@lY[-3,-2]
    \figvectP-3[-4,#6]\c@lproscal\v@lXa[-3,-1]\c@lproscal\v@lYa[-3,-2]
    \Figptsint@rLE{#1}\resetc@ntr@l\et@tfigptsinterlinellP}\ignorespaces\fi}
\ctr@ld@f\def\Figptsint@rLE#1{%
    \getredf@ctDD\f@ctech(\v@lmin,\v@lmax)%
    \getredf@ctDD\p@rtent(\v@lX,\v@lY)\ifnum\p@rtent>\f@ctech\f@ctech=\p@rtent\fi%
    \getredf@ctDD\p@rtent(\v@lXa,\v@lYa)\ifnum\p@rtent>\f@ctech\f@ctech=\p@rtent\fi%
    \divide\v@lmin\f@ctech\divide\v@lmax\f@ctech\divide\v@lX\f@ctech\divide\v@lY\f@ctech%
    \divide\v@lXa\f@ctech\divide\v@lYa\f@ctech%
    \c@rre=\repdecn@mb\v@lXa\v@lmax\mili@u=\repdecn@mb\v@lYa\v@lmin%
    \getredf@ctDD\f@ctech(\c@rre,\mili@u)%
    \c@rre=\repdecn@mb\v@lX\v@lmax\mili@u=\repdecn@mb\v@lY\v@lmin%
    \getredf@ctDD\p@rtent(\c@rre,\mili@u)\ifnum\p@rtent>\f@ctech\f@ctech=\p@rtent\fi%
    \divide\v@lmin\f@ctech\divide\v@lmax\f@ctech\divide\v@lX\f@ctech\divide\v@lY\f@ctech%
    \divide\v@lXa\f@ctech\divide\v@lYa\f@ctech%
    \v@lmin=\repdecn@mb{\v@lmin}\v@lmin\v@lmax=\repdecn@mb{\v@lmax}\v@lmax%
    \edef\G@xde{\repdecn@mb\v@lmin}\edef\P@xde{\repdecn@mb\v@lmax}%
    \c@rre=-\v@lmax\v@leur=\repdecn@mb\v@lY\v@lY\advance\c@rre\v@leur\c@rre=\G@xde\c@rre%
    \v@leur=\repdecn@mb\v@lX\v@lX\v@leur=\P@xde\v@leur\advance\c@rre\v@leur
    \v@lmin=\repdecn@mb\v@lYa\v@lmin\v@lmax=\repdecn@mb\v@lXa\v@lmax%
    \mili@u=\repdecn@mb\v@lX\v@lmax\advance\mili@u\repdecn@mb\v@lY\v@lmin
    \v@lmax=\repdecn@mb\v@lXa\v@lmax\advance\v@lmax\repdecn@mb\v@lYa\v@lmin
    \ifdim\v@lmax>\epsil@n%
    \maxim@m{\v@leur}{\c@rre}{-\c@rre}\maxim@m{\v@lmin}{\mili@u}{-\mili@u}%
    \maxim@m{\v@leur}{\v@leur}{\v@lmin}\maxim@m{\v@lmin}{\v@lmax}{-\v@lmax}%
    \maxim@m{\v@leur}{\v@leur}{\v@lmin}\p@rtentiere{\p@rtent}{\v@leur}\advance\p@rtent\@ne%
    \divide\c@rre\p@rtent\divide\mili@u\p@rtent\divide\v@lmax\p@rtent%
    \delt@=\repdecn@mb{\mili@u}\mili@u\v@leur=\repdecn@mb{\v@lmax}\c@rre%
    \advance\delt@-\v@leur\ifdim\delt@<\z@\else\sqrt@\delt@\delt@%
    \invers@\v@lmax\v@lmax\edef\Uns@rAp{\repdecn@mb\v@lmax}%
    \v@leur=-\mili@u\advance\v@leur-\delt@\v@leur=\Uns@rAp\v@leur%
    \edef\t@ille{\repdecn@mb\v@leur}\figpttra#1:=-4/\t@ille,-3/\s@mme=#1\advance\s@mme\@ne%
    \v@leur=-\mili@u\advance\v@leur\delt@\v@leur=\Uns@rAp\v@leur%
    \edef\t@ille{\repdecn@mb\v@leur}\figpttra\the\s@mme:=-4/\t@ille,-3/\fi\fi}
\ctr@ln@m\figptsorthoprojline
\ctr@ld@f\def\figptsorthoprojlineDD#1=#2/#3,#4/{\ifGR@cri{\s@uvc@ntr@l\et@tfigptsorthoprojlineDD%
    \setc@ntr@l{2}\figvectPDD-3[#3,#4]\figvectNVDD-4[-3]\resetc@ntr@l{2}%
    \def\list@num{#2}\s@mme=#1\@ecfor\p@int:=\list@num\do{%
    \inters@cDD\the\s@mme:[\p@int,-4;#3,-3]\advance\s@mme\@ne}%
    \resetc@ntr@l\et@tfigptsorthoprojlineDD}\ignorespaces\fi}
\ctr@ld@f\def\figptsorthoprojlineTD#1=#2/#3,#4/{\ifGR@cri{\s@uvc@ntr@l\et@tfigptsorthoprojlineTD%
    \setc@ntr@l{2}\figvectPTD-2[#3,#4]\vecunit@TD{-2}{-2}%
    \def\list@num{#2}\s@mme=#1\@ecfor\p@int:=\list@num\do{%
    \figvectPTD-1[#3,\p@int]\c@lproscalTD\v@leur[-1,-2]%
    \edef\v@lcoef{\repdecn@mb{\v@leur}}\figpttraTD\the\s@mme:=#3/\v@lcoef,-2/%
    \advance\s@mme\@ne}\resetc@ntr@l\et@tfigptsorthoprojlineTD}\ignorespaces\fi}
\ctr@ln@m\figptsorthoprojplane
\ctr@ld@f\def\figptsorthoprojplaneDD{\un@v@ilable{figptsorthoprojplane}}
\ctr@ld@f\def\figptsorthoprojplaneTD#1=#2/#3,#4/{\ifGR@cri{\s@uvc@ntr@l\et@tfigptsorthoprojplane%
    \setc@ntr@l{2}\vecunit@TD{-2}{#4}%
    \def\list@num{#2}\s@mme=#1\@ecfor\p@int:=\list@num\do{\figvectPTD-1[\p@int,#3]%
    \c@lproscalTD\v@leur[-1,-2]\edef\v@lcoef{\repdecn@mb{\v@leur}}%
    \figpttraTD\the\s@mme:=\p@int/\v@lcoef,-2/\advance\s@mme\@ne}%
    \resetc@ntr@l\et@tfigptsorthoprojplane}\ignorespaces\fi}
\ctr@ld@f\def\figptshom#1=#2/#3,#4/{\ifGR@cri{\s@uvc@ntr@l\et@tfigptshom%
    \setc@ntr@l{2}\def\list@num{#2}\s@mme=#1%
    \@ecfor\p@int:=\list@num\do{\figvectP-1[#3,\p@int]%
    \figpttra\the\s@mme:=#3/#4,-1/\advance\s@mme\@ne}%
    \resetc@ntr@l\et@tfigptshom}\ignorespaces\fi}
\ctr@ld@f\def\figptsinv#1=#2/#3,#4/{\ifGR@cri{\s@uvc@ntr@l\et@tfigptsinv%
    \setc@ntr@l{2}\def\list@num{#2}\s@mme=#1%
    \@ecfor\p@int:=\list@num\do{\figvectP-1[#3,\p@int]\Figg@tXY{-1}%
    \getredf@ctB\f@ctech\n@rmeucC{\delt@}{-1}%
    \delt@=\ptT@unit@\delt@\delt@=\ptT@unit@\delt@%
    \invers@{\delt@}{\delt@}\multiply\f@ctech\f@ctech\divide\delt@\f@ctech%
    \delt@=#4\delt@\edef\v@lcoef{\repdecn@mb{\delt@}}\figpttra\the\s@mme:=#3/\v@lcoef,-1/%
    \advance\s@mme\@ne}\resetc@ntr@l\et@tfigptsinv}\ignorespaces\fi}
\ctr@ln@m\figptsrot
\ctr@ld@f\def\figptsrotDD#1=#2/#3,#4/{\ifGR@cri{\s@uvc@ntr@l\et@tfigptsrotDD%
    \c@ssin{\C@}{\S@}{#4}\setc@ntr@l{2}\def\list@num{#2}\s@mme=#1%
    \@ecfor\p@int:=\list@num\do{\figvectPDD-1[#3,\p@int]\Figg@tXY{-1}%
    \v@lXa=\C@\v@lX\advance\v@lXa-\S@\v@lY%
    \v@lYa=\S@\v@lX\advance\v@lYa\C@\v@lY%
    \Figv@ctCreg-1(\v@lXa,\v@lYa)\figpttraDD\the\s@mme:=#3/1,-1/\advance\s@mme\@ne}%
    \resetc@ntr@l\et@tfigptsrotDD}\ignorespaces\fi}
\ctr@ld@f\def\figptsrotTD#1=#2/#3,#4,#5/{\ifGR@cri{\s@uvc@ntr@l\et@tfigptsrotTD%
    \c@ssin{\C@}{\S@}{#4}%
    \setc@ntr@l{2}\def\list@num{#2}\s@mme=#1%
    \@ecfor\p@int:=\list@num\do{\figptorthoprojplaneTD-3:=#3/\p@int,#5/%
    \figvectPTD-2[-3,\p@int]%
    \figvectNVTD-1[#5,-2]\n@rmeucTD\v@leur{-2}\edef\v@lcoef{\repdecn@mb{\v@leur}}%
    \Figg@tXYa{-1}\v@lXa=\v@lcoef\v@lXa\v@lYa=\v@lcoef\v@lYa\v@lZa=\v@lcoef\v@lZa%
    \v@lXa=\S@\v@lXa\v@lYa=\S@\v@lYa\v@lZa=\S@\v@lZa\Figg@tXY{-2}%
    \advance\v@lXa\C@\v@lX\advance\v@lYa\C@\v@lY\advance\v@lZa\C@\v@lZ%
    \Figg@tXY{-3}\advance\v@lXa\v@lX\advance\v@lYa\v@lY\advance\v@lZa\v@lZ%
    \Figp@intregTD\the\s@mme:(\v@lXa,\v@lYa,\v@lZa)\advance\s@mme\@ne}%
    \resetc@ntr@l\et@tfigptsrotTD}\ignorespaces\fi}
\ctr@ln@m\figptssym
\ctr@ld@f\def\figptssymDD#1=#2/#3,#4/{\ifGR@cri{\s@uvc@ntr@l\et@tfigptssymDD%
    \setc@ntr@l{2}\figvectPDD-3[#3,#4]\Figg@tXY{-3}\Figv@ctCreg-4(-\v@lY,\v@lX)%
    \resetc@ntr@l{2}\def\list@num{#2}\s@mme=#1%
    \@ecfor\p@int:=\list@num\do{\inters@cDD-5:[#3,-3;\p@int,-4]\figvectPDD-2[\p@int,-5]%
    \figpttraDD\the\s@mme:=\p@int/2,-2/\advance\s@mme\@ne}%
    \resetc@ntr@l\et@tfigptssymDD}\ignorespaces\fi}
\ctr@ld@f\def\figptssymTD#1=#2/#3,#4/{\ifGR@cri{\s@uvc@ntr@l\et@tfigptssymTD%
    \setc@ntr@l{2}\vecunit@TD{-2}{#4}\def\list@num{#2}\s@mme=#1%
    \@ecfor\p@int:=\list@num\do{\figvectPTD-1[\p@int,#3]%
    \c@lproscalTD\v@leur[-1,-2]\v@leur=2\v@leur\edef\v@lcoef{\repdecn@mb{\v@leur}}%
    \figpttraTD\the\s@mme:=\p@int/\v@lcoef,-2/\advance\s@mme\@ne}%
    \resetc@ntr@l\et@tfigptssymTD}\ignorespaces\fi}
\ctr@ln@m\figptstra
\ctr@ld@f\def\figptstraDD#1=#2/#3,#4/{\ifGR@cri{\Figg@tXYa{#4}\v@lXa=#3\v@lXa\v@lYa=#3\v@lYa%
    \def\list@num{#2}\s@mme=#1\@ecfor\p@int:=\list@num\do{\Figg@tXY{\p@int}%
    \advance\v@lX\v@lXa\advance\v@lY\v@lYa%
    \Figp@intregDD\the\s@mme:(\v@lX,\v@lY)\advance\s@mme\@ne}}\ignorespaces\fi}
\ctr@ld@f\def\figptstraTD#1=#2/#3,#4/{\ifGR@cri{\Figg@tXYa{#4}\v@lXa=#3\v@lXa\v@lYa=#3\v@lYa%
    \v@lZa=#3\v@lZa\def\list@num{#2}\s@mme=#1\@ecfor\p@int:=\list@num\do{\Figg@tXY{\p@int}%
    \advance\v@lX\v@lXa\advance\v@lY\v@lYa\advance\v@lZ\v@lZa%
    \Figp@intregTD\the\s@mme:(\v@lX,\v@lY,\v@lZ)\advance\s@mme\@ne}}\ignorespaces\fi}
\ctr@ln@m\figptvisilimSL
\ctr@ld@f\def\figptvisilimSLDD{\un@v@ilable{figptvisilimSL}}
\ctr@ld@f\def\figptvisilimSLTD#1:#2[#3,#4;#5,#6]{\ifGR@cri{\s@uvc@ntr@l\et@tfigptvisilimSLTD%
    \setc@ntr@l{2}\figvectP-1[#3,#4]\n@rminf{\delt@}{-1}%
    \ifcase\CUR@proj\v@lX=\cxa@\p@\v@lY=-\p@\v@lZ=\cxb@\p@
    \Figv@ctCreg-2(\v@lX,\v@lY,\v@lZ)\figvectP-3[#5,#6]\figvectNV-1[-2,-3]%
    \or\figvectP-1[#5,#6]\vecunitCV@TD{-1}\v@lmin=\v@lX\v@lmax=\v@lY
    \v@leur=\v@lZ\v@lX=\cza@\p@\v@lY=\czb@\p@\v@lZ=\czc@\p@\c@lprovec{-1}%
    \or\c@ley@pt{-2}\figvectN-1[#5,#6,-2]\fi
    \edef\Ai@{#3}\edef\Aj@{#4}\figvectP-2[#5,\Ai@]\c@lproscal\v@leur[-1,-2]%
    \ifdim\v@leur>\z@\p@rtent=\@ne\else\p@rtent=\m@ne\fi%
    \figvectP-2[#5,\Aj@]\c@lproscal\v@leur[-1,-2]%
    \ifdim\p@rtent\v@leur>\z@\figptcopy#1:#2/#3/%
    \message{*** \BS@ figptvisilimSL: points are on the same side.}\else%
    \figptcopy-3:/#3/\figptcopy-4:/#4/%
    \loop\figptbary-5:[-3,-4;1,1]\figvectP-2[#5,-5]\c@lproscal\v@leur[-1,-2]%
    \ifdim\p@rtent\v@leur>\z@\figptcopy-3:/-5/\else\figptcopy-4:/-5/\fi%
    \divide\delt@\tw@\ifdim\delt@>\epsil@n\repeat%
    \figptbary#1:#2[-3,-4;1,1]\fi\resetc@ntr@l\et@tfigptvisilimSLTD}\ignorespaces\fi}
\ctr@ld@f\def\c@ley@pt#1{\t@stp@r\ifitis@K\v@lX=\cza@\p@\v@lY=\czb@\p@\v@lZ=\czc@\p@%
    \Figv@ctCreg-1(\v@lX,\v@lY,\v@lZ)\Figp@intreg-2:(\wd\Bt@rget,\ht\Bt@rget,\dp\Bt@rget)%
    \figpttra#1:=-2/-\disob@intern,-1/\else\end\fi}
\ctr@ld@f\def\t@stp@r{\itis@Ktrue\ifnewt@rgetpt\else\itis@Kfalse%
    \message{*** \BS@ figptvisilimXX: target point undefined.}\fi\ifnewdis@b\else%
    \itis@Kfalse\message{*** \BS@ figptvisilimXX: observation distance undefined.}\fi%
    \ifitis@K\else\message{*** This macro must be called after \BS@ figdrawbegin or after
    having set the missing parameter(s) with \BS@ figset proj()}\fi}
\ctr@ld@f\def\figscan#1(#2,#3){{\s@uvc@ntr@l\et@tfigscan\@psfgetbb{#1}\if@psfbbfound\else%
    \def\@psfllx{0}\def\@psflly{20}\def\@psfurx{540}\def\@psfury{640}\fi\figscan@{#2}{#3}%
    \resetc@ntr@l\et@tfigscan}\ignorespaces}
\ctr@ld@f\def\figscan@#1#2{%
    \unit@=\@ne bp\setc@ntr@l{2}\figsetmark{}%
    \def\minst@p{20pt}%
    \v@lX=\@psfllx\p@\v@lX=\Sc@leFact\v@lX\r@undint\v@lX\v@lX%
    \v@lY=\@psflly\p@\v@lY=\Sc@leFact\v@lY\ifdim\v@lY>\z@\r@undint\v@lY\v@lY\fi%
    \delt@=\@psfury\p@\delt@=\Sc@leFact\delt@%
    \advance\delt@-\v@lY\v@lXa=\@psfurx\p@\v@lXa=\Sc@leFact\v@lXa\v@leur=\minst@p%
    \edef\valv@lY{\repdecn@mb{\v@lY}}\edef\LgTr@it{\the\delt@}%
    \loop\ifdim\v@lX<\v@lXa\edef\valv@lX{\repdecn@mb{\v@lX}}%
    \figptDD -1:(\valv@lX,\valv@lY)\figwriten -1:\hbox{\vrule height\LgTr@it}(0)%
    \ifdim\v@leur<\minst@p\else\figsetmark{\raise-8bp\hbox{$\scriptscriptstyle\triangle$}}%
    \figwrites -1:\@ffichnb{0}{\valv@lX}(6)\v@leur=\z@\figsetmark{}\fi%
    \advance\v@leur#1pt\advance\v@lX#1pt\repeat%
    \def\minst@p{10pt}%
    \v@lX=\@psfllx\p@\v@lX=\Sc@leFact\v@lX\ifdim\v@lX>\z@\r@undint\v@lX\v@lX\fi%
    \v@lY=\@psflly\p@\v@lY=\Sc@leFact\v@lY\r@undint\v@lY\v@lY%
    \delt@=\@psfurx\p@\delt@=\Sc@leFact\delt@%
    \advance\delt@-\v@lX\v@lYa=\@psfury\p@\v@lYa=\Sc@leFact\v@lYa\v@leur=\minst@p%
    \edef\valv@lX{\repdecn@mb{\v@lX}}\edef\LgTr@it{\the\delt@}%
    \loop\ifdim\v@lY<\v@lYa\edef\valv@lY{\repdecn@mb{\v@lY}}%
    \figptDD -1:(\valv@lX,\valv@lY)\figwritee -1:\vbox{\hrule width\LgTr@it}(0)%
    \ifdim\v@leur<\minst@p\else\figsetmark{$\triangleright$\kern4bp}%
    \figwritew -1:\@ffichnb{0}{\valv@lY}(6)\v@leur=\z@\figsetmark{}\fi%
    \advance\v@leur#2pt\advance\v@lY#2pt\repeat}
\ctr@ld@f
\ctr@ld@f\def\figscan@E#1(#2,#3){{\s@uvc@ntr@l\et@tfigscan@E%
    \Figdisc@rdLTS{#1}{\t@xt@}\pdfximage{\t@xt@}%
    \setbox\Gb@x=\hbox{\pdfrefximage\pdflastximage}%
    \edef\@psfllx{0}\v@lY=-\dp\Gb@x\edef\@psflly{\repdecn@mb{\v@lY}}%
    \edef\@psfurx{\repdecn@mb{\wd\Gb@x}}%
    \v@lY=\dp\Gb@x\advance\v@lY\ht\Gb@x\edef\@psfury{\repdecn@mb{\v@lY}}%
    \figscan@{#2}{#3}\resetc@ntr@l\et@tfigscan@E}\ignorespaces}
\ctr@ld@f\def\figshowpts[#1,#2]{{\figsetmark{$\bullet$}\figsetptname{\bf ##1}%
    \p@rtent=#2\relax\ifnum\p@rtent<\z@\p@rtent=\z@\fi%
    \s@mme=#1\relax\ifnum\s@mme<\z@\s@mme=\z@\fi%
    \loop\ifnum\s@mme<\p@rtent\pt@rvect{\s@mme}%
    \ifitis@K\figwriten{\the\s@mme}:(4pt)\fi\advance\s@mme\@ne\repeat%
    \pt@rvect{\s@mme}\ifitis@K\figwriten{\the\s@mme}:(4pt)\fi}\ignorespaces}
\ctr@ld@f\def\pt@rvect#1{\set@bjc@de{#1}%
    \expandafter\expandafter\expandafter\inqpt@rvec\csname\objc@de\endcsname:}
\ctr@ld@f\def\inqpt@rvec#1#2:{\if#1\C@dCl@spt\itis@Ktrue\else\itis@Kfalse\fi}
\ctr@ld@f\def\figshowsettings{{%
    \immediate\write16{====================================================================}%
    \immediate\write16{ Current settings are (DDV means "with dynamic default value"):}%
    \immediate\write16{ --- GENERAL ---}%
    \immediate\write16{Scale factor and Unit = \unit@util\space (\the\unit@)
     \space -> \BS@ figinit{ScaleFactorUnit}}%
    \immediate\write16{Update mode = \ifGRupdatem@de yes\else no\fi
     \space-> \BS@ figset(update=yes/no) or \BS@ figsetdefault(update=yes/no)}%
    \immediate\write16{ --- WRITING ---}%
    \immediate\write16{Implicit point name = \ptn@me{i} \space-> \BS@ figset write(ptname={Name})}%
    \immediate\write16{Point marker = \the\c@nsymb \space -> \BS@ figset write(mark=Mark)}%
    \immediate\write16{Print rounded coordinates = \ifr@undcoord yes\else no\fi
     \space-> \BS@ figset write(roundcoord=yes/no)}%
    \immediate\write16{ --- GRAPHICAL (general) ---}%
    \immediate\write16{Color = \CUR@color \space-> \BS@ figset(color=ColorDefinition)}%
    \immediate\write16{Filling mode = \iffillm@de yes\else no\fi
     \space-> \BS@ figset(fillmode=yes/no)}%
    \immediate\write16{Line join = \CUR@join \space-> \BS@ figset(join=miter/round/bevel)}%
    \immediate\write16{Line style = \CUR@dash \space-> \BS@ figset(dash=Index/Pattern)}%
    \immediate\write16{Line width = \CUR@width
     \space-> \BS@ figset(width=real in PostScript units)}%
    \immediate\write16{ --- GRAPHICAL (specific) ---}%
    \immediate\write16{Altitude (all the following attributes are DDV):}%
    \immediate\write16{ Base line color =
     \ifx\DDV@blcolor\D@FTref general color\else\DDV@blcolor\fi
     \space-> \BS@ figset altitude(blcolor=ColorDefinition)}%
    \immediate\write16{ Base line style =
     \ifx\DDV@bldash\D@FTref general style\else\DDV@bldash\fi
     \space-> \BS@ figset altitude(bldash=Index/Pattern)}%
    \immediate\write16{ Base line width =
     \ifx\DDV@blwidth\D@FTref general width\else\DDV@blwidth\fi
     \space-> \BS@ figset altitude(blwidth=real in PostScript units)}%
    \immediate\write16{ Square line color =
     \ifx\DDV@sqcolor\D@FTref general color\else\DDV@sqcolor\fi
     \space-> \BS@ figset altitude(sqcolor=ColorDefinition)}%
    \immediate\write16{ Square line style =
     \ifx\DDV@sqdash\D@FTref general style\else\DDV@sqdash\fi
     \space-> \BS@ figset altitude(sqdash=Index/Pattern)}%
    \immediate\write16{ Square line width =
     \ifx\DDV@sqwidth\D@FTref general width\else\DDV@sqwidth\fi
     \space-> \BS@ figset altitude(sqwidth=real in PostScript units)}%
    \immediate\write16{Arrowhead:}%
    \immediate\write16{ (half-)Angle = \@rrowheadangle
     \space-> \BS@ figset arrowhead(angle=real in degrees)}%
    \immediate\write16{ Filling mode = \if@rrowhfill yes\else no\fi
     \space-> \BS@ figset arrowhead(fillmode=yes/no)}%
    \immediate\write16{ "Outside" = \if@rrowhout yes\else no\fi
     \space-> \BS@ figset arrowhead(out=yes/no)}%
    \immediate\write16{ Length = \@rrowheadlength
     \if@rrowratio\space(not active)\else\space(active)\fi
     \space-> \BS@ figset arrowhead(length=real in user coord.)}%
    \immediate\write16{ Ratio = \@rrowheadratio
     \if@rrowratio\space(active)\else\space(not active)\fi
     \space-> \BS@ figset arrowhead(ratio=real in [0,1])}%
    \immediate\write16{Curve:}%
    \immediate\write16{ Roundness = \curv@roundness
     \space-> \BS@ figset curve(roundness=real in [0,0.5])}%
    \immediate\write16{Flow chart:}%
    \immediate\write16{ Arrow position = \@rrowp@s
     \space-> \BS@ figset flowchart(arrowposition=real in [0,1])}%
    \immediate\write16{ Arrow reference point = \ifcase\@rrowr@fpt start\else end\fi
     \space-> \BS@ figset flowchart(arrowrefpt = start/end)}%
    \immediate\write16{ Background color = \fcbgc@lor
     \space-> \BS@ figset flowchart(bgcolor=ColorDefinition)}%
    \immediate\write16{ Line type = \ifcase\fclin@typ@ curve\else polygon\fi
     \space-> \BS@ figset flowchart(line=polygon/curve)}%
    \immediate\write16{ Padding = (\Xp@dd, \Yp@dd)
     \space-> \BS@ figset flowchart(padding = real in user coord.)}%
    \immediate\write16{\space\space\space\space(or
     \BS@ figset flowchart(xpadding=real, ypadding=real) )}%
    \immediate\write16{ Radius = \fclin@r@d
     \space-> \BS@ figset flowchart(radius=positive real in user coord.)}%
    \immediate\write16{ Shape = \fcsh@pe
     \space-> \BS@ figset flowchart(shape = rectangle, ellipse or lozenge)}%
    \immediate\write16{ Thickness color (DDV) = 
     \ifx\DDV@thickcolor\D@FTref general color\else\DDV@thickcolor\fi
     \space-> \BS@ figset flowchart(thickcolor=ColorDefinition)}%
    \immediate\write16{ Thickness = \thickn@ss
     \space-> \BS@ figset flowchart(thickness = real in user coord.)}%
    \immediate\write16{Mesh:}%
    \immediate\write16{ Diagonal = \c@ntrolmesh
     \space-> \BS@ figset mesh(diag=integer in {-1,0,1})}%
    \immediate\write16{ Lines color (DDV) =
     \ifx\DDV@meshcolor\D@FTref general color\else\DDV@meshcolor\fi
     \space-> \BS@ figset mesh(color=ColorDefinition)}%
    \immediate\write16{ Lines style (DDV) =
     \ifx\DDV@meshdash\D@FTref general style\else\DDV@meshdash\fi
     \space-> \BS@ figset mesh(dash=Index/Pattern)}%
    \immediate\write16{ Lines width (DDV) =
     \ifx\DDV@meshwidth\D@FTref general width\else\DDV@meshwidth\fi
     \space-> \BS@ figset mesh(width=real in PostScript units)}%
    \immediate\write16{Trimesh:}%
    \immediate\write16{ Lines color (DDV) =
     \ifx\DDV@tmeshcolor\D@FTref general color\else\DDV@tmeshcolor\fi
     \space-> \BS@ figset trimesh(color=ColorDefinition)}%
    \immediate\write16{ Lines style (DDV) =
     \ifx\DDV@tmeshdash\D@FTref general style\else\DDV@tmeshdash\fi
     \space-> \BS@ figset trimesh(dash=Index/Pattern)}%
    \immediate\write16{ Lines width (DDV) =
     \ifx\DDV@tmeshwidth\D@FTref general width\else\DDV@tmeshwidth\fi
     \space-> \BS@ figset trimesh(width=real in PostScript units)}%
    \ifTr@isDim%
    \immediate\write16{ --- 3D to 2D PROJECTION ---}%
    \immediate\write16{Projection : \typ@proj \space-> \BS@ figinit{ScaleFactorUnit, ProjType}}%
    \immediate\write16{Longitude (psi) = \v@lPsi \space-> \BS@ figset proj(psi=real in degrees)}%
    \ifcase\CUR@proj\immediate\write16{Depth coeff. (Lambda)
     \space = \v@lTheta \space-> \BS@ figset proj(lambda=real in [0,1])}%
    \else\immediate\write16{Latitude (theta)
     \space = \v@lTheta \space-> \BS@ figset proj(theta=real in degrees)}%
    \fi%
    \ifnum\CUR@proj=\tw@%
    \immediate\write16{Observation distance = \disob@unit
     \space-> \BS@ figset proj(dist=real in user coord.)}%
    \immediate\write16{Target point = \t@rgetpt \space-> \BS@ figset proj(targetpt=pt number)}%
     \v@lX=\ptT@unit@\wd\Bt@rget\v@lY=\ptT@unit@\ht\Bt@rget\v@lZ=\ptT@unit@\dp\Bt@rget%
    \immediate\write16{ Its coordinates are
     (\repdecn@mb{\v@lX}, \repdecn@mb{\v@lY}, \repdecn@mb{\v@lZ})}%
    \fi%
    \fi%
    \immediate\write16{====================================================================}%
    \ignorespaces}}
\ctr@ln@w{newif}\ifitis@vect@r
\ctr@ld@f\def\figvectC#1(#2,#3){{\itis@vect@rtrue\figpt#1:(#2,#3)}\ignorespaces}
\ctr@ld@f\def\Figv@ctCreg#1(#2,#3){{\itis@vect@rtrue\Figp@intreg#1:(#2,#3)}\ignorespaces}
\ctr@ln@m\figvectDBezier
\ctr@ld@f\def\figvectDBezierDD#1:#2,#3[#4,#5,#6,#7]{\ifGR@cri{\s@uvc@ntr@l\et@tfigvectDBezierDD%
    \FigvectDBezier@#2,#3[#4,#5,#6,#7]\v@lX=\c@ef\v@lX\v@lY=\c@ef\v@lY%
    \Figv@ctCreg#1(\v@lX,\v@lY)\resetc@ntr@l\et@tfigvectDBezierDD}\ignorespaces\fi}
\ctr@ld@f\def\figvectDBezierTD#1:#2,#3[#4,#5,#6,#7]{\ifGR@cri{\s@uvc@ntr@l\et@tfigvectDBezierTD%
    \FigvectDBezier@#2,#3[#4,#5,#6,#7]\v@lX=\c@ef\v@lX\v@lY=\c@ef\v@lY\v@lZ=\c@ef\v@lZ%
    \Figv@ctCreg#1(\v@lX,\v@lY,\v@lZ)\resetc@ntr@l\et@tfigvectDBezierTD}\ignorespaces\fi}
\ctr@ld@f\def\FigvectDBezier@#1,#2[#3,#4,#5,#6]{\setc@ntr@l{2}%
    \edef\T@{#2}\v@leur=\p@\advance\v@leur-#2pt\edef\UNmT@{\repdecn@mb{\v@leur}}%
    \ifnum#1=\tw@\def\c@ef{6}\else\def\c@ef{3}\fi%
    \figptcopy-4:/#3/\figptcopy-3:/#4/\figptcopy-2:/#5/\figptcopy-1:/#6/%
    \l@mbd@un=-4 \l@mbd@de=-\thr@@\p@rtent=\m@ne\c@lDecast%
    \ifnum#1=\tw@\c@lDCDeux{-4}{-3}\c@lDCDeux{-3}{-2}\c@lDCDeux{-4}{-3}\else%
    \l@mbd@un=-4 \l@mbd@de=-\thr@@\p@rtent=-\tw@\c@lDecast%
    \c@lDCDeux{-4}{-3}\fi\Figg@tXY{-4}}
\ctr@ln@m\c@lDCDeux
\ctr@ld@f\def\c@lDCDeuxDD#1#2{\Figg@tXY{#2}\Figg@tXYa{#1}%
    \advance\v@lX-\v@lXa\advance\v@lY-\v@lYa\Figp@intregDD#1:(\v@lX,\v@lY)}
\ctr@ld@f\def\c@lDCDeuxTD#1#2{\Figg@tXY{#2}\Figg@tXYa{#1}\advance\v@lX-\v@lXa%
    \advance\v@lY-\v@lYa\advance\v@lZ-\v@lZa\Figp@intregTD#1:(\v@lX,\v@lY,\v@lZ)}
\ctr@ln@m\figvectN
\ctr@ld@f\def\figvectNDD#1[#2,#3]{\ifGR@cri{\Figg@tXYa{#2}\Figg@tXY{#3}%
    \advance\v@lX-\v@lXa\advance\v@lY-\v@lYa%
    \Figv@ctCreg#1(-\v@lY,\v@lX)}\ignorespaces\fi}
\ctr@ld@f\def\figvectNTD#1[#2,#3,#4]{\ifGR@cri{\vecunitC@TD[#2,#4]\v@lmin=\v@lX\v@lmax=\v@lY%
    \v@leur=\v@lZ\vecunitC@TD[#2,#3]\c@lprovec{#1}}\ignorespaces\fi}
\ctr@ln@m\figvectNV
\ctr@ld@f\def\figvectNVDD#1[#2]{\ifGR@cri{\Figg@tXY{#2}\Figv@ctCreg#1(-\v@lY,\v@lX)}\ignorespaces\fi}
\ctr@ld@f\def\figvectNVTD#1[#2,#3]{\ifGR@cri{\vecunitCV@TD{#3}\v@lmin=\v@lX\v@lmax=\v@lY%
    \v@leur=\v@lZ\vecunitCV@TD{#2}\c@lprovec{#1}}\ignorespaces\fi}
\ctr@ln@m\figvectP
\ctr@ld@f\def\figvectPDD#1[#2,#3]{\ifGR@cri{\Figg@tXYa{#2}\Figg@tXY{#3}%
    \advance\v@lX-\v@lXa\advance\v@lY-\v@lYa%
    \Figv@ctCreg#1(\v@lX,\v@lY)}\ignorespaces\fi}
\ctr@ld@f\def\figvectPTD#1[#2,#3]{\ifGR@cri{\Figg@tXYa{#2}\Figg@tXY{#3}%
    \advance\v@lX-\v@lXa\advance\v@lY-\v@lYa\advance\v@lZ-\v@lZa%
    \Figv@ctCreg#1(\v@lX,\v@lY,\v@lZ)}\ignorespaces\fi}
\ctr@ln@m\figvectU
\ctr@ld@f\def\figvectUDD#1[#2]{\ifGR@cri{\n@rmeuc\v@leur{#2}\invers@\v@leur\v@leur%
    \delt@=\repdecn@mb{\v@leur}\unit@\edef\v@ldelt@{\repdecn@mb{\delt@}}%
    \Figg@tXY{#2}\v@lX=\v@ldelt@\v@lX\v@lY=\v@ldelt@\v@lY%
    \Figv@ctCreg#1(\v@lX,\v@lY)}\ignorespaces\fi}
\ctr@ld@f\def\figvectUTD#1[#2]{\ifGR@cri{\n@rmeuc\v@leur{#2}\invers@\v@leur\v@leur%
    \delt@=\repdecn@mb{\v@leur}\unit@\edef\v@ldelt@{\repdecn@mb{\delt@}}%
    \Figg@tXY{#2}\v@lX=\v@ldelt@\v@lX\v@lY=\v@ldelt@\v@lY\v@lZ=\v@ldelt@\v@lZ%
    \Figv@ctCreg#1(\v@lX,\v@lY,\v@lZ)}\ignorespaces\fi}
\ctr@ld@f\def\figvisu#1#2#3{\c@ldefproj\initb@undb@x\xdef\figforTeXFigno{\figforTeXnextFigno}%
    \s@mme=\figforTeXnextFigno\advance\s@mme\@ne\xdef\figforTeXnextFigno{\number\s@mme}%
    \setbox\b@xvisu=\hbox{\ifnum\@utoFN>\z@\figinsert{}\gdef\@utoFInDone{0}\fi\ignorespaces#3}%
    \gdef\@utoFInDone{1}\gdef\@utoFN{0}%
    \v@lXa=-\c@@rdYmin\v@lYa=\c@@rdYmax\advance\v@lYa-\c@@rdYmin%
    \v@lX=\c@@rdXmax\advance\v@lX-\c@@rdXmin%
    \setbox#1=\hbox{#2}\v@lY=-\v@lX\maxim@m{\v@lX}{\v@lX}{\wd#1}%
    \advance\v@lY\v@lX\divide\v@lY\tw@\advance\v@lY-\c@@rdXmin%
    \setbox#1=\vbox{\parindent\z@\hsize=\v@lX\vskip\v@lYa%
    \rlap{\hskip\v@lY\smash{\raise\v@lXa\box\b@xvisu}}%
    \def\t@xt@{#2}\ifx\t@xt@\empty\else\medskip\centerline{#2}\fi}\wd#1=\v@lX}
\ctr@ld@f\def\figDecrementFigno{{\xdef\figforTeXnextFigno{\figforTeXFigno}%
    \s@mme=\figforTeXFigno\advance\s@mme\m@ne\xdef\figforTeXFigno{\number\s@mme}}}
\ctr@ln@w{newbox}\Bt@rget\setbox\Bt@rget=\null
\ctr@ln@w{newbox}\BminTD@\setbox\BminTD@=\null
\ctr@ln@w{newbox}\BmaxTD@\setbox\BmaxTD@=\null
\ctr@ln@w{newif}\ifnewt@rgetpt\ctr@ln@w{newif}\ifnewdis@b
\ctr@ld@f\def\b@undb@xTD#1#2#3{%
    \relax\ifdim#1<\wd\BminTD@\global\wd\BminTD@=#1\fi%
    \relax\ifdim#2<\ht\BminTD@\global\ht\BminTD@=#2\fi%
    \relax\ifdim#3<\dp\BminTD@\global\dp\BminTD@=#3\fi%
    \relax\ifdim#1>\wd\BmaxTD@\global\wd\BmaxTD@=#1\fi%
    \relax\ifdim#2>\ht\BmaxTD@\global\ht\BmaxTD@=#2\fi%
    \relax\ifdim#3>\dp\BmaxTD@\global\dp\BmaxTD@=#3\fi}
\ctr@ld@f\def\c@ldefdisob{{\ifdim\wd\BminTD@<\maxdimen\v@leur=\wd\BmaxTD@\advance\v@leur-\wd\BminTD@%
    \delt@=\ht\BmaxTD@\advance\delt@-\ht\BminTD@\maxim@m{\v@leur}{\v@leur}{\delt@}%
    \delt@=\dp\BmaxTD@\advance\delt@-\dp\BminTD@\maxim@m{\v@leur}{\v@leur}{\delt@}%
    \v@leur=5\v@leur\else\v@leur=800pt\fi\c@ldefdisob@{\v@leur}}}
\ctr@ln@m\disob@intern
\ctr@ln@m\disob@
\ctr@ln@m\divf@ctproj
\ctr@ld@f\def\c@ldefdisob@#1{{\v@leur=#1\ifdim\v@leur<\p@\v@leur=800pt\fi%
    \xdef\disob@intern{\repdecn@mb{\v@leur}}%
    \delt@=\ptT@unit@\v@leur\xdef\disob@unit{\repdecn@mb{\delt@}}%
    \f@ctech=\@ne\loop\ifdim\v@leur>\t@n pt\divide\v@leur\t@n\multiply\f@ctech\t@n\repeat%
    \xdef\disob@{\repdecn@mb{\v@leur}}\xdef\divf@ctproj{\the\f@ctech}}%
    \global\newdis@btrue}
\ctr@ln@m\t@rgetpt
\ctr@ld@f\def\c@ldeft@rgetpt{\newt@rgetpttrue\def\t@rgetpt{CenterBoundBox}{%
    \delt@=\wd\BmaxTD@\advance\delt@-\wd\BminTD@\divide\delt@\tw@%
    \v@leur=\wd\BminTD@\advance\v@leur\delt@\global\wd\Bt@rget=\v@leur%
    \delt@=\ht\BmaxTD@\advance\delt@-\ht\BminTD@\divide\delt@\tw@%
    \v@leur=\ht\BminTD@\advance\v@leur\delt@\global\ht\Bt@rget=\v@leur%
    \delt@=\dp\BmaxTD@\advance\delt@-\dp\BminTD@\divide\delt@\tw@%
    \v@leur=\dp\BminTD@\advance\v@leur\delt@\global\dp\Bt@rget=\v@leur}}
\ctr@ln@m\c@ldefproj
\ctr@ld@f\def\c@ldefprojTD{\ifnewt@rgetpt\else\c@ldeft@rgetpt\fi\ifnewdis@b\else\c@ldefdisob\fi}
\ctr@ld@f\def\c@lprojcav{
    \v@lZa=\cxa@\v@lY\advance\v@lX\v@lZa%
    \v@lZa=\cxb@\v@lY\v@lY=\v@lZ\advance\v@lY\v@lZa\ignorespaces}
\ctr@ln@m\v@lcoef
\ctr@ld@f\def\c@lprojrea{
    \advance\v@lX-\wd\Bt@rget\advance\v@lY-\ht\Bt@rget\advance\v@lZ-\dp\Bt@rget%
    \v@lZa=\cza@\v@lX\advance\v@lZa\czb@\v@lY\advance\v@lZa\czc@\v@lZ%
    \divide\v@lZa\divf@ctproj\advance\v@lZa\disob@ pt\invers@{\v@lZa}{\v@lZa}%
    \v@lZa=\disob@\v@lZa\edef\v@lcoef{\repdecn@mb{\v@lZa}}%
    \v@lXa=\cxa@\v@lX\advance\v@lXa\cxb@\v@lY\v@lXa=\v@lcoef\v@lXa%
    \v@lY=\cyb@\v@lY\advance\v@lY\cya@\v@lX\advance\v@lY\cyc@\v@lZ%
    \v@lY=\v@lcoef\v@lY\v@lX=\v@lXa\ignorespaces}
\ctr@ld@f\def\c@lprojort{
    \v@lXa=\cxa@\v@lX\advance\v@lXa\cxb@\v@lY%
    \v@lY=\cyb@\v@lY\advance\v@lY\cya@\v@lX\advance\v@lY\cyc@\v@lZ%
    \v@lX=\v@lXa\ignorespaces}
\ctr@ld@f\def\Figptpr@j#1:#2/#3/{{\Figg@tXY{#3}\superc@lprojSP%
    \Figp@intregDD#1:{#2}(\v@lX,\v@lY)}\ignorespaces}
\ctr@ln@m\figsetobdist
\ctr@ld@f\def\figsetobdistDD{\un@v@ilable{figsetobdist}}
\ctr@ld@f\def\figsetobdistTD(#1){{\ifCUR@PS\W@rnmesIgn{figset proj(dist=...)}%
    \else\v@leur=#1\unit@\c@ldefdisob@{\v@leur}\fi}\ignorespaces}
\ctr@ln@m\c@lprojSP
\ctr@ln@m\CUR@proj
\ctr@ln@m\typ@proj
\ctr@ln@m\superc@lprojSP
\ctr@ld@f\def\Figs@tproj#1{%
    \if#13 \def@ultproj\else\if#1c\def@ultproj%
    \else\if#1o\xdef\CUR@proj{1}\xdef\typ@proj{orthogonal}%
         \figsetviewTD(\def@ultpsi,\def@ulttheta)%
         \global\let\c@lprojSP=\c@lprojort\global\let\superc@lprojSP=\c@lprojort%
    \else\if#1r\xdef\CUR@proj{2}\xdef\typ@proj{realistic}%
         \figsetviewTD(\def@ultpsi,\def@ulttheta)%
         \global\let\c@lprojSP=\c@lprojrea\global\let\superc@lprojSP=\c@lprojrea%
    \else\def@ultproj\message{*** Unknown projection. Cavalier projection assumed.}%
    \fi\fi\fi\fi}
\ctr@ld@f\def\def@ultproj{\xdef\CUR@proj{0}\xdef\typ@proj{cavalier}\figsetviewTD(\def@ultpsi,0.5)%
         \global\let\c@lprojSP=\c@lprojcav\global\let\superc@lprojSP=\c@lprojcav}
\ctr@ln@m\figsettarget
\ctr@ld@f\def\figsettargetDD{\un@v@ilable{figsettarget}}
\ctr@ld@f\def\figsettargetTD[#1]{{\ifCUR@PS\W@rnmesIgn{figset proj(targetpt=...)}%
    \else\global\newt@rgetpttrue\xdef\t@rgetpt{#1}\Figg@tXY{#1}\global\wd\Bt@rget=\v@lX%
    \global\ht\Bt@rget=\v@lY\global\dp\Bt@rget=\v@lZ\fi}\ignorespaces}
\ctr@ln@m\figsetview
\ctr@ld@f\def\figsetviewDD{\un@v@ilable{figsetview}}
\ctr@ld@f\def\figsetviewTD(#1){\ifCUR@PS\W@rnmesIgn{figset proj(Psi|Theta|Lambda=...)}%
     \else\Figsetview@#1,:\fi\ignorespaces}
\ctr@ld@f\def\Figsetview@#1,#2:{{\xdef\v@lPsi{#1}\def\t@xt@{#2}%
    \ifx\t@xt@\empty\def\@rgdeux{\v@lTheta}\else\X@rgdeux@#2\fi%
    \c@ssin{\costhet@}{\sinthet@}{#1}\v@lmin=\costhet@ pt\v@lmax=\sinthet@ pt%
    \ifcase\CUR@proj%
    \v@leur=\@rgdeux\v@lmin\xdef\cxa@{\repdecn@mb{\v@leur}}%
    \v@leur=\@rgdeux\v@lmax\xdef\cxb@{\repdecn@mb{\v@leur}}\v@leur=\@rgdeux pt%
    \relax\ifdim\v@leur>\p@\message{*** Lambda too large ! See \BS@ figset proj() !}\fi%
    \else%
    \v@lmax=-\v@lmax\xdef\cxa@{\repdecn@mb{\v@lmax}}\xdef\cxb@{\costhet@}%
    \ifx\t@xt@\empty\edef\@rgdeux{\def@ulttheta}\fi\c@ssin{\C@}{\S@}{\@rgdeux}%
    \v@lmax=-\S@ pt%
    \v@leur=\v@lmax\v@leur=\costhet@\v@leur\xdef\cya@{\repdecn@mb{\v@leur}}%
    \v@leur=\v@lmax\v@leur=\sinthet@\v@leur\xdef\cyb@{\repdecn@mb{\v@leur}}%
    \xdef\cyc@{\C@}\v@lmin=-\C@ pt%
    \v@leur=\v@lmin\v@leur=\costhet@\v@leur\xdef\cza@{\repdecn@mb{\v@leur}}%
    \v@leur=\v@lmin\v@leur=\sinthet@\v@leur\xdef\czb@{\repdecn@mb{\v@leur}}%
    \xdef\czc@{\repdecn@mb{\v@lmax}}\fi%
    \xdef\v@lTheta{\@rgdeux}}}
\ctr@ld@f\def\def@ultpsi{40}
\ctr@ld@f\def\def@ulttheta{25}
\ctr@ln@m\l@debut
\ctr@ln@m\n@mref
\ctr@ld@f\def\Figsetpr@j#1=#2|{\keln@mtr#1|%
    \def\n@mref{dep}\ifx\l@debut\n@mref\Figsetd@p{#2}\else
    \def\n@mref{dis}\ifx\l@debut\n@mref%
     \ifnum\CUR@proj=\tw@\figsetobdist(#2)\else\Figset@rr\fi\else
    \def\n@mref{lam}\ifx\l@debut\n@mref\Figsetd@p{#2}\else
    \def\n@mref{lat}\ifx\l@debut\n@mref\Figsetth@{#2}\else
    \def\n@mref{lon}\ifx\l@debut\n@mref\figsetview(#2)\else
    \def\n@mref{psi}\ifx\l@debut\n@mref\figsetview(#2)\else
    \def\n@mref{tar}\ifx\l@debut\n@mref%
     \ifnum\CUR@proj=\tw@\figsettarget[#2]\else\Figset@rr\fi\else
    \def\n@mref{the}\ifx\l@debut\n@mref\Figsetth@{#2}\else
    \W@rnmesAttr{figset proj}{#1}\fi\fi\fi\fi\fi\fi\fi\fi}
\ctr@ld@f\def\Figsetd@p#1{\ifnum\CUR@proj=\z@\figsetview(\v@lPsi,#1)\else\Figset@rr\fi}
\ctr@ld@f\def\Figsetth@#1{\ifnum\CUR@proj=\z@\Figset@rr\else\figsetview(\v@lPsi,#1)\fi}
\ctr@ld@f\def\Figset@rr{\message{*** \BS@ figset proj(): Attribute "\n@mref" ignored, incompatible
    with current projection}}
\ctr@ld@f\def\initb@undb@xTD{\wd\BminTD@=\maxdimen\ht\BminTD@=\maxdimen\dp\BminTD@=\maxdimen%
    \wd\BmaxTD@=-\maxdimen\ht\BmaxTD@=-\maxdimen\dp\BmaxTD@=-\maxdimen}
\ctr@ln@w{newbox}\Gb@x      
\ctr@ln@w{newbox}\Gb@xSC    
\ctr@ln@w{newtoks}\c@nsymb  
\ctr@ln@w{newif}\ifr@undcoord\ctr@ln@w{newif}\ifunitpr@sent
\ctr@ld@f\def\unssqrttw@{0.707106 }
\ctr@ld@f\def\figAst{\raise-1.15ex\hbox{$\ast$}}
\ctr@ld@f\def\figBullet{\raise-1.15ex\hbox{$\bullet$}}
\ctr@ld@f\def\figCirc{\raise-1.15ex\hbox{$\circ$}}
\ctr@ld@f\def\figDiamond{\raise-1.15ex\hbox{$\diamond$}}%
\ctr@ld@f\def\boxit#1#2{\leavevmode\hbox{\vrule\vbox{\hrule\vglue#1%
    \vtop{\hbox{\kern#1{#2}\kern#1}\vglue#1\hrule}}\vrule}}
\ctr@ld@f
\ctr@ld@f
\ctr@ld@f\def\c@nterpt{\ignorespaces%
    \kern-.5\wd\Gb@xSC%
    \raise-.5\ht\Gb@xSC\rlap{\hbox{\raise.5\dp\Gb@xSC\hbox{\copy\Gb@xSC}}}%
    \kern .5\wd\Gb@xSC\ignorespaces}
\ctr@ld@f\def\b@undb@xSC#1#2{{\v@lXa=#1\v@lYa=#2%
    \v@leur=\ht\Gb@xSC\advance\v@leur\dp\Gb@xSC%
    \advance\v@lXa-.5\wd\Gb@xSC\advance\v@lYa-.5\v@leur\b@undb@x{\v@lXa}{\v@lYa}%
    \advance\v@lXa\wd\Gb@xSC\advance\v@lYa\v@leur\b@undb@x{\v@lXa}{\v@lYa}}}
\ctr@ln@m\Dist@n
\ctr@ln@m\l@suite
\ctr@ld@f\def\@keldist#1#2{\edef\Dist@n{#2}\y@tiunit{\Dist@n}%
    \ifunitpr@sent#1=\Dist@n\else#1=\Dist@n\unit@\fi}
\ctr@ld@f\def\y@tiunit#1{\unitpr@sentfalse\expandafter\y@tiunit@#1:}
\ctr@ld@f\def\y@tiunit@#1#2:{\ifcat#1a\unitpr@senttrue\else\def\l@suite{#2}%
    \ifx\l@suite\empty\else\y@tiunit@#2:\fi\fi}
\ctr@ln@m\figcoord
\ctr@ld@f\def\figcoordDD#1{{\v@lX=\ptT@unit@\v@lX\v@lY=\ptT@unit@\v@lY%
    \ifr@undcoord\ifcase#1\v@leur=0.5pt\or\v@leur=0.05pt\or\v@leur=0.005pt%
    \or\v@leur=0.0005pt\else\v@leur=\z@\fi%
    \ifdim\v@lX<\z@\advance\v@lX-\v@leur\else\advance\v@lX\v@leur\fi%
    \ifdim\v@lY<\z@\advance\v@lY-\v@leur\else\advance\v@lY\v@leur\fi\fi%
    (\@ffichnb{#1}{\repdecn@mb{\v@lX}},\ifmmode\else\thinspace\fi%
    \@ffichnb{#1}{\repdecn@mb{\v@lY}})}}
\ctr@ld@f\def\@ffichnb#1#2{{\def\@@ffich{\@ffich#1(}\edef\n@mbre{#2}%
    \expandafter\@@ffich\n@mbre)}}
\ctr@ld@f\def\@ffich#1(#2.#3){{#2\ifnum#1>\z@.\fi\def\dig@ts{#3}\s@mme=\z@%
    \loop\ifnum\s@mme<#1\expandafter\@ffichdec\dig@ts:\advance\s@mme\@ne\repeat}}
\ctr@ld@f\def\@ffichdec#1#2:{\relax#1\def\dig@ts{#20}}
\ctr@ld@f\def\figcoordTD#1{{\v@lX=\ptT@unit@\v@lX\v@lY=\ptT@unit@\v@lY\v@lZ=\ptT@unit@\v@lZ%
    \ifr@undcoord\ifcase#1\v@leur=0.5pt\or\v@leur=0.05pt\or\v@leur=0.005pt%
    \or\v@leur=0.0005pt\else\v@leur=\z@\fi%
    \ifdim\v@lX<\z@\advance\v@lX-\v@leur\else\advance\v@lX\v@leur\fi%
    \ifdim\v@lY<\z@\advance\v@lY-\v@leur\else\advance\v@lY\v@leur\fi%
    \ifdim\v@lZ<\z@\advance\v@lZ-\v@leur\else\advance\v@lZ\v@leur\fi\fi%
    (\@ffichnb{#1}{\repdecn@mb{\v@lX}},\ifmmode\else\thinspace\fi%
     \@ffichnb{#1}{\repdecn@mb{\v@lY}},\ifmmode\else\thinspace\fi%
     \@ffichnb{#1}{\repdecn@mb{\v@lZ}})}}
\ctr@ld@f\def\figsetroundcoord#1{\expandafter\Figsetr@undcoord#1:\ignorespaces}
\ctr@ld@f\def\Figsetr@undcoord#1#2:{\if#1n\r@undcoordfalse\else\r@undcoordtrue\fi}
\ctr@ld@f\def\Figsetwr@te#1=#2|{\keln@mun#1|%
    \def\n@mref{m}\ifx\l@debut\n@mref\figsetmark{#2}\else
    \def\n@mref{p}\ifx\l@debut\n@mref\figsetptname{#2}\else
    \def\n@mref{r}\ifx\l@debut\n@mref\figsetroundcoord{#2}\else
    \W@rnmesAttr{figset write}{#1}\fi\fi\fi}
\ctr@ld@f\def\figsetmark#1{\c@nsymb={#1}\setbox\Gb@xSC=\hbox{\the\c@nsymb}\ignorespaces}
\ctr@ln@m\ptn@me
\ctr@ld@f\def\figsetptname#1{\def\ptn@me##1{#1}\ignorespaces}
\ctr@ld@f\def\FigWrit@L#1:#2(#3,#4){\ignorespaces\@keldist\v@leur{#3}\@keldist\delt@{#4}%
    \C@rp@r@m\def\list@num{#1}\@ecfor\p@int:=\list@num\do{\FigWrit@pt{\p@int}{#2}}}
\ctr@ld@f\def\FigWrit@pt#1#2{\FigWp@r@m{#1}{#2}\Vc@rrect\figWp@si%
    \ifdim\wd\Gb@xSC>\z@\b@undb@xSC{\v@lX}{\v@lY}\fi\figWBB@x}
\ctr@ld@f\def\FigWp@r@m#1#2{\Figg@tXY{#1}%
    \setbox\Gb@x=\hbox{\def\t@xt@{#2}\ifx\t@xt@\empty\Figg@tT{#1}\else#2\fi}\c@lprojSP}
\ctr@ld@f\let\Vc@rrect=\relax
\ctr@ld@f\let\C@rp@r@m=\relax
\ctr@ld@f\def\figwrite[#1]#2{{\ignorespaces\def\list@num{#1}\@ecfor\p@int:=\list@num\do{%
    \setbox\Gb@x=\hbox{\def\t@xt@{#2}\ifx\t@xt@\empty\Figg@tT{\p@int}\else#2\fi}%
    \Figwrit@{\p@int}}}\ignorespaces}
\ctr@ld@f\def\Figwrit@#1{\Figg@tXY{#1}\c@lprojSP%
    \rlap{\kern\v@lX\raise\v@lY\hbox{\unhcopy\Gb@x}}\v@leur=\v@lY%
    \advance\v@lY\ht\Gb@x\b@undb@x{\v@lX}{\v@lY}\advance\v@lX\wd\Gb@x%
    \v@lY=\v@leur\advance\v@lY-\dp\Gb@x\b@undb@x{\v@lX}{\v@lY}}
\ctr@ld@f\def\figwritec[#1]#2{{\ignorespaces\def\list@num{#1}%
    \@ecfor\p@int:=\list@num\do{\Figwrit@c{\p@int}{#2}}}\ignorespaces}
\ctr@ld@f\def\Figwrit@c#1#2{\FigWp@r@m{#1}{#2}%
    \rlap{\kern\v@lX\raise\v@lY\hbox{\rlap{\kern-.5\wd\Gb@x%
    \raise-.5\ht\Gb@x\hbox{\raise.5\dp\Gb@x\hbox{\unhcopy\Gb@x}}}}}%
    \v@leur=\ht\Gb@x\advance\v@leur\dp\Gb@x%
    \advance\v@lX-.5\wd\Gb@x\advance\v@lY-.5\v@leur\b@undb@x{\v@lX}{\v@lY}%
    \advance\v@lX\wd\Gb@x\advance\v@lY\v@leur\b@undb@x{\v@lX}{\v@lY}}
\ctr@ld@f\def\figwritep[#1]{{\ignorespaces\def\list@num{#1}\setbox\Gb@x=\hbox{\c@nterpt}%
    \@ecfor\p@int:=\list@num\do{\Figwrit@{\p@int}}}\ignorespaces}
\ctr@ld@f\def\figwritew#1:#2(#3){\figwritegcw#1:{#2}(#3,0pt)}
\ctr@ld@f\def\figwritee#1:#2(#3){\figwritegce#1:{#2}(#3,0pt)}
\ctr@ld@f\def\figwriten#1:#2(#3){{\def\Vc@rrect{\v@lZ=\v@leur\advance\v@lZ\dp\Gb@x}%
    \Figwrit@NS#1:{#2}(#3)}\ignorespaces}
\ctr@ld@f\def\figwrites#1:#2(#3){{\def\Vc@rrect{\v@lZ=-\v@leur\advance\v@lZ-\ht\Gb@x}%
    \Figwrit@NS#1:{#2}(#3)}\ignorespaces}
\ctr@ld@f\def\Figwrit@NS#1:#2(#3){\let\figWp@si=\FigWp@siNS\let\figWBB@x=\FigWBB@xNS%
    \FigWrit@L#1:{#2}(#3,0pt)}
\ctr@ld@f\def\FigWp@siNS{\rlap{\kern\v@lX\raise\v@lY\hbox{\rlap{\kern-.5\wd\Gb@x%
    \raise\v@lZ\hbox{\unhcopy\Gb@x}}\c@nterpt}}}
\ctr@ld@f\def\FigWBB@xNS{\advance\v@lY\v@lZ%
    \advance\v@lY-\dp\Gb@x\advance\v@lX-.5\wd\Gb@x\b@undb@x{\v@lX}{\v@lY}%
    \advance\v@lY\ht\Gb@x\advance\v@lY\dp\Gb@x%
    \advance\v@lX\wd\Gb@x\b@undb@x{\v@lX}{\v@lY}}
\ctr@ld@f\def\figwritenw#1:#2(#3){{\let\figWp@si=\FigWp@sigW\let\figWBB@x=\FigWBB@xgWE%
    \def\C@rp@r@m{\v@leur=\unssqrttw@\v@leur\delt@=\v@leur%
    \ifdim\delt@=\z@\delt@=\epsil@n\fi}\let@xte={-}\FigWrit@L#1:{#2}(#3,0pt)}\ignorespaces}
\ctr@ld@f\def\figwritesw#1:#2(#3){{\let\figWp@si=\FigWp@sigW\let\figWBB@x=\FigWBB@xgWE%
    \def\C@rp@r@m{\v@leur=\unssqrttw@\v@leur\delt@=-\v@leur%
    \ifdim\delt@=\z@\delt@=-\epsil@n\fi}\let@xte={-}\FigWrit@L#1:{#2}(#3,0pt)}\ignorespaces}
\ctr@ld@f\def\figwritene#1:#2(#3){{\let\figWp@si=\FigWp@sigE\let\figWBB@x=\FigWBB@xgWE%
    \def\C@rp@r@m{\v@leur=\unssqrttw@\v@leur\delt@=\v@leur%
    \ifdim\delt@=\z@\delt@=\epsil@n\fi}\let@xte={}\FigWrit@L#1:{#2}(#3,0pt)}\ignorespaces}
\ctr@ld@f\def\figwritese#1:#2(#3){{\let\figWp@si=\FigWp@sigE\let\figWBB@x=\FigWBB@xgWE%
    \def\C@rp@r@m{\v@leur=\unssqrttw@\v@leur\delt@=-\v@leur%
    \ifdim\delt@=\z@\delt@=-\epsil@n\fi}\let@xte={}\FigWrit@L#1:{#2}(#3,0pt)}\ignorespaces}
\ctr@ld@f\def\figwritegw#1:#2(#3,#4){{\let\figWp@si=\FigWp@sigW\let\figWBB@x=\FigWBB@xgWE%
    \let@xte={-}\FigWrit@L#1:{#2}(#3,#4)}\ignorespaces}
\ctr@ld@f\def\figwritege#1:#2(#3,#4){{\let\figWp@si=\FigWp@sigE\let\figWBB@x=\FigWBB@xgWE%
    \let@xte={}\FigWrit@L#1:{#2}(#3,#4)}\ignorespaces}
\ctr@ld@f\def\FigWp@sigW{\v@lXa=\z@\v@lYa=\ht\Gb@x\advance\v@lYa\dp\Gb@x%
    \ifdim\delt@>\z@\relax%
    \rlap{\kern\v@lX\raise\v@lY\hbox{\rlap{\kern-\wd\Gb@x\kern-\v@leur%
          \raise\delt@\hbox{\raise\dp\Gb@x\hbox{\unhcopy\Gb@x}}}\c@nterpt}}%
    \else\ifdim\delt@<\z@\relax\v@lYa=-\v@lYa%
    \rlap{\kern\v@lX\raise\v@lY\hbox{\rlap{\kern-\wd\Gb@x\kern-\v@leur%
          \raise\delt@\hbox{\raise-\ht\Gb@x\hbox{\unhcopy\Gb@x}}}\c@nterpt}}%
    \else\v@lXa=-.5\v@lYa%
    \rlap{\kern\v@lX\raise\v@lY\hbox{\rlap{\kern-\wd\Gb@x\kern-\v@leur%
          \raise-.5\ht\Gb@x\hbox{\raise.5\dp\Gb@x\hbox{\unhcopy\Gb@x}}}\c@nterpt}}%
    \fi\fi}
\ctr@ld@f\def\FigWp@sigE{\v@lXa=\z@\v@lYa=\ht\Gb@x\advance\v@lYa\dp\Gb@x%
    \ifdim\delt@>\z@\relax%
    \rlap{\kern\v@lX\raise\v@lY\hbox{\c@nterpt\kern\v@leur%
          \raise\delt@\hbox{\raise\dp\Gb@x\hbox{\unhcopy\Gb@x}}}}%
    \else\ifdim\delt@<\z@\relax\v@lYa=-\v@lYa%
    \rlap{\kern\v@lX\raise\v@lY\hbox{\c@nterpt\kern\v@leur%
          \raise\delt@\hbox{\raise-\ht\Gb@x\hbox{\unhcopy\Gb@x}}}}%
    \else\v@lXa=-.5\v@lYa%
    \rlap{\kern\v@lX\raise\v@lY\hbox{\c@nterpt\kern\v@leur%
          \raise-.5\ht\Gb@x\hbox{\raise.5\dp\Gb@x\hbox{\unhcopy\Gb@x}}}}%
    \fi\fi}
\ctr@ld@f\def\FigWBB@xgWE{\advance\v@lY\delt@%
    \advance\v@lX\the\let@xte\v@leur\advance\v@lY\v@lXa\b@undb@x{\v@lX}{\v@lY}%
    \advance\v@lX\the\let@xte\wd\Gb@x\advance\v@lY\v@lYa\b@undb@x{\v@lX}{\v@lY}}
\ctr@ld@f\def\figwritegcw#1:#2(#3,#4){{\let\figWp@si=\FigWp@sigcW\let\figWBB@x=\FigWBB@xgcWE%
    \let@xte={-}\FigWrit@L#1:{#2}(#3,#4)}\ignorespaces}
\ctr@ld@f\def\figwritegce#1:#2(#3,#4){{\let\figWp@si=\FigWp@sigcE\let\figWBB@x=\FigWBB@xgcWE%
    \let@xte={}\FigWrit@L#1:{#2}(#3,#4)}\ignorespaces}
\ctr@ld@f\def\FigWp@sigcW{\rlap{\kern\v@lX\raise\v@lY\hbox{\rlap{\kern-\wd\Gb@x\kern-\v@leur%
     \raise-.5\ht\Gb@x\hbox{\raise\delt@\hbox{\raise.5\dp\Gb@x\hbox{\unhcopy\Gb@x}}}}%
     \c@nterpt}}}
\ctr@ld@f\def\FigWp@sigcE{\rlap{\kern\v@lX\raise\v@lY\hbox{\c@nterpt\kern\v@leur%
    \raise-.5\ht\Gb@x\hbox{\raise\delt@\hbox{\raise.5\dp\Gb@x\hbox{\unhcopy\Gb@x}}}}}}
\ctr@ld@f\def\FigWBB@xgcWE{\v@lZ=\ht\Gb@x\advance\v@lZ\dp\Gb@x%
    \advance\v@lX\the\let@xte\v@leur\advance\v@lY\delt@\advance\v@lY.5\v@lZ%
    \b@undb@x{\v@lX}{\v@lY}%
    \advance\v@lX\the\let@xte\wd\Gb@x\advance\v@lY-\v@lZ\b@undb@x{\v@lX}{\v@lY}}
\ctr@ld@f\def\figwritebn#1:#2(#3){{\def\Vc@rrect{\v@lZ=\v@leur}\Figwrit@NS#1:{#2}(#3)}\ignorespaces}
\ctr@ld@f\def\figwritebs#1:#2(#3){{\def\Vc@rrect{\v@lZ=-\v@leur}\Figwrit@NS#1:{#2}(#3)}\ignorespaces}
\ctr@ld@f\def\figwritebw#1:#2(#3){{\let\figWp@si=\FigWp@sibW\let\figWBB@x=\FigWBB@xbWE%
    \let@xte={-}\FigWrit@L#1:{#2}(#3,0pt)}\ignorespaces}
\ctr@ld@f\def\figwritebe#1:#2(#3){{\let\figWp@si=\FigWp@sibE\let\figWBB@x=\FigWBB@xbWE%
    \let@xte={}\FigWrit@L#1:{#2}(#3,0pt)}\ignorespaces}
\ctr@ld@f\def\FigWp@sibW{\rlap{\kern\v@lX\raise\v@lY\hbox{\rlap{\kern-\wd\Gb@x\kern-\v@leur%
          \hbox{\unhcopy\Gb@x}}\c@nterpt}}}
\ctr@ld@f\def\FigWp@sibE{\rlap{\kern\v@lX\raise\v@lY\hbox{\c@nterpt\kern\v@leur%
          \hbox{\unhcopy\Gb@x}}}}
\ctr@ld@f\def\FigWBB@xbWE{\v@lZ=\ht\Gb@x\advance\v@lZ\dp\Gb@x%
    \advance\v@lX\the\let@xte\v@leur\advance\v@lY\ht\Gb@x\b@undb@x{\v@lX}{\v@lY}%
    \advance\v@lX\the\let@xte\wd\Gb@x\advance\v@lY-\v@lZ\b@undb@x{\v@lX}{\v@lY}}
\ctr@ln@w{newread}\frf@g  \ctr@ln@w{newwrite}\fwf@g
\ctr@ln@w{newif}\ifCUR@PS
\ctr@ln@w{newif}\ifGR@cri
\ctr@ln@w{newif}\ifUse@llipse
\ctr@ln@w{newif}\ifGRdebugm@de \GRdebugm@defalse 
\ctr@ln@w{newif}\ifPDFm@ke
\ifx\pdfliteral\undefined\else\ifnum\pdfoutput>\z@\PDFm@ketrue\fi\fi
\ctr@ld@f\def\initPDF@rDVI{%
\ifPDFm@ke
 \let\figscan=\figscan@E
 \let\newGr@FN=\newGr@FNPDF
 \ctr@ld@f\def\c@mcurveto{c}
 \ctr@ld@f\def\c@mfill{f}
 \ctr@ld@f\def\c@mgsave{q}
 \ctr@ld@f\def\c@mgrestore{Q}
 \ctr@ld@f\def\c@mlineto{l}
 \ctr@ld@f\def\c@mmoveto{m}
 \ctr@ld@f\def\c@msetgray{g}     \ctr@ld@f\def\c@msetgrayStroke{G}
 \ctr@ld@f\def\c@msetcmykcolor{k}\ctr@ld@f\def\c@msetcmykcolorStroke{K}
 \ctr@ld@f\def\c@msetrgbcolor{rg}\ctr@ld@f\def\c@msetrgbcolorStroke{RG}
 \ctr@ld@f\def\d@fprimarC@lor{\CUR@color\space\CUR@colorc@md%
               \space\CUR@color\space\CUR@colorc@mdStroke}
 \ctr@ld@f\def\c@msetdash{d}
 \ctr@ld@f\def\c@msetlinejoin{j}
 \ctr@ld@f\def\c@msetlinewidth{w}
 \ctr@ld@f\def\f@gclosestroke{\immediate\write\fwf@g{s}}
 \ctr@ld@f\def\f@gfill{\immediate\write\fwf@g{\fillc@md}}
 \ctr@ld@f\def\f@gnewpath{}
 \ctr@ld@f\def\f@gstroke{\immediate\write\fwf@g{S}}
\else
 \let\figinsertE=\figinsert
 \let\newGr@FN=\newGr@FNDVI
 \ctr@ld@f\def\c@mcurveto{curveto}
 \ctr@ld@f\def\c@mfill{fill}
 \ctr@ld@f\def\c@mgsave{gsave}
 \ctr@ld@f\def\c@mgrestore{grestore}
 \ctr@ld@f\def\c@mlineto{lineto}
 \ctr@ld@f\def\c@mmoveto{moveto}
 \ctr@ld@f\def\c@msetgray{setgray}          \ctr@ld@f\def\c@msetgrayStroke{}
 \ctr@ld@f\def\c@msetcmykcolor{setcmykcolor}\ctr@ld@f\def\c@msetcmykcolorStroke{}
 \ctr@ld@f\def\c@msetrgbcolor{setrgbcolor}  \ctr@ld@f\def\c@msetrgbcolorStroke{}
 \ctr@ld@f\def\d@fprimarC@lor{\CUR@color\space\CUR@colorc@md}
 \ctr@ld@f\def\c@msetdash{setdash}
 \ctr@ld@f\def\c@msetlinejoin{setlinejoin}
 \ctr@ld@f\def\c@msetlinewidth{setlinewidth}
 \ctr@ld@f\def\f@gclosestroke{\immediate\write\fwf@g{closepath\space stroke}}
 \ctr@ld@f\def\f@gfill{\immediate\write\fwf@g{\fillc@md}}
 \ctr@ld@f\def\f@gnewpath{\immediate\write\fwf@g{newpath}}
 \ctr@ld@f\def\f@gstroke{\immediate\write\fwf@g{stroke}}
\fi}
\ctr@ld@f\def\c@pypsfile#1#2{\c@pyfil@{\immediate\write#1}{#2}}
\ctr@ld@f\def\Figinclud@PDF#1#2{\openin\frf@g=#1\pdfliteral{q #2 0 0 #2 0 0 cm}%
    \c@pyfil@{\pdfliteral}{\frf@g}\pdfliteral{Q}\closein\frf@g}
\ctr@ln@w{newif}\ifmored@ta
\ctr@ln@m\bl@nkline
\ctr@ld@f\def\c@pyfil@#1#2{\def\bl@nkline{\par}{\catcode`\%=12
    \loop\ifeof#2\mored@tafalse\else\mored@tatrue\immediate\read#2 to\tr@c
    \ifx\tr@c\bl@nkline\else#1{\tr@c}\fi\fi\ifmored@ta\repeat}}
\ctr@ld@f\def\keln@mun#1#2|{\def\l@debut{#1}\def\l@suite{#2}}
\ctr@ld@f\def\keln@mde#1#2#3|{\def\l@debut{#1#2}\def\l@suite{#3}}
\ctr@ld@f\def\keln@mtr#1#2#3#4|{\def\l@debut{#1#2#3}\def\l@suite{#4}}
\ctr@ld@f\def\keln@mqu#1#2#3#4#5|{\def\l@debut{#1#2#3#4}\def\l@suite{#5}}
\ctr@ld@f\let\@psffilein=\frf@g 
\ctr@ln@w{newif}\if@psffileok    
\ctr@ln@w{newif}\if@psfbbfound   
\ctr@ln@w{newif}\if@psfverbose   
\@psfverbosetrue
\ctr@ln@m\@psfllx \ctr@ln@m\@psflly
\ctr@ln@m\@psfurx \ctr@ln@m\@psfury
\ctr@ln@m\resetcolonc@tcode
\ctr@ld@f\def\@psfgetbb#1{\global\@psfbbfoundfalse%
\global\def\@psfllx{0}\global\def\@psflly{0}%
\global\def\@psfurx{30}\global\def\@psfury{30}%
\openin\@psffilein=#1\relax
\ifeof\@psffilein\errmessage{I couldn't open #1, will ignore it}\else
   \edef\resetcolonc@tcode{\catcode`\noexpand\:\the\catcode`\:\relax}%
   {\@psffileoktrue \chardef\other=12
    \def\do##1{\catcode`##1=\other}\dospecials \catcode`\ =10 \resetcolonc@tcode
    \loop
       \read\@psffilein to \@psffileline
       \ifeof\@psffilein\@psffileokfalse\else
          \expandafter\@psfaux\@psffileline:. \\%
       \fi
   \if@psffileok\repeat
   \if@psfbbfound\else
    \if@psfverbose\message{No bounding box comment in #1; using defaults}\fi\fi
   }\closein\@psffilein\fi}%
\ctr@ln@m\@psfbblit
\ctr@ln@m\@psfpercent
{\catcode`\%=12 \global\let\@psfpercent=
\ctr@ln@m\@psfaux
\long\def\@psfaux#1#2:#3\\{\ifx#1\@psfpercent
   \def\testit{#2}\ifx\testit\@psfbblit
      \@psfgrab #3 . . . \\%
      \@psffileokfalse
      \global\@psfbbfoundtrue
   \fi\else\ifx#1\par\else\@psffileokfalse\fi\fi}%
\ctr@ld@f\def\@psfempty{}%
\ctr@ld@f\def\@psfgrab #1 #2 #3 #4 #5\\{%
\global\def\@psfllx{#1}\ifx\@psfllx\@psfempty
      \@psfgrab #2 #3 #4 #5 .\\\else
   \global\def\@psflly{#2}%
   \global\def\@psfurx{#3}\global\def\@psfury{#4}\fi}%
\ctr@ld@f\def\PSwrit@cmd#1#2#3{{\Figg@tXY{#1}\c@lprojSP\b@undb@x{\v@lX}{\v@lY}%
    \v@lX=\ptT@ptps\v@lX\v@lY=\ptT@ptps\v@lY%
    \immediate\write#3{\repdecn@mb{\v@lX}\space\repdecn@mb{\v@lY}\space#2}}}
\ctr@ld@f\def\PSwrit@cmdS#1#2#3#4#5{{\Figg@tXY{#1}\c@lprojSP\b@undb@x{\v@lX}{\v@lY}%
    \global\result@t=\v@lX\global\result@@t=\v@lY%
    \v@lX=\ptT@ptps\v@lX\v@lY=\ptT@ptps\v@lY%
    \immediate\write#3{\repdecn@mb{\v@lX}\space\repdecn@mb{\v@lY}\space#2}}%
    \edef#4{\the\result@t}\edef#5{\the\result@@t}}
\ctr@ld@f\def\update@ttr#1#2#3{\Figdisc@rdLTS{#3}{\n@mref}%
    \ifx\n@mref\D@FTref#2{#1}\else#2{#3}\fi}
\ctr@ld@f\def\D@FTref{default}
\ctr@ld@f\def\W@rnmesAttr#1#2{%
    \immediate\write16{*** Unknown attribute: \BS@ #1(..., #2=...)}}
\ctr@ld@f\def\W@rnmeskwd#1#2{%
    \immediate\write16{*** Unknown keyword #2 in \BS@ #1}}
\ctr@ld@f\def\W@rnmesIgn#1{\immediate\write16{*** \BS@ #1 is ignored inside a
     \BS@ figdrawbegin-\BS@ figdrawend block.}}
\ctr@ld@f\def\Psset@lti#1=#2|{\keln@mtr#1|%
    \def\n@mref{blc}\ifx\l@debut\n@mref\update@ttr\D@FTref\P@setblcolor{#2}\else
    \def\n@mref{bld}\ifx\l@debut\n@mref\update@ttr\D@FTref\P@setbldash{#2}\else
    \def\n@mref{blw}\ifx\l@debut\n@mref\update@ttr\D@FTref\P@setblwidth{#2}\else
    \def\n@mref{sqc}\ifx\l@debut\n@mref\update@ttr\D@FTref\P@setsqcolor{#2}\else
    \def\n@mref{sqd}\ifx\l@debut\n@mref\update@ttr\D@FTref\P@setsqdash{#2}\else
    \def\n@mref{sqw}\ifx\l@debut\n@mref\update@ttr\D@FTref\P@setsqwidth{#2}\else
    \W@rnmesAttr{figset altitude}{#1}\fi\fi\fi\fi\fi\fi}
\ctr@ln@m\DDV@blcolor
\ctr@ld@f\def\P@setblcolor#1{\edef\DDV@blcolor{#1}}
\ctr@ln@m\DDV@bldash
\ctr@ld@f\def\P@setbldash#1{\edef\DDV@bldash{#1}}
\ctr@ln@m\DDV@blwidth
\ctr@ld@f\def\P@setblwidth#1{\edef\DDV@blwidth{#1}}
\ctr@ln@m\DDV@sqcolor
\ctr@ld@f\def\P@setsqcolor#1{\edef\DDV@sqcolor{#1}}
\ctr@ln@m\DDV@sqdash
\ctr@ld@f\def\P@setsqdash#1{\edef\DDV@sqdash{#1}}
\ctr@ln@m\DDV@sqwidth
\ctr@ld@f\def\P@setsqwidth#1{\edef\DDV@sqwidth{#1}}
\ctr@ld@f\def\figdrawaltitude#1[#2,#3,#4]{{\ifCUR@PS\ifGR@cri%
    \PSc@mment{altitude Square Dim=#1, Triangle=[#2 / #3,#4]}%
    \s@uvc@ntr@l\et@tpsaltitude\resetc@ntr@l{2}\figptorthoprojline-5:=#2/#3,#4/%
    \figvectP -1[#3,#4]\n@rminf{\v@leur}{-1}\vecunit@{-3}{-1}%
    \figvectP -1[-5,#3]\n@rminf{\v@lmin}{-1}\figvectP -2[-5,#4]\n@rminf{\v@lmax}{-2}%
    \ifdim\v@lmin<\v@lmax\s@mme=#3\else\v@lmax=\v@lmin\s@mme=#4\fi%
    \figvectP -4[-5,#2]\vecunit@{-4}{-4}\delt@=#1\unit@%
    \edef\t@ille{\repdecn@mb{\delt@}}\figpttra-1:=-5/\t@ille,-3/%
    \figptstra-3=-5,-1/\t@ille,-4/\figdrawline[#2,-5]%
    \Pss@tspecifSt{color=\DDV@sqcolor,dash=\DDV@sqdash,width=\DDV@sqwidth}%
    \figdrawline[-1,-2,-3]%
    \Psrest@reSt{color=\DDV@sqcolor,dash=\DDV@sqdash,width=\DDV@sqwidth}%
    \ifdim\v@leur<\v@lmax%
    \Pss@tspecifSt{color=\DDV@blcolor,dash=\DDV@bldash,width=\DDV@blwidth}%
    \figdrawline[-5,\the\s@mme]%
    \Psrest@reSt{color=\DDV@blcolor,dash=\DDV@bldash,width=\DDV@blwidth}%
    \fi\PSc@mment{End altitude}\resetc@ntr@l\et@tpsaltitude\fi\fi}}
\ctr@ld@f\def\Ps@rcerc#1;#2(#3,#4){\ellBB@x#1;#2,#2(#3,#4,0)%
    \f@gnewpath{\delt@=#2\unit@\delt@=\ptT@ptps\delt@%
    \BdingB@xfalse%
    \PSwrit@cmd{#1}{\repdecn@mb{\delt@}\space #3\space #4\space arc}{\fwf@g}}}
\ctr@ln@m\figdrawarccirc
\ctr@ld@f\def\Q@arccircDD#1;#2(#3,#4){\ifCUR@PS\ifGR@cri%
    \PSc@mment{arccircDD Center=#1 ; Radius=#2 (Ang1=#3, Ang2=#4)}%
    \iffillm@de\Ps@rcerc#1;#2(#3,#4)%
    \f@gfill%
    \else\Ps@rcerc#1;#2(#3,#4)\f@gstroke\fi%
    \PSc@mment{End arccircDD}\fi\fi}
\ctr@ld@f\def\Q@arccircTD#1,#2,#3;#4(#5,#6){{\ifCUR@PS\ifGR@cri\s@uvc@ntr@l\et@tpsarccircTD%
    \PSc@mment{arccircTD Center=#1,P1=#2,P2=#3 ; Radius=#4 (Ang1=#5, Ang2=#6)}%
    \setc@ntr@l{2}\c@lExtAxes#1,#2,#3(#4)\Q@arcellPATD#1,-4,-5(#5,#6)%
    \PSc@mment{End arccircTD}\resetc@ntr@l\et@tpsarccircTD\fi\fi}}
\ctr@ld@f\def\c@lExtAxes#1,#2,#3(#4){%
    \figvectPTD-5[#1,#2]\vecunit@{-5}{-5}\figvectNTD-4[#1,#2,#3]\vecunit@{-4}{-4}%
    \figvectNVTD-3[-4,-5]\delt@=#4\unit@\edef\r@yon{\repdecn@mb{\delt@}}%
    \figpttra-4:=#1/\r@yon,-5/\figpttra-5:=#1/\r@yon,-3/}
\ctr@ln@m\figdrawarccircP
\ctr@ld@f\def\Q@arccircPDD#1;#2[#3,#4]{{\ifCUR@PS\ifGR@cri\s@uvc@ntr@l\et@tpsarccircPDD%
    \PSc@mment{arccircPDD Center=#1; Radius=#2, [P1=#3, P2=#4]}%
    \Ps@ngleparam#1;#2[#3,#4]\ifdim\v@lmin>\v@lmax\advance\v@lmax\DePI@deg\fi%
    \edef\@ngdeb{\repdecn@mb{\v@lmin}}\edef\@ngfin{\repdecn@mb{\v@lmax}}%
    \figdrawarccirc#1;\r@dius(\@ngdeb,\@ngfin)%
    \PSc@mment{End arccircPDD}\resetc@ntr@l\et@tpsarccircPDD\fi\fi}}
\ctr@ld@f\def\Q@arccircPTD#1;#2[#3,#4,#5]{{\ifCUR@PS\ifGR@cri\s@uvc@ntr@l\et@tpsarccircPTD%
    \PSc@mment{arccircPTD Center=#1; Radius=#2, [P1=#3, P2=#4, P3=#5]}%
    \setc@ntr@l{2}\c@lExtAxes#1,#3,#5(#2)\figdrawarcellPP#1,-4,-5[#3,#4]%
    \PSc@mment{End arccircPTD}\resetc@ntr@l\et@tpsarccircPTD\fi\fi}}
\ctr@ld@f\def\Ps@ngleparam#1;#2[#3,#4]{\setc@ntr@l{2}%
    \figvectPDD-1[#1,#3]\vecunit@{-1}{-1}\Figg@tXY{-1}\arct@n\v@lmin(\v@lX,\v@lY)%
    \figvectPDD-2[#1,#4]\vecunit@{-2}{-2}\Figg@tXY{-2}\arct@n\v@lmax(\v@lX,\v@lY)%
    \v@lmin=\rdT@deg\v@lmin\v@lmax=\rdT@deg\v@lmax%
    \v@leur=#2pt\maxim@m{\mili@u}{-\v@leur}{\v@leur}%
    \edef\r@dius{\repdecn@mb{\mili@u}}}
\ctr@ld@f\def\Ps@rcercBz#1;#2(#3,#4){\Ps@rellBz#1;#2,#2(#3,#4,0)}
\ctr@ld@f\def\Ps@rellBz#1;#2,#3(#4,#5,#6){%
    \ellBB@x#1;#2,#3(#4,#5,#6)\BdingB@xfalse%
    \c@lNbarcs{#4}{#5}\v@leur=#4pt\setc@ntr@l{2}\figptell-13::#1;#2,#3(#4,#6)%
    \f@gnewpath\PSwrit@cmd{-13}{\c@mmoveto}{\fwf@g}%
    \s@mme=\z@\bcl@rellBz#1;#2,#3(#6)\BdingB@xtrue}
\ctr@ld@f\def\bcl@rellBz#1;#2,#3(#4){\relax%
    \ifnum\s@mme<\p@rtent\advance\s@mme\@ne%
    \advance\v@leur\delt@\edef\@ngle{\repdecn@mb\v@leur}\figptell-14::#1;#2,#3(\@ngle,#4)%
    \advance\v@leur\delt@\edef\@ngle{\repdecn@mb\v@leur}\figptell-15::#1;#2,#3(\@ngle,#4)%
    \advance\v@leur\delt@\edef\@ngle{\repdecn@mb\v@leur}\figptell-16::#1;#2,#3(\@ngle,#4)%
    \figptscontrolDD-18[-13,-14,-15,-16]%
    \PSwrit@cmd{-18}{}{\fwf@g}\PSwrit@cmd{-17}{}{\fwf@g}%
    \PSwrit@cmd{-16}{\c@mcurveto}{\fwf@g}%
    \figptcopyDD-13:/-16/\bcl@rellBz#1;#2,#3(#4)\fi}
\ctr@ld@f\def\Ps@rell#1;#2,#3(#4,#5,#6){\ellBB@x#1;#2,#3(#4,#5,#6)%
    \f@gnewpath{\v@lmin=#2\unit@\v@lmin=\ptT@ptps\v@lmin%
    \v@lmax=#3\unit@\v@lmax=\ptT@ptps\v@lmax\BdingB@xfalse%
    \PSwrit@cmd{#1}%
    {#6\space\repdecn@mb{\v@lmin}\space\repdecn@mb{\v@lmax}\space #4\space #5\space ellipse}{\fwf@g}}%
    \global\Use@llipsetrue}
\ctr@ln@m\figdrawarcell
\ctr@ld@f\def\Q@arcellDD#1;#2,#3(#4,#5,#6){{\ifCUR@PS\ifGR@cri%
    \PSc@mment{arcellDD Center=#1 ; XRad=#2, YRad=#3 (Ang1=#4, Ang2=#5, Inclination=#6)}%
    \iffillm@de\Ps@rell#1;#2,#3(#4,#5,#6)%
    \f@gfill%
    \else\Ps@rell#1;#2,#3(#4,#5,#6)\f@gstroke\fi%
    \PSc@mment{End arcellDD}\fi\fi}}
\ctr@ld@f\def\Q@arcellTD#1;#2,#3(#4,#5,#6){{\ifCUR@PS\ifGR@cri\s@uvc@ntr@l\et@tpsarcellTD%
    \PSc@mment{arcellTD Center=#1 ; XRad=#2, YRad=#3 (Ang1=#4, Ang2=#5, Inclination=#6)}%
    \setc@ntr@l{2}\figpttraC -8:=#1/#2,0,0/\figpttraC -7:=#1/0,#3,0/%
    \figvectC -4(0,0,1)\figptsrot -8=-8,-7/#1,#6,-4/\Q@arcellPATD#1,-8,-7(#4,#5)%
    \PSc@mment{End arcellTD}\resetc@ntr@l\et@tpsarcellTD\fi\fi}}
\ctr@ln@m\figdrawarcellPA
\ctr@ld@f\def\Q@arcellPADD#1,#2,#3(#4,#5){{\ifCUR@PS\ifGR@cri\s@uvc@ntr@l\et@tpsarcellPADD%
    \PSc@mment{arcellPADD Center=#1,PtAxis1=#2,PtAxis2=#3 (Ang1=#4, Ang2=#5)}%
    \setc@ntr@l{2}\figvectPDD-1[#1,#2]\vecunit@DD{-1}{-1}\v@lX=\ptT@unit@\result@t%
    \edef\XR@d{\repdecn@mb{\v@lX}}\Figg@tXY{-1}\arct@n\v@lmin(\v@lX,\v@lY)%
    \v@lmin=\rdT@deg\v@lmin\edef\Inclin@{\repdecn@mb{\v@lmin}}%
    \figgetdist\YR@d[#1,#3]\Q@arcellDD#1;\XR@d,\YR@d(#4,#5,\Inclin@)%
    \PSc@mment{End arcellPADD}\resetc@ntr@l\et@tpsarcellPADD\fi\fi}}
\ctr@ld@f\def\Q@arcellPATD#1,#2,#3(#4,#5){{\ifCUR@PS\ifGR@cri\s@uvc@ntr@l\et@tpsarcellPATD%
    \PSc@mment{arcellPATD Center=#1,PtAxis1=#2,PtAxis2=#3 (Ang1=#4, Ang2=#5)}%
    \iffillm@de\Ps@rellPATD#1,#2,#3(#4,#5)%
    \f@gfill%
    \else\Ps@rellPATD#1,#2,#3(#4,#5)\f@gstroke\fi%
    \PSc@mment{End arcellPATD}\resetc@ntr@l\et@tpsarcellPATD\fi\fi}}
\ctr@ld@f\def\Ps@rellPATD#1,#2,#3(#4,#5){\let\c@lprojSP=\relax%
    \setc@ntr@l{2}\figvectPTD-1[#1,#2]\figvectPTD-2[#1,#3]\c@lNbarcs{#4}{#5}%
    \v@leur=#4pt\c@lptellP{#1}{-1}{-2}\Figptpr@j-5:/-3/%
    \f@gnewpath\PSwrit@cmdS{-5}{\c@mmoveto}{\fwf@g}{\X@un}{\Y@un}%
    \edef\C@nt@r{#1}\s@mme=\z@\bcl@rellPATD}
\ctr@ld@f\def\bcl@rellPATD{\relax%
    \ifnum\s@mme<\p@rtent\advance\s@mme\@ne%
    \advance\v@leur\delt@\c@lptellP{\C@nt@r}{-1}{-2}\Figptpr@j-4:/-3/%
    \advance\v@leur\delt@\c@lptellP{\C@nt@r}{-1}{-2}\Figptpr@j-6:/-3/%
    \advance\v@leur\delt@\c@lptellP{\C@nt@r}{-1}{-2}\Figptpr@j-3:/-3/%
    \v@lX=\z@\v@lY=\z@\Figtr@nptDD{-5}{-5}\Figtr@nptDD{2}{-3}%
    \divide\v@lX\@vi\divide\v@lY\@vi%
    \Figtr@nptDD{3}{-4}\Figtr@nptDD{-1.5}{-6}\v@lmin=\v@lX\v@lmax=\v@lY%
    \v@lX=\z@\v@lY=\z@\Figtr@nptDD{2}{-5}\Figtr@nptDD{-5}{-3}%
    \divide\v@lX\@vi\divide\v@lY\@vi\Figtr@nptDD{-1.5}{-4}\Figtr@nptDD{3}{-6}%
    \BdingB@xfalse%
    \Figp@intregDD-4:(\v@lmin,\v@lmax)\PSwrit@cmdS{-4}{}{\fwf@g}{\X@de}{\Y@de}%
    \Figp@intregDD-4:(\v@lX,\v@lY)\PSwrit@cmdS{-4}{}{\fwf@g}{\X@tr}{\Y@tr}%
    \BdingB@xtrue\PSwrit@cmdS{-3}{\c@mcurveto}{\fwf@g}{\X@qu}{\Y@qu}%
    \B@zierBB@x{1}{\Y@un}(\X@un,\X@de,\X@tr,\X@qu)%
    \B@zierBB@x{2}{\X@un}(\Y@un,\Y@de,\Y@tr,\Y@qu)%
    \edef\X@un{\X@qu}\edef\Y@un{\Y@qu}\figptcopyDD-5:/-3/\bcl@rellPATD\fi}
\ctr@ld@f\def\c@lNbarcs#1#2{%
    \delt@=#2pt\advance\delt@-#1pt\maxim@m{\v@lmax}{\delt@}{-\delt@}%
    \v@leur=\v@lmax\divide\v@leur45 \p@rtentiere{\p@rtent}{\v@leur}\advance\p@rtent\@ne%
    \s@mme=\p@rtent\multiply\s@mme\thr@@\divide\delt@\s@mme}
\ctr@ld@f\def\figdrawarcellPP#1,#2,#3[#4,#5]{{\ifCUR@PS\ifGR@cri\s@uvc@ntr@l\et@tpsarcellPP%
    \PSc@mment{arcellPP Center=#1,PtAxis1=#2,PtAxis2=#3 [Point1=#4, Point2=#5]}%
    \setc@ntr@l{2}\figvectP-2[#1,#3]\vecunit@{-2}{-2}\v@lmin=\result@t%
    \invers@{\v@lmax}{\v@lmin}%
    \figvectP-1[#1,#2]\vecunit@{-1}{-1}\v@leur=\result@t%
    \v@leur=\repdecn@mb{\v@lmax}\v@leur\edef\AsB@{\repdecn@mb{\v@leur}}
    \c@lAngle{#1}{#4}{\v@lmin}\edef\@ngdeb{\repdecn@mb{\v@lmin}}%
    \c@lAngle{#1}{#5}{\v@lmax}\ifdim\v@lmin>\v@lmax\advance\v@lmax\DePI@deg\fi%
    \edef\@ngfin{\repdecn@mb{\v@lmax}}\figdrawarcellPA#1,#2,#3(\@ngdeb,\@ngfin)%
    \PSc@mment{End arcellPP}\resetc@ntr@l\et@tpsarcellPP\fi\fi}}
\ctr@ld@f\def\c@lAngle#1#2#3{\figvectP-3[#1,#2]%
    \c@lproscal\delt@[-3,-1]\c@lproscal\v@leur[-3,-2]%
    \v@leur=\AsB@\v@leur\arct@n#3(\delt@,\v@leur)#3=\rdT@deg#3}
\ctr@ln@w{newif}\if@rrowratio\@rrowratiotrue
\ctr@ln@w{newif}\if@rrowhfill
\ctr@ln@w{newif}\if@rrowhout
\ctr@ld@f\def\Psset@rrowhe@d#1=#2|{\keln@mun#1|%
    \def\n@mref{a}\ifx\l@debut\n@mref\update@ttr\D@FTarrowheadangle\Q@s@tarrowheadangle{#2}\else
    \def\n@mref{f}\ifx\l@debut\n@mref\update@ttr\D@FTarrowheadfill\Q@s@tarrowheadfill{#2}\else
    \def\n@mref{l}\ifx\l@debut\n@mref\update@ttr\D@FTarrowheadlength\Q@s@tarrowheadlength{#2}\else
    \def\n@mref{o}\ifx\l@debut\n@mref\update@ttr\D@FTarrowheadout\Q@s@tarrowheadout{#2}\else
    \def\n@mref{r}\ifx\l@debut\n@mref\update@ttr\D@FTarrowheadratio\Q@s@tarrowheadratio{#2}\else
    \W@rnmesAttr{figset arrowhead}{#1}\fi\fi\fi\fi\fi}
\ctr@ln@m\@rrowheadangle
\ctr@ln@m\C@AHANG \ctr@ln@m\S@AHANG \ctr@ln@m\UNSS@N
\ctr@ld@f\def\Q@s@tarrowheadangle#1{\edef\@rrowheadangle{#1}{\c@ssin{\C@}{\S@}{#1}%
    \xdef\C@AHANG{\C@}\xdef\S@AHANG{\S@}\v@lmax=\S@ pt%
    \invers@{\v@leur}{\v@lmax}\maxim@m{\v@leur}{\v@leur}{-\v@leur}%
    \xdef\UNSS@N{\the\v@leur}}}
\ctr@ld@f\def\Q@s@tarrowheadfill#1{\expandafter\set@rrowhfill#1:}
\ctr@ld@f\def\set@rrowhfill#1#2:{\if#1n\@rrowhfillfalse\else\@rrowhfilltrue\fi}
\ctr@ld@f\def\Q@s@tarrowheadout#1{\expandafter\set@rrowhout#1:}
\ctr@ld@f\def\set@rrowhout#1#2:{\if#1n\@rrowhoutfalse\else\@rrowhouttrue\fi}
\ctr@ln@m\@rrowheadlength
\ctr@ld@f\def\Q@s@tarrowheadlength#1{\edef\@rrowheadlength{#1}\@rrowratiofalse}
\ctr@ln@m\@rrowheadratio
\ctr@ld@f\def\Q@s@tarrowheadratio#1{\edef\@rrowheadratio{#1}\@rrowratiotrue}
\ctr@ln@m\D@FTarrowheadlength
\ctr@ld@f\def\figresetarrowhead{%
    \Q@s@tarrowheadangle{\D@FTarrowheadangle}%
    \Q@s@tarrowheadfill{\D@FTarrowheadfill}%
    \Q@s@tarrowheadout{\D@FTarrowheadout}%
    \Q@s@tarrowheadratio{\D@FTarrowheadratio}%
    \d@fm@cdim\D@FTarrowheadlength{\D@FTh@rdahlength}
    \Q@s@tarrowheadlength{\D@FTarrowheadlength}}
\ctr@ld@f\def\D@FTarrowheadratio{0.1}
\ctr@ld@f\def\D@FTarrowheadangle{20}
\ctr@ld@f\def\D@FTarrowheadfill{no}
\ctr@ld@f\def\D@FTarrowheadout{no}
\ctr@ld@f\def\D@FTh@rdahlength{8pt}
\ctr@ln@m\figdrawarrow
\ctr@ld@f\def\Q@arrowDD[#1,#2]{{\ifCUR@PS\ifGR@cri\s@uvc@ntr@l\et@tpsarrow%
    \PSc@mment{arrowDD [Pt1,Pt2]=[#1,#2]}\Q@s@tfillmode{no}%
    \Q@arrowheadDD[#1,#2]\setc@ntr@l{2}\figdrawline[#1,-3]%
    \PSc@mment{End arrowDD}\resetc@ntr@l\et@tpsarrow\fi\fi}}
\ctr@ld@f\def\Q@arrowTD[#1,#2]{{\ifCUR@PS\ifGR@cri\s@uvc@ntr@l\et@tpsarrowTD%
    \PSc@mment{arrowTD [Pt1,Pt2]=[#1,#2]}\resetc@ntr@l{2}%
    \Figptpr@j-5:/#1/\Figptpr@j-6:/#2/\let\c@lprojSP=\relax\Q@arrowDD[-5,-6]%
    \PSc@mment{End arrowTD}\resetc@ntr@l\et@tpsarrowTD\fi\fi}}
\ctr@ln@m\figdrawarrowhead
\ctr@ld@f\def\Q@arrowheadDD[#1,#2]{{\ifCUR@PS\ifGR@cri\s@uvc@ntr@l\et@tpsarrowheadDD%
    \if@rrowhfill\def\@hangle{-\@rrowheadangle}\else\def\@hangle{\@rrowheadangle}\fi%
    \if@rrowratio%
    \if@rrowhout\def\@hratio{-\@rrowheadratio}\else\def\@hratio{\@rrowheadratio}\fi%
    \PSc@mment{arrowheadDD Ratio=\@hratio, Angle=\@hangle, [Pt1,Pt2]=[#1,#2]}%
    \Ps@rrowhead\@hratio,\@hangle[#1,#2]%
    \else%
    \if@rrowhout\def\@hlength{-\@rrowheadlength}\else\def\@hlength{\@rrowheadlength}\fi%
    \PSc@mment{arrowheadDD Length=\@hlength, Angle=\@hangle, [Pt1,Pt2]=[#1,#2]}%
    \Ps@rrowheadfd\@hlength,\@hangle[#1,#2]%
    \fi%
    \PSc@mment{End arrowheadDD}\resetc@ntr@l\et@tpsarrowheadDD\fi\fi}}
\ctr@ld@f\def\Q@arrowheadTD[#1,#2]{{\ifCUR@PS\ifGR@cri\s@uvc@ntr@l\et@tpsarrowheadTD%
    \PSc@mment{arrowheadTD [Pt1,Pt2]=[#1,#2]}\resetc@ntr@l{2}%
    \Figptpr@j-5:/#1/\Figptpr@j-6:/#2/\let\c@lprojSP=\relax\Q@arrowheadDD[-5,-6]%
    \PSc@mment{End arrowheadTD}\resetc@ntr@l\et@tpsarrowheadTD\fi\fi}}
\ctr@ld@f\def\Ps@rrowhead#1,#2[#3,#4]{\v@leur=#1\p@\maxim@m{\v@leur}{\v@leur}{-\v@leur}%
    \ifdim\v@leur>\Cepsil@n{
    \PSc@mment{@rrowhead Ratio=#1, Angle=#2, [Pt1,Pt2]=[#3,#4]}\v@leur=\UNSS@N%
    \v@leur=\CUR@width\v@leur\v@leur=\ptpsT@pt\v@leur\delt@=.5\v@leur
    \setc@ntr@l{2}\figvectPDD-3[#4,#3]%
    \Figg@tXY{-3}\v@lX=#1\v@lX\v@lY=#1\v@lY\Figv@ctCreg-3(\v@lX,\v@lY)%
    \vecunit@{-4}{-3}\mili@u=\result@t%
    \ifdim#2pt>\z@\v@lXa=-\C@AHANG\delt@%
     \edef\c@ef{\repdecn@mb{\v@lXa}}\figpttraDD-3:=-3/\c@ef,-4/\fi%
    \edef\c@ef{\repdecn@mb{\delt@}}%
    \v@lXa=\mili@u\v@lXa=\C@AHANG\v@lXa%
    \v@lYa=\ptpsT@pt\p@\v@lYa=\CUR@width\v@lYa\v@lYa=\sDcc@ngle\v@lYa%
    \advance\v@lXa-\v@lYa\gdef\sDcc@ngle{0}%
    \ifdim\v@lXa>\v@leur\edef\c@efendpt{\repdecn@mb{\v@leur}}%
    \else\edef\c@efendpt{\repdecn@mb{\v@lXa}}\fi%
    \Figg@tXY{-3}\v@lmin=\v@lX\v@lmax=\v@lY%
    \v@lXa=\C@AHANG\v@lmin\v@lYa=\S@AHANG\v@lmax\advance\v@lXa\v@lYa%
    \v@lYa=-\S@AHANG\v@lmin\v@lX=\C@AHANG\v@lmax\advance\v@lYa\v@lX%
    \setc@ntr@l{1}\Figg@tXY{#4}\advance\v@lX\v@lXa\advance\v@lY\v@lYa%
    \setc@ntr@l{2}\Figp@intregDD-2:(\v@lX,\v@lY)%
    \v@lXa=\C@AHANG\v@lmin\v@lYa=-\S@AHANG\v@lmax\advance\v@lXa\v@lYa%
    \v@lYa=\S@AHANG\v@lmin\v@lX=\C@AHANG\v@lmax\advance\v@lYa\v@lX%
    \setc@ntr@l{1}\Figg@tXY{#4}\advance\v@lX\v@lXa\advance\v@lY\v@lYa%
    \setc@ntr@l{2}\Figp@intregDD-1:(\v@lX,\v@lY)%
    \ifdim#2pt<\z@\fillm@detrue\figdrawline[-2,#4,-1]
    \else\figptstraDD-3=#4,-2,-1/\c@ef,-4/\s@uvdash{\typ@dash}\Q@s@tdash{\D@FTdash}%
    \figdrawline[-2,-3,-1]\Q@s@tdash{\typ@dash}\fi
    \ifdim#1pt>\z@\figpttraDD-3:=#4/\c@efendpt,-4/\else\figptcopyDD-3:/#4/\fi%
    \PSc@mment{End @rrowhead}}\fi}
\ctr@ld@f\def\sDcc@ngle{0}
\ctr@ld@f\def\Ps@rrowheadfd#1,#2[#3,#4]{{%
    \PSc@mment{@rrowheadfd Length=#1, Angle=#2, [Pt1,Pt2]=[#3,#4]}%
    \setc@ntr@l{2}\figvectPDD-1[#3,#4]\n@rmeucDD{\v@leur}{-1}\v@leur=\ptT@unit@\v@leur%
    \invers@{\v@leur}{\v@leur}\v@leur=#1\v@leur\edef\R@tio{\repdecn@mb{\v@leur}}%
    \Ps@rrowhead\R@tio,#2[#3,#4]\PSc@mment{End @rrowheadfd}}}
\ctr@ln@m\figdrawarrowBezier
\ctr@ld@f\def\Q@arrowBezierDD[#1,#2,#3,#4]{{\ifCUR@PS\ifGR@cri\s@uvc@ntr@l\et@tpsarrowBezierDD%
    \PSc@mment{arrowBezierDD Control points=#1,#2,#3,#4}\setc@ntr@l{2}%
    \if@rrowratio\c@larclengthDD\v@leur,10[#1,#2,#3,#4]\else\v@leur=\z@\fi%
    \Ps@rrowB@zDD\v@leur[#1,#2,#3,#4]%
    \PSc@mment{End arrowBezierDD}\resetc@ntr@l\et@tpsarrowBezierDD\fi\fi}}
\ctr@ld@f\def\Q@arrowBezierTD[#1,#2,#3,#4]{{\ifCUR@PS\ifGR@cri\s@uvc@ntr@l\et@tpsarrowBezierTD%
    \PSc@mment{arrowBezierTD Control points=#1,#2,#3,#4}\resetc@ntr@l{2}%
    \Figptpr@j-7:/#1/\Figptpr@j-8:/#2/\Figptpr@j-9:/#3/\Figptpr@j-10:/#4/%
    \let\c@lprojSP=\relax\ifnum\CUR@proj<\tw@\Q@arrowBezierDD[-7,-8,-9,-10]%
    \else\f@gnewpath\PSwrit@cmd{-7}{\c@mmoveto}{\fwf@g}%
    \if@rrowratio\c@larclengthDD\mili@u,10[-7,-8,-9,-10]\else\mili@u=\z@\fi%
    \p@rtent=\NBz@rcs\advance\p@rtent\m@ne\subB@zierTD\p@rtent[#1,#2,#3,#4]%
    \f@gstroke%
    \advance\v@lmin\p@rtent\delt@
    \v@leur=\v@lmin\advance\v@leur0.33333 \delt@\edef\unti@rs{\repdecn@mb{\v@leur}}%
    \v@leur=\v@lmin\advance\v@leur0.66666 \delt@\edef\deti@rs{\repdecn@mb{\v@leur}}%
    \figptcopyDD-8:/-10/\c@lsubBzarc\unti@rs,\deti@rs[#1,#2,#3,#4]%
    \figptcopyDD-8:/-4/\figptcopyDD-9:/-3/\Ps@rrowB@zDD\mili@u[-7,-8,-9,-10]\fi%
    \PSc@mment{End arrowBezierTD}\resetc@ntr@l\et@tpsarrowBezierTD\fi\fi}}
\ctr@ld@f\def\c@larclengthDD#1,#2[#3,#4,#5,#6]{{\p@rtent=#2\figptcopyDD-5:/#3/%
    \delt@=\p@\divide\delt@\p@rtent\c@rre=\z@\v@leur=\z@\s@mme=\z@%
    \loop\ifnum\s@mme<\p@rtent\advance\s@mme\@ne\advance\v@leur\delt@%
    \edef\T@{\repdecn@mb{\v@leur}}\figptBezierDD-6::\T@[#3,#4,#5,#6]%
    \figvectPDD-1[-5,-6]\n@rmeucDD{\mili@u}{-1}\advance\c@rre\mili@u%
    \figptcopyDD-5:/-6/\repeat\global\result@t=\ptT@unit@\c@rre}#1=\result@t}
\ctr@ld@f\def\Ps@rrowB@zDD#1[#2,#3,#4,#5]{{\Q@s@tfillmode{no}%
    \if@rrowratio\delt@=\@rrowheadratio#1\else\delt@=\@rrowheadlength pt\fi%
    \v@leur=\C@AHANG\delt@\edef\R@dius{\repdecn@mb{\v@leur}}%
    \FigptintercircB@zDD-5::0,\R@dius[#5,#4,#3,#2]%
    \Q@s@tarrowheadlength{\repdecn@mb{\delt@}}\Q@arrowheadDD[-5,#5]%
    \let\n@rmeuc=\n@rmeucDD\figgetdist\R@dius[#5,-3]%
    \FigptintercircB@zDD-6::0,\R@dius[#5,#4,#3,#2]%
    \figptBezierDD-5::0.33333[#5,#4,#3,#2]\figptBezierDD-3::0.66666[#5,#4,#3,#2]%
    \figptscontrolDD-5[-6,-5,-3,#2]\Q@BezierDD1[-6,-5,-4,#2]}}
\ctr@ln@m\figdrawarrowcirc
\ctr@ld@f\def\Q@arrowcircDD#1;#2(#3,#4){{\ifCUR@PS\ifGR@cri\s@uvc@ntr@l\et@tpsarrowcircDD%
    \PSc@mment{arrowcircDD Center=#1 ; Radius=#2 (Ang1=#3,Ang2=#4)}%
    \Q@s@tfillmode{no}\Pscirc@rrowhead#1;#2(#3,#4)%
    \setc@ntr@l{2}\figvectPDD -4[#1,-3]\vecunit@{-4}{-4}%
    \Figg@tXY{-4}\arct@n\v@lmin(\v@lX,\v@lY)%
    \v@lmin=\rdT@deg\v@lmin\v@leur=#4pt\advance\v@leur-\v@lmin%
    \maxim@m{\v@leur}{\v@leur}{-\v@leur}%
    \ifdim\v@leur>\DemiPI@deg\relax\ifdim\v@lmin<#4pt\advance\v@lmin\DePI@deg%
    \else\advance\v@lmin-\DePI@deg\fi\fi\edef\ar@ngle{\repdecn@mb{\v@lmin}}%
    \ifdim#3pt<#4pt\figdrawarccirc#1;#2(#3,\ar@ngle)\else\figdrawarccirc#1;#2(\ar@ngle,#3)\fi%
    \PSc@mment{End arrowcircDD}\resetc@ntr@l\et@tpsarrowcircDD\fi\fi}}
\ctr@ld@f\def\Q@arrowcircTD#1,#2,#3;#4(#5,#6){{\ifCUR@PS\ifGR@cri\s@uvc@ntr@l\et@tpsarrowcircTD%
    \PSc@mment{arrowcircTD Center=#1,P1=#2,P2=#3 ; Radius=#4 (Ang1=#5, Ang2=#6)}%
    \resetc@ntr@l{2}\c@lExtAxes#1,#2,#3(#4)\let\c@lprojSP=\relax%
    \figvectPTD-11[#1,-4]\figvectPTD-12[#1,-5]\c@lNbarcs{#5}{#6}%
    \if@rrowratio\v@lmax=\degT@rd\v@lmax\edef\D@lpha{\repdecn@mb{\v@lmax}}\fi%
    \advance\p@rtent\m@ne\mili@u=\z@%
    \v@leur=#5pt\c@lptellP{#1}{-11}{-12}\Figptpr@j-9:/-3/%
    \f@gnewpath\PSwrit@cmdS{-9}{\c@mmoveto}{\fwf@g}{\X@un}{\Y@un}%
    \edef\C@nt@r{#1}\s@mme=\z@\bcl@rcircTD\f@gstroke%
    \advance\v@leur\delt@\c@lptellP{#1}{-11}{-12}\Figptpr@j-5:/-3/%
    \advance\v@leur\delt@\c@lptellP{#1}{-11}{-12}\Figptpr@j-6:/-3/%
    \advance\v@leur\delt@\c@lptellP{#1}{-11}{-12}\Figptpr@j-10:/-3/%
    \figptscontrolDD-8[-9,-5,-6,-10]%
    \if@rrowratio\c@lcurvradDD0.5[-9,-8,-7,-10]\advance\mili@u\result@t%
    \maxim@m{\mili@u}{\mili@u}{-\mili@u}\mili@u=\ptT@unit@\mili@u%
    \mili@u=\D@lpha\mili@u\advance\p@rtent\@ne\divide\mili@u\p@rtent\fi%
    \Ps@rrowB@zDD\mili@u[-9,-8,-7,-10]%
    \PSc@mment{End arrowcircTD}\resetc@ntr@l\et@tpsarrowcircTD\fi\fi}}
\ctr@ld@f\def\bcl@rcircTD{\relax%
    \ifnum\s@mme<\p@rtent\advance\s@mme\@ne%
    \advance\v@leur\delt@\c@lptellP{\C@nt@r}{-11}{-12}\Figptpr@j-5:/-3/%
    \advance\v@leur\delt@\c@lptellP{\C@nt@r}{-11}{-12}\Figptpr@j-6:/-3/%
    \advance\v@leur\delt@\c@lptellP{\C@nt@r}{-11}{-12}\Figptpr@j-10:/-3/%
    \figptscontrolDD-8[-9,-5,-6,-10]\BdingB@xfalse%
    \PSwrit@cmdS{-8}{}{\fwf@g}{\X@de}{\Y@de}\PSwrit@cmdS{-7}{}{\fwf@g}{\X@tr}{\Y@tr}%
    \BdingB@xtrue\PSwrit@cmdS{-10}{\c@mcurveto}{\fwf@g}{\X@qu}{\Y@qu}%
    \if@rrowratio\c@lcurvradDD0.5[-9,-8,-7,-10]\advance\mili@u\result@t\fi%
    \B@zierBB@x{1}{\Y@un}(\X@un,\X@de,\X@tr,\X@qu)%
    \B@zierBB@x{2}{\X@un}(\Y@un,\Y@de,\Y@tr,\Y@qu)%
    \edef\X@un{\X@qu}\edef\Y@un{\Y@qu}\figptcopyDD-9:/-10/\bcl@rcircTD\fi}
\ctr@ld@f\def\Pscirc@rrowhead#1;#2(#3,#4){{%
    \PSc@mment{circ@rrowhead Center=#1 ; Radius=#2 (Ang1=#3,Ang2=#4)}%
    \v@leur=#2\unit@\edef\s@glen{\repdecn@mb{\v@leur}}\v@lY=\z@\v@lX=\v@leur%
    \resetc@ntr@l{2}\Figv@ctCreg-3(\v@lX,\v@lY)\figpttraDD-5:=#1/1,-3/%
    \figptrotDD-5:=-5/#1,#4/%
    \figvectPDD-3[#1,-5]\Figg@tXY{-3}\v@leur=\v@lX%
    \ifdim#3pt<#4pt\v@lX=\v@lY\v@lY=-\v@leur\else\v@lX=-\v@lY\v@lY=\v@leur\fi%
    \Figv@ctCreg-3(\v@lX,\v@lY)\vecunit@{-3}{-3}%
    \if@rrowratio\v@leur=#4pt\advance\v@leur-#3pt\maxim@m{\mili@u}{-\v@leur}{\v@leur}%
    \mili@u=\degT@rd\mili@u\v@leur=\s@glen\mili@u\edef\s@glen{\repdecn@mb{\v@leur}}%
    \mili@u=#2\mili@u\mili@u=\@rrowheadratio\mili@u\else\mili@u=\@rrowheadlength pt\fi%
    \figpttraDD-6:=-5/\s@glen,-3/\v@leur=#2pt\v@leur=2\v@leur%
    \invers@{\v@leur}{\v@leur}\c@rre=\repdecn@mb{\v@leur}\mili@u
    \mili@u=\c@rre\mili@u=\repdecn@mb{\c@rre}\mili@u%
    \v@leur=\p@\advance\v@leur-\mili@u
    \invers@{\mili@u}{2\v@leur}\delt@=\c@rre\delt@=\repdecn@mb{\mili@u}\delt@%
    \xdef\sDcc@ngle{\repdecn@mb{\delt@}}
    \sqrt@{\mili@u}{\v@leur}\arct@n\v@leur(\mili@u,\c@rre)%
    \v@leur=\rdT@deg\v@leur
    \ifdim#3pt<#4pt\v@leur=-\v@leur\fi%
    \if@rrowhout\v@leur=-\v@leur\fi\edef\cor@ngle{\repdecn@mb{\v@leur}}%
    \figptrotDD-6:=-6/-5,\cor@ngle/\Q@arrowheadDD[-6,-5]%
    \PSc@mment{End circ@rrowhead}}}
\ctr@ln@m\figdrawarrowcircP
\ctr@ld@f\def\Q@arrowcircPDD#1;#2[#3,#4]{{\ifCUR@PS\ifGR@cri%
    \PSc@mment{arrowcircPDD Center=#1; Radius=#2, [P1=#3,P2=#4]}%
    \s@uvc@ntr@l\et@tpsarrowcircPDD\Ps@ngleparam#1;#2[#3,#4]%
    \ifdim\v@leur>\z@\ifdim\v@lmin>\v@lmax\advance\v@lmax\DePI@deg\fi%
    \else\ifdim\v@lmin<\v@lmax\advance\v@lmin\DePI@deg\fi\fi%
    \edef\@ngdeb{\repdecn@mb{\v@lmin}}\edef\@ngfin{\repdecn@mb{\v@lmax}}%
    \figdrawarrowcirc#1;\r@dius(\@ngdeb,\@ngfin)%
    \PSc@mment{End arrowcircPDD}\resetc@ntr@l\et@tpsarrowcircPDD\fi\fi}}
\ctr@ld@f\def\Q@arrowcircPTD#1;#2[#3,#4,#5]{{\ifCUR@PS\ifGR@cri\s@uvc@ntr@l\et@tpsarrowcircPTD%
    \PSc@mment{arrowcircPTD Center=#1; Radius=#2, [P1=#3,P2=#4,P3=#5]}%
    \figgetangleTD\@ngfin[#1,#3,#4,#5]\v@leur=#2pt%
    \maxim@m{\mili@u}{-\v@leur}{\v@leur}\edef\r@dius{\repdecn@mb{\mili@u}}%
    \ifdim\v@leur<\z@\v@lmax=\@ngfin pt\advance\v@lmax-\DePI@deg%
    \edef\@ngfin{\repdecn@mb{\v@lmax}}\fi\Q@arrowcircTD#1,#3,#5;\r@dius(0,\@ngfin)%
    \PSc@mment{End arrowcircPTD}\resetc@ntr@l\et@tpsarrowcircPTD\fi\fi}}
\ctr@ld@f\def\figdrawaxes#1(#2){{\ifCUR@PS\ifGR@cri\s@uvc@ntr@l\et@tpsaxes%
    \PSc@mment{axes Origin=#1 Range=(#2)}\an@lys@xes#2,:\resetc@ntr@l{2}%
    \ifx\t@xt@\empty\ifTr@isDim\Q@@xes#1(0,#2,0,#2,0,#2)\else\Q@@xes#1(0,#2,0,#2)\fi%
    \else\Q@@xes#1(#2)\fi\PSc@mment{End axes}\resetc@ntr@l\et@tpsaxes\fi\fi}}
\ctr@ld@f\def\an@lys@xes#1,#2:{\def\t@xt@{#2}}
\ctr@ln@m\Q@@xes
\ctr@ld@f\def\Q@@xesDD#1(#2,#3,#4,#5){%
    \figpttraC-5:=#1/#2,0/\figpttraC-6:=#1/#3,0/\Q@arrowDD[-5,-6]%
    \figpttraC-5:=#1/0,#4/\figpttraC-6:=#1/0,#5/\Q@arrowDD[-5,-6]}
\ctr@ld@f\def\Q@@xesTD#1(#2,#3,#4,#5,#6,#7){%
    \figpttraC-7:=#1/#2,0,0/\figpttraC-8:=#1/#3,0,0/\Q@arrowTD[-7,-8]%
    \figpttraC-7:=#1/0,#4,0/\figpttraC-8:=#1/0,#5,0/\Q@arrowTD[-7,-8]%
    \figpttraC-7:=#1/0,0,#6/\figpttraC-8:=#1/0,0,#7/\Q@arrowTD[-7,-8]}
\ctr@ln@m\newGr@FN
\ctr@ld@f\def\newGr@FNPDF#1{\s@mme=\Gr@FNb\advance\s@mme\@ne\xdef\Gr@FNb{\number\s@mme}}
\ctr@ld@f\def\newGr@FNDVI#1{\newGr@FNPDF{}\xdef#1{\jobname GI\Gr@FNb.anx}}
\ctr@ld@f\def\figdrawbegin#1{\newGr@FN\DefGIfilen@me\gdef\@utoFN{0}%
    \def\t@xt@{#1}\relax\ifx\t@xt@\empty\GRupdatem@detrue%
    \gdef\@utoFN{1}\Psb@ginfig\DefGIfilen@me\else\expandafter\Psb@ginfigNu@#1 :\fi}
\ctr@ld@f\def\Psb@ginfigNu@#1 #2:{\def\t@xt@{#1}\relax\ifx\t@xt@\empty\def\t@xt@{#2}%
    \ifx\t@xt@\empty\GRupdatem@detrue\gdef\@utoFN{1}\Psb@ginfig\DefGIfilen@me%
    \else\Psb@ginfigNu@#2:\fi\else\Psb@ginfig{#1}\fi}
\ctr@ln@m\PSfilen@me \ctr@ln@m\auxfilen@me
\ctr@ld@f\def\Psb@ginfig#1{\ifCUR@PS\else%
    \edef\PSfilen@me{#1}\edef\auxfilen@me{\jobname.anx}%
    \ifGRupdatem@de\GR@critrue\else\openin\frf@g=\PSfilen@me\relax%
    \ifeof\frf@g\GR@critrue\else\GR@crifalse\fi\closein\frf@g\fi%
    \CUR@PStrue\c@ldefproj\expandafter\setupd@te\D@FTupdate:%
    \ifGR@cri\initb@undb@x%
    \immediate\openout\fwf@g=\auxfilen@me\initpss@ttings\fi%
    \fi}
\ctr@ld@f\def\Gr@FNb{0}
\ctr@ld@f\def\figforTeXFileno{\Gr@FNb}
\ctr@ld@f\def\figforTeXFigno{0 }
\ctr@ld@f\def\figforTeXnextFigno{1 }
\ctr@ld@f\edef\DefGIfilen@me{\jobname GI.anx}
\ctr@ld@f\def\initpss@ttings{\figreset{altitude,arrowhead,curve,general,flowchart,mesh,trimesh}%
    \Use@llipsefalse}
\ctr@ld@f\def\B@zierBB@x#1#2(#3,#4,#5,#6){{\c@rre=\t@n\epsil@n
    \v@lmax=#4\advance\v@lmax-#5\v@lmax=\thr@@\v@lmax\advance\v@lmax#6\advance\v@lmax-#3%
    \mili@u=#4\mili@u=-\tw@\mili@u\advance\mili@u#3\advance\mili@u#5%
    \v@lmin=#4\advance\v@lmin-#3\maxim@m{\v@leur}{-\v@lmax}{\v@lmax}%
    \maxim@m{\delt@}{-\mili@u}{\mili@u}\maxim@m{\v@leur}{\v@leur}{\delt@}%
    \maxim@m{\delt@}{-\v@lmin}{\v@lmin}\maxim@m{\v@leur}{\v@leur}{\delt@}%
    \ifdim\v@leur>\c@rre\invers@{\v@leur}{\v@leur}\edef\Uns@rM@x{\repdecn@mb{\v@leur}}%
    \v@lmax=\Uns@rM@x\v@lmax\mili@u=\Uns@rM@x\mili@u\v@lmin=\Uns@rM@x\v@lmin%
    \maxim@m{\v@leur}{-\v@lmax}{\v@lmax}\ifdim\v@leur<\c@rre%
    \maxim@m{\v@leur}{-\mili@u}{\mili@u}\ifdim\v@leur<\c@rre\else%
    \invers@{\mili@u}{\mili@u}\v@leur=-0.5\v@lmin%
    \v@leur=\repdecn@mb{\mili@u}\v@leur\m@jBBB@x{\v@leur}{#1}{#2}(#3,#4,#5,#6)\fi%
    \else\delt@=\repdecn@mb{\mili@u}\mili@u\v@leur=\repdecn@mb{\v@lmax}\v@lmin%
    \advance\delt@-\v@leur\ifdim\delt@<\z@\else\invers@{\v@lmax}{\v@lmax}%
    \edef\Uns@rAp{\repdecn@mb{\v@lmax}}\sqrt@{\delt@}{\delt@}%
    \v@leur=-\mili@u\advance\v@leur\delt@\v@leur=\Uns@rAp\v@leur%
    \m@jBBB@x{\v@leur}{#1}{#2}(#3,#4,#5,#6)%
    \v@leur=-\mili@u\advance\v@leur-\delt@\v@leur=\Uns@rAp\v@leur%
    \m@jBBB@x{\v@leur}{#1}{#2}(#3,#4,#5,#6)\fi\fi\fi}}
\ctr@ld@f\def\m@jBBB@x#1#2#3(#4,#5,#6,#7){{\relax\ifdim#1>\z@\ifdim#1<\p@%
    \edef\T@{\repdecn@mb{#1}}\v@lX=\p@\advance\v@lX-#1\edef\UNmT@{\repdecn@mb{\v@lX}}%
    \v@lX=#4\v@lY=#5\v@lZ=#6\v@lXa=#7\v@lX=\UNmT@\v@lX\advance\v@lX\T@\v@lY%
    \v@lY=\UNmT@\v@lY\advance\v@lY\T@\v@lZ\v@lZ=\UNmT@\v@lZ\advance\v@lZ\T@\v@lXa%
    \v@lX=\UNmT@\v@lX\advance\v@lX\T@\v@lY\v@lY=\UNmT@\v@lY\advance\v@lY\T@\v@lZ%
    \v@lX=\UNmT@\v@lX\advance\v@lX\T@\v@lY%
    \ifcase#2\or\v@lY=#3\or\v@lY=\v@lX\v@lX=#3\fi\b@undb@x{\v@lX}{\v@lY}\fi\fi}}
\ctr@ld@f\def\PsB@zier#1[#2]{{\f@gnewpath%
    \s@mme=\z@\def\list@num{#2,0}\extrairelepremi@r\p@int\de\list@num%
    \PSwrit@cmdS{\p@int}{\c@mmoveto}{\fwf@g}{\X@un}{\Y@un}\p@rtent=#1\bclB@zier}}
\ctr@ld@f\def\bclB@zier{\relax%
    \ifnum\s@mme<\p@rtent\advance\s@mme\@ne\BdingB@xfalse%
    \extrairelepremi@r\p@int\de\list@num\PSwrit@cmdS{\p@int}{}{\fwf@g}{\X@de}{\Y@de}%
    \extrairelepremi@r\p@int\de\list@num\PSwrit@cmdS{\p@int}{}{\fwf@g}{\X@tr}{\Y@tr}%
    \BdingB@xtrue%
    \extrairelepremi@r\p@int\de\list@num\PSwrit@cmdS{\p@int}{\c@mcurveto}{\fwf@g}{\X@qu}{\Y@qu}%
    \B@zierBB@x{1}{\Y@un}(\X@un,\X@de,\X@tr,\X@qu)%
    \B@zierBB@x{2}{\X@un}(\Y@un,\Y@de,\Y@tr,\Y@qu)%
    \edef\X@un{\X@qu}\edef\Y@un{\Y@qu}\bclB@zier\fi}
\ctr@ln@m\figdrawBezier
\ctr@ld@f\def\Q@BezierDD#1[#2]{\ifCUR@PS\ifGR@cri%
    \PSc@mment{BezierDD N arcs=#1, Control points=#2}%
    \iffillm@de\PsB@zier#1[#2]%
    \f@gfill%
    \else\PsB@zier#1[#2]\f@gstroke\fi%
    \PSc@mment{End BezierDD}\fi\fi}
\ctr@ln@m\et@tpsBezierTD
\ctr@ld@f\def\Q@BezierTD#1[#2]{\ifCUR@PS\ifGR@cri\s@uvc@ntr@l\et@tpsBezierTD%
    \PSc@mment{BezierTD N arcs=#1, Control points=#2}%
    \iffillm@de\PsB@zierTD#1[#2]%
    \f@gfill%
    \else\PsB@zierTD#1[#2]\f@gstroke\fi%
    \PSc@mment{End BezierTD}\resetc@ntr@l\et@tpsBezierTD\fi\fi}
\ctr@ld@f\def\PsB@zierTD#1[#2]{\ifnum\CUR@proj<\tw@\PsB@zier#1[#2]\else\PsB@zier@TD#1[#2]\fi}
\ctr@ld@f\def\PsB@zier@TD#1[#2]{{\f@gnewpath%
    \s@mme=\z@\def\list@num{#2,0}\extrairelepremi@r\p@int\de\list@num%
    \let\c@lprojSP=\relax\setc@ntr@l{2}\Figptpr@j-7:/\p@int/%
    \PSwrit@cmd{-7}{\c@mmoveto}{\fwf@g}%
    \loop\ifnum\s@mme<#1\advance\s@mme\@ne\extrairelepremi@r\p@intun\de\list@num%
    \extrairelepremi@r\p@intde\de\list@num\extrairelepremi@r\p@inttr\de\list@num%
    \subB@zierTD\NBz@rcs[\p@int,\p@intun,\p@intde,\p@inttr]\edef\p@int{\p@inttr}\repeat}}
\ctr@ld@f\def\subB@zierTD#1[#2,#3,#4,#5]{\delt@=\p@\divide\delt@\NBz@rcs\v@lmin=\z@%
    {\Figg@tXY{-7}\edef\X@un{\the\v@lX}\edef\Y@un{\the\v@lY}%
    \s@mme=\z@\loop\ifnum\s@mme<#1\advance\s@mme\@ne%
    \v@leur=\v@lmin\advance\v@leur0.33333 \delt@\edef\unti@rs{\repdecn@mb{\v@leur}}%
    \v@leur=\v@lmin\advance\v@leur0.66666 \delt@\edef\deti@rs{\repdecn@mb{\v@leur}}%
    \advance\v@lmin\delt@\edef\trti@rs{\repdecn@mb{\v@lmin}}%
    \figptBezierTD-8::\trti@rs[#2,#3,#4,#5]\Figptpr@j-8:/-8/%
    \c@lsubBzarc\unti@rs,\deti@rs[#2,#3,#4,#5]\BdingB@xfalse%
    \PSwrit@cmdS{-4}{}{\fwf@g}{\X@de}{\Y@de}\PSwrit@cmdS{-3}{}{\fwf@g}{\X@tr}{\Y@tr}%
    \BdingB@xtrue\PSwrit@cmdS{-8}{\c@mcurveto}{\fwf@g}{\X@qu}{\Y@qu}%
    \B@zierBB@x{1}{\Y@un}(\X@un,\X@de,\X@tr,\X@qu)%
    \B@zierBB@x{2}{\X@un}(\Y@un,\Y@de,\Y@tr,\Y@qu)%
    \edef\X@un{\X@qu}\edef\Y@un{\Y@qu}\figptcopyDD-7:/-8/\repeat}}
\ctr@ld@f\def\NBz@rcs{2}
\ctr@ld@f\def\c@lsubBzarc#1,#2[#3,#4,#5,#6]{\figptBezierTD-5::#1[#3,#4,#5,#6]%
    \figptBezierTD-6::#2[#3,#4,#5,#6]\Figptpr@j-4:/-5/\Figptpr@j-5:/-6/%
    \figptscontrolDD-4[-7,-4,-5,-8]}
\ctr@ln@m\figdrawcirc
\ctr@ld@f\def\Q@circDD#1(#2){\ifCUR@PS\ifGR@cri\PSc@mment{circDD Center=#1 (Radius=#2)}%
    \Q@arccircDD#1;#2(0,360)\PSc@mment{End circDD}\fi\fi}
\ctr@ld@f\def\Q@circTD#1,#2,#3(#4){\ifCUR@PS\ifGR@cri%
    \PSc@mment{circTD Center=#1,P1=#2,P2=#3 (Radius=#4)}%
    \Q@arccircTD#1,#2,#3;#4(0,360)\PSc@mment{End circTD}\fi\fi}
\ctr@ln@m\p@urcent
{\catcode`\%=12\gdef\p@urcent{
\ctr@ld@f\def\PSc@mment#1{\ifGRdebugm@de\immediate\write\fwf@g{\p@urcent\space#1}\fi}
\ctr@ln@m\acc@louv \ctr@ln@m\acc@lfer
{\catcode`\[=1\catcode`\{=12\gdef\acc@louv[{}}
{\catcode`\]=2\catcode`\}=12\gdef\acc@lfer{}]]
\ctr@ld@f\def\PSdict@{\ifUse@llipse%
    \immediate\write\fwf@g{/ellipsedict 9 dict def ellipsedict /mtrx matrix put}%
    \immediate\write\fwf@g{/ellipse \acc@louv ellipsedict begin}%
    \immediate\write\fwf@g{ /endangle exch def /startangle exch def}%
    \immediate\write\fwf@g{ /yrad exch def /xrad exch def}%
    \immediate\write\fwf@g{ /rotangle exch def /y exch def /x exch def}%
    \immediate\write\fwf@g{ /savematrix mtrx currentmatrix def}%
    \immediate\write\fwf@g{ x y translate rotangle rotate xrad yrad scale}%
    \immediate\write\fwf@g{ 0 0 1 startangle endangle arc}%
    \immediate\write\fwf@g{ savematrix setmatrix end\acc@lfer def}%
    \fi\PShe@der{EndProlog}}
\ctr@ld@f\def\Pssetc@rve#1=#2|{\keln@mun#1|%
    \def\n@mref{r}\ifx\l@debut\n@mref\update@ttr\D@FTroundness\Q@s@troundness{#2}\else
    \W@rnmesAttr{figset curve}{#1}\fi}
\ctr@ln@m\curv@roundness
\ctr@ld@f\def\Q@s@troundness#1{\edef\curv@roundness{#1}}
\ctr@ld@f\def\D@FTroundness{0.2} 
\ctr@ln@m\figdrawcurve
\ctr@ld@f\def\Q@curveDD[#1]{{\ifCUR@PS\ifGR@cri\PSc@mment{curveDD Points=#1}%
    \s@uvc@ntr@l\et@tpscurveDD%
    \iffillm@de\Psc@rveDD\curv@roundness[#1]%
    \f@gfill%
    \else\Psc@rveDD\curv@roundness[#1]\f@gstroke\fi%
    \PSc@mment{End curveDD}\resetc@ntr@l\et@tpscurveDD\fi\fi}}
\ctr@ld@f\def\Q@curveTD[#1]{{\ifCUR@PS\ifGR@cri%
    \PSc@mment{curveTD Points=#1}\s@uvc@ntr@l\et@tpscurveTD\let\c@lprojSP=\relax%
    \iffillm@de\Psc@rveTD\curv@roundness[#1]%
    \f@gfill%
    \else\Psc@rveTD\curv@roundness[#1]\f@gstroke\fi%
    \PSc@mment{End curveTD}\resetc@ntr@l\et@tpscurveTD\fi\fi}}
\ctr@ld@f\def\Psc@rveDD#1[#2]{%
    \def\list@num{#2}\extrairelepremi@r\Ak@\de\list@num%
    \extrairelepremi@r\Ai@\de\list@num\extrairelepremi@r\Aj@\de\list@num%
    \f@gnewpath\PSwrit@cmdS{\Ai@}{\c@mmoveto}{\fwf@g}{\X@un}{\Y@un}%
    \setc@ntr@l{2}\figvectPDD -1[\Ak@,\Aj@]%
    \@ecfor\Ak@:=\list@num\do{\figpttraDD-2:=\Ai@/#1,-1/\BdingB@xfalse%
       \PSwrit@cmdS{-2}{}{\fwf@g}{\X@de}{\Y@de}%
       \figvectPDD -1[\Ai@,\Ak@]\figpttraDD-2:=\Aj@/-#1,-1/%
       \PSwrit@cmdS{-2}{}{\fwf@g}{\X@tr}{\Y@tr}\BdingB@xtrue%
       \PSwrit@cmdS{\Aj@}{\c@mcurveto}{\fwf@g}{\X@qu}{\Y@qu}%
       \B@zierBB@x{1}{\Y@un}(\X@un,\X@de,\X@tr,\X@qu)%
       \B@zierBB@x{2}{\X@un}(\Y@un,\Y@de,\Y@tr,\Y@qu)%
       \edef\X@un{\X@qu}\edef\Y@un{\Y@qu}\edef\Ai@{\Aj@}\edef\Aj@{\Ak@}}}
\ctr@ld@f\def\Psc@rveTD#1[#2]{\ifnum\CUR@proj<\tw@\Psc@rvePPTD#1[#2]\else\Psc@rveCPTD#1[#2]\fi}
\ctr@ld@f\def\Psc@rvePPTD#1[#2]{\setc@ntr@l{2}%
    \def\list@num{#2}\extrairelepremi@r\Ak@\de\list@num\Figptpr@j-5:/\Ak@/%
    \extrairelepremi@r\Ai@\de\list@num\Figptpr@j-3:/\Ai@/%
    \extrairelepremi@r\Aj@\de\list@num\Figptpr@j-4:/\Aj@/%
    \f@gnewpath\PSwrit@cmdS{-3}{\c@mmoveto}{\fwf@g}{\X@un}{\Y@un}%
    \figvectPDD -1[-5,-4]%
    \@ecfor\Ak@:=\list@num\do{\Figptpr@j-5:/\Ak@/\figpttraDD-2:=-3/#1,-1/%
       \BdingB@xfalse\PSwrit@cmdS{-2}{}{\fwf@g}{\X@de}{\Y@de}%
       \figvectPDD -1[-3,-5]\figpttraDD-2:=-4/-#1,-1/%
       \PSwrit@cmdS{-2}{}{\fwf@g}{\X@tr}{\Y@tr}\BdingB@xtrue%
       \PSwrit@cmdS{-4}{\c@mcurveto}{\fwf@g}{\X@qu}{\Y@qu}%
       \B@zierBB@x{1}{\Y@un}(\X@un,\X@de,\X@tr,\X@qu)%
       \B@zierBB@x{2}{\X@un}(\Y@un,\Y@de,\Y@tr,\Y@qu)%
       \edef\X@un{\X@qu}\edef\Y@un{\Y@qu}\figptcopyDD-3:/-4/\figptcopyDD-4:/-5/}}
\ctr@ld@f\def\Psc@rveCPTD#1[#2]{\setc@ntr@l{2}%
    \def\list@num{#2}\extrairelepremi@r\Ak@\de\list@num%
    \extrairelepremi@r\Ai@\de\list@num\extrairelepremi@r\Aj@\de\list@num%
    \Figptpr@j-7:/\Ai@/%
    \f@gnewpath\PSwrit@cmd{-7}{\c@mmoveto}{\fwf@g}%
    \figvectPTD -9[\Ak@,\Aj@]%
    \@ecfor\Ak@:=\list@num\do{\figpttraTD-10:=\Ai@/#1,-9/%
       \figvectPTD -9[\Ai@,\Ak@]\figpttraTD-11:=\Aj@/-#1,-9/%
       \subB@zierTD\NBz@rcs[\Ai@,-10,-11,\Aj@]\edef\Ai@{\Aj@}\edef\Aj@{\Ak@}}}
\ctr@ld@f\def\figdrawend{\ifCUR@PS\ifGR@cri\immediate\closeout\fwf@g%
    \immediate\openout\fwf@g=\PSfilen@me\relax%
    \ifPDFm@ke\PSBdingB@x\else%
    \immediate\write\fwf@g{\p@urcent\string!PS-Adobe-2.0 EPSF-2.0}%
    \PShe@der{Creator\string: TeX (fig4tex.tex)}%
    \PShe@der{Title\string: \PSfilen@me}%
    \PShe@der{CreationDate\string: \the\day/\the\month/\the\year}%
    \PSBdingB@x%
    \PShe@der{EndComments}\PSdict@\fi%
    \immediate\write\fwf@g{\c@mgsave}%
    \openin\frf@g=\auxfilen@me\c@pypsfile\fwf@g\frf@g\closein\frf@g%
    \immediate\write\fwf@g{\c@mgrestore}%
    \PSc@mment{End of file.}\immediate\closeout\fwf@g%
    \immediate\openout\fwf@g=\auxfilen@me\immediate\closeout\fwf@g%
    \immediate\write16{File \PSfilen@me\space created.}\fi\fi\CUR@PSfalse\GR@critrue}
\ctr@ld@f\def\PShe@der#1{\immediate\write\fwf@g{\p@urcent\p@urcent#1}}
\ctr@ld@f\def\PSBdingB@x{{\v@lX=\ptT@ptps\c@@rdXmin\v@lY=\ptT@ptps\c@@rdYmin%
     \v@lXa=\ptT@ptps\c@@rdXmax\v@lYa=\ptT@ptps\c@@rdYmax%
     \PShe@der{BoundingBox\string: \repdecn@mb{\v@lX}\space\repdecn@mb{\v@lY}%
     \space\repdecn@mb{\v@lXa}\space\repdecn@mb{\v@lYa}}}}
\ctr@ld@f\def\figdrawfcconnect[#1]{{\ifCUR@PS\ifGR@cri\PSc@mment{fcconnect Points=#1}%
    \Q@s@tfillmode{no}\s@uvc@ntr@l\et@tpsfcconnect\resetc@ntr@l{2}%
    \fcc@nnect@[#1]\resetc@ntr@l\et@tpsfcconnect\PSc@mment{End fcconnect}\fi\fi}}
\ctr@ld@f\def\fcc@nnect@[#1]{\let\N@rm=\n@rmeucDD\def\list@num{#1}%
    \extrairelepremi@r\Ai@\de\list@num\edef\pr@m{\Ai@}\v@leur=\z@\p@rtent=\@ne\c@llgtot%
    \ifcase\fclin@typ@\edef\list@num{[\pr@m,#1,\Ai@}\expandafter\figdrawcurve\list@num]%
    \else\ifdim\fclin@r@d\p@>\z@\Pslin@conge[#1]\else\figdrawline[#1]\fi\fi%
    \v@leur=\@rrowp@s\v@leur\edef\list@num{#1,\Ai@,0}%
    \extrairelepremi@r\Ai@\de\list@num\mili@u=\epsil@n\c@llgpart%
    \advance\mili@u-\epsil@n\advance\mili@u-\delt@\advance\v@leur-\mili@u%
    \ifcase\fclin@typ@\invers@\mili@u\delt@%
    \ifnum\@rrowr@fpt>\z@\advance\delt@-\v@leur\v@leur=\delt@\fi%
    \v@leur=\repdecn@mb\v@leur\mili@u\edef\v@lt{\repdecn@mb\v@leur}%
    \extrairelepremi@r\Ak@\de\list@num%
    \figvectPDD-1[\pr@m,\Aj@]\figpttraDD-6:=\Ai@/\curv@roundness,-1/%
    \figvectPDD-1[\Ak@,\Ai@]\figpttraDD-7:=\Aj@/\curv@roundness,-1/%
    \delt@=\@rrowheadlength\p@\delt@=\C@AHANG\delt@\edef\R@dius{\repdecn@mb{\delt@}}%
    \ifcase\@rrowr@fpt%
    \FigptintercircB@zDD-8::\v@lt,\R@dius[\Ai@,-6,-7,\Aj@]\Q@arrowheadDD[-5,-8]\else%
    \FigptintercircB@zDD-8::\v@lt,\R@dius[\Aj@,-7,-6,\Ai@]\Q@arrowheadDD[-8,-5]\fi%
    \else\advance\delt@-\v@leur%
    \p@rtentiere{\p@rtent}{\delt@}\edef\C@efun{\the\p@rtent}%
    \p@rtentiere{\p@rtent}{\v@leur}\edef\C@efde{\the\p@rtent}%
    \figptbaryDD-5:[\Ai@,\Aj@;\C@efun,\C@efde]\ifcase\@rrowr@fpt%
    \delt@=\@rrowheadlength\unit@\delt@=\C@AHANG\delt@\edef\t@ille{\repdecn@mb{\delt@}}%
    \figvectPDD-2[\Ai@,\Aj@]\vecunit@{-2}{-2}\figpttraDD-5:=-5/\t@ille,-2/\fi%
    \Q@arrowheadDD[\Ai@,-5]\fi}
\ctr@ld@f\def\c@llgtot{\@ecfor\Aj@:=\list@num\do{\figvectP-1[\Ai@,\Aj@]\N@rm\delt@{-1}%
    \advance\v@leur\delt@\advance\p@rtent\@ne\edef\Ai@{\Aj@}}}
\ctr@ld@f\def\c@llgpart{\extrairelepremi@r\Aj@\de\list@num\figvectP-1[\Ai@,\Aj@]\N@rm\delt@{-1}%
    \advance\mili@u\delt@\ifdim\mili@u<\v@leur\edef\pr@m{\Ai@}\edef\Ai@{\Aj@}\c@llgpart\fi}
\ctr@ld@f\def\Pslin@conge[#1]{\ifnum\p@rtent>\tw@{\def\list@num{#1}%
    \extrairelepremi@r\Ai@\de\list@num\extrairelepremi@r\Aj@\de\list@num%
    \figptcopy-6:/\Ai@/\figvectP-3[\Ai@,\Aj@]\vecunit@{-3}{-3}\v@lmax=\result@t%
    \@ecfor\Ak@:=\list@num\do{\figvectP-4[\Aj@,\Ak@]\vecunit@{-4}{-4}%
    \minim@m\v@lmin\v@lmax\result@t\v@lmax=\result@t%
    \det@rm\delt@[-3,-4]\maxim@m\mili@u{\delt@}{-\delt@}\ifdim\mili@u>\Cepsil@n%
    \ifdim\delt@>\z@\figgetangleDD\Angl@[\Aj@,\Ak@,\Ai@]\else%
    \figgetangleDD\Angl@[\Aj@,\Ai@,\Ak@]\fi%
    \v@leur=\PI@deg\advance\v@leur-\Angl@\p@\divide\v@leur\tw@%
    \edef\Angl@{\repdecn@mb\v@leur}\c@ssin{\C@}{\S@}{\Angl@}\v@leur=\fclin@r@d\unit@%
    \v@leur=\S@\v@leur\mili@u=\C@\p@\invers@\mili@u\mili@u%
    \v@leur=\repdecn@mb{\mili@u}\v@leur%
    \minim@m\v@leur\v@leur\v@lmin\edef\t@ille{\repdecn@mb{\v@leur}}%
    \figpttra-5:=\Aj@/-\t@ille,-3/\figdrawline[-6,-5]\figpttra-6:=\Aj@/\t@ille,-4/%
    \figvectNVDD-3[-3]\figvectNVDD-8[-4]\inters@cDD-7:[-5,-3;-6,-8]%
    \ifdim\delt@>\z@\figdrawarccircP-7;\fclin@r@d[-5,-6]\else\figdrawarccircP-7;\fclin@r@d[-6,-5]\fi%
    \else\figdrawline[-6,\Aj@]\figptcopy-6:/\Aj@/\fi
    \edef\Ai@{\Aj@}\edef\Aj@{\Ak@}\figptcopy-3:/-4/}\figdrawline[-6,\Aj@]}\else\figdrawline[#1]\fi}
\ctr@ld@f\def\figdrawfcnode[#1]#2{{\ifCUR@PS\ifGR@cri\PSc@mment{fcnode Points=#1}%
    \s@uvc@ntr@l\et@tpsfcnode\resetc@ntr@l{2}%
    \def\t@xt@{#2}\ifx\t@xt@\empty\def\g@tt@xt{\setbox\Gb@x=\hbox{\Figg@tT{\p@int}}}%
    \else\def\g@tt@xt{\setbox\Gb@x=\hbox{#2}}\fi%
    \v@lmin=\h@rdfcXp@dd\advance\v@lmin\Xp@dd\unit@\multiply\v@lmin\tw@%
    \v@lmax=\h@rdfcYp@dd\advance\v@lmax\Yp@dd\unit@\multiply\v@lmax\tw@%
    \Figv@ctCreg-8(\unit@,-\unit@)\def\list@num{#1}%
    \delt@=\CUR@width bp\divide\delt@\tw@%
    \fcn@de\PSc@mment{End fcnode}\resetc@ntr@l\et@tpsfcnode\fi\fi}}
\ctr@ld@f\def\d@butn@de{\g@tt@xt\v@lX=\wd\Gb@x%
    \v@lY=\ht\Gb@x\advance\v@lY\dp\Gb@x\advance\v@lX\v@lmin\advance\v@lY\v@lmax}
\ctr@ld@f\def\fcn@deE{%
    \@ecfor\p@int:=\list@num\do{\d@butn@de\v@lX=\unssqrttw@\v@lX\v@lY=\unssqrttw@\v@lY%
    \ifdim\thickn@ss\p@>\z@
    \v@lXa=\v@lX\advance\v@lXa\delt@\v@lXa=\ptT@unit@\v@lXa\edef\XR@d{\repdecn@mb\v@lXa}%
    \v@lYa=\v@lY\advance\v@lYa\delt@\v@lYa=\ptT@unit@\v@lYa\edef\YR@d{\repdecn@mb\v@lYa}%
    \arct@n\v@leur(\v@lXa,\v@lYa)\v@leur=\rdT@deg\v@leur\edef\@nglde{\repdecn@mb\v@leur}%
    {\c@lptellDD-2::\p@int;\XR@d,\YR@d(\@nglde)}
    \advance\v@leur-\PI@deg\edef\@nglun{\repdecn@mb\v@leur}%
    {\c@lptellDD-3::\p@int;\XR@d,\YR@d(\@nglun)}%
    \figptstra-6=-3,-2,\p@int/\thickn@ss,-8/\Q@s@tfillmode{yes}%
    \Pss@tspecifSt{color=\DDV@thickcolor}%
    \figdrawline[-2,-3,-6,-5]\figdrawarcell-4;\XR@d,\YR@d(\@nglun,\@nglde,0)%
    \Psrest@reSt{color=\DDV@thickcolor}\fi
    \v@lX=\ptT@unit@\v@lX\v@lY=\ptT@unit@\v@lY%
    \edef\XR@d{\repdecn@mb\v@lX}\edef\YR@d{\repdecn@mb\v@lY}%
    \Q@s@tfillmode{yes}\Pss@tspecifSt{color=\fcbgc@lor}%
    \figdrawarcell\p@int;\XR@d,\YR@d(0,360,0)%
    \Q@s@tfillmode{no}\Psrest@reSt{color=\fcbgc@lor}\figdrawarcell\p@int;\XR@d,\YR@d(0,360,0)}}
\ctr@ld@f\def\fcn@deL{\delt@=\ptT@unit@\delt@\edef\t@ille{\repdecn@mb\delt@}%
    \@ecfor\p@int:=\list@num\do{\Figg@tXYa{\p@int}\d@butn@de%
    \ifdim\v@lX>\v@lY\itis@Ktrue\else\itis@Kfalse\fi%
    \advance\v@lXa-\v@lX\Figp@intreg-1:(\v@lXa,\v@lYa)%
    \advance\v@lXa\v@lX\advance\v@lYa-\v@lY\Figp@intreg-2:(\v@lXa,\v@lYa)%
    \advance\v@lXa\v@lX\advance\v@lYa\v@lY\Figp@intreg-3:(\v@lXa,\v@lYa)%
    \advance\v@lXa-\v@lX\advance\v@lYa\v@lY\Figp@intreg-4:(\v@lXa,\v@lYa)%
    \ifdim\thickn@ss\p@>\z@
    \Figg@tXYa{\p@int}\Q@s@tfillmode{yes}\Pss@tspecifSt{color=\DDV@thickcolor}%
    \c@lpt@xt{-1}{-4}\c@lpt@xt@\v@lXa\v@lYa\v@lX\v@lY\c@rre\delt@%
    \Figp@intregDD-9:(\v@lZ,\v@lYa)\Figp@intregDD-11:(\v@lZa,\v@lYa)%
    \c@lpt@xt{-4}{-3}\c@lpt@xt@\v@lYa\v@lXa\v@lY\v@lX\delt@\c@rre%
    \Figp@intregDD-12:(\v@lXa,\v@lZ)\Figp@intregDD-10:(\v@lXa,\v@lZa)%
    \ifitis@K\figptstra-7=-9,-10,-11/\thickn@ss,-8/\figdrawline[-9,-11,-5,-6,-7]\else%
    \figptstra-7=-10,-11,-12/\thickn@ss,-8/\figdrawline[-10,-12,-5,-6,-7]\fi%
    \Psrest@reSt{color=\DDV@thickcolor}\fi
    \Q@s@tfillmode{yes}\Pss@tspecifSt{color=\fcbgc@lor}\figdrawline[-1,-2,-3,-4]%
    \Q@s@tfillmode{no}\Psrest@reSt{color=\fcbgc@lor}\figdrawline[-1,-2,-3,-4,-1]}}
\ctr@ld@f\def\c@lpt@xt#1#2{\figvectN-7[#1,#2]\vecunit@{-7}{-7}\figpttra-5:=#1/\t@ille,-7/%
    \figvectP-7[#1,#2]\Figg@tXY{-7}\c@rre=\v@lX\delt@=\v@lY\Figg@tXY{-5}}
\ctr@ld@f\def\c@lpt@xt@#1#2#3#4#5#6{\v@lZ=#6\invers@{\v@lZ}{\v@lZ}\v@leur=\repdecn@mb{#5}\v@lZ%
    \v@lZ=#2\advance\v@lZ-#4\mili@u=\repdecn@mb{\v@leur}\v@lZ%
    \v@lZ=#3\advance\v@lZ\mili@u\v@lZa=-\v@lZ\advance\v@lZa\tw@#1}
\ctr@ld@f\def\fcn@deR{\@ecfor\p@int:=\list@num\do{\Figg@tXYa{\p@int}\d@butn@de%
    \advance\v@lXa-0.5\v@lX\advance\v@lYa-0.5\v@lY\Figp@intreg-1:(\v@lXa,\v@lYa)%
    \advance\v@lXa\v@lX\Figp@intreg-2:(\v@lXa,\v@lYa)%
    \advance\v@lYa\v@lY\Figp@intreg-3:(\v@lXa,\v@lYa)%
    \advance\v@lXa-\v@lX\Figp@intreg-4:(\v@lXa,\v@lYa)%
    \ifdim\thickn@ss\p@>\z@
    \Q@s@tfillmode{yes}\Pss@tspecifSt{color=\DDV@thickcolor}%
    \Figv@ctCreg-5(-\delt@,-\delt@)\figpttra-9:=-1/1,-5/%
    \Figv@ctCreg-5(\delt@,-\delt@)\figpttra-10:=-2/1,-5/%
    \Figv@ctCreg-5(\delt@,\delt@)\figpttra-11:=-3/1,-5/%
    \figptstra-7=-9,-10,-11/\thickn@ss,-8/\figdrawline[-9,-11,-5,-6,-7]%
    \Psrest@reSt{color=\DDV@thickcolor}\fi
    \Q@s@tfillmode{yes}\Pss@tspecifSt{color=\fcbgc@lor}\figdrawline[-1,-2,-3,-4]%
    \Q@s@tfillmode{no}\Psrest@reSt{color=\fcbgc@lor}\figdrawline[-1,-2,-3,-4,-1]}}
\ctr@ld@f\def\Pssetfl@wchart#1=#2|{\keln@mtr#1|%
    \def\n@mref{arr}\ifx\l@debut\n@mref\expandafter\keln@mtr\l@suite|%
     \def\n@mref{owp}\ifx\l@debut\n@mref\update@ttr\D@FTfcarrowposition\P@setfcarrowposition{#2}\else
     \def\n@mref{owr}\ifx\l@debut\n@mref\update@ttr\D@FTfcarrowrefpt\P@setfcarrowrefpt{#2}\else
     \W@rnmesAttr{figset flowchart}{#1}\fi\fi\else%
    \def\n@mref{bgc}\ifx\l@debut\n@mref\update@ttr\D@FTfcbgcolor\P@setfcbgcolor{#2}\else
    \def\n@mref{lin}\ifx\l@debut\n@mref\update@ttr\D@FTfcline\P@setfcline{#2}\else
    \def\n@mref{pad}\ifx\l@debut\n@mref\update@ttr\D@FTfcxpadding\P@setfcxpadding{#2}%
                                       \update@ttr\D@FTfcypadding\P@setfcypadding{#2}\else
    \def\n@mref{rad}\ifx\l@debut\n@mref\update@ttr\D@FTfcradius\P@setfcradius{#2}\else
    \def\n@mref{sha}\ifx\l@debut\n@mref\update@ttr\D@FTfcshape\P@setfcshape{#2}\else
    \def\n@mref{thi}\ifx\l@debut\n@mref\expandafter\keln@mtr\l@suite|%
     \def\n@mref{ckc}\ifx\l@debut\n@mref\update@ttr\D@FTref\P@setfcthickcolor{#2}\else
     \def\n@mref{ckn}\ifx\l@debut\n@mref\update@ttr\D@FTfcthickness\P@setfcthickness{#2}\else
     \W@rnmesAttr{figset flowchart}{#1}\fi\fi\else%
    \def\n@mref{xpa}\ifx\l@debut\n@mref\update@ttr\D@FTfcxpadding\P@setfcxpadding{#2}\else
    \def\n@mref{ypa}\ifx\l@debut\n@mref\update@ttr\D@FTfcypadding\P@setfcypadding{#2}\else
    \W@rnmesAttr{figset flowchart}{#1}\fi\fi\fi\fi\fi\fi\fi\fi\fi}
\ctr@ln@m\@rrowp@s
\ctr@ld@f\def\P@setfcarrowposition#1{\edef\@rrowp@s{#1}}
\ctr@ln@m\@rrowr@fpt
\ctr@ld@f\def\P@setfcarrowrefpt#1{\setfcr@fpt#1|}
\ctr@ld@f\def\setfcr@fpt#1#2|{\if#1e\def\@rrowr@fpt{1}\else\def\@rrowr@fpt{0}\fi}
\ctr@ln@m\fcbgc@lor
\ctr@ld@f\def\P@setfcbgcolor#1{\edef\fcbgc@lor{#1}}
\ctr@ln@m\fclin@typ@
\ctr@ld@f\def\P@setfcline#1{\setfccurv@#1|}
\ctr@ld@f\def\setfccurv@#1#2|{\if#1c\def\fclin@typ@{0}\else\def\fclin@typ@{1}\fi}
\ctr@ln@m\fclin@r@d
\ctr@ld@f\def\P@setfcradius#1{\edef\fclin@r@d{#1}}
\ctr@ln@m\fcn@de \ctr@ln@m\fcsh@pe
\ctr@ln@m\h@rdfcXp@dd \ctr@ln@m\h@rdfcYp@dd
\ctr@ld@f\def\P@setfcshape#1{\setfcshap@#1|}
\ctr@ld@f\def\setfcshap@#1#2|{%
    \if#1e\let\fcn@de=\fcn@deE\def\h@rdfcXp@dd{4pt}\def\h@rdfcYp@dd{4pt}%
     \edef\fcsh@pe{ellipse}\else%
    \if#1l\let\fcn@de=\fcn@deL\def\h@rdfcXp@dd{4pt}\def\h@rdfcYp@dd{4pt}%
     \edef\fcsh@pe{lozenge}\else%
          \let\fcn@de=\fcn@deR\def\h@rdfcXp@dd{6pt}\def\h@rdfcYp@dd{6pt}%
     \edef\fcsh@pe{rectangle}\fi\fi}
\ctr@ln@m\DDV@thickcolor
\ctr@ld@f\def\P@setfcthickcolor#1{\edef\DDV@thickcolor{#1}}
\ctr@ln@m\thickn@ss
\ctr@ld@f\def\P@setfcthickness#1{\edef\thickn@ss{#1}}
\ctr@ln@m\Xp@dd
\ctr@ld@f\def\P@setfcxpadding#1{\edef\Xp@dd{#1}}
\ctr@ln@m\Yp@dd
\ctr@ld@f\def\P@setfcypadding#1{\edef\Yp@dd{#1}}
\ctr@ld@f\def\figdrawline[#1]{{\ifCUR@PS\ifGR@cri\PSc@mment{line Points=#1}%
    \let\figdrawlign@=\Pslign@P\Pslin@{#1}\PSc@mment{End line}\fi\fi}}
\ctr@ld@f\def\figdrawlineF#1{{\ifCUR@PS\ifGR@cri\PSc@mment{lineF Filename=#1}%
    \let\figdrawlign@=\Pslign@F\Pslin@{#1}\PSc@mment{End lineF}\fi\fi}}
\ctr@ld@f\def\figdrawlineC(#1){{\ifCUR@PS\ifGR@cri\PSc@mment{lineC}%
    \let\figdrawlign@=\Pslign@C\Pslin@{#1}\PSc@mment{End lineC}\fi\fi}}
\ctr@ld@f\def\Pslin@#1{\iffillm@de\figdrawlign@{#1}%
    \f@gfill%
    \else\figdrawlign@{#1}\ifx\derp@int\premp@int%
    \f@gclosestroke%
    \else\f@gstroke\fi\fi}
\ctr@ld@f\def\Pslign@P#1{\def\list@num{#1}\extrairelepremi@r\p@int\de\list@num%
    \edef\premp@int{\p@int}\f@gnewpath%
    \PSwrit@cmd{\p@int}{\c@mmoveto}{\fwf@g}%
    \@ecfor\p@int:=\list@num\do{\PSwrit@cmd{\p@int}{\c@mlineto}{\fwf@g}%
    \edef\derp@int{\p@int}}}
\ctr@ld@f\def\Pslign@F#1{\s@uvc@ntr@l\et@tPslign@F\setc@ntr@l{2}\openin\frf@g=#1\relax%
    \ifeof\frf@g\message{*** File #1 not found !}\end\else%
    \read\frf@g to\tr@c\edef\premp@int{\tr@c}\expandafter\extr@ctCF\tr@c:%
    \f@gnewpath\PSwrit@cmd{-1}{\c@mmoveto}{\fwf@g}%
    \loop\read\frf@g to\tr@c\ifeof\frf@g\mored@tafalse\else\mored@tatrue\fi%
    \ifmored@ta\expandafter\extr@ctCF\tr@c:\PSwrit@cmd{-1}{\c@mlineto}{\fwf@g}%
    \edef\derp@int{\tr@c}\repeat\fi\closein\frf@g\resetc@ntr@l\et@tPslign@F}
\ctr@ln@m\extr@ctCF
\ctr@ld@f\def\extr@ctCFDD#1 #2:{\v@lX=#1\unit@\v@lY=#2\unit@\Figp@intregDD-1:(\v@lX,\v@lY)}
\ctr@ld@f\def\extr@ctCFTD#1 #2 #3:{\v@lX=#1\unit@\v@lY=#2\unit@\v@lZ=#3\unit@%
    \Figp@intregTD-1:(\v@lX,\v@lY,\v@lZ)}
\ctr@ld@f\def\Pslign@C#1{\s@uvc@ntr@l\et@tPslign@C\setc@ntr@l{2}%
    \def\list@num{#1}\extrairelepremi@r\p@int\de\list@num%
    \edef\premp@int{\p@int}\f@gnewpath%
    \expandafter\Pslign@C@\p@int:\PSwrit@cmd{-1}{\c@mmoveto}{\fwf@g}%
    \@ecfor\p@int:=\list@num\do{\expandafter\Pslign@C@\p@int:%
    \PSwrit@cmd{-1}{\c@mlineto}{\fwf@g}\edef\derp@int{\p@int}}%
    \resetc@ntr@l\et@tPslign@C}
\ctr@ld@f\def\Pslign@C@#1 #2:{{\def\t@xt@{#1}\ifx\t@xt@\empty\Pslign@C@#2:
    \else\extr@ctCF#1 #2:\fi}}
\ctr@ld@f\def\Pssetm@sh#1=#2|{\keln@mde#1|%
    \def\n@mref{co}\ifx\l@debut\n@mref\update@ttr\D@FTref\P@setmeshcolor{#2}\else
    \def\n@mref{da}\ifx\l@debut\n@mref\update@ttr\D@FTref\P@setmeshdash{#2}\else
    \def\n@mref{di}\ifx\l@debut\n@mref\update@ttr\D@FTmeshdiag\Q@s@tmeshdiag{#2}\else
    \def\n@mref{wi}\ifx\l@debut\n@mref\update@ttr\D@FTref\P@setmeshwidth{#2}\else
    \W@rnmesAttr{figset mesh}{#1}\fi\fi\fi\fi}
\ctr@ln@m\c@ntrolmesh
\ctr@ld@f\def\Q@s@tmeshdiag#1{\edef\c@ntrolmesh{#1}}
\ctr@ld@f\def\D@FTmeshdiag{0}    
\ctr@ln@m\DDV@meshcolor
\ctr@ld@f\def\P@setmeshcolor#1{\edef\DDV@meshcolor{#1}}
\ctr@ln@m\DDV@meshdash
\ctr@ld@f\def\P@setmeshdash#1{\edef\DDV@meshdash{#1}}
\ctr@ln@m\DDV@meshwidth
\ctr@ld@f\def\P@setmeshwidth#1{\edef\DDV@meshwidth{#1}}
\ctr@ld@f\def\figdrawmesh#1,#2[#3,#4,#5,#6]{{\ifCUR@PS\ifGR@cri%
    \PSc@mment{mesh N1=#1, N2=#2, Quadrangle=[#3,#4,#5,#6]}\s@uvc@ntr@l\et@tpsmesh%
    \Pss@tspecifSt{color=\DDV@meshcolor,dash=\DDV@meshdash,width=\DDV@meshwidth}%
    \setc@ntr@l{2}%
    \ifnum#1>\@ne\Psmeshp@rt#1[#3,#4,#5,#6]\fi%
    \ifnum#2>\@ne\Psmeshp@rt#2[#4,#5,#6,#3]\fi%
    \ifnum\c@ntrolmesh>\z@\Psmeshdi@g#1,#2[#3,#4,#5,#6]\fi%
    \ifnum\c@ntrolmesh<\z@\Psmeshdi@g#2,#1[#4,#5,#6,#3]\fi%
    \Psrest@reSt{color=\DDV@meshcolor,dash=\DDV@meshdash,width=\DDV@meshwidth}%
    \figdrawline[#3,#4,#5,#6,#3]\PSc@mment{End mesh}\resetc@ntr@l\et@tpsmesh\fi\fi}}
\ctr@ld@f\def\Psmeshp@rt#1[#2,#3,#4,#5]{{\l@mbd@un=\@ne\l@mbd@de=#1\loop%
    \ifnum\l@mbd@un<#1\advance\l@mbd@de\m@ne\figptbary-1:[#2,#3;\l@mbd@de,\l@mbd@un]%
    \figptbary-2:[#5,#4;\l@mbd@de,\l@mbd@un]\figdrawline[-1,-2]\advance\l@mbd@un\@ne\repeat}}
\ctr@ld@f\def\Psmeshdi@g#1,#2[#3,#4,#5,#6]{\figptcopy-2:/#3/\figptcopy-3:/#6/%
    \l@mbd@un=\z@\l@mbd@de=#1\loop\ifnum\l@mbd@un<#1%
    \advance\l@mbd@un\@ne\advance\l@mbd@de\m@ne\figptcopy-1:/-2/\figptcopy-4:/-3/%
    \figptbary-2:[#3,#4;\l@mbd@de,\l@mbd@un]%
    \figptbary-3:[#6,#5;\l@mbd@de,\l@mbd@un]\Psmeshdi@gp@rt#2[-1,-2,-3,-4]\repeat}
\ctr@ld@f\def\Psmeshdi@gp@rt#1[#2,#3,#4,#5]{{\l@mbd@un=\z@\l@mbd@de=#1\loop%
    \ifnum\l@mbd@un<#1\figptbary-5:[#2,#5;\l@mbd@de,\l@mbd@un]%
    \advance\l@mbd@de\m@ne\advance\l@mbd@un\@ne%
    \figptbary-6:[#3,#4;\l@mbd@de,\l@mbd@un]\figdrawline[-5,-6]\repeat}}
\ctr@ln@m\figdrawnormal
\ctr@ld@f\def\Q@normalDD#1,#2[#3,#4]{{\ifCUR@PS\ifGR@cri%
    \PSc@mment{normal Length=#1, Lambda=#2 [Pt1,Pt2]=[#3,#4]}%
    \s@uvc@ntr@l\et@tpsnormal\resetc@ntr@l{2}\figptendnormal-6::#1,#2[#3,#4]%
    \figptcopyDD-5:/-1/\figdrawarrow[-5,-6]%
    \PSc@mment{End normal}\resetc@ntr@l\et@tpsnormal\fi\fi}}
\ctr@ld@f\def\figreset#1{\trtlis@rg{#1}{\Psreset@}}
\ctr@ld@f\def\Psreset@#1|{\def\t@xt@{#1}\ifx\t@xt@\empty\P@resetg@n
    \else\keln@mde#1|%
    \def\n@mref{al}\ifx\l@debut\n@mref%
        \figset altitude(blcolor=default,bldash=default,blwidth=default,%
        sqcolor=default,sqdash=default,sqwidth=default)\else
    \def\n@mref{ar}\ifx\l@debut\n@mref\figresetarrowhead\else
    \def\n@mref{cu}\ifx\l@debut\n@mref\figset curve(roundness=\D@FTroundness)\else
    \def\n@mref{ge}\ifx\l@debut\n@mref\P@resetg@n\else
    \def\n@mref{fl}\ifx\l@debut\n@mref%
        \figset flowchart(arrowp=\D@FTfcarrowposition,arrowr=\D@FTfcarrowrefpt,%
	bgcolor=\D@FTfcbgcolor,line=\D@FTfcline,radius=\D@FTfcradius,%
	shape=\D@FTfcshape,thickcolor=default,thickness=\D@FTfcthickness,%
	xpadd=\D@FTfcxpadding,ypadd=\D@FTfcypadding)\else
    \def\n@mref{me}\ifx\l@debut\n@mref\figset mesh(diag=\D@FTmeshdiag,%
        color=default,dash=default,width=default)\else
    \def\n@mref{tr}\ifx\l@debut\n@mref%
        \figset trimesh(color=default,dash=default,width=default)\else
    \W@rnmeskwd{figreset}{#1}\fi\fi\fi\fi\fi\fi\fi\fi}
\ctr@ld@f\def\P@resetg@n{\figset (color=\D@FTcolor,dash=\D@FTdash,fill=\D@FTfill,%
    join=\D@FTjoin,width=\D@FTwidth)}
\ctr@ld@f\def\figset#1(#2){\def\t@xt@{#1}\ifx\t@xt@\empty\trtlis@rg{#2}{\Pssetg@n}
    \else\keln@mde#1|%
    \def\n@mref{al}\ifx\l@debut\n@mref\trtlis@rg{#2}{\Psset@lti}\else
    \def\n@mref{ar}\ifx\l@debut\n@mref\trtlis@rg{#2}{\Psset@rrowhe@d}\else
    \def\n@mref{cu}\ifx\l@debut\n@mref\trtlis@rg{#2}{\Pssetc@rve}\else
    \def\n@mref{fl}\ifx\l@debut\n@mref\trtlis@rg{#2}{\Pssetfl@wchart}\else
    \def\n@mref{ge}\ifx\l@debut\n@mref\trtlis@rg{#2}{\Pssetg@n}\else
    \def\n@mref{me}\ifx\l@debut\n@mref\trtlis@rg{#2}{\Pssetm@sh}\else
    \def\n@mref{pr}\ifx\l@debut\n@mref\ifCUR@PS\W@rnmesIgn{figset proj(...)}%
     \else\trtlis@rg{#2}{\Figsetpr@j}\fi\else
    \def\n@mref{tr}\ifx\l@debut\n@mref\trtlis@rg{#2}{\Pssettrim@sh}\else
    \def\n@mref{wr}\ifx\l@debut\n@mref\let\M@cro=\Figsetwr@te\trtlis@rgtok{#2,|}\else
    \W@rnmeskwd{figset}{#1}\fi\fi\fi\fi\fi\fi\fi\fi\fi\fi\ignorespaces}
\ctr@ld@f\def\figsetdefault#1(#2){\ifCUR@PS\W@rnmesIgn{figsetdefault}\else%
    \def\t@xt@{#1}\ifx\t@xt@\empty\trtlis@rg{#2}{\Pssd@g@n}\else\keln@mun#1|
    \def\n@mref{a}\ifx\l@debut\n@mref\trtlis@rg{#2}{\Pssd@@rrowhe@d}\else
    \def\n@mref{c}\ifx\l@debut\n@mref\trtlis@rg{#2}{\Pssd@c@rve}\else
    \def\n@mref{g}\ifx\l@debut\n@mref\trtlis@rg{#2}{\Pssd@g@n}\else
    \def\n@mref{f}\ifx\l@debut\n@mref\trtlis@rg{#2}{\Pssd@fl@wchart}\else
    \def\n@mref{m}\ifx\l@debut\n@mref\trtlis@rg{#2}{\Pssd@m@sh}\else
    \W@rnmeskwd{figsetdefault}{#1}\fi\fi\fi\fi\fi\fi\initpss@ttings\fi}
\ctr@ld@f\def\Pssd@g@n#1=#2|{\keln@mun#1|%
    \def\n@mref{c}\ifx\l@debut\n@mref\edef\D@FTcolor{#2}\else
    \def\n@mref{d}\ifx\l@debut\n@mref\edef\D@FTdash{#2}\else
    \def\n@mref{f}\ifx\l@debut\n@mref\edef\D@FTfill{#2}\else
    \def\n@mref{j}\ifx\l@debut\n@mref\edef\D@FTjoin{#2}\else
    \def\n@mref{u}\ifx\l@debut\n@mref\edef\D@FTupdate{#2}\Q@s@tupdate{#2}\else
    \def\n@mref{w}\ifx\l@debut\n@mref\edef\D@FTwidth{#2}\else
    \W@rnmesAttr{figsetdefault}{#1}\fi\fi\fi\fi\fi\fi}
\ctr@ld@f\def\Pssd@@rrowhe@d#1=#2|{\keln@mun#1|%
    \def\n@mref{a}\ifx\l@debut\n@mref\edef\D@FTarrowheadangle{#2}\else
    \def\n@mref{f}\ifx\l@debut\n@mref\edef\D@FTarrowheadfill{#2}\else
    \def\n@mref{l}\ifx\l@debut\n@mref\y@tiunit{#2}\ifunitpr@sent%
     \edef\D@FTh@rdahlength{#2}\else\edef\D@FTh@rdahlength{#2pt}%
     \message{*** \BS@ figsetdefault (..., #1=#2, ...) : unit is missing, pt is assumed.}%
     \fi\else
    \def\n@mref{o}\ifx\l@debut\n@mref\edef\D@FTarrowheadout{#2}\else
    \def\n@mref{r}\ifx\l@debut\n@mref\edef\D@FTarrowheadratio{#2}\else
    \W@rnmesAttr{figsetdefault arrowhead}{#1}\fi\fi\fi\fi\fi}
\ctr@ld@f\def\Pssd@c@rve#1=#2|{\keln@mun#1|%
    \def\n@mref{r}\ifx\l@debut\n@mref\edef\D@FTroundness{#2}\else%
    \W@rnmesAttr{figsetdefault curve}{#1}\fi}
\ctr@ld@f\def\Pssd@fl@wchart#1=#2|{\keln@mtr#1|%
    \def\n@mref{arr}\ifx\l@debut\n@mref\expandafter\keln@mtr\l@suite|%
     \def\n@mref{owp}\ifx\l@debut\n@mref\edef\D@FTfcarrowposition{#2}\else
     \def\n@mref{owr}\ifx\l@debut\n@mref\edef\D@FTfcarrowrefpt{#2}\else
                     \W@rnmesAttr{figsetdefault flowchart}{#1}\fi\fi\else%
    \def\n@mref{bgc}\ifx\l@debut\n@mref\edef\D@FTfcbgcolor{#2}\else
    \def\n@mref{lin}\ifx\l@debut\n@mref\edef\D@FTfcline{#2}\else
    \def\n@mref{pad}\ifx\l@debut\n@mref\edef\D@FTfcxpadding{#2}%
                    \edef\D@FTfcypadding{#2}\else
    \def\n@mref{rad}\ifx\l@debut\n@mref\edef\D@FTfcradius{#2}\else
    \def\n@mref{sha}\ifx\l@debut\n@mref\edef\D@FTfcshape{#2}\else
    \def\n@mref{thi}\ifx\l@debut\n@mref\expandafter\keln@mtr\l@suite|%
     \def\n@mref{ckn}\ifx\l@debut\n@mref\edef\D@FTfcthickness{#2}\else
                     \W@rnmesAttr{figsetdefault flowchart}{#1}\fi\else%
    \def\n@mref{xpa}\ifx\l@debut\n@mref\edef\D@FTfcxpadding{#2}\else
    \def\n@mref{ypa}\ifx\l@debut\n@mref\edef\D@FTfcypadding{#2}\else
    \W@rnmesAttr{figsetdefault flowchart}{#1}\fi\fi\fi\fi\fi\fi\fi\fi\fi}
\ctr@ld@f\def\D@FTfcarrowposition{0.5}
\ctr@ld@f\def\D@FTfcarrowrefpt{start}
\ctr@ld@f\def\D@FTfcbgcolor{1}
\ctr@ld@f\def\D@FTfcline{polygon}
\ctr@ld@f\def\D@FTfcradius{0}
\ctr@ld@f\def\D@FTfcshape{rectangle}
\ctr@ld@f\def\D@FTfcthickness{0}
\ctr@ld@f\def\D@FTfcxpadding{0}
\ctr@ld@f\def\D@FTfcypadding{0}
\ctr@ld@f\def\Pssd@m@sh#1=#2|{\keln@mun#1|%
    \def\n@mref{d}\ifx\l@debut\n@mref\edef\D@FTmeshdiag{#2}\else%
    \W@rnmesAttr{figsetdefault mesh}{#1}\fi}
\ctr@ln@w{newif}\iffillm@de
\ctr@ld@f\def\Q@s@tfillmode#1{\expandafter\setfillm@de#1:}
\ctr@ld@f\def\setfillm@de#1#2:{\if#1n\fillm@defalse\else\fillm@detrue\fi}
\ctr@ld@f\def\D@FTfill{no}     
\ctr@ln@w{newif}\ifGRupdatem@de
\ctr@ld@f\def\Q@s@tupdate#1{\ifCUR@PS\W@rnmesIgn{figset (update=...)}%
    \else\expandafter\setupd@te#1:\fi}
\ctr@ld@f\def\setupd@te#1#2:{\if#1n\GRupdatem@defalse\else\GRupdatem@detrue\fi}
\ctr@ld@f\def\D@FTupdate{no}     
\ctr@ln@m\CUR@color \ctr@ln@m\CUR@colorc@md
\ctr@ld@f\def\s@uvcolor#1{\edef#1{\CUR@color}}
\ctr@ld@f\def\D@FTcolor{0}       
\ctr@ld@f\def\Pssetc@lor#1{\ifGR@cri\result@tent=\@ne\expandafter\c@lnbV@l#1 :%
    \def\CUR@color{}\def\CUR@colorc@md{}%
    \ifcase\result@tent\or\Q@s@tgray{#1}\or\or\Q@s@trgb{#1}\or\Q@s@tcmyk{#1}\fi\fi}
\ctr@ln@m\CUR@colorc@mdStroke
\ctr@ld@f\def\Q@s@tcmyk#1{\ifGR@cri\def\CUR@color{#1}\def\CUR@colorc@md{\c@msetcmykcolor}%
    \def\CUR@colorc@mdStroke{\c@msetcmykcolorStroke}%
    \ifCUR@PS\PSc@mment{setcmyk Color=#1}\us@primarC@lor\fi\fi}
\ctr@ld@f\def\Q@s@trgb#1{\ifGR@cri\def\CUR@color{#1}\def\CUR@colorc@md{\c@msetrgbcolor}%
    \def\CUR@colorc@mdStroke{\c@msetrgbcolorStroke}%
    \ifCUR@PS\PSc@mment{setrgb Color=#1}\us@primarC@lor\fi\fi}
\ctr@ld@f\def\Q@s@tgray#1{\ifGR@cri\def\CUR@color{#1}\def\CUR@colorc@md{\c@msetgray}%
    \def\CUR@colorc@mdStroke{\c@msetgrayStroke}%
    \ifCUR@PS\PSc@mment{setgray Gray level=#1}\us@primarC@lor\fi\fi}
\ctr@ln@m\fillc@md
\ctr@ld@f\def\us@primarC@lor{\immediate\write\fwf@g{\d@fprimarC@lor}%
    \let\fillc@md=\prfillc@md}
\ctr@ld@f\def\prfillc@md{\d@fprimarC@lor\space\c@mfill}
\ctr@ld@f\def\c@lnbV@l#1 #2:{\def\t@xt@{#1}\relax\ifx\t@xt@\empty\c@lnbV@l#2:
    \else\c@lnbV@l@#1 #2:\fi}
\ctr@ld@f\def\c@lnbV@l@#1 #2:{\def\t@xt@{#2}\ifx\t@xt@\empty%
    \def\t@xt@{#1}\ifx\t@xt@\empty\advance\result@tent\m@ne\fi
    \else\advance\result@tent\@ne\c@lnbV@l@#2:\fi}
\ctr@ld@f\def\Blackcmyk{0 0 0 1}
\ctr@ld@f\def\Whitecmyk{0 0 0 0}
\ctr@ld@f\def\Cyancmyk{1 0 0 0}
\ctr@ld@f\def\Magentacmyk{0 1 0 0}
\ctr@ld@f\def\Yellowcmyk{0 0 1 0}
\ctr@ld@f\def\Redcmyk{0 1 1 0}
\ctr@ld@f\def\Greencmyk{1 0 1 0}
\ctr@ld@f\def\Bluecmyk{1 1 0 0}
\ctr@ld@f\def\Graycmyk{0 0 0 0.50}
\ctr@ld@f\def\BrickRedcmyk{0 0.89 0.94 0.28} 
\ctr@ld@f\def\Browncmyk{0 0.81 1 0.60} 
\ctr@ld@f\def\ForestGreencmyk{0.91 0 0.88 0.12} 
\ctr@ld@f\def\Goldenrodcmyk{ 0 0.10 0.84 0} 
\ctr@ld@f\def\Marooncmyk{0 0.87 0.68 0.32} 
\ctr@ld@f\def\Orangecmyk{0 0.61 0.87 0} 
\ctr@ld@f\def\Purplecmyk{0.45 0.86 0 0} 
\ctr@ld@f\def\RoyalBluecmyk{1. 0.50 0 0} 
\ctr@ld@f\def\Violetcmyk{0.79 0.88 0 0} 
\ctr@ld@f\def\Blackrgb{0 0 0}
\ctr@ld@f\def\Whitergb{1 1 1}
\ctr@ld@f\def\Redrgb{1 0 0}
\ctr@ld@f\def\Greenrgb{0 1 0}
\ctr@ld@f\def\Bluergb{0 0 1}
\ctr@ld@f\def\Cyanrgb{0 1 1}
\ctr@ld@f\def\Magentargb{1 0 1}
\ctr@ld@f\def\Yellowrgb{1 1 0}
\ctr@ld@f\def\Grayrgb{0.5 0.5 0.5}
\ctr@ld@f\def\Chocolatergb{0.824 0.412 0.118}
\ctr@ld@f\def\DarkGoldenrodrgb{0.722 0.525 0.043}
\ctr@ld@f\def\DarkOrangergb{1 0.549 0}
\ctr@ld@f\def\Firebrickrgb{0.698 0.133 0.133}
\ctr@ld@f\def\ForestGreenrgb{0.133 0.545 0.133}
\ctr@ld@f\def\Goldrgb{1 0.843 0}
\ctr@ld@f\def\HotPinkrgb{1 0.412 0.706}
\ctr@ld@f\def\Maroonrgb{0.690 0.188 0.376}
\ctr@ld@f\def\Pinkrgb{1 0.753 0.796}
\ctr@ld@f\def\RoyalBluergb{0.255 0.412 0.882}
\ctr@ld@f\def\Pssetg@n#1=#2|{\keln@mun#1|%
    \def\n@mref{c}\ifx\l@debut\n@mref\update@ttr\D@FTcolor\Pssetc@lor{#2}\else
    \def\n@mref{d}\ifx\l@debut\n@mref\update@ttr\D@FTdash\Q@s@tdash{#2}\else
    \def\n@mref{f}\ifx\l@debut\n@mref\update@ttr\D@FTfill\Q@s@tfillmode{#2}\else
    \def\n@mref{j}\ifx\l@debut\n@mref\update@ttr\D@FTjoin\Q@s@tjoin{#2}\else
    \def\n@mref{u}\ifx\l@debut\n@mref\update@ttr\D@FTupdate\Q@s@tupdate{#2}\else
    \def\n@mref{w}\ifx\l@debut\n@mref\update@ttr\D@FTwidth\Q@s@twidth{#2}\else
    \W@rnmesAttr{figset}{#1}\fi\fi\fi\fi\fi\fi}
\ctr@ln@m\CUR@dash
\ctr@ld@f\def\s@uvdash#1{\edef#1{\CUR@dash}}
\ctr@ld@f\def\D@FTdash{1}        
\ctr@ld@f\def\Q@s@tdash#1{\ifGR@cri\edef\CUR@dash{#1}\ifCUR@PS\expandafter\Pssetd@sh#1 :\fi\fi}
\ctr@ld@f\def\Pssetd@shI#1{\PSc@mment{setdash Index=#1}\ifcase#1%
    \or\immediate\write\fwf@g{[] 0 \c@msetdash}
    \or\immediate\write\fwf@g{[6 2] 0 \c@msetdash}
    \or\immediate\write\fwf@g{[4 2] 0 \c@msetdash}
    \or\immediate\write\fwf@g{[2 2] 0 \c@msetdash}
    \or\immediate\write\fwf@g{[1 2] 0 \c@msetdash}
    \or\immediate\write\fwf@g{[2 4] 0 \c@msetdash}
    \or\immediate\write\fwf@g{[3 5] 0 \c@msetdash}
    \or\immediate\write\fwf@g{[3 3] 0 \c@msetdash}
    \or\immediate\write\fwf@g{[3 5 1 5] 0 \c@msetdash}
    \or\immediate\write\fwf@g{[6 4 2 4] 0 \c@msetdash}
    \fi}
\ctr@ld@f\def\Pssetd@sh#1 #2:{{\def\t@xt@{#1}\ifx\t@xt@\empty\Pssetd@sh#2:
    \else\def\t@xt@{#2}\ifx\t@xt@\empty\Pssetd@shI{#1}\else\s@mme=\@ne\def\debutp@t{#1}%
    \an@lysd@sh#2:\ifodd\s@mme\edef\debutp@t{\debutp@t\space\finp@t}\def\finp@t{0}\fi%
    \PSc@mment{setdash Pattern=#1 #2}%
    \immediate\write\fwf@g{[\debutp@t] \finp@t\space\c@msetdash}\fi\fi}}
\ctr@ld@f\def\an@lysd@sh#1 #2:{\def\t@xt@{#2}\ifx\t@xt@\empty\def\finp@t{#1}\else%
    \edef\debutp@t{\debutp@t\space#1}\advance\s@mme\@ne\an@lysd@sh#2:\fi}
\ctr@ln@m\CUR@width
\ctr@ld@f\def\s@uvwidth#1{\edef#1{\CUR@width}}
\ctr@ld@f\def\D@FTwidth{0.4}     
\ctr@ld@f\def\Q@s@twidth#1{\ifGR@cri\edef\CUR@width{#1}\ifCUR@PS%
    \PSc@mment{setwidth Width=#1}\immediate\write\fwf@g{#1 \c@msetlinewidth}\fi\fi}
\ctr@ln@m\CUR@join
\ctr@ld@f\def\s@uvjoin#1{\edef#1{\CUR@join}}
\ctr@ld@f\def\D@FTjoin{miter}   
\ctr@ld@f\def\Q@s@tjoin#1{\ifGR@cri\edef\CUR@join{#1}\ifCUR@PS\expandafter\Pssetj@in#1:\fi\fi}
\ctr@ld@f\def\Pssetj@in#1#2:{\PSc@mment{setjoin join=#1}%
    \if#1r\def\t@xt@{1}\else\if#1b\def\t@xt@{2}\else\def\t@xt@{0}\fi\fi%
    \immediate\write\fwf@g{\t@xt@\space\c@msetlinejoin}}
\ctr@ld@f\def\Pss@tspecifSt#1{\trtlis@rg{#1}{\Pss@tspecifSt@}}
\ctr@ld@f\def\Pss@tspecifSt@#1=#2|{\keln@mun#1|%
    \def\n@mref{c}\ifx\l@debut\n@mref\def\n@mref{#2}\ifx\n@mref\D@FTref\else%
     \s@uvcolor{\typ@color}\Pssetc@lor{#2}\fi\else
    \def\n@mref{d}\ifx\l@debut\n@mref\def\n@mref{#2}\ifx\n@mref\D@FTref\else%
     \s@uvdash{\typ@dash}\Q@s@tdash{#2}\fi\else
    \def\n@mref{j}\ifx\l@debut\n@mref\def\n@mref{#2}\ifx\n@mref\D@FTref\else%
     \s@uvjoin{\typ@join}\Q@s@tjoin{#2}\fi\else
    \def\n@mref{w}\ifx\l@debut\n@mref\def\n@mref{#2}\ifx\n@mref\D@FTref\else%
     \s@uvwidth{\typ@width}\Q@s@twidth{#2}\fi\else
    \W@rnmeskwd{Pss@tspecifSt}{#1}\fi\fi\fi\fi}
\ctr@ld@f\def\Psrest@reSt#1{\trtlis@rg{#1}{\Psrest@reSt@}}
\ctr@ld@f\def\Psrest@reSt@#1=#2|{\keln@mun#1|%
    \def\n@mref{c}\ifx\l@debut\n@mref\def\n@mref{#2}\ifx\n@mref\D@FTref\else%
     \Pssetc@lor{\typ@color}\fi\else
    \def\n@mref{d}\ifx\l@debut\n@mref\def\n@mref{#2}\ifx\n@mref\D@FTref\else%
     \Q@s@tdash{\typ@dash}\fi\else
    \def\n@mref{j}\ifx\l@debut\n@mref\def\n@mref{#2}\ifx\n@mref\D@FTref\else%
     \Q@s@tjoin{\typ@join}\fi\else
    \def\n@mref{w}\ifx\l@debut\n@mref\def\n@mref{#2}\ifx\n@mref\D@FTref\else%
     \Q@s@twidth{\typ@width}\fi\else
    \W@rnmeskwd{Psrest@reSt}{#1}\fi\fi\fi\fi}
\ctr@ld@f\def\Pssettrim@sh#1=#2|{\keln@mde#1|%
    \def\n@mref{co}\ifx\l@debut\n@mref\update@ttr\D@FTref\P@settmeshcolor{#2}\else
    \def\n@mref{da}\ifx\l@debut\n@mref\update@ttr\D@FTref\P@settmeshdash{#2}\else
    \def\n@mref{wi}\ifx\l@debut\n@mref\update@ttr\D@FTref\P@settmeshwidth{#2}\else
    \W@rnmesAttr{figset trimesh}{#1}\fi\fi\fi}
\ctr@ln@m\DDV@tmeshcolor
\ctr@ld@f\def\P@settmeshcolor#1{\edef\DDV@tmeshcolor{#1}}
\ctr@ln@m\DDV@tmeshdash
\ctr@ld@f\def\P@settmeshdash#1{\edef\DDV@tmeshdash{#1}}
\ctr@ln@m\DDV@tmeshwidth
\ctr@ld@f\def\P@settmeshwidth#1{\edef\DDV@tmeshwidth{#1}}
\ctr@ld@f\def\figdrawtrimesh#1[#2,#3,#4]{{\ifCUR@PS\ifGR@cri%
    \PSc@mment{trimesh Type=#1, Triangle=[#2,#3,#4]}%
    \s@uvc@ntr@l\et@tpstrimesh\ifnum#1>\@ne%
    \Pss@tspecifSt{color=\DDV@tmeshcolor,dash=\DDV@tmeshdash,width=\DDV@tmeshwidth}%
    \setc@ntr@l{2}%
    \Pstrimeshp@rt#1[#2,#3,#4]\Pstrimeshp@rt#1[#3,#4,#2]\Pstrimeshp@rt#1[#4,#2,#3]%
    \Psrest@reSt{color=\DDV@tmeshcolor,dash=\DDV@tmeshdash,width=\DDV@tmeshwidth}%
    \fi\figdrawline[#2,#3,#4,#2]%
    \PSc@mment{End trimesh}\resetc@ntr@l\et@tpstrimesh\fi\fi}}
\ctr@ld@f\def\Pstrimeshp@rt#1[#2,#3,#4]{{\l@mbd@un=\@ne\l@mbd@de=#1\loop\ifnum\l@mbd@de>\@ne%
    \advance\l@mbd@de\m@ne\figptbary-1:[#2,#3;\l@mbd@de,\l@mbd@un]%
    \figptbary-2:[#2,#4;\l@mbd@de,\l@mbd@un]\figdrawline[-1,-2]%
    \advance\l@mbd@un\@ne\repeat}}
\initpr@lim\initpss@ttings\initPDF@rDVI
\ctr@ln@w{newbox}\figBoxA
\ctr@ln@w{newbox}\figBoxB
\ctr@ln@w{newbox}\figBoxC
\catcode`\@=12


\def\gap{[0.8ex]}

\def\myotimes{\otimes}


\newcommand\sfT{\mathsf{T}}
\newcommand\frH{\mathfrak{H}}
\newcommand\frc{\mathfrak{c}}
\newcommand\frd{\mathfrak{d}}
\newcommand\frT{\mathfrak{T}}
\newcommand\frl{\mathfrak{l}}

\def\FrmV{\frH_{\alpha,\tt, V}}
\def\FrmV{\frH_{\alpha,\tt, V}}

\def\OpV{\sfH_{\alpha, \tt, V}}

\def\Frm{Q_{\alpha, \cC}}
\def\Op{\sfH_{\alpha, \cC}}

\def\OpL{\mathsf{L}_{\alpha,\cC}}
\def\OpLf{\mathsf{L}_{\alpha,\Gamma_\tt}}
\def\frmLf{Q_{\alpha,\Gamma_\tt}}
\def\frmLft{\wt{Q}_{\alpha,\Gamma_\tt}}
\def\frmLfr{{Q}_{\alpha,\Gamma}}
\def\frmLfrt{\wt{Q}_{\alpha,\Gamma}}

\def\frmLWz{Q_0}
\def\frmLWo{Q_1}
\def\frmLWj{Q_j}

\def\frmLz{Q_0'}

\def\frmLzz{Q_{00}'}
\def\frmLzo{Q_{01}'}
\def\frmLzj{Q_{0j}'}

\def\frmLzpr{Q_0'}

\def\frmLWot{Q_{\alpha, \Omega^{K}_\tt}}
\def\frmLWone{\wt{Q}_{\alpha,\Omega^{K}_\tt}}

\def\frmVg{\frT_{\alpha, \tt,\gamma, V}}
\def\opVg{\sfT_{\alpha,\tt, \gamma, V}}

\def\frmg{\frT_{\alpha, \tt, \gamma}}
\def\opg{\sfT_{\alpha, \tt,\gamma}}

\newcommand\frm{\frT_{\alpha, \tt}}
\def\op{\sfT_{\alpha,\tt}}

\def\opID{\sfh_{\alpha,L}^{\rm D}}
\def\opIN{\sfh_{\alpha,L}^{\rm N}}
\def\frmID{\frq_{\alpha,L}^{\rm D}}
\def\frmIN{\frq_{\alpha,L}^{\rm N}}

\def\frmKSN{\frq_c^{\rm N}}
\def\frmKSD{\frq_c^{\rm D}}

\newcommand\N{\cN}

\def\frmR{\frT_{\tt,R}}
\def\opR{\sfT_{\tt,R}}

\def\frmRN{\frS_{\tt, R}}

\def\frmRI{\fra_{\tt, R}}
\def\frmRII{\frb_{\tt, R}}
\def\frmRIII{\frc_{\tt, R}}
\def\frmRIV{\frd_{\tt, R}}

\def\frmRk{\frS_{\aa,\tt,R}^{(k)}}

\def\frmtrans{\frt_{\aa,\tt, R}}
\def\frmlong{\frl_{\aa,\tt,R}}

\newcommand\medsum{{\textstyle\sum}}

\newcommand\Cd{{\cC_{d,\theta}}}

\newcommand\m{\mathfrak{m}}
\def\tt{\theta}
\def\aa{\alpha}
\def\lm{\lambda}
\def\sfh{\mathsf{h}}
\def\Gt{{\Gamma_\tt}}
\def\St{{\Sigma_\tt}}
\newcommand\G{\Gamma}
\def\arr{\rightarrow}
\def\frg{\mathfrak{g}}
\def\fri{\mathfrak{i}}


\def\restric#1#2{{#1} |_{#2}}

\newcommand\myemph[1]{\textbf{\emph{#1}}} 

\definecolor{darkred}{rgb}{0.5,0.1,0.1}
\newcommand{\vl}[1]{{\color{red} #1}}
\newcommand{\vlc}[1]{{\color{red} VL: #1}}

\newcommand\s{\sigma}
\newcommand\sd{\sigma_{\rm dis}}
\newcommand\sess{\sigma_{\rm ess}}

\newcommand\ii{{\mathsf{i}}}
\newcommand\p{\partial} 

\newcommand\pp{\prime}
\newcommand\dpp{{\prime\prime}}
\newcommand\tpp{{\prime\prime\prime}}

\newcommand\sfH{\mathsf{H}}
\renewcommand\sfh{\mathsf{h}}
\newcommand\one{\mathbbm{1}}
\newcommand\Z{\mathsf{z}}
\newcommand\sfP{\mathsf{P}}
\newcommand\dd{{\mathsf{d}}}
\newcommand\uhr{\upharpoonright}

\newcommand\frf{{\mathfrak f}}
\newcommand\frj{{\mathfrak j}}
\newcommand\omg{\omega}

\newcommand\sfw{\mathsf{w}}
\newcommand\sfz{\mathsf{z}}

\newcommand\sfS{\mathsf{S}}
\newcommand\sfC{\mathsf{C}}
\newcommand\sfW{\mathsf{W}}
\newcommand\sfZ{\mathsf{Z}}
\newcommand\sfU{\mathsf{U}}
\newcommand\sfV{\mathsf{V}}

\newcommand*{\medcap}{\mathbin{\scalebox{1.5}{\ensuremath{\cap}}}}%
\newcommand*{\medcup}{\mathbin{\scalebox{1.5}{\ensuremath{\cup}}}}%
\newcommand*{\medoplus}{\mathbin{\scalebox{1.5}{\ensuremath{\oplus}}}}%

\def\radius{3cm}
\def\softness{0.4}

\definecolor{softred}{rgb}{1,\softness,\softness}
\definecolor{softgreen}{rgb}{\softness,1,\softness}
\definecolor{softblue}{rgb}{\softness,\softness,1}
\definecolor{softrg}{rgb}{1,1,\softness}
\definecolor{softrb}{rgb}{1,\softness,1}
\definecolor{softgb}{rgb}{\softness,1,1}

\newcounter{counter_a}
\newenvironment{myenum}{\begin{list}{{\rm(\roman{counter_a})}}%
{\usecounter{counter_a}
\setlength{\itemsep}{1.ex}\setlength{\topsep}{1.5ex}
\setlength{\leftmargin}{5ex}\setlength{\labelwidth}{5ex}}}{\end{list}}
\newcommand\ds{\displaystyle}
\usepackage[latin1]{inputenc}
\usepackage[T1]{fontenc}
\newcommand{\red}{\color{red}}

\newcommand{\eg}{{\it e.g.}\,}
\newcommand{\ie}{{\it i.e.}\,}
\newcommand{\cf}{{\it cf.}\,}

\numberwithin{figure}{section}
\numberwithin{equation}{section}
\theoremstyle{plain}
\newtheorem*{thm*}{Theorem}
\newtheorem{thm}{Theorem}[section]

\newtheorem{lem}[thm]{Lemma}
\newtheorem{prop}[thm]{Proposition}

\newtheorem{cor}[thm]{Corollary}

\newtheorem{notn}[thm]{Notation}

\newtheorem{dfn}[thm]{Definition}
\theoremstyle{remark}

\theoremstyle{plain}


%
\newcommand{\dsp}{\displaystyle}
\newcommand{\spec}{{\mathrm{spec}}}
\newcommand{\rmd}{\mathrm{d}}
\newcommand{\rmi}{\mathrm{i}}
\newcommand{\supp}{\mathrm{supp}\,}
\newcommand{\beu}{\begin{equation*}}
\newcommand{\eeu}{\end{equation*}}
\newcommand{\besu}{\begin{equation*}
\begin{aligned}}
\newcommand{\eesu}{\end{aligned}
\end{equation*}}
\newcommand{\bes}{\begin{equation}
\begin{aligned}}
\newcommand{\ees}{\end{aligned}
\end{equation}}

\newcommand\cA{\mathcal A}
\newcommand\cB{\mathcal B}
\newcommand\cD{\mathcal D}
\newcommand\cF{\mathcal F}
\newcommand\cG{\mathcal G}
\newcommand\cH{\mathcal H}
\newcommand\cK{\mathcal K}
\newcommand\cJ{\mathcal J}
\newcommand\cL{\mathcal L}
\newcommand\cM{\mathcal M}
\newcommand\cN{\mathcal N}
\newcommand\cP{\mathcal P}
\newcommand\cR{\mathcal R}
\newcommand\cS{\mathcal S}
\newcommand\CC{\mathbb C}
\newcommand\NN{\mathbb N}
\newcommand\RR{\mathbb R}
\newcommand\ZZ{\mathbb Z}
\newcommand\frA{\mathfrak A}
\newcommand\frB{\mathfrak B}
\newcommand\frS{\mathfrak S}
\newcommand\fra{\mathfrak a}
\newcommand\frq{\mathfrak q}
\newcommand\frs{\mathfrak s}
\newcommand\frp{\mathfrak p}
\newcommand\frh{\mathfrak h}
\newcommand\fraD{\mathfrak{a}_{\rm D}}
\newcommand\fraN{\mathfrak{a}}
\newcommand\dis{\displaystyle}
\newcommand\ov{\overline}
\newcommand\wt{\widetilde}
\newcommand\wh{\widehat}
\newcommand{\defeq}{\mathrel{\mathop:}=}
\newcommand{\eqdef}{=\mathrel{\mathop:}}
\newcommand{\defequ}{\mathrel{\mathop:}\hspace*{-0.72ex}&=}
\newcommand\vectn[2]{\begin{pmatrix} #1 \\ #2 \end{pmatrix}}
\newcommand\vect[2]{\begin{pmatrix} #1 \\[1ex] #2 \end{pmatrix}}
\newcommand\restr[1]{\!\bigm|_{#1}}
\newcommand\RE{\text{\rm Re}}

\newcommand\sign{{\rm sign\,}}

\newcommand\void[1]{}

\newcommand\eps{\varepsilon}
\newcommand\ran{{\rm ran\,}}
\newcommand\sigp{\sigma_{\rm p}}
\DeclareMathOperator\tr{tr}


\def\sA{{\mathfrak A}}   \def\sB{{\mathfrak B}}   \def\sC{{\mathfrak C}}
\def\sD{{\mathfrak D}}   \def\sE{{\mathfrak E}}   \def\sF{{\mathfrak F}}
\def\sG{{\mathfrak G}}   \def\sH{{\mathfrak H}}   \def\sI{{\mathfrak I}}
\def\sJ{{\mathfrak J}}   \def\sK{{\mathfrak K}}   \def\sL{{\mathfrak L}}
\def\sM{{\mathfrak M}}   \def\sN{{\mathfrak N}}   \def\sO{{\mathfrak O}}
\def\sP{{\mathfrak P}}   \def\sQ{{\mathfrak Q}}   \def\sR{{\mathfrak R}}
\def\sS{{\mathfrak S}}   \def\sT{{\mathfrak T}}   \def\sU{{\mathfrak U}}
\def\sV{{\mathfrak V}}   \def\sW{{\mathfrak W}}   \def\sX{{\mathfrak X}}
\def\sY{{\mathfrak Y}}   \def\sZ{{\mathfrak Z}}

\def\frb{{\mathfrak b}}
\def\frl{{\mathfrak l}}
\def\frs{{\mathfrak s}}
\def\frt{{\mathfrak t}}

\def\dA{{\mathbb A}}   \def\dB{{\mathbb B}}   \def\dC{{\mathbb C}}
\def\dD{{\mathbb D}}   \def\dE{{\mathbb E}}   \def\dF{{\mathbb F}}
\def\dG{{\mathbb G}}   \def\dH{{\mathbb H}}   \def\dI{{\mathbb I}}
\def\dJ{{\mathbb J}}   \def\dK{{\mathbb K}}   \def\dL{{\mathbb L}}
\def\dM{{\mathbb M}}   \def\dN{{\mathbb N}}   \def\dO{{\mathbb O}}
\def\dP{{\mathbb P}}   \def\dQ{{\mathbb Q}}   \def\dR{{\mathbb R}}
\def\dS{{\mathbb S}}   \def\dT{{\mathbb T}}   \def\dU{{\mathbb U}}
\def\dV{{\mathbb V}}   \def\dW{{\mathbb W}}   \def\dX{{\mathbb X}}
\def\dY{{\mathbb Y}}   \def\dZ{{\mathbb Z}}

\def\cA{{\mathcal A}}   \def\cB{{\mathcal B}}   \def\cC{{\mathcal C}}
\def\cD{{\mathcal D}}   \def\cE{{\mathcal E}}   \def\cF{{\mathcal F}}
\def\cG{{\mathcal G}}   \def\cH{{\mathcal H}}   \def\cI{{\mathcal I}}
\def\cJ{{\mathcal J}}   \def\cK{{\mathcal K}}   \def\cL{{\mathcal L}}
\def\cM{{\mathcal M}}   \def\cN{{\mathcal N}}   \def\cO{{\mathcal O}}
\def\cP{{\mathcal P}}   \def\cQ{{\mathcal Q}}   \def\cR{{\mathcal R}}
\def\cS{{\mathcal S}}   \def\cT{{\mathcal T}}   \def\cU{{\mathcal U}}
\def\cV{{\mathcal V}}   \def\cW{{\mathcal W}}   \def\cX{{\mathcal X}}
\def\cY{{\mathcal Y}}   \def\cZ{{\mathcal Z}}

\renewcommand{\div}{\mathrm{div}\,}
\newcommand{\grad}{\mathrm{grad}\,}
\newcommand{\Tr}{\mathrm{Tr}\,}

\newcommand{\dom}{\mathrm{dom}\,}
\newcommand{\mes}{\mathrm{mes}\,}

\makeatletter
\renewcommand\theequation{\thesection.\arabic{equation}}
\@addtoreset{equation}{section}

\def\@listI{
    \leftmargin\leftmargini
    \parsep 1.5pt plus 1pt minus 1pt
    \topsep -1.5pt plus 1pt minus 1pt
    \itemsep \parsep}
\let\@listi\@listI
\makeatother

\usepackage{color}
\newcommand{\Bk}{\color{black}}
\newcommand{\Rd}{\color{red}}
\newcommand{\Gn}{\color{green}}
\newcommand{\Bl}{\color{blue}}
\def\to#1{\Bl {#1} \Bk}

\newcommand{\R}{\mathbb{R}}

\newcommand{\spn}{\mathrm{span}\,}

\newcommand{\fq}{\mathfrak{q}}
\newcommand{\frQ}{\mathfrak{Q}}

\newcommand{\Gui}{\mathsf{Gui}}
\newcommand{\Lay}{\mathsf{Lay}}
\newcommand{\Dom}{\mathsf{Dom}}
\newcommand{\Tri}{\mathsf{Tri}}

\newcommand{\far}{\mathsf{far}}
\newcommand{\near}{\mathsf{near}}
\newcommand{\trans}{\mathsf{trans}}

\newcommand{\ps}[2]{\left\langle#1,#2\right\rangle}
\newcommand{\vabs}[1]{\left|#1\right|}

\title[Bound states of $\delta$-interactions on conical surfaces]
{On the bound states of Schr\"odinger operators with \boldmath{$\delta$}-interactions on conical surfaces}

\author{Vladimir Lotoreichik} 
\address{Department of Theoretical Physics,
Nuclear Physics Institute, Czech Academy of Sciences, 250 68, \v{R}e\v{z} near Prague, Czech Republic}
\email{lotoreichik@ujf.cas.cz}

\author{Thomas Ourmi\`eres-Bonafos}
\address{BCAM - Basque Center for Applied Mathematics, Alameda de Mazarredo, 14 E48009 Bilbao, Basque Country -  Spain}
\email{tourmieres@bcamath.org}

\begin{document}

\subjclass[2010]{Primary 35P20; Secondary 35P15, 35Q40, 35Q60, 35J10}

\keywords{Schr\"odinger operator, $\delta$-interaction, existence of bound states, spectral asymptotics, conical and hyperconical surfaces}

\maketitle

\begin{abstract}
In dimension greater than or equal to three, we investigate the spectrum 
of a Schr\"odinger operator with a $\delta$-interaction supported on a cone whose cross section 
is the sphere of co-dimension two.
After decomposing into fibers, we prove that there is discrete spectrum only in dimension three and that 
it is generated by the axisymmetric fiber. We get that these eigenvalues are non-decreasing functions 
of the aperture of the cone and we exhibit the precise logarithmic accumulation of the discrete spectrum below the threshold of the essential spectrum.
\end{abstract}

\section{Introduction}

\subsection{Motivation}

Some physical systems are efficiently described by 
Schr\"{o}dinger operators with singular $\delta$-type interactions 
supported on various sets of zero Lebesgue measure
(points, curves, surfaces or hypersurfaces). 
For instance, these operators are used to approximate atomic Hamiltonians in strong homogeneous magnetic fields~\cite{BD06} or 
photonic crystals with high-contrast~\cite{FK96}. 
The spectra of such Schr\"odinger operators are related to 
admissible values of the energy in quantum mechanics or, to admissible propagation frequencies of electromagnetic waves in optics. 

A natural issue is to understand how the geometry of the support of the
$\delta$-interaction influences the spectrum of these Schr\"odinger operators. This question is important, not only 
because of prospective applications in physics, but because it is also mathematically relevant in spectral geometry. 
For references on this topic, we refer to the review paper~\cite{E08}, 
the monograph~\cite{EK15} and the references therein.

In dimension two, for $\delta$-interactions supported on curves,  
the question of finding a connection between the spectrum and the
geometry was first addressed in~\cite{EI01}. In that paper, 
the authors consider two-dimensional Schr\"odinger operators
with attractive $\delta$-interactions supported 
on asymptotically straight curves. 
Provided that the curve is not a straight line, 
they prove that there exists at least one 
bound state below the threshold of the essential spectrum. 
In the same spirit, we mention the special case of $\delta$-interactions supported on broken lines, 
investigated in~\cite{BEW08, DR14, EKo15, EN03}.

In dimension three the state of the art is not as complete as in dimension two. 
Instead of dealing with asymptotically straight curves, one is interested in attractive $\delta$-interactions 
supported on asymptotically flat surfaces and such Schr\"odinger operators were first studied in~\cite{EK03}. 
Provided the cross section is smooth, infinite conical surfaces give rise to a family of asymptotically flat surfaces. 
The special case of a circular cross section is investigated in~\cite{BEL14} where the main result is the existence 
of infinitely many bound states below the threshold of the essential spectrum. Moreover, the authors bound from above 
the sequence of eigenvalues by a sequence which converges to the threshold of the essential spectrum at a known rate. 
Nevertheless, sharp spectral asymptotics on the number of eigenvalues remained unknown so far. 
We tackle this question in the present paper and give the precise rate of accumulation. It is reminiscent of~\cite{DOR15}, 
where the authors exhibit a similar result for a Dirichlet Laplacian in a conical layer. 
In this last paper, the authors also study the behaviour of the eigenvalues with respect to the aperture of the cone.
Here, we restrain ourselves to show that the eigenvalues depend monotonously
on the aperture of the cone.

In dimension greater than or equal to four very little is known so far. In the special case of an 
attractive $\delta$-interaction supported on a hyperconical surface
we prove that there is no discrete spectrum. Because a hyperconical surface splits the Euclidean space into 
a convex domain and a non-convex conical domain, it is worth mentioning that this result strengthens 
the difference with Robin Laplacians in convex circular conical domains. Indeed, according to~\cite{P15}, 
these Robin Laplacians have infinite discrete spectrum for any dimension greater than or equal to three.

Finally, we emphasise that, for attractive $\delta$-interactions supported on conical surfaces, the structure of the essential spectrum strongly depends on the smoothness of the cross section. 
For attractive $\delta$-interactions supported on general non-smooth conical surfaces 
this structure is expected to be more involved and 
we refer to~\cite{BDP14} for similar considerations about magnetic Laplacians on non-smooth conical domains.

\subsection{Hamiltonians with $\delta$-interactions on conical surfaces}

For $d\geq3$, let $(x_1,x_2,x_3,\dots,x_d)$ be the Cartesian coordinates on $\dR^d$. $(L^2(\dR^d); (\cdot,\cdot)_{\dR^d})$ and $(L^2(\dR^d;\dC^d), (\cdot,\cdot)_{\dR^d})$ denote the usual $L^2$-space and the $L^2$-space of vector-valued functions over $\R^d$, respectively. The first order $L^2$-based Sobolev space over $\R^d$ is denoted by $H^1(\dR^d)$.
We define the function $\dR^d\ni x \mapsto \rho(x) \in \dR_+$ as 
\begin{equation}\label{def:rho}
	\rho(x) := \sqrt{\medsum_{k=1}^{d-1} x_k^2}
\end{equation}
and introduce the conical hypersurface $\Cd\subset\dR^d$, as
\begin{equation}\label{def:cone}
	\Cd := 
		\big\{x = (x_1,x_2,\dots,x_d) \in\dR^d\colon x_d = (\cot\tt)\rho(x)\big\},
	\qquad 
	\tt\in(0,\pi/2).
\end{equation}
The parameter $\tt$ is the half-opening angle (aperture) of the cone (\cf Figure~\ref{fig:cone}). 
Since there is no possible confusion, we denote the conical hypersurface by $\cC$ instead of $\Cd$. 
We also denote by $(L^2(\cC); (\cdot,\cdot)_\cC))$ the $L^2$-space over $\cC$.
 
\begin{figure}[h!]
\figinit{0.75cm}
\figpt 0:(0,0)
\figpt 1:(0,2.776221382176024)
\figpt 2:(0,-2.776221382176024)
\figpt 3:(-4.442882938158366,0)
\figpt 4:(-5,0)
\figpt 5:(2,0)
\figpt 6:(-0.5,0.3)
\figpt 7:(-3.4,0)
\figpt 8:(-2.221441469079183,1.388110691088013)
\figpt 9:(-2.221441469079183,-1.388110691088013)
\figpt 10:(-2.221441469079183,0)
\figpt 11:(3.5,0)
\figpt 12:(4.75,0)
\figpt 13:(3.75,-0.25)
\figpt 14:(3.75,1)
\figpt 15:(4,0.20)
\figpt 16:(3.25,-0.45)
\figdrawbegin{}
\figdrawline [3,1]
\figdrawline [3,2]
\figdrawarrow [4,5]
\figdrawarrow [11,12]
\figdrawarrow [13,14]
\figdrawarrow [15,16]
\figdrawarcell 0 ; 0.5,2.776221382176024 (0,360, 0)
\figdrawarcell 10 ; 0.3,1.388110691088013 (90,260,0)
\figdrawarrowcircP 3;1.2[0,1]
\figset (dash=8)
\figdrawarcell 10 ; 0.3,1.388110691088013 (-90,90,0)
\figdrawend

\figvisu{\figBoxA}{}{
\figwritenw 6: {$\cC_{d,\tt}$}(0.3)
\figwritene 7:{$\theta$} (0.4)
\figwrites 12:{$x_{3}$} (0.15)
\figwritenw 14:{$x_{2}$} (0)
\figwritesw 16:{$x_{1}$}(0)
}

\centerline{\box\figBoxA}
\caption{The cone $\cC_{d,\tt}$ in dimension $d=3$.}
\label{fig:cone}
\end{figure}


For $\alpha > 0$, we introduce the symmetric, densely defined sesquilinear form
\begin{equation}\label{eqn:defqf}
	\Frm [ u, v ]  
	:= 	( \nabla u, \nabla v )_{\dR^d}  
							  - \alpha ( u|_\cC,v|_\cC)_\cC,
	\qquad	
	\dom\Frm := H^1(\dR^d).
\end{equation}
This form is closed and semibounded in $L^2(\dR^d)$ (\cf, \eg, \cite[Sec. 2]{BEKS94} and~\cite[Prop. 3.1]{BEL14_RMP}). 

\begin{dfn}\label{def:Op} 
	By the first representation theorem (\cite[Ch. VI, Thm. 2.1]{K}), 
	the form $\Frm$ is associated with a self-adjoint operator $\Op$ acting on $L^2(\R^d)$. 
	This operator is called Schr\"odinger operator with $\delta$-interaction of strength $\alpha > 0$ supported on $\cC$.
\end{dfn}

The operator $\Op$ can be understood as the Laplacian
on $\dR^d$ with a $\delta$-type coupling boundary condition on the conical surface $\cC$. Formally, one can write 
$\Op = -\Delta - \alpha\delta_{\cC}$ or
$\Op = -\Delta - \alpha\delta(x - \cC)$.
We refer to \cite{BEL14_RMP}, for a rigorous description of the action of $\Op$ and of its domain.

\subsection{Notations and main results} 

We introduce a few notation before stating the main results of this paper.
The set of positive integers is denoted by $\mathbb{N} := \{1,2,\dots\}$ and the set of natural integers is denoted by 
$\mathbb{N}_0 := \mathbb{N}\cup\{0\}$. Let $\sfT$ be a semi-bounded self-adjoint operator 
associated with the quadratic form $\frt$. 
We denote by $\sess(\sfT)$ and $\sd(\sfT)$
the essential and the discrete spectrum of $\sfT$, respectively.
By $\s(\sfT)$, we denote the spectrum of $\sfT$ 
(\ie $\s(\sfT) = \sess(\sfT)\cup\sd(\sfT)$).

We set $E_{\rm ess}(\sfT) := \inf\sess(\sfT)$ and, for $k\in\mathbb{N}$, $E_k(\sfT)$ denotes the $k$-th eigenvalue of $\sfT$
in the interval $(-\infty, E_{\rm ess}(\sfT))$. They are 
ordered non-decreasingly with multiplicities taken into account.
We define the counting function of $\sfT$ as
\[
	\N_E(\sfT) := \#\big\{k\in\dN\colon E_k(\sfT) < E\big\}, 
	\qquad E \le E_{\rm ess}(\sfT).
\]	
When working with the quadratic form $\frt$, 
we use the notations $\sess(\frt)$, $\sd(\frt)$, $\s(\frt)$,
$E_{\rm ess}(\frt)$, $E_k(\frt)$ and $\N_E(\frt)$  instead.

The first result is about the characterisation of the essential spectrum
and the qualitative description of the discrete spectrum for $\Op$.
\begin{thm}
	Let $\tt\in(0,\pi/2)$ and $\alpha>0$. The following statements hold:
	\begin{myenum}	
		\item for any dimension $d\geq3$, $\sess(\Op) = [-\alpha^2/4,+\infty)$;
		\item for $d=3$, $\#\sd(\Op) = \infty$; 
		\item for $d \ge 4$, $\#\sd(\Op) = 0$.
	\end{myenum}
\label{thm:struc_spec}
\end{thm}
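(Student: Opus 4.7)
The plan is to exploit the $O(d-1)$-rotational symmetry of the cone $\cC$ by a fiber decomposition of $\Op$. Separating spherical harmonics on $S^{d-2}$ and applying the unitary change $u\mapsto \rho^{(d-2)/2}u$ on the radial measure, one reduces $\Op$ to a direct sum of $2$D operators of the form
\[
	h_k = -\partial_\rho^2 - \partial_z^2 + \frac{c_{d,k}}{\rho^2} - \alpha\delta_{\Gamma_\tt}
\]
on the half-plane $\Omega := \{(\rho,z)\in\dR^2:\rho>0\}$ with Dirichlet condition at $\rho=0$, where $\Gamma_\tt := \{(\rho,(\cot\tt)\rho):\rho>0\}$, $k\in\{0,1,2,\ldots\}$, and $c_{d,k} := k(k+d-3)+(d-2)(d-4)/4$. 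The sign of $c_{d,k}$ distinguishes the three regimes of the theorem: $c_{3,0}=-1/4<0$, whereas $c_{d,k}\geq 0$ for every $k\geq 0$ as soon as $d\geq 4$.

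For (i), I would prove the two inclusions separately. The inclusion $[-\alpha^2/4,+\infty)\subset\sess(\Op)$ follows by constructing explicit Weyl sequences: for any $\lambda\geq-\alpha^2/4$, a product of the 1D bound state $e^{-\alpha|t|/2}$ in the direction transverse to $\cC$ with a tangential plane wave tuned to energy $\lambda$, times a cutoff whose support is translated far from the apex (where $\cC$ is locally a hyperplane), is a singular sequence for $\Op-\lambda$. The opposite inclusion rests on Persson's theorem, or equivalently a Neumann bracketing with respect to a large ball $B_R$: the restriction to $B_R$ has compact resolvent, while in the exterior a trace-type estimate absorbs the $\delta$-term into the gradient with an error that vanishes as $R\to\infty$, yielding $\Frm[u]\geq(-\alpha^2/4-o_R(1))\|u\|^2$. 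This upper bound on $\inf\sess(\Op)$, ruling out essential spectrum below $-\alpha^2/4$ despite the attractive $\delta$-interaction on the non-compact cone, is the main technical obstacle of the theorem.

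For (ii), in dimension $d=3$ the only fiber with an attractive effective potential is the axisymmetric one, $h_0=-\partial_\rho^2-\partial_z^2-\frac{1}{4\rho^2}-\alpha\delta_{\Gamma_\tt}$, and the existence of infinitely many eigenvalues of $h_0$ below $-\alpha^2/4$ is established in~\cite{BEL14} via an explicit family of trial functions exhibiting the Hardy-type scaling produced by the $-1/(4\rho^2)$ term.

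For (iii), the goal is to prove $h_k\geq-\alpha^2/4$ for every $k$ when $d\geq 4$, which together with (i) forces $\sd(\Op)=\emptyset$. Since $c_{d,k}\geq 0$, I may drop the centrifugal term to obtain $h_k\geq\widetilde h:=-\partial_\rho^2-\partial_z^2-\alpha\delta_{\Gamma_\tt}$ on $\Omega$ with Dirichlet condition at $\rho=0$. Any $u$ in the form domain of $\widetilde h$ extends by zero across $\rho=0$ to $\hat u\in H^1(\dR^2)$ with $\|\nabla\hat u\|^2=\|\nabla u\|^2$; extending $\Gamma_\tt$ to the full straight line $\hat\Gamma_\tt$ through the origin and using $\hat u=0$ on $\hat\Gamma_\tt\setminus\Gamma_\tt$, one obtains $\int_{\hat\Gamma_\tt}|\hat u|^2=\int_{\Gamma_\tt}|u|^2$. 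Comparing the form of $\widetilde h$ with that of the $\dR^2$-operator $-\Delta-\alpha\delta_{\hat\Gamma_\tt}$, whose spectrum equals $[-\alpha^2/4,+\infty)$ by separation of variables in coordinates aligned with $\hat\Gamma_\tt$, yields $\widetilde h\geq -\alpha^2/4$ and concludes the proof.
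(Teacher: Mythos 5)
Your proposal follows essentially the same route as the paper: items (i) and (ii) are deferred to~\cite{BEL14} (exactly as the paper itself does), and item (iii) is proved as in Propositions~\ref{prop:fq_flatmetric}--\ref{prop:red_axy} and Corollary~\ref{cor:spec} --- fiber decomposition over spherical harmonics, the flat-metric substitution $u\mapsto r^{(d-2)/2}u$ producing the centrifugal term $\gamma(d,l)r^{-2}\ge 0$ for $d\ge 4$, dropping that term, and comparing the extension by zero with the $\delta$-interaction on a full straight line in $\dR^2$. One small caveat: your description of the axisymmetric fiber in $d=3$ as $-\partial_\rho^2-\partial_z^2-\tfrac{1}{4\rho^2}-\alpha\delta_{\Gamma_\theta}$ with a Dirichlet condition at $\rho=0$ is not quite accurate --- the paper deliberately excludes $(d,l)=(3,0)$ from the flat-metric transformation because the boundary term at $r=0$ in the integration by parts does not vanish there --- but this is immaterial for the theorem since you, like the paper, take item (ii) from~\cite{BEL14}.
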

Note that Theorem~\ref{thm:struc_spec} was already known in dimension $d=3$ (\cf \cite{BEL14}). 
As the proof of the structure of the essential spectrum in dimension $d \ge 4$ follows 
exactly the same lines as the one exposed in \cite[\S 2]{BEL14} (in dimension $d=3$), we omit it here for the sake of brevity.
The absence of discrete spectrum in item~(iii) differs from the results in \cite{EI01, EK03} where, in dimension $d=2$ or $d=3$,
it is shown that geometric deformations always induce bound states (at least
for $\aa > 0$ sufficiently large). To prove item~(iii),
we show that the operator $\Op$ is unitarily equivalent to an infinite orthogonal
sum of self-adjoint fiber operators and that the spectrum
of each fiber operator is included in $[-\aa^2/4,+\infty)$.

Then, relying on Theorem~\ref{thm:struc_spec}, 
we focus on properties of the discrete spectrum of $\Op$ in dimension $d=3$.
\begin{prop}
	Let $\aa > 0$. In dimension $d=3$, for all $k\in\dN$ the functions, $\tt \mapsto E_k( \Op )$
	are non-decreasing on $(0,\pi/2)$.
	\label{prop:non_decr_eigv}
\end{prop}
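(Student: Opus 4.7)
The plan is to reduce the problem to the axisymmetric fiber of $\Op$, use a $\theta$-dependent change of angular variable that pins the $\delta$-interaction at a fixed ray, and then show that the pulled-back quadratic form is monotone in $\theta$ via the min-max principle.

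By the fiber decomposition $\Op \simeq \bigoplus_{m\in\dZ} \Op^{(m)}$ that is used in the proof of Theorem~\ref{thm:struc_spec}, only the axisymmetric fiber $\Op^{(0)}$ carries discrete spectrum, so $E_k(\Op)=E_k(\Op^{(0)})$ for every $k\in\dN$. Introducing polar coordinates $r = \rho\sin\theta_0$, $z = \rho\cos\theta_0$ in the meridian half-plane (so that $\cC$ becomes the ray $\{\theta_0 = \theta\}$), the form of $\Op^{(0)}$ on $L^2((0,\infty)\times(0,\pi);\rho^2\sin\theta_0\,d\rho\,d\theta_0)$ reads
\[
q_0[u] = \int_0^\infty\!\!\int_0^\pi \bigl(|\partial_\rho u|^2 + \rho^{-2}|\partial_{\theta_0} u|^2\bigr)\rho^2\sin\theta_0\,d\theta_0\,d\rho - \alpha\sin\theta\int_0^\infty |u(\rho,\theta)|^2\rho\,d\rho.
\]

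Given $\theta_1 < \theta_2$ in $(0,\pi/2)$, I fix a reference angle $\theta_\star$ and a smooth family of monotone diffeomorphisms $F_\theta : (0,\pi)\to(0,\pi)$, depending smoothly on $\theta$, with $F_\theta(\theta_\star) = \theta$ and endpoints $0, \pi$ fixed. Composing the change of variable $\theta_0 = F_\theta(s)$ with the Liouville-type multiplier $\sqrt{\sin F_\theta(s)\,F_\theta'(s)/\sin s}$ defines a unitary $U_\theta$ from the $\theta$-independent reference space $\mathcal{H}_\star := L^2((0,\infty)\times(0,\pi);\rho^2\sin s\,d\rho\,ds)$ onto the fiber Hilbert space, and $\widetilde q_\theta := q_0 \circ U_\theta$ is a quadratic form on $\mathcal{H}_\star$ whose $\delta$-support is pinned at $s = \theta_\star$. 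If I can show that $\theta \mapsto \widetilde q_\theta[v]$ is non-decreasing for each fixed $v$, the min-max principle gives $E_k(\widetilde q_{\theta_1}) \le E_k(\widetilde q_{\theta_2})$, and unitarity concludes.

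The main obstacle is verifying that $\partial_\theta \widetilde q_\theta[v] \ge 0$. After pull-back, the $\theta$-dependence has been repackaged into three ingredients: the angular Jacobian $F_\theta'(s)$ in the kinetic term, the transported weight $\sin F_\theta(s)$ in the measure, and the effective coupling $\alpha\sin\theta/F_\theta'(\theta_\star)$ of the $\delta$-term. These pieces have mixed signs a priori and have to combine to a non-negative quadratic form. A convenient choice is to take $F_\theta$ piecewise affine on $(0,\theta_\star)$ and $(\theta_\star,\pi)$, mollified near $\theta_\star$, so that $F_\theta'$ is essentially constant on each sub-interval; the delicate step is then an integration by parts in $s$ that matches the change of angular weight against the change of effective coupling. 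A useful consistency check is the limit $\theta \to (\pi/2)^-$, where the cone flattens to a plane and all $E_k$ are expected to tend to $-\alpha^2/4$ from below, consistent with the claimed monotonicity.
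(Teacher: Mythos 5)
Your overall template --- reduce to the axisymmetric fiber, transplant the fiber form onto a $\theta$-independent Hilbert space by a unitary that pins the $\delta$-support, and then apply the min--max principle to a family of forms with a common domain --- is exactly the right one, and the reduction to the lowest fiber is sound (it is the content of Corollary~\ref{cor:spec}\,(i)). But the argument has a genuine gap at precisely the point you yourself flag as ``the main obstacle'': you never establish that $\theta\mapsto\widetilde q_\theta[v]$ is non-decreasing, and with your choice of coordinates this is not a routine verification. Concretely, with $\theta_0=F_\theta(s)$ the angular kinetic term is transported to $\rho^{-2}(F_\theta'(s))^{-1}\sin(F_\theta(s))\,|\p_s(\cdot)|^2$ before the Liouville multiplier is even taken into account; since $F_\theta'$ must exceed $1$ on $(0,\theta_\star)$ in order to stretch that interval onto $(0,\theta)$, the factor $(F_\theta')^{-1}$ pulls this term down as $\theta$ grows while $\sin F_\theta(s)$ pushes it up; the multiplier generates additional zeroth-order terms of indefinite sign; and the effective coupling is multiplied by $\sin\theta/F_\theta'(\theta_\star)$, again two competing factors. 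Nothing in the proposal shows that these variations resolve with the correct sign, the announced ``integration by parts in $s$'' is not carried out, and the consistency check at $\theta\to\pi/2^-$ is not a substitute. As written this is a plan, not a proof.

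The paper sidesteps the difficulty by a different, and much more favourable, change of variables: instead of deforming the angular variable, it rotates the meridian half-plane so that $\Gamma_\theta$ becomes the fixed ray $\{t=0,\ s>0\}$ in the inclined half-plane $\Omega_\tt$, and then applies the shear $(s,t)\mapsto(\check s,\check t)=(s\tan\theta,t)$, which maps $\Omega_\tt$ onto the fixed domain $\Omega_{\pi/4}$. After this step the weight in the kinetic term, the weight in the $L^2$-norm, and the coefficient of the trace term all carry the \emph{same} $\theta$-dependent constant, which cancels in the Rayleigh quotient; the only surviving $\theta$-dependence is the factor $\tan^2\theta$ multiplying the manifestly non-negative quantity $|\p_{\check s}\check u|^2$, so monotonicity of the Rayleigh quotient is immediate and min--max concludes. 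If you wish to salvage the angular-diffeomorphism route, you would need to exhibit an explicit $F_\theta$ and verify the sign of $\partial_\theta\widetilde q_\theta[v]$ term by term; it is far from clear that any such choice works, because your deformation acts transversally to the interaction ray, whereas the deformation that makes the monotonicity visible acts along it.
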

This proposition is reminiscent of a similar result in~\cite{EN03} 
for $\delta$-interactions supported on broken lines.
Nevertheless, our proof is somewhat simpler since we do not use the 
Birman-Schwinger principle. The idea of the proof is to show
that the discrete spectrum of $\Op$, below the point $-\aa^2/4$, coincides with the discrete spectrum of the lowest fiber operator below the same point.
Then, one can show that the lowest fiber operator is unitarily equivalent to another
operator, whose form domain is independent of $\tt$ and whose Rayleigh
quotient is a monotone function of $\tt$.

Finally, we state our main result on the spectral asymptotics of 
$\Op$ for $d = 3$.
%
%
%
\begin{thm}
	Let $\theta\in (0,\pi/2)$ and $\alpha > 0$. In dimension $d=3$, we have
	\[
		\N_{-\aa^2/4 - E}(\Op) 
		\sim 
		\frac{\cot\tt}{4\pi} |\ln(E)|,
		\quad E \arr 0+.
	\]
\label{th:acceig}
\end{thm}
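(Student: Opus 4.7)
My plan is to reduce the three-dimensional problem to a one-dimensional Schr\"odinger operator with an attractive inverse-square tail, for which the eigenvalue asymptotics near the threshold are classical.

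First, I would use the fiber decomposition employed in the proofs of Theorem~\ref{thm:struc_spec} and Proposition~\ref{prop:non_decr_eigv} to reduce the counting function to that of the lowest (axisymmetric) fiber $\OpL$: the higher angular modes carry an additional centrifugal term $m^2/\rho^2\ge 0$, so that their spectrum lies in $[-\aa^2/4,+\infty)$, and consequently
\[
\N_{-\aa^2/4-E}(\Op)=\N_{-\aa^2/4-E}(\OpL).
\]
Next I would straighten the cone by introducing coordinates $(r,s)$ with $r$ the signed distance from the apex along the generator and $s$ the perpendicular coordinate, so that $\rho=r\sin\tt+s\cos\tt=:\Phi(r,s)$ and the support of the $\delta$-interaction becomes $\{s=0,\,r>0\}$; the isometric image of the half-plane $\{\rho>0\}$ is $\Omega_\tt:=\{\Phi>0\}$. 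The unitary transform $u\mapsto v=\Phi^{1/2}u$ from $L^2(\Omega_\tt;\Phi\,dr\,ds)$ onto $L^2(\Omega_\tt;dr\,ds)$, together with one integration by parts, removes the weight at the price of an attractive Hardy-like potential and yields the equivalent form
\[
\wt Q[v]=\int_{\Omega_\tt}\!\!\big(|\partial_rv|^2+|\partial_sv|^2\big)\,dr\,ds-\int_{\Omega_\tt}\frac{|v|^2}{4\Phi^2}\,dr\,ds-\aa\int_{\R_+}|v(r,0)|^2\,dr.
\]

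Then I would bracket $\wt Q$ from below and from above by imposing Neumann (resp.\ Dirichlet) conditions on the segment $\{r=R\}\cap\Omega_\tt$ for $R$ large; the bounded piece near the apex contributes only $O(1)$ eigenvalues below $-\aa^2/4$. On the remaining half-strip $\{r>R\}\cap\Omega_\tt$ I would perform a transverse separation of variables: the operator $-\partial_s^2-\aa\delta_0$ on $\R$ has ground state $\chi(s)=\sqrt{\aa/2}\,e^{-\aa|s|/2}$ with eigenvalue $-\aa^2/4$ and the rest of its spectrum in $[0,+\infty)$. Projecting onto $\chi$ and isolating the orthogonal complement by a further Dirichlet/Neumann bracketing, the transverse excited modes contribute only $O(1)$ eigenvalues below $-\aa^2/4$, while the ground mode reduces the problem to a one-dimensional Schr\"odinger operator on $L^2((R,+\infty))$ with effective potential
\[
V_{\rm eff}(r)=-\frac{\aa^2}{4}-\int_\R\frac{|\chi(s)|^2}{4\Phi(r,s)^2}\,ds=-\frac{\aa^2}{4}-\frac{1}{4r^2\sin^2\tt}+o(r^{-2}),\qquad r\to\infty.
\]
Finally I would invoke the classical logarithmic accumulation for the one-dimensional operator $-\partial_r^2-\gamma r^{-2}$ on the half-line with $\gamma>1/4$, namely $\N_{-E}=(2\pi)^{-1}\sqrt{\gamma-1/4}\,|\ln E|+O(1)$ as $E\to 0+$. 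With $\gamma=(4\sin^2\tt)^{-1}$ one has $\sqrt{\gamma-1/4}=\cot\tt/2$, which gives the claimed prefactor $\cot\tt/(4\pi)$; matching the upper and lower bounds produced by the two bracketings then yields the announced equivalent.

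The hardest step is expected to be the transverse projection. The genuine transverse eigenfunction is only approximately $\chi$, because the half-strip $\Omega_\tt\cap\{r>R\}$ is cut off by the boundary $\{\Phi=0\}$, and the perturbation $-(4\Phi^2)^{-1}$ couples the ground mode to the higher transverse modes; one has to show that both corrections are of order $o(r^{-2})$ at infinity, so that neither the leading inverse-square tail nor the logarithmic constant $\cot\tt/(4\pi)$ is altered.
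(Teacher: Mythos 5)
Your overall architecture coincides with the paper's up to the decisive step: the reduction to the axisymmetric fiber, the passage to the flat metric producing the Hardy-type potential $-\tfrac14(s\sin\theta+t\cos\theta)^{-2}$, the bracketing at a finite abscissa $R$, and the identification of the one-dimensional model $-\partial_x^2-(4\sin^2\theta)^{-1}x^{-2}$ with the constant $\cot\theta/(4\pi)$ all match Sections~\ref{sec:Fib_dec} and~\ref{sec:couneig}. Two preliminary remarks. First, your justification for discarding the higher fibers is not valid as stated: adding the nonnegative centrifugal term only bounds each higher fiber from \emph{below} by the axisymmetric one, whose spectrum does descend below $-\alpha^2/4$; the actual argument (Proposition~\ref{prop:red_axy}) uses that functions in the form domain of the fibers with $l\ge1$ vanish at $r=0$, so that after the flat-metric transform they extend by zero to $H^1(\mathbb{R}^2)$ and can be compared with the $\delta$-line in the plane. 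Second, before any transverse analysis one must remove a neighbourhood of the line $\{s\sin\theta+t\cos\theta=0\}$, where the Hardy potential is critically singular; the paper does this with an IMS cutoff (Lemma~\ref{lem:bnd_aux}), reducing to the domain $\{s\sin\theta+t\cos\theta>2K\}$ on which the potential is uniformly smaller than $\alpha^2/4$.

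The genuine divergence, and the gap, lies in your treatment of the region $\{r>R\}$. You propose a Born--Oppenheimer reduction: project onto the transverse ground state $\chi(s)=\sqrt{\alpha/2}\,e^{-\alpha|s|/2}$ of $-\partial_s^2-\alpha\delta_0$ on $\mathbb{R}$ and work with the effective potential $-\alpha^2/4-(4r^2\sin^2\theta)^{-1}+o(r^{-2})$. This yields the \emph{lower} bound cheaply (restrict the form to trial functions $\chi(s)f(r)$, up to an exponentially small cutoff correction), but for the \emph{upper} bound the claim that the transverse excited modes contribute only $O(1)$ eigenvalues concerns the coupled operator, not the decoupled one: the Hardy potential has nonzero off-diagonal matrix elements between $\chi\otimes L^2(\mathrm{d}r)$ and its orthogonal complement, and the transverse cross-section is a half-line whose endpoint recedes with $r$, so $\chi$ is not even an exact transverse eigenfunction. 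You flag this as the hardest step but give no mechanism for closing it; that estimate \emph{is} the proof. The paper avoids the issue entirely by never projecting: for the lower bound it freezes the potential into an exactly separable majorant on a single long Dirichlet rectangle of width $2R\tan\theta$, and for the upper bound it tiles $\{r>R\}$ by roughly $\sqrt{R}$ Neumann rectangles of linearly growing width, replaces the potential by its supremum $1/(r_k^2\sin^2\theta)$ on each, and controls the transverse factor via Proposition~\ref{prop:mod1D} (first eigenvalue exponentially close to $-\alpha^2/4$, second eigenvalue nonnegative); both bounds then reduce to Theorem~\ref{th:KS88} with the parameter-dependent choice $R=M|\ln E|$. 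If you wish to keep your route, you need a quantitative decoupling estimate --- for instance a Cauchy--Schwarz or Schur-complement argument playing the $O(r^{-2})$ size of the coupling against the spectral gap $\alpha^2/4$ of the transverse operator --- which is feasible but is precisely the work your sketch defers.
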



The proof of Theorem~\ref{th:acceig} is
inspired by a similar strategy developed in \cite{DOR15} for Dirichlet
conical layers. Loosely speaking, we reduce the spectral asymptotics 
of $\Op$ to the known spectral asymptotics of
the one-dimensional Schr\"odinger operator
\[
	-\frac{\dd^2}{\dd x^2} - \frac{1}{4\sin^2\tt} \frac{1}{x^2}, 
	\qquad \text{on}~~(1,+\infty),
\]
with Dirichlet or Neumann boundary condition at $x=1$.
To this end, we use Dirichlet and Neumann bracketing combined with an IMS formula (\cf \cite{CFKS87}). We emphasise that 
the geometry of the bracketings depends on the spectral
parameter in a more sophisticated way than in~\cite{DOR15}.
Moreover, to estimate the operators involved in the Dirichlet and Neumann bracketings, 
we need spectral properties of two specific one-dimensional Schr\"odinger operators with a $\delta$-point interaction.

%


\subsection{Organisation of the paper} 

In Section~\ref{sec:Fib_dec}, we reduce the study of $\Op$ to a family of two-dimensional operators (fibers). 
This reduction allows to understand the structure of the discrete spectrum mentioned in Theorem~\ref{thm:struc_spec} 
and to prove Proposition~\ref{prop:non_decr_eigv}. After introducing one-dimensional model operators, Theorem~\ref{th:acceig} 
is proven in Section~\ref{sec:couneig}. Finally, we conclude the paper with Appendix~\ref{app:appB} about some properties of 
fiber decompositions.

\section{Fiber decomposition and its first applications}
\label{sec:Fib_dec}

In this section we prove Theorem~\ref{thm:struc_spec} and 
Proposition~\ref{prop:non_decr_eigv}. To take advantage of the symmetry of the problem, we work with a description of $\Op$ in cylindrical coordinates. Then, we reduce the study to a family of two-dimensional operators (fibers). For $d=3$, the fiber decomposition
is also used in the proof of the spectral asymptotics in Section~\ref{sec:couneig}.

\subsection{Hyper-cylindrical coordinates} 

Since the conical surface $\cC$ is axisymmetric, 
the problem is better described in (hyper-)cylindrical coordinates with $x_d$ as the reference axis. Let us denote these coordinates by 
$(r,z, \phi)\in\dR_+\times\dR\times\dS^{d-2}$, 
where $\dS^{d-2} \cong [0,\pi]^{d-3} \times[0,2\pi)$ is the unit sphere of dimension $d-2$ and $\mathfrak{m}_{d-2}$ its natural surface measure. 
We can write 
$\phi\in\dS^{d-2}$ as 
$\phi=(\phi_1,\dots,\phi_{d-2})\in[0,\pi]^{d-3}\times[0,2\pi)$ 
and, for all $k\in\{1,\dots,d-2\}$, we have             
\begin{equation}
	x_k = r \bigg(\prod_{p=1}^{k-1}\sin \phi_p\bigg) \cos \phi_k,\quad 
	x_{d-1} = r\prod_{p=1}^{d-2}\sin \phi_p, \quad x_d = z.
\label{eqn:var_cyl}
\end{equation}
For further use, we introduce the \textit{meridian domain} 
$\dR_+^2 = \dR_+\times\dR$, the \textit{meridian ray}
\begin{equation}\label{def:Gt}
	\Gt := 
	\big\{ (r,z) \in\dR^2_+\colon  z = r \cot\tt\big\},
\end{equation}
Now, we introduce some notations related to the cylindrical coordinates.

\begin{notn}$L_{\mathsf{cyl}}^2(\R^d)$ denotes the Hilbert space
\[
	L_{\mathsf{cyl}}^2(\R^d) = L^2(\R_+^2\times\mathbb{S}^{d-2},r^{d-2}\dd r \dd z \dd\m_{d-2}(\phi)).
\]
Similarly, we define the Sobolev cylindrical space $H_{\mathsf{cyl}}^1(\R^d)$ by
\[
	H_{\mathsf{cyl}}^1(\R^d) := \{u \in L_{\mathsf{cyl}}^2(\R^d) : \partial_r u, \partial_z u, r^{-1}|\nabla_{\mathbb{S}^{d-2}}u| 
	\in L_{\mathsf{cyl}}^2(\R^d)\},
\]
endowed with the norm
\[
	\|u\|_{H_\mathsf{cyl}^1(\R^d)}^2 := \|u\|_{L_{\mathsf{cyl}}^2(\R^d)}^2 + \|\partial_r u\|_{L_{\mathsf{cyl}}^2(\R^d)}^2 + 
	\|\partial_z u\|_{L_{\mathsf{cyl}}^2(\R^d)}^2 + \|r^{-1}|\nabla_{\mathbb{S}^{d-2}}u|\|_{L_{\mathsf{cyl}}^2(\R^d)}^2,
\]
where $\nabla_{\mathbb{S}^{d-2}}$ is the surface gradient on $\mathbb{S}^{d-2}$.

$\cC_0^\infty(\ov{\R_+^2})$ denotes the space of infinitely differentiable 
functions with compact support in $\ov{\R_+^2}$ and we introduce the space $\cC_{0,0}^\infty(\ov{\R_+^2})$ 
defined by
\[
	\cC_{0,0}^\infty(\ov{\R_+^2}) = \{u \in \cC_0^\infty(\ov{\R_+^2}) : \restric{u}{r=0} = 0\}.
\]
\label{notn:def_space}
\end{notn}

The change of variables \eqref{eqn:var_cyl} maps the whole space 
$\dR^d$ onto $\dR_+^2\times\dS^{d-2}$ and $\Op$ 
becomes the unbounded self-adjoint operator $\OpL$ acting on $L_{\mathsf{cyl}}^2(\R^d)$. By the first representation theorem, $\OpL$ can be seen as the operator associated with the quadratic form $\Frm^\mathsf{cyl}$, defined by the expression of $\Frm$ in cylindrical coordinates:
\[
	\begin{split}
		\Frm^\mathsf{cyl}[u] &:= \|\partial_r u\|_{L_{\mathsf{cyl}}^2(\R^d)}^2 + 
		\|\partial_z u\|_{L_{\mathsf{cyl}}^2(\R^d)}^2 + \|r^{-1}|\nabla_{\mathbb{S}^{d-2}}u|\|_{L_{\mathsf{cyl}}^2(\R^d)}^2\\
		& \qquad\qquad - \alpha \int_{\R_+\times\dS^{d-2}} |u(s\sin\tt,s\cos\tt,\phi)|^2\dd s \dd\mathfrak{m}_{d-2}(\phi).\\
		\dom\Frm^\mathsf{cyl} &:= H_\mathsf{cyl}^1(\R^d).
	\end{split}  
\]
%

\subsection{Spherical harmonics} 

First, we recall some known results on the spherical harmonics that can be found, \eg, in \cite[Chap. IV \S 2]{SW71}. 
$-\Delta_{\dS^{d-2}}$ denotes the Laplace-Beltrami on the sphere $\dS^{d-2}$ and its eigenvalues are given by $l(l+d-3)$, 
with $l\in\dN_0$ and the associated eigenspaces, 
denoted by $\cG_l^{d-2}$, are of dimension 
\[
	c(d,l) := \binom{d+l-2}{d-2} - \binom{d+l-4}{d-2},
	\qquad
	\text{where}\quad \binom{n}{k} :=\left\{ 	\begin{array}{cl}\frac{n!}{k!(n-k)!}& \text{if } n \ge k,\\
												0&\text{otherwise}.
									\end{array}
	\right.
\]	
Moreover, for all $k \in \{1,\dots,c(d,l)\}$, we denote by $Y_{l,k}^{d-2}$ the usual spherical harmonics. 
For all $l\in\dN_0$ and $k\in\{1,\dots,c(d,l)\}$, they satisfy
\begin{equation*}
	-\Delta_{\dS^{d-2}}Y_{l,k}^{d-2} = l(l+d-3)Y_{l,k}^{d-2},\qquad 
	\cG_l^{d-2} = \underset{k\in\{1,\dots,c(d,l)\}}{\spn}\{Y_{l,k}^{d-2}\}.
\end{equation*}
Finally, the family of all the spherical harmonics forms an orthonormal basis of $L^2(\dS^{d-2})$; \ie,
\[
	\ov{\underset{l\in\dN_0}{\spn}\cG_l^{d-2}}
	= 
	L^2(\dS^{d-2}).
\]
Now, decomposing into spherical harmonics and according to the terminology of 
\cite[\S XIII.16]{RS78}, we have the constant fiber sum
\begin{equation*}
\begin{array}{lcl}
	L^2(\dR_+^2\times\dS^{d-2},r^{d-2}\dd r \dd z \dd\mathfrak{m}_{d-2}(\phi)) 
	&=& 
	L^2(\dR_+^2,r^{d-2}\dd r\dd z) \otimes
	L^2(\dS^{d-2})\\
	& = & 
	\displaystyle\bigoplus_{l\in\dN_0}
	\bigoplus_{k=1}^{c(d,l)}L^2(\dR_+^2,r^{d-2}\dd r\dd z).
\end{array}
\end{equation*}
Secondly, for any function $u\in L_\mathsf{cyl}^2(\R^d)$, we define
	\[
		(\pi_{l,k}u)(r,z) := \ps{u}{Y_{l,k}^{d-2}}_{\dS^{d-2}} = \int_{\dS^{d-2}} u(r,z,\phi) \ov{Y_{l,k}^{d-2}}(\phi) \dd \mathfrak{m}_{d-2}(\phi).
	\]
We consider the family $(\Pi_{l,k})_{l\in\dN_0,k\in\{1,\dots,c(d,l)\}}$ of orthogonal projectors on $L_\mathsf{cyl}^2(\R^d)$, defined as
\begin{equation}\label{eq:Pikl}
	(\Pi_{l,k} u)(r,z,\phi) 
	:= 
	(\pi_{l,k} u)(r,z) Y_{l,k}^{d-2}(\phi). 
\end{equation}
By definition of the spherical harmonics and $H_\mathsf{cyl}^1(\R^d)$, we know that $\Pi_{l,k}(H_\mathsf{cyl}^1(\dR^d))\subset H_\mathsf{cyl}^1(\dR^d)$. 
We introduce the quadratic forms
\begin{equation}
	\frmLf ^{[l,k]}[u]  := \Frm^\mathsf{cyl}[u Y_{l,k}^{d-2}],\qquad \dom\frmLf^{[l,k]}  := \pi_{l,k}(H_\mathsf{cyl}^1(\R^d)).
	\label{eqn:def_fq1}
\end{equation}
By straightforward computations we first notice that the forms $\frmLf^{[l,k]}$ do not depend on $k$ and to simplify, we drop the index $k$. 
Thirdly, we get
\begin{equation}
\begin{split}
	\frmLf ^{[l]}[u] & = 
	\int_{\dR_+^2}\Big(|\p_r u|^2 + |\p_z u|^2 + 
	\frac{l(l+d-3)}{r^2}|u|^2\Big)r^{d-2}\dd r \dd z\\
	&\qquad\qquad\qquad\qquad\qquad -
	\alpha\int_{\dR_+}
	|u(s\sin\theta,s\cos\theta)|^2s^{d-2}\sin^{d-2}\theta\dd s,\\
	\dom\frmLf^{[l]} & = \left\{
	\begin{array}{lr}
		\{u : u, \p_r u, \p_z u \in L^2(\dR_+^2,r^{d-2}\dd r\dd z)\},
		& l = 0,\\
		\{u : u, \p_r u, \p_z u, r^{-1}u \in L^2(\dR_+^2,r^{d-2}\dd r\dd z)\},
		& l \ne 0.
	\end{array}
	\right.
\end{split}
\label{eqn:def_fq}
\end{equation}
We refer to \cite[\S II.3.a]{BDM99} for a full description of the domains of the above forms when $d=3$.

Using \eqref{eqn:def_fq1}, one can show that the quadratic forms $\frmLf^{[l]}$ are symmetric, closed, densely defined and semibounded on $\pi_{l,k}(L_\mathsf{cyl}^2(\R^d)) = L^2(\R_+^2,r^{d-2}\dd r\dd z)$. Hence, by the first representation theorem, each quadratic form $\frmLf^{[l]}$ is associated with a self-adjoint operator $\OpLf^{[l]}$ acting on $L^2(\R_+^2,r^{d-2}\dd r\dd z)$.

Using the precise description of $\dom\OpL$ given in \cite[Thm. 3.3 (a)]{BEL14_RMP} and the symmetry of $\cC$, one can show that
\[
	\Pi_{l,k}(\dom\OpL)\subset\dom\OpL,\quad \OpL(\Pi_{l,k}(\dom\OpL)) \subset \Pi_{l,k}(L_\mathsf{cyl}^2(\R^d)).
\]
The first representation theorem implies that the operator $\pi_{l,k}\restric{\OpL}{\Pi_{l,k}(\dom\OpL)}$ can be identified with $\OpLf^{[l]}$  and that they have the same spectrum. By \cite[\S 1.4]{S12}, the operator $\OpL$ decomposes as	
\begin{equation}
	\OpL = \bigoplus_{l\in\dN_0}\bigoplus_{k=1}^{c(d,l)} \OpLf^{[l]}.
\label{eqn:fib_dec}
\end{equation}
The self-adjoint operators $\OpLf^{[l]}$ are the fibers of $\OpL$ and this decomposition yields
\begin{equation}
	\s (\Op) = \s (\OpL) = 
	\ov{\cup_{l\in\dN_0}\cup_{k = 1}^{c(d,l)}\s(\OpLf^{[l]})} 
	= 
	\ov{\cup_{l\in\dN_0}\s(\OpLf^{[l]})}.
\label{eqn:dec_spec_un}
\end{equation}
For further use, for all $d\geq3$ and $l\geq0$, we define $\mathsf{Co}(Q_{\aa,\Gamma_\tt}^{[l]})$ as
	\[
		\mathsf{Co}(Q_{\aa,\Gamma_\tt}^{[l]})  := \left\{
	\begin{array}{lll}
		\cC_{0,0}^{\infty}(\ov{\R_+^2}),
		& \text{when }(d,l) = (3,l),
		&l>0,\\
		\cC_{0}^{\infty}(\ov{\R_+^2}),
		& \text{otherwise.}
	\end{array}
	\right.
	\]
With this definition, $\mathsf{Co}(Q_{\aa,\Gamma_\tt}^{[l]})$ is a form core of $Q_{\aa,\Gamma_\tt}^{[l]}$ (\cf Proposition \ref{prop:core_fdcyl}).


\subsection{Flat metric} 
In this subsection, after reformulating the problem in flat metric, we study the quadratic forms $\frmLf^{[l]}$ with the help of a unitarily equivalent form. 
First, we formulate the following proposition.
\begin{prop}\label{def:unitary}
	Let $\cH$ and $\cG$ be two Hilbert spaces and let $\sfU\colon \cH\arr \cG$ be a unitary operator. 
	Let $\frs$ be a closed, densely defined, symmetric and semibounded quadratic form on the Hilbert space $\cH$. 
	Define the quadratic form $\frt$ by
	\[
		\frt[u] := \frs[\sfU^{-1}v],\qquad \dom\frt := \sfU(\dom\frs).
	\]
	With this definition, the following statements hold.
	\begin{myenum}
		\item The form $\frt$ is closed, densely defined, symmetric, and semibounded on the
			Hilbert space $\cG$; and we say that $\frs$ and $\frt$ are unitarily equivalent.
		\item	The respective self-adjoint operators $\sfT$ and $\sfS$ (associated with these forms) are 
			unitarily equivalent and the relation $\sfS = \sfU^{-1}\sfT\sfU$ holds.
		\item	If $\mathsf{Co}(\frs)$ is a form core of $\frs$, then $\mathsf{U}(\mathsf{Co}(\frs))$ is a form core of $\frt$.
	\end{myenum}
\end{prop}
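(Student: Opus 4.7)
The plan is to exploit the fact that $\sfU$ is a unitary isomorphism between $\cH$ and $\cG$ and to push each of the required properties of $\frs$ through $\sfU$ via the defining identity $\frt[u,v] = \frs[\sfU^{-1}u, \sfU^{-1}v]$ (polarising the diagonal definition).

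For item (i), I would verify the four properties one after another. Density of $\dom\frt=\sfU(\dom\frs)$ in $\cG$ is immediate because $\sfU$ is a homeomorphism. Symmetry follows directly from symmetry of $\frs$ and the definition. Semiboundedness is transported by the isometry property: if $\frs[w]\geq -c\|w\|_{\cH}^2$, then for $u=\sfU w$ one has $\frt[u]=\frs[w]\geq -c\|w\|_{\cH}^2=-c\|u\|_{\cG}^2$. Closedness is the only step deserving care: given a sequence $(u_n)\subset\dom\frt$ with $u_n\to u$ in $\cG$ and $\frt[u_n-u_m]\to 0$, set $w_n:=\sfU^{-1}u_n$ so that $w_n\to\sfU^{-1}u$ in $\cH$ and $\frs[w_n-w_m]\to 0$; closedness of $\frs$ then yields $\sfU^{-1}u\in\dom\frs$ and $\frs[w_n-\sfU^{-1}u]\to 0$, which translates back into $u\in\dom\frt$ together with $\frt[u_n-u]\to 0$.

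For item (ii), I would use the first representation theorem on both sides. Take $w\in\dom\sfS$; then for every $w'\in\dom\frs$,
\[
    \frs[w,w']=(\sfS w,w')_{\cH}=(\sfU\sfS w,\sfU w')_{\cG}.
\]
Setting $u=\sfU w$, $v=\sfU w'$ gives $\frt[u,v]=(\sfU\sfS\sfU^{-1}u,v)_{\cG}$ for every $v\in\dom\frt$, so by the first representation theorem $u\in\dom\sfT$ and $\sfT u=\sfU\sfS\sfU^{-1}u$. The reverse inclusion is symmetric (apply the same argument to $\sfU^{-1}$). This gives $\sfT=\sfU\sfS\sfU^{-1}$ on $\sfU(\dom\sfS)=\dom\sfT$, equivalently $\sfS=\sfU^{-1}\sfT\sfU$.

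For item (iii), the key observation is that the form norms on $\dom\frs$ and $\dom\frt$, given by $\|w\|_{\frs}^2:=\frs[w]+(c+1)\|w\|_{\cH}^2$ and $\|u\|_{\frt}^2:=\frt[u]+(c+1)\|u\|_{\cG}^2$ (with $c$ a common lower bound), are intertwined isometrically by $\sfU$. Given $u\in\dom\frt$, pick $(w_n)\subset\mathsf{Co}(\frs)$ with $\|w_n-\sfU^{-1}u\|_{\frs}\to 0$; then $u_n:=\sfU w_n\in\sfU(\mathsf{Co}(\frs))$ satisfies $\|u_n-u\|_{\frt}=\|w_n-\sfU^{-1}u\|_{\frs}\to 0$, proving that $\sfU(\mathsf{Co}(\frs))$ is dense in $\dom\frt$ for the form norm, \ie a form core. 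There is no genuine obstacle in this proposition; the only point where one must be a little careful is checking that the domain identity $\dom\sfT=\sfU(\dom\sfS)$ really follows from both directions of the first representation theorem and not only one.
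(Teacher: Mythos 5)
Your proof is correct and follows the only natural route: direct verification of each property by transporting it through the unitary $\sfU$ (density, symmetry and semiboundedness immediately, closedness via Cauchy sequences in the form norm, the operator identity from both directions of the first representation theorem, and the core statement from the isometry of the form norms). The paper gives no detailed proof at all---it merely states that the proposition is a direct consequence of the definitions---so your argument simply supplies, correctly, the routine details the authors chose to omit.
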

Proposition~\ref{def:unitary} is a direct consequence of the definition of $\frt$ and the definition of the unitary transform.

Secondly, we introduce the following unitary transform 
\begin{equation}\label{eqn:unitary_flat}
	\sfU \colon L^2(\dR_+^2,r^{d-2}\dd r\dd z) \arr L^2(\dR_+^2),
	\qquad 
	(\sfU u)(r,z) := r^{(d-2)/2}u(r,z)\equiv\wt{u}(r,z).
\end{equation}
\begin{prop} 
	Let $d \geq 3$ and $l\in\dN_0$ be such that 
	$(d,l)\neq(3,0)$. 
	Then $\frmLf^{[l]}$ is unitarily equivalent to the quadratic form
	\begin{equation*}
	\begin{split}
		\frmLft^{[l]}[\wt{u}] 
		& := 
		\int_{\dR_+^2}\Big(
		|\p_r \wt{u}|^2 + |\p_z\wt{u}|^2 + 
		\frac{\gamma(d,l)}{r^2}|\wt{u}|^2\Big) \dd r \dd z 
		- 
		\alpha\int_{\dR_+}|\wt{u}(s\sin\theta,s\cos\theta)|^2\dd s,\\
		\dom\frmLft^{[l]} & := \sfU ( \dom\frmLf^{[l]} ),
	\end{split}	
	\end{equation*}
	where $\gamma(d,l) = l (l+d-3) + \frac14(d-2)(d-4) \geq 0$. 
	Moreover, 
	$\mathsf{U}(\mathsf{Co}(\frmLf^{[l]}))$ is a form core of $\frmLft^{[l]}$ 
	and any function $\wt{u}\in\mathsf{U}(\mathsf{Co}(\frmLf^{[l]}))$ 
	satisfies $\restric{\wt{u}}{r=0} = 0$.
	\label{prop:fq_flatmetric}
\end{prop}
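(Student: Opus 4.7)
My plan is to apply Proposition~\ref{def:unitary} to the explicit unitary $\sfU$ defined in~\eqref{eqn:unitary_flat}, reducing the statement to an algebraic identity that I then verify on a convenient form core and propagate by closedness. A direct calculation gives $\|\sfU u\|_{L^2(\dR_+^2)}^2 = \|u\|_{L^2(\dR_+^2,r^{d-2}\dd r\dd z)}^2$, so $\sfU$ is unitary onto $L^2(\dR_+^2)$. Items~(i)--(iii) of Proposition~\ref{def:unitary} then immediately furnish a closed, densely defined, symmetric and semibounded form $\frs[\wt{u}] := \frmLf^{[l]}[\sfU^{-1}\wt{u}]$ on $\sfU(\dom\frmLf^{[l]})$ with $\sfU(\mathsf{Co}(\frmLf^{[l]}))$ as a form core. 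The substantive task is to identify $\frs$ with the explicit integral expression announced for $\frmLft^{[l]}$.

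To establish this, I would work on the core. Writing $u = r^{-(d-2)/2}\wt{u}$ and differentiating in $r$, a short computation gives
\[
r^{d-2}|\partial_r u|^2 = |\partial_r \wt{u}|^2 - \tfrac{d-2}{2r}\partial_r|\wt{u}|^2 + \tfrac{(d-2)^2}{4r^2}|\wt{u}|^2,
\]
while $r^{d-2}|\partial_z u|^2 = |\partial_z \wt{u}|^2$ and $r^{d-2}\cdot\tfrac{l(l+d-3)}{r^2}|u|^2 = \tfrac{l(l+d-3)}{r^2}|\wt{u}|^2$. Integrating the cross term by parts in $r$ yields
\[
-\tfrac{d-2}{2}\int_0^\infty \frac{\partial_r|\wt{u}|^2}{r}\dd r = -\tfrac{d-2}{2}\Bigl[\frac{|\wt{u}|^2}{r}\Bigr]_0^{\infty} - \tfrac{d-2}{2}\int_0^\infty \frac{|\wt{u}|^2}{r^2}\dd r,
\]
and the $1/r^2$ coefficients combine as $\tfrac{(d-2)^2}{4}-\tfrac{d-2}{2}+l(l+d-3) = l(l+d-3)+\tfrac{(d-2)(d-4)}{4} = \gamma(d,l)$, as required. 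The surface integral along $\Gamma_\tt$ transforms as $|u(s\sin\tt,s\cos\tt)|^2 s^{d-2}\sin^{d-2}\tt = |\wt{u}(s\sin\tt,s\cos\tt)|^2$, exactly in the announced form.

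The main obstacle is controlling the boundary contribution $\bigl[|\wt{u}|^2/r\bigr]_0^\infty$: the endpoint at $+\infty$ is killed by compact support, but at $r=0$ one has $|\wt{u}(r,z)|^2/r = r^{d-3}|u(r,z)|^2$, which vanishes in the limit $r\to 0^+$ for every $u\in \cC_0^\infty(\overline{\dR_+^2})$ as soon as $d\geq 4$, and for $d=3$ with $l\geq 1$ the core $\cC_{0,0}^\infty(\overline{\dR_+^2})$ supplies $u(0,z)=0$, yielding the same conclusion. This is precisely where the assumption $(d,l)\neq (3,0)$ enters: in that excluded critical Hardy case $\gamma(3,0)=-1/4$ and the boundary term is genuinely nonzero. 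Nonnegativity $\gamma(d,l)\geq 0$ follows by inspection: $\gamma(3,l)=l^2-\tfrac{1}{4}\geq \tfrac{3}{4}$ for $l\geq 1$, $\gamma(4,l)=l(l+1)$, and for $d\geq 5$ both summands of $\gamma(d,l)$ are nonnegative.

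Finally, the vanishing of the trace $\wt{u}|_{r=0}$ for $\wt{u}\in \sfU(\mathsf{Co}(\frmLf^{[l]}))$ is immediate from the factor $r^{(d-2)/2}$ and $(d-2)/2\geq 1/2>0$. Since the two forms $\frs$ and $\frmLft^{[l]}$ agree on the core $\sfU(\mathsf{Co}(\frmLf^{[l]}))$, and $\frs$ is closed on $\sfU(\dom\frmLf^{[l]})$, equality extends to the full domain by closedness, completing the proof.
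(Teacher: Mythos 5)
Your proposal is correct and follows essentially the same route as the paper: apply Proposition~\ref{def:unitary} to the weight transform $\sfU$ of \eqref{eqn:unitary_flat}, verify the integral expression on the core $\mathsf{Co}(\frmLf^{[l]})$ via the same derivative expansion and integration by parts in $r$, and kill the boundary term at $r=0$ using $\cC_{0,0}^\infty(\ov{\R_+^2})$ when $d=3$, $l\geq 1$ and $\cC_0^\infty(\ov{\R_+^2})$ when $d\geq 4$. The extra remarks you add (the explicit check $\gamma(d,l)\geq 0$ and the observation that $(d,l)=(3,0)$ is the critical Hardy case where the boundary term survives) are accurate and consistent with the paper's computation.
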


\begin{proof}
	Let $(d,l)\neq(3,0)$. First, 
	we define the quadratic form $\frmLft^{[l]}$ as
	\[
		\frmLft^{[l]}[\wt{u}] := \frmLf^{[l]}[\sfU^{-1}\wt{u}], \qquad \dom\frmLft^{[l]} := \sfU(\dom\frmLf^{[l]}),
	\]
	with $\sfU$ as in \eqref{eqn:unitary_flat}.
	By Proposition~\ref{def:unitary}\,(i) 
	we know that $\frmLft^{[l]}$ is a closed, densely defined, symmetric, and semibounded quadratic 
	form on $L^2(\R_+^2)$. 
	To get the expression of $\frmLft^{[l]}$ stated in Proposition~\ref{prop:fq_flatmetric}, 
	it is sufficient to check it on a form core of $\frmLft^{[l]}$. 
	Consequently, let us choose $u\in\mathsf{Co}(\frmLf^{[l]})$ and set $\wt u := \sfU u$. Then, we have
	\begin{equation*}
	\begin{split}
		\frmLft^{[l]}[\wt{u}] = \frmLf^{[l]}[r^{-(d-2)/2}\wt{u}] &=
		\int_{\dR_+^2}|\p_r(r^{-(d-2)/2}\wt{u})|^2 r^{d-2}\dd r\dd z 
		+ \int_{\dR_+^2} 
		\Big(|\p_z\wt{u}|^2 + \frac{l(l+d-3)}{r^2}|\wt{u}|^2\Big)
		\dd r\dd z\\
		&\qquad\qquad\qquad\qquad- \alpha\int_{\dR_+}
		|\wt{u}(s\sin\theta,s\cos\theta)|^2\dd s.
	\end{split}
	\end{equation*}
	A simple computation yields
	\begin{equation}
		|\p_r u|^2 =
		|\p_r(r^{-(d-2)/2}\wt{u})|^2 = 
		\frac{1}{r^{d-2}}|\p_r\wt{u}|^2 
		+ 
		\frac{(d-2)^2}{4r^d}|\wt{u}|^2 
		- 
		\frac{d-2}{2r^{d-1}}\p_r(|\wt{u}|^2).
	\label{eqn:der_comp}
	\end{equation}
	Integrating over $r\in\dR_+$ the last term in \eqref{eqn:der_comp}, 
	and integrating by parts, we end up with
	\begin{equation}
		\int_{\dR_+}
		\frac1{r^{d-1}}\p_r(|\wt{u}|^2)r^{d-2}\dd r 
		= 
		\int_{\dR_+}
		\frac1{r}\p_r(|\wt{u}|^2)\dd r 
		= \lim_{r\arr+\infty}(r^{-1}|\wt{u}|^2)-\lim_{r\arr 0}
		(r^{-1}|\wt{u}|^2) 
		+ 
		\int_{\dR_+} \frac1{r^2}|\wt{u}|^2\dd r.
	\label{eqn:ipp}
	\end{equation}
	The two limits in~\eqref{eqn:ipp} make sense and both are equal to zero.
	Indeed, we have $r^{-1}|\wt{u}|^2 = |u|^2 r^{d-3}$, 
	and $u$ is compactly supported,
	so 
	$|\wt{u}|^2 = |u|^2r^{d-2}\arr0$
	as $r\arr +\infty$ and the first limit in~\eqref{eqn:ipp} is zero. 
	Now, when $d=3$, $u\in\cC_{0,0}^\infty(\ov{\R_+^2})$ so,
	$r^{-1}|\wt{u}|^2 = |u|^2\arr 0$ as $r \arr 0$. When $d\geq4$, $u\in\cC_{0}^\infty(\ov{\R_+^2})$ and $r^{-1}|\wt{u}|^2 = |u|^2r^{d-3}\arr 0$ as $r \arr 0$.
	Hence, \eqref{eqn:ipp} rewrites as
	\begin{equation}
		\int_{\dR_+}
		\frac1{r^{d-1}}\p_r(|\wt{u}|^2)r^{d-2}\dd r 
		= 
		\int_{\dR_+} \frac1{r^2}|\wt{u}|^2\dd r.
	\label{eqn:ipp_fin}
	\end{equation}
	Using~\eqref{eqn:der_comp} and~\eqref{eqn:ipp_fin} we get the desired expression for $\frmLft^{[l]}$. 
\end{proof}

Now, we are ready to prove the following statement.

\begin{prop} 
	Let $d\geq3$ and $l\in\dN_0$ be such that $(d,l)\neq(3,0)$. Then we have
	\begin{equation*}
		\inf\s(\OpLf^{[l]}) \geq -\alpha^2/4.
	\end{equation*}
\label{prop:red_axy}
\end{prop}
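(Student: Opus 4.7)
Via Proposition~\ref{prop:fq_flatmetric} and Proposition~\ref{def:unitary}\,(ii), the operator $\OpLf^{[l]}$ is unitarily equivalent to the self-adjoint operator associated with $\frmLft^{[l]}$, so the bound $\inf\sigma(\OpLf^{[l]})\ge -\alpha^2/4$ is equivalent to
\[
   \frmLft^{[l]}[\wt u]\;\ge\;-\frac{\alpha^2}{4}\,\|\wt u\|_{L^2(\dR_+^2)}^{2}
\]
for all $\wt u$ in the form domain. Since $\mathsf{U}(\mathsf{Co}(\frmLf^{[l]}))$ is a form core of $\frmLft^{[l]}$ (again Proposition~\ref{def:unitary}\,(iii) combined with Proposition~\ref{prop:fq_flatmetric}), it suffices to establish the inequality for any $\wt u$ in this core; as recorded in Proposition~\ref{prop:fq_flatmetric} every such $\wt u$ additionally satisfies $\restric{\wt u}{r=0}=0$.

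\textbf{Reduction to the flat two-dimensional problem.} Since $\gamma(d,l)\ge 0$ in every case $(d,l)\ne(3,0)$, the Hardy-type term in $\frmLft^{[l]}$ is nonnegative and may be discarded:
\[
   \frmLft^{[l]}[\wt u] \;\ge\; \int_{\dR_+^2}\bigl(|\p_r\wt u|^2+|\p_z\wt u|^2\bigr)\dd r\dd z
   \;-\;\alpha\int_{\dR_+}|\wt u(s\sin\tt,s\cos\tt)|^2\dd s.
\]
Because $\wt u$ is smooth on $\overline{\dR_+^2}$ with compact support and vanishes at $r=0$, its extension by zero across the $z$-axis defines $\wt u_{\rm ext}\in H^1(\dR^2)$ with identical $L^2$-norm and identical $L^2$-norm of the gradient. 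Moreover, the parametrised line $s\mapsto (s\sin\tt,s\cos\tt)$, $s\in\dR$, meets the half-plane $\{r>0\}$ precisely for $s>0$, so the extension by zero turns the half-line integral into a full-line integral:
\[
   \int_{\dR_+}|\wt u(s\sin\tt,s\cos\tt)|^2\dd s
   \;=\;\int_{\dR}|\wt u_{\rm ext}(s\sin\tt,s\cos\tt)|^2\dd s.
\]
Thus it suffices to prove the inequality
\[
   \int_{\dR^2}|\nabla \wt u_{\rm ext}|^2\dd r\dd z
   \;-\;\alpha\int_{\dR}|\wt u_{\rm ext}(s\sin\tt,s\cos\tt)|^2\dd s
   \;\ge\;-\frac{\alpha^2}{4}\|\wt u_{\rm ext}\|_{L^2(\dR^2)}^2,
\]
which is the lower bound $-\alpha^2/4$ for the two-dimensional Schr\"odinger operator with $\delta$-interaction of strength~$\alpha$ supported on a straight line.

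\textbf{The classical $2$D estimate.} Introduce orthonormal coordinates $(s,t)$ on $\dR^2$ with $s$ parallel to the line $\{(s\sin\tt,s\cos\tt):s\in\dR\}$ and $t$ perpendicular to it; the line then reads $\{t=0\}$. Dropping the nonnegative tangential contribution and applying Fubini reduces the estimate to the well-known one-dimensional inequality
\[
   \int_{\dR}|v'(t)|^2\dd t \;-\;\alpha|v(0)|^2
   \;\ge\;-\frac{\alpha^2}{4}\int_{\dR}|v(t)|^2\dd t,\qquad v\in H^1(\dR),
\]
applied for almost every $s$ to $v(t):=\wt u_{\rm ext}(s,t)$; integrating over~$s$ and using Fubini yields the desired bound. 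Combining with the reduction step gives $\frmLft^{[l]}[\wt u]\ge -\alpha^2/4\,\|\wt u\|^2$ on the core, hence on the whole form domain, which proves the proposition.

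\textbf{Expected obstacle.} No deep difficulty is anticipated; the only point requiring care is the extension by zero, which relies crucially on the boundary behaviour $\restric{\wt u}{r=0}=0$ guaranteed by Proposition~\ref{prop:fq_flatmetric} (and which precisely fails in the excluded case $(d,l)=(3,0)$, explaining the restriction).
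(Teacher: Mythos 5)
Your proposal is correct and follows essentially the same route as the paper: pass to the flat-metric form $\frmLft^{[l]}$, discard the nonnegative term $\gamma(d,l)r^{-2}|\wt u|^2$, extend by zero to $H^1(\dR^2)$ using $\restric{\wt u}{r=0}=0$, and invoke the $-\alpha^2/4$ lower bound for the two-dimensional $\delta$-interaction on a straight line. The only difference is cosmetic: the paper cites the known spectrum $[-\alpha^2/4,+\infty)$ of that line operator, while you spell out the underlying one-dimensional trace inequality.
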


\begin{proof}
	Instead of working with the operator $\OpLf^{[l]}$ and its associated form $\frmLf^{[l]}$  
	we work with the unitarily equivalent quadratic form 
	in the flat metric $\frmLft^{[l]}$. We want to apply the min-max principle to the quadratic form $\frmLft^{[l]}$. 
	To do so, it is sufficient to apply it with test functions in $\mathsf{U}(\mathsf{Co}	(\frmLf^{[l]}))$,
	where $\sfU$ is as in \eqref{eqn:unitary_flat}. 
	Let $\wt{u}\in\mathsf{U}(\mathsf{Co}(\frmLf^{[l]}))$, we have
	\begin{equation*}
		\frmLft^{[l]}[\wt{u}] 
		\geq 
		\int_{\dR_+^2}\big(|\p_r\wt{u}|^2 + |\p_z\wt{u}|^2\big) \dd r \dd z 
		- 
		\alpha\int_{\dR_+}|\wt{u}(s\sin\theta,s\cos\theta)|^{2}\dd s.
	\end{equation*}
	Thanks to Proposition \ref{prop:fq_flatmetric} we know that 
	$\wt{u}\in H_0^1(\dR_+^2)$. $\wt{u}$ can be extended by zero to the whole plane $\R^2$, defining a function $\wt{u}_0\in H^1(\R^2)$. We obtain
	\begin{equation}\label{eqn:min_maxbas}
		\frmLft^{[l]}[\wt{u}] 
	 	\ge
		\int_{\dR^2}\big(|\p_r\wt{u}_0|^2 + |\p_z\wt{u}_0|^2\big)
		\dd r \dd z - 
		\alpha\int_{\dR}|\wt{u}_0(s\sin\theta,s\cos\theta)|^{2}\dd s.
	\end{equation}
	The quadratic form on the right-hand side is the one of a 
	Schr\"odinger operator with an attractive $\delta$-interaction of strength 
	$\alpha > 0$ supported on a straight line in $\dR^2$. Its
	spectrum can be computed \textit{via} separation of variables and is $[-\aa^2/4,+\infty)$.
	The min-max principle applied to the form on the right hand side of \eqref{eqn:min_maxbas} yields
	\begin{equation*}
		\frmLft^{[l]}[\wt{u}] 
		\geq 
		-(\alpha^2/4)\|\wt{u}_0\|^2_{\dR^2}
		=
		-(\alpha^2/4)\|\wt{u}\|^2_{\dR^2_+}.
	\end{equation*}
	Finally, we get the inequality applying the min-max principle to $\frmLft^{[l]}$.
\end{proof}

Combining the structure of the essential spectrum, stated in 
Theorem~\ref{thm:struc_spec}, with Proposition \ref{prop:red_axy} we obtain the following corollary.

\begin{cor}\label{cor:spec}
	Let the self-adjoint operator $\Op$ be as in
	Definition~\ref{def:Op} and the self-adjoint operator $\OpLf^{[0]}$
	be as in \eqref{eqn:fib_dec}. 
	\begin{myenum}
		\item For $d = 3$, $\sd(\Op) = \sd(\OpLf^{[0]})$ holds and the multiplicities
		   		 of the corresponding eigenvalues coincide. 
		\item For $d \ge 4$, $\sd(\Op) = \varnothing$ holds.
	\end{myenum}
\end{cor}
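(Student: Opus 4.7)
The plan is to combine the orthogonal fiber decomposition~\eqref{eqn:fib_dec} with the bound from Proposition~\ref{prop:red_axy} and the identification $\sess(\Op)=[-\alpha^2/4,+\infty)$ provided by Theorem~\ref{thm:struc_spec}(i). Since $\sd(\Op)\subset(-\infty,-\alpha^2/4)$, the fibers that can contribute to $\sd(\Op)$ are precisely those whose spectrum extends strictly below $-\alpha^2/4$, and Proposition~\ref{prop:red_axy} rules out every index pair $(d,l)\neq(3,0)$.

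For part~(ii), when $d\geq 4$ the excluded pair $(3,0)$ is never attained, so every $l\in\dN_0$ satisfies $\inf\s(\OpLf^{[l]})\geq-\alpha^2/4$. Feeding this into~\eqref{eqn:dec_spec_un} I obtain $\s(\Op)\subset[-\alpha^2/4,+\infty)=\sess(\Op)$, which forces $\sd(\Op)=\varnothing$.

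For part~(i), when $d=3$, Proposition~\ref{prop:red_axy} still applies to every $l\geq 1$ (giving $\s(\OpLf^{[l]})\subset[-\alpha^2/4,+\infty)$) but not to $l=0$. Hence the only fiber that can carry eigenvalues below $-\alpha^2/4$ is the axisymmetric one $\OpLf^{[0]}$. Since $c(3,0)=1$, the $l=0$ block appears exactly once in the direct sum~\eqref{eqn:fib_dec} without any duplication, and the mutual orthogonality of the images of the projectors $\Pi_{l,k}$ then sets up a one-to-one correspondence, respecting multiplicities, between eigenvalues of $\Op$ and eigenvalues of $\OpLf^{[0]}$ lying in $(-\infty,-\alpha^2/4)$. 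This is exactly the claimed identity $\sd(\Op)=\sd(\OpLf^{[0]})$ with matching multiplicities.

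I do not anticipate any real obstacle. The only step that needs some care is the multiplicity bookkeeping in part~(i): I must explicitly invoke $c(3,0)=1$ so that no spurious repetition of the axisymmetric block occurs in~\eqref{eqn:fib_dec}, and use the orthogonality of the subspaces $\Pi_{l,k}(L^2_{\mathsf{cyl}}(\dR^d))$ to ensure that eigenvectors coming from distinct fibers are independent and together fill out the full eigenspace of $\Op$ at each eigenvalue below $-\alpha^2/4$.
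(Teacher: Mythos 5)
Your argument is correct and is essentially the paper's own proof, which is stated in a single sentence (``combining the structure of the essential spectrum in Theorem~\ref{thm:struc_spec}\,(i) with Proposition~\ref{prop:red_axy}''); you have simply made explicit the role of the decomposition~\eqref{eqn:dec_spec_un} and the multiplicity bookkeeping via $c(3,0)=1$. No further comment is needed.
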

In dimension $d\geq4$, it proves Theorem \ref{thm:struc_spec}\,(iii) about the emptiness of $\sd(\Op)$. In dimension $d=3$, it reduces the study of the eigenvalues of $\OpL$ to its axisymmetric fiber $\OpLf^{[0]}$. In the remainder of this paper, except if stated explicitly, $d=3$ and to simplify the notations, we drop the index $0$ and define
\begin{equation}
	\OpLf := \OpLf^{[0]},\qquad \frmLf := \frmLf^{[0]}.
	\label{eqn:not_axy}
\end{equation}
%

\subsection{Monotonicity of the eigenvalues} 

We prove Proposition~\ref{prop:non_decr_eigv} about the monotonicity of the eigenvalues of $\Op$ with respect to the half-opening angle
of the underlying cone $\cC$. Thanks to Corollary~\ref{cor:spec}, we know that we only have to focus on the axisymmetric 
fiber $\OpLf$. We describe the transition from the fiber 
form $\frmLf$ (in~\eqref{eqn:not_axy}) on the meridian domain to a unitarily equivalent
form on the inclined half-plane. 
This transition will be useful in the proof of 
Proposition~\ref{prop:non_decr_eigv} as well
as in further considerations. 

To this end, first, we define the rotation
\begin{equation}\label{def:m}
	s = z\cos\theta + r\sin\theta,\quad t=-z\sin\theta+r\cos\theta,
\end{equation} 
that transforms the meridian domain $\dR^2_+$ into the inclined half-plane (\cf Figure~\ref{fig:inchalp}) 
\begin{equation}\label{Omega_tt}
	\Omega_\tt := \{ (s,t)\in\dR^2 : s \sin\tt + t \cos\tt > 0\}. 
\end{equation}	
The meridian ray $\Gt$, defined in \eqref{def:Gt}, becomes the ray
\begin{equation}\label{def:G}
	\G :=  \{(s,0)\in\Omega_\tt \colon s > 0\}.
\end{equation}	

\begin{figure}[h!]
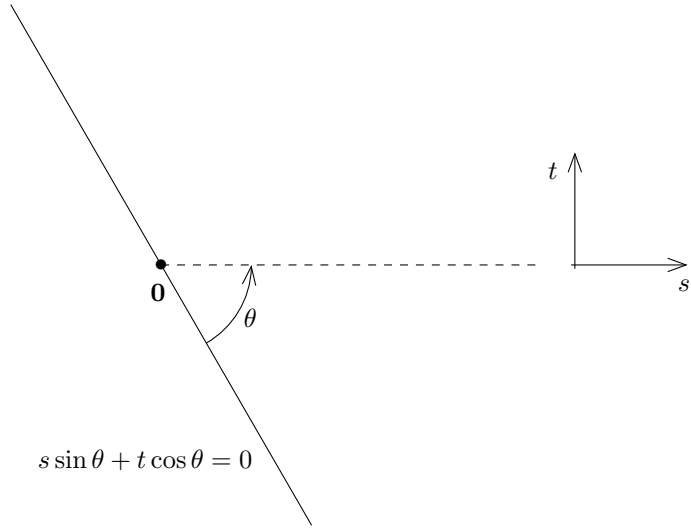

\figinit{1cm}
\figpt 0:(0,0)
\figpt 1:(5,0)

\figpt 2:(0,3.46410161)

\figpt 3:(-2,3.46410161)
\figpt 4:(2,-3.46410161) 

\figpt 5:(0.25,-0.4330127)
\figpt 6:(0.5,0)

\figpt 7:(0.6,-0.7)

\figpt 8:(1.5,-2.598076)

\figpt 9:(5.45,0)
\figpt 10:(7,0)

\figpt 11:(5.5,-0.05)
\figpt 12:(5.5,1.5)
\figdrawbegin{}
\figdrawline [3,4]
\figdrawarrow [9,10]
\figdrawarrow [11,12]
\figdrawarrowcircP 0;1.2[5,6]
\figset (dash=8)
\figdrawline [0,1]
\figdrawend

\figvisu{\figBoxA}{}{
\figwritee 7:{$\theta$} (0.5)
\figwrites 10:{$s$} (0.2)
\figwritesw 12:{$t$} (0.2)
\figwritew 8:{$s\sin\tt + t\cos\tt = 0$} (0.2)
\figset write(mark=$\bullet$)
\figwrites 0:{$\mathbf{0}$} (0.25)
}

\centerline{\box\figBoxA}
\caption{The inclined half-plane $\Omega_\tt$. The dashed line is the support of the $\delta$-interaction.}
\label{fig:inchalp}
\end{figure}

We associate the unitary operator
\begin{equation}\label{def:U}
	\sfU\colon L^2(\dR^2_+) \rightarrow L^2(\Omega_\tt),
	\qquad
	(\sfU u)(s,t)  := \wh{u}(s,t)=  u(s\sin\tt + t\cos\tt,s\cos\tt - t\sin\tt)
\end{equation}
with Rotation \eqref{def:m}. A straightforward computation yields that the quadratic form $\frmLf$ defined in \eqref{eqn:not_axy}, is unitarily equivalent to $\frmLfr$, defined by
\begin{equation}
\begin{split}
	\frmLfr[\wh{u}] & 
	:= 
	\int_{\Omega_\tt} 
	(|\p_s\wh{u}|^2+ |\p_t\wh{u}|^2)(s\sin\tt +  t\cos\tt) 
	\dd s \dd t 
	- 
	\alpha \int_{\Gamma}|\wh{u}(s,0)|^2s\sin\tt \dd s,\\
	\dom\frmLfr & := \sfU(\dom \frmLf).
\end{split}	
\label{eqn:fq_turn}
\end{equation}
To avoid the dependence on $\theta$ of the domain $\dom \frmLfr$, we perform the change of variables $(s,t)\mapsto (\check{s},\check{t}) = (s\tan\theta,t)$ that transforms the domain $\Omega_\tt$ into $\Omega := \Omega_{\pi/4}$. Setting $\check{u}(\check{s},\check{t})=\wh{u}(s,t)$, for $u\in\dom \frmLf$ we get, for the Rayleigh quotient
\begin{equation*}
\begin{array}{lcl}
	\displaystyle\frac{\frmLf[u]}{\|u\|^2_{\dR^2_+}} & 
	= 
	& \displaystyle 
	\frac{\int_\Omega 
	(\tan^2\theta|\p_{\check{s}}\check{u}|^2 + |\p_{\check{t}}\check{u}|^2)
	(\check{s} + \check{t})\cos\tt\cot\tt\dd\check{s}\dd\check{t}-\alpha
	\int_{\check{s}>0}
	|\check{u}(\check{s},0)|^2\check{s}\sin\tt\cot^2\tt\dd\check{s}}
	{\int_{\Omega}|\check{u}|^2(\check{s}+\check{t})\cos\theta\cot\theta\dd \check{s}\dd\check{t}}\\
	&&\\
	&=& 
	\displaystyle
	\frac{\int_\Omega (\tan^2\theta|\p_{\check{s}}\check{u}|^2 
	+ 
	|\p_{\check{t}}\check{u}|^2)
	(\check{s} + \check{t})\dd\check{s}\dd\check{t}-
	\alpha\int_{\check{s}>0}|\check{u}(\check{s},0)|^2\check{s}\dd\check{s}}{
	\int_{\Omega}|\check{u}|^2(\check{s}+\check{t})\dd \check{s}\dd\check{t}}.
	\end{array}
\end{equation*}
The right hand side of the last equation is the Rayleigh quotient of a 
quadratic form acting on $L^2(\Omega;(\check{s}+\check{t})\dd\check{s}\dd\check{t})$. 
Because this form is unitarily equivalent to $\frmLf$ and because its Rayleigh quotients are 
nondecreasing functions of $\tt$, the claim of Proposition~\ref{prop:non_decr_eigv} follows from the min-max formulae 
for the eigenvalues.

\section{Spectral asymptotics of $\Op$ in the three-dimensional case}
\label{sec:couneig}
In this section we prove Theorem~\ref{th:acceig}. The idea is to exhibit a lower and an upper bound
for the counting function of the operator $\OpLf$. We recall that all along this section $d=3$.


\subsection{Proof of Theorem~\ref{th:acceig}}

The proof of Theorem~\ref{th:acceig} relies on the following 
two propositions, whose proofs are postponed to Subsections~\ref{ssec:l_bnd}
and~\ref{ssec:u_bnd}, respectively.

\begin{prop}\label{prop:lower_bnd}
	Let $\aa > 0$ and $\tt\in(0,\pi/2)$. We have
	\[
		\liminf_{E\arr 0+}
			\frac{\N_{-\aa^2/4 - E}(\Op)}{|\ln E|} \ge \frac{\cot\tt}{4\pi}.
	\]
\end{prop}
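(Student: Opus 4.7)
The strategy is to reduce the counting function of $\Op$ to that of a classical one-dimensional Hardy-type Schr\"odinger operator via a direct min-max argument with a well-chosen test subspace. By Corollary~\ref{cor:spec}(i), $\N_{-\aa^2/4-E}(\Op)=\N_{-\aa^2/4-E}(\OpLf)$, so I focus on the axisymmetric fiber. The rotation \eqref{def:m}--\eqref{def:U} turns its quadratic form into $\frmLfr$ on $L^2(\Omega_\tt,\rho\,\dd s\,\dd t)$ with $\rho(s,t):=s\sin\tt+t\cos\tt$. A further unitary $u\mapsto v:=\sqrt{\rho}\,u$ from $L^2(\Omega_\tt,\rho\,\dd s\,\dd t)$ onto $L^2(\Omega_\tt)$, combined with an integration by parts exploiting $\Delta\rho=0$ and $|\nabla\rho|=1$, removes the weight and produces the unitarily equivalent form
\[
	\mathfrak{r}_\tt[v] := \int_{\Omega_\tt}\Big(|\nabla v|^2 - \frac{|v|^2}{4\rho^2}\Big)\dd s\,\dd t - \aa\int_0^\infty|v(s,0)|^2\,\dd s,
\]
with form domain $\{v\in H^1(\Omega_\tt):v|_{\partial\Omega_\tt}=0\}$, the Dirichlet condition being inherited from the regularity of $u$ at $\rho=0$.

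For a scale parameter $R=R(E)>1$ to be tuned later, I would introduce the one-dimensional Hardy-type operator
\[
	\mathfrak{h}_\tt^{\mathrm D} := -\frac{\dd^2}{\dd s^2} - \frac{1}{4s^2\sin^2\tt}\quad\text{on } L^2(R,+\infty)
\]
with Dirichlet condition at $s=R$, and build the $N$-dimensional test subspace $V_N:=\mathrm{span}\{v_k\}_{k=1}^N\subset\dom\mathfrak{r}_\tt$ with
\[
	v_k(s,t) := g_k(s)\,\phi(t)\,\eta\!\left(\frac{2t}{s\tan\tt}\right),
\]
where $g_k$ is the $k$-th Dirichlet eigenfunction of $\mathfrak{h}_\tt^{\mathrm D}$ normalized in $L^2(R,+\infty)$, $\phi(t):=\sqrt{\aa/2}\,e^{-\aa|t|/2}$ is the $L^2$-normalized ground state of $-\dd^2/\dd t^2-\aa\delta_0$ on $\dR$ (with eigenvalue $-\aa^2/4$), and $\eta\in C_c^\infty(\dR)$ is a fixed cutoff equal to $1$ on $[-1/2,1/2]$ and vanishing outside $[-1,1]$. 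The cutoff confines the support of $v_k$ to $\{|t|\leq s\tan\tt/2\}$, keeping it well inside $\Omega_\tt$ for $s\geq R$. Using Fubini, the fundamental identity $\|\phi'\|_{L^2(\dR)}^2-\aa|\phi(0)|^2=-\aa^2/4$, the orthogonality $\langle g_j,g_k\rangle=\delta_{jk}$, and the Taylor expansion $\rho^{-2}=(s\sin\tt)^{-2}(1+\mathrm{O}(t/s))$ (whose odd-in-$t$ terms integrate to zero against the even weight $\phi^2$), a direct computation yields, for every $v\in V_N$,
\[
	\frac{\mathfrak{r}_\tt[v]}{\|v\|^2_{L^2(\Omega_\tt)}}\leq -\frac{\aa^2}{4}+\lambda_N(\mathfrak{h}_\tt^{\mathrm D})+\varepsilon(R),
\]
with an error $\varepsilon(R)=\mathrm{O}(R^{-2})$ modulo a term exponentially small in $R$ coming from the $\eta$-cutoff. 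By the min-max principle, this delivers $\N_{-\aa^2/4-E}(\OpLf)\geq \N_{-E+\varepsilon(R)}(\mathfrak{h}_\tt^{\mathrm D})$.

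It then remains to estimate the right-hand side. The Liouville substitution $s=e^y$, $g(s)=e^{y/2}w(y)$ converts the eigenvalue equation $\mathfrak{h}_\tt^{\mathrm D}g=-Eg$ into
\[
	-w''(y)+Ee^{2y}w(y)=\nu^2\,w(y),\qquad y\in(\ln R,+\infty),
\]
where $\nu:=\sqrt{1/(4\sin^2\tt)-1/4}=\cot\tt/2$. Counting the zeros of the zero-energy solution in the oscillatory region between $y=\ln R$ and the classical turning point $y_*=\tfrac12|\ln E|+\mathrm{O}(1)$ gives
\[
	\N_{-E}(\mathfrak{h}_\tt^{\mathrm D}) = \frac{\nu}{\pi}(y_*-\ln R)+\mathrm{O}(1) = \frac{\cot\tt}{4\pi}|\ln E|+\mathrm{O}(|\ln R|)
\]
as $E\to 0^+$. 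Choosing $R(E):=|\ln E|$, so that simultaneously $\ln R=\mathrm{O}(\ln|\ln E|)=o(|\ln E|)$ and $\varepsilon(R)\to 0$, proves Proposition~\ref{prop:lower_bnd}. The main obstacle is the balancing of two competing scales: enlarging $R$ suppresses the geometric correction $\rho^{-2}-(s\sin\tt)^{-2}$, but simultaneously depletes the Hardy counting function by an additive term of order $|\ln R|$; reconciling these two regimes rigorously is the reason for the Dirichlet/Neumann bracketing and IMS-type localization announced in the introduction.
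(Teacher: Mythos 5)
Your overall architecture is the same as the paper's: reduce to the axisymmetric fiber via Corollary~\ref{cor:spec}\,(i), pass to the inclined half-plane and the flat metric, separate variables so that the transverse direction contributes the $\delta$-point-interaction energy $\approx-\alpha^2/4$ and the longitudinal direction contributes a Hardy-type operator with coupling $c=1/(4\sin^2\theta)$, whose counting function carries the constant $\tfrac{1}{2\pi}\sqrt{c-1/4}=\tfrac{\cot\theta}{4\pi}$. The paper implements this with Dirichlet bracketing on a half-strip plus a monotone bound on the potential; your explicit product test functions and Pr\"ufer-type zero counting are acceptable substitutes. However, there is a genuine gap in the error bookkeeping, precisely at the point your last sentence waves at. First, the min--max inequality should read $\N_{-\alpha^2/4-E}(\OpLf)\ge \N_{-E-\varepsilon(R)}(\mathfrak{h}_\theta^{\mathrm D})$ (the shift worsens the threshold, it does not improve it). Second, and fatally for your stated choices: with an error $\varepsilon(R)=\mathrm{O}(R^{-2})$ and $R=|\ln E|$, the effective spectral parameter is $E+\varepsilon(R)\asymp|\ln E|^{-2}\gg E$, so $|\ln(E+\varepsilon(R))|\asymp 2\ln|\ln E|=o(|\ln E|)$ and your lower bound degenerates to $\liminf\ge 0$. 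The requirement is not $\varepsilon(R)\to0$ but $\varepsilon(R)=o(E)$, so that $|\ln(E+\varepsilon(R))|\sim|\ln E|$. No polynomial-in-$R^{-1}$ error can satisfy this for any $R$ that is polylogarithmic in $E$; you must show that \emph{all} error terms (cutoff terms, normalization defect, and the replacement of $(s\sin\theta+t\cos\theta)^{-2}$ by $(s\sin\theta)^{-2}$ after exploiting parity in $t$) are in fact $\mathrm{O}(e^{-cR})$ with $c=c(\alpha,\theta)>0$, and then take $R=M|\ln E|$ with $M$ so large that $cM>1$. This is exactly the point the paper is careful about (Proposition~\ref{prop:mod1D}\,(i) gives the exponential rate, and $M$ is chosen with $C_{\rm D}'M\tan\theta>1$ in Step~3); your proposal neither establishes the exponential smallness nor makes the large-$M$ choice, and your closing paragraph concedes the scales are not reconciled.

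Two smaller points. The claim that the weighted form on all of $\Omega_\theta$ is unitarily equivalent to $\mathfrak{r}_\theta$ with a Dirichlet condition on $\partial\Omega_\theta$ ``inherited from the regularity of $u$ at $\rho=0$'' is false for the axisymmetric fiber: this is precisely the excluded case $(d,l)=(3,0)$ of Proposition~\ref{prop:fq_flatmetric}, since elements of the form core need not vanish at $r=0$ and the integration by parts leaves a nontrivial boundary term there. This does not hurt your argument because your test functions are supported in $\{\rho\ge R\sin\theta/2\}$, but the identity should only be invoked on such functions (as the paper does by working on $\Pi$, resp.\ $\Omega_\theta^K$). Finally, your zero-counting asymptotics $\N_{-E}(\mathfrak{h}_\theta^{\mathrm D})=\tfrac{\cot\theta}{4\pi}|\ln E|+\mathrm{O}(\ln R)$ needs the $\mathrm{O}(1)$ in the oscillation argument to be uniform in $R$; the paper sidesteps this by rescaling $(R,\infty)$ to the fixed interval $(1,\infty)$ and quoting Theorem~\ref{th:KS88} once, which you could also do.
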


\begin{prop}\label{prop:upper_bnd}
	Let $\aa > 0$ and $\tt\in(0,\pi/2)$. We have
	\[
		\limsup_{E\arr 0+}
			\frac{\N_{-\aa^2/4 - E}(\Op)}{|\ln E|} \le \frac{\cot\tt}{4\pi}.
	\]
\end{prop}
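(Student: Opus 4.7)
My plan is to follow the strategy sketched in the introduction and produce the upper bound by a Neumann bracketing. By Corollary~\ref{cor:spec} it suffices to work with the axisymmetric fiber $\OpLf$, and after applying the rotation~\eqref{def:U} the task becomes to estimate the counting function of the form $\frmLfr$ on $L^2(\Omega_\tt, r\,\dd s\,\dd t)$ with $r = s\sin\tt + t\cos\tt$ and with $\delta$-interaction on the ray $\Gamma$. The ultimate target is the one-dimensional operator $-\frac{\dd^2}{\dd x^2} - \frac{1}{4\sin^2\tt}\frac{1}{x^2}$ mentioned in the introduction, whose counting function below $-E$ is known to behave as $(\cot\tt/(4\pi))|\ln E|$ by the classical spectral asymptotics for Hardy-type operators; indeed $\sqrt{1/(4\sin^2\tt) - 1/4}/(2\pi) = \cot\tt/(4\pi)$.

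I would first perform a Neumann bracketing in the longitudinal variable $s$, inserting a Neumann cut at $s = R = R(E)$, chosen so that $R(E) \to +\infty$ as $E \to 0+$ but slowly enough (for instance $R(E) = |\ln E|^{\beta}$ with small $\beta > 0$) that a Weyl-type bound on the bounded near-tip operator forces its counting function below $-\aa^2/4 - E$ to be $o(|\ln E|)$. In the far piece $\Omega_\tt \cap \{s > R\}$ I would then apply a second Neumann bracketing in the transverse variable $t$ at $|t| = \tau(s)$ for a profile $\tau(s)$ mildly growing in $s$; outside $\{|t| < \tau(s)\}$ the $\delta$-interaction is absent, the form is non-negative, and this region contributes nothing to $N_{-\aa^2/4 - E}$. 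The essential work reduces to the tube $\mathcal{T} := \Omega_\tt \cap \{s > R,\ |t| < \tau(s)\}$.

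On $\mathcal{T}$ I would carry out a partial separation of variables. For each fixed $s$, the transverse problem is a one-dimensional Schr\"odinger operator on $(-\tau(s),\tau(s))$ with $\delta$-interaction of strength $\alpha$ at $t = 0$, Neumann conditions at the endpoints, and weight $r(s,\cdot)\,\dd t$; this is a close variant of $\opIN$. A direct computation using the modified Bessel functions $I_0,K_0$, which appear as solutions of the Bessel-type equation produced by the $s$-dependent weight, yields for the lowest eigenvalue the expansion $\mu_0(s) = -\aa^2/4 - \cot^2\tt/(4s^2) + o(s^{-2})$. Projecting onto the corresponding transverse ground state and handling the orthogonal complement by an IMS-type argument (cf.~\cite{CFKS87}) bounds the form on $\mathcal{T}$ from below by that of the effective one-dimensional operator $-\tfrac{1}{s}\p_s(s\p_s\,\cdot\,) + \mu_0(s)$ on $L^2((R,+\infty); s\sin\tt\,\dd s)$ with a Neumann condition at $s = R$. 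The unitary $v = \sqrt{s}\,w$ flattens the weight and turns this into $-\p_s^2 - 1/(4s^2) + \mu_0(s)$ on $L^2(\dd s)$; combining the two $s^{-2}$ contributions produces exactly $-\p_s^2 - (4\sin^2\tt\,s^2)^{-1} - \aa^2/4$.

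The main technical obstacle is precisely this matching of the effective Hardy coefficient to $1/(4\sin^2\tt)$: one must control $\mu_0(s)$ to subleading order and keep the cross-terms between longitudinal and transverse modes small, so that the aggregate of error contributions (the Neumann cut at $s = R$, the derivatives of the IMS cut-off $\tau(s)$, the tail of the transverse ground state outside $\{|t| < \tau(s)\}$, and the $o(s^{-2})$ remainder in $\mu_0$) produces only $o(|\ln E|)$ after counting. This forces a joint balancing of $R(E)$ and $\tau(s)$. Granted these estimates, applying the classical spectral asymptotics for the Hardy operator $-\dd^2/\dd x^2 - (4\sin^2\tt)^{-1}x^{-2}$ on $(R,+\infty)$ with Neumann condition at $x = R$ yields $N_{-E} \sim (\cot\tt/(4\pi))|\ln E|$, and feeding this through the bracketing delivers the claimed upper bound on $\limsup_{E\to 0+} N_{-\aa^2/4-E}(\Op)/|\ln E|$.
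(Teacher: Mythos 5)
Your overall architecture (reduce to the axisymmetric fiber, bracket off a neighbourhood of the tip, reduce the far region to the one\-/dimensional Hardy operator with coefficient $1/(4\sin^2\tt)$, invoke the Kirsch--Simon asymptotics) is the right one, and your bookkeeping of the constant is consistent since $\cot^2\tt+1=\sin^{-2}\tt$. But there is a genuine gap exactly where you flag it: the entire quantitative content of the upper bound is the reduction to the effective operator $-\p_s^2-(4\sin^2\tt\,s^2)^{-1}-\aa^2/4$ with all errors contributing only $o(|\ln E|)$, and you leave this as ``granted these estimates''. Moreover, two ingredients of your sketch are problematic as stated. First, the transverse Neumann problem on $(-\tau(s),\tau(s))$ has its lowest eigenvalue \emph{below} $-\aa^2/4$ by an amount of order $e^{-c\tau(s)}$ (\cf Proposition~\ref{prop:mod1D}\,(ii)), so the effective longitudinal threshold becomes $-E+O(e^{-c\tau(s)})$; unless $\tau(s)\gtrsim |\ln E|$ at the relevant scales this quantity is positive and the longitudinal count is not even finite a priori. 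This is precisely why any workable bracketing must let the transverse width grow with $s$ and must extract the leading order only from the region $s\gtrsim M|\ln E|$ with $M$ large, where the exponential correction is $o(E)$; your cut at $s=R(E)=|\ln E|^\beta$ with small $\beta$ sits far inside the dangerous zone. Second, a ``Weyl\--type bound'' cannot control the near\--tip piece: the energies in question lie below the bottom of the essential spectrum of the free Laplacian, so the classical phase\--space volume vanishes and the eigenvalues there are created entirely by the $\delta$\--interaction; one needs a cell\--by\--cell bracketing along the ray (again via Proposition~\ref{prop:mod1D}), not Weyl asymptotics.

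For comparison, the paper's proof avoids the adiabatic analysis altogether. After the IMS localisation of Lemma~\ref{lem:bnd_aux}, it tiles the far region by $m=\lfloor\sqrt R\rfloor$ rectangles $\Lambda_0,\dots,\Lambda_{m-1}$ of longitudinal length $R/m\approx\sqrt R$ and height proportional to $r_k$, on each of which the Hardy potential is simply frozen at its supremum $\eps_k=(r_k\sin\tt)^{-2}$; Neumann decoupling and Proposition~\ref{prop:mod1D} give at most $1+C\sqrt R/r_k$ eigenvalues per rectangle, hence $O(\sqrt R)=o(|\ln E|)$ in total. Only the last, semi\--infinite rectangle $\Lambda_m$, starting at $r_m\approx 3r+R$ with $R=M|\ln E|$, carries the leading order: there the potential is dominated by $\bigl(4\sin^2\tt\,(s-r_m/2)^2\bigr)^{-1}$, which after scaling is exactly $\frac{4}{r_m^2}\frmKSN$ with $c=1/(4\sin^2\tt)$, and Theorem~\ref{th:KS88} applies. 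No two\--term expansion of a transverse eigenvalue and no projection onto an $s$\--dependent transverse ground state is ever needed. To complete your own route you would have to prove the adiabatic lower bound on the form in the tube with all cross\--terms and remainders $o(s^{-2})$ uniformly, which is considerably more delicate than the frozen\--coefficient bracketing actually used.
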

Propositions~\ref{prop:lower_bnd} and~\ref{prop:upper_bnd} imply
\[
	\lim_{E\arr 0+}
		\frac{\N_{-\aa^2/4 - E}(\Op)}{|\ln E|} = \frac{\cot\tt}{4\pi},
\]
which proves Theorem~\ref{th:acceig}.

\subsection{Auxiliary one-dimensional operators}\label{ssec:PI}

In this subsection we discuss some spectral properties of one-dimensional model
Schr\"odinger operators, which are used in the proofs of Propositions~\ref{prop:lower_bnd} 
and~\ref{prop:upper_bnd}.

Let us start by studying two Schr\"odinger operators with a point $\delta$-interaction.
For the spectral theory of one-dimensional Schr\"odinger operators with 
point $\delta$-interactions we refer to \cite[Chs. I.3, II.2, III.2]{AGHH}, 
the review paper \cite{KM13} and the references therein.

For $L > 0$, we define the interval $I := (- L,L)$ 
and introduce the Hilbert space $(L^2(I),(\cdot,\cdot)_I)$. 
Let us fix $\alpha > 0$ and 
consider the following two symmetric sesquilinear forms 
\begin{equation*}
\begin{split}
	\frmID [\varphi,\psi] & :=
			 (\varphi^\pp,\psi^\pp)_I - \aa\varphi(0)\ov{\psi(0)},
	 \qquad 
	\dom\frmID := H^1_0(I),\\
	\frmIN [\varphi,\psi] & := 
			(\varphi^\pp,\psi^\pp)_I - \aa\varphi(0)\ov{\psi(0)},
	\qquad 
	\dom\frmIN := H^1(I),
\end{split}
\end{equation*}
one can verify that both forms are
closed, densely defined, and semibounded in $L^2(I)$. 
For $\varphi \in H^2(I \setminus \{0\})$ set 
$\varphi_+ := \restric{\varphi}{(0,L)} $, $\varphi_- := \restric{\varphi}{(-L,0)} $ 
and 
$[\varphi^\pp](0) := \varphi^\pp(0+) - \varphi^\pp(0-)$.
Thanks to the first representation theorem (\cite[Ch. VI, Thm. 2.1]{K}), 
each of these quadratic forms is associated with a unique self-adjoint operator acting on $L^2(I)$.
The respective operators are given by
\begin{equation}
\begin{split}
	\opID \varphi 
		& = - (\varphi^\dpp_+\oplus \varphi^\dpp_-),\quad
	\dom\opID 
		= \big\{\varphi\in H^2(I\setminus\{0\}) \colon
			\varphi(\pm L) = 0,~[\varphi^\pp](0) = - \aa\varphi(0 \pm )
		 \big\},\\
	\opIN \varphi 
		& = - (\varphi^\dpp_+\oplus \varphi^\dpp_-),\quad	
	\dom\opIN 
		 = \big\{\varphi\in H^2(I\setminus\{0\}) \colon
		 	\varphi^\pp(\pm L) = 0,~[\varphi^\pp](0) = - \aa\varphi(0 \pm )
		\big\}.
\end{split}	
\end{equation}
As $H^1_0(I)$ and $H^1(I)$ are compactly embedded into $L^2(I)$ both operators 
$\opID$ and $\opIN$ have compact resolvent and their spectra 
consist of non-decreasing sequences of eigenvalues. 
The understanding of the first two eigenvalues of $\opID$
and $\opIN$, as functions of $L$, is important for our purposes. 
The next proposition is essentially proven 
in the paper~\cite{EY02}.

\begin{prop}\cite[Prop. 2.4, Prop. 2.5]{EY02}\label{prop:mod1D}
	The following statements hold:
	\begin{myenum}
		\item there exist $L_{\rm D} = L_{\rm D}(\aa) > 0$ and 
			$C_{\rm D} = C_{\rm D}(\aa),C_{\rm D}^\pp = C_{\rm D}^\pp(\aa) 
			> 0$ such that
			for all $L \ge L_{\rm D}$
			\[
				-\frac{\aa^2}{4} <  E_1(\opID) 
				\le 
				-\frac{\aa^2}{4} 
				+	C_{\rm D} e^{- C_{\rm D}^\pp L};
			\]	
		\item there exist $L_{\rm N} = L_{\rm N}(\aa) > 0$ and 
			$C_{\rm N} = C_{\rm N}(\aa), C_{\rm N}^\pp = C_{\rm N}^\pp(\aa)
			 > 0$ 
			such that for all $L \ge L_{\rm N}$
			\[
				-\frac{\aa^2}{4}  > E_1(\opIN)  \ge 
				-\frac{\aa^2}{4} - C_{\rm N} e^{- C_{\rm N}^\pp L};
			\]	
		\item for all $L > 0$, $E_2(\opID), E_2(\opIN) \ge 0$.
	\end{myenum}
\end{prop}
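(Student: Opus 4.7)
The idea is to exploit the reflection symmetry $x\mapsto -x$, which commutes with both $\opID$ and $\opIN$, and split $L^2(I)$ into its even and odd subspaces. Odd functions automatically satisfy $\varphi(0)=0$, so the $\delta$-coupling is inert on the odd sector and both operators reduce there to the Laplacian on $(0,L)$: with Dirichlet at both endpoints for $\opID$ (lowest eigenvalue $(\pi/L)^2>0$) and with Dirichlet at $0$ and Neumann at $L$ for $\opIN$ (lowest eigenvalue $(\pi/(2L))^2>0$). Thus each odd sector contributes only strictly positive eigenvalues, and establishing item (iii) reduces to showing that each even sector contains at most one negative eigenvalue.

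In the even sector I search for a bound state at $E=-\kappa^2$ with $\kappa>0$ using the ansatz $\varphi(x)=\sinh(\kappa(L-|x|))$ for $\opID$ and $\varphi(x)=\cosh(\kappa(L-|x|))$ for $\opIN$. The boundary conditions at $\pm L$ are built in, and the $\delta$-jump $[\varphi'](0)=-\alpha\varphi(0)$ reduces the eigenvalue problem to
\begin{equation*}
2\kappa = \alpha\tanh(\kappa L) \quad \text{(Dirichlet)}, \qquad 2\kappa = \alpha\coth(\kappa L) \quad \text{(Neumann)}.
\end{equation*}
Because $\kappa\mapsto\alpha\tanh(\kappa L)$ is strictly concave and increasing with $\alpha\tanh<\alpha$, and $\kappa\mapsto\alpha\coth(\kappa L)$ is strictly convex and decreasing with $\alpha\coth>\alpha$, each equation admits at most one positive solution $\kappa_L$, which yields the uniqueness needed for (iii). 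Moreover, $\tanh<1$ forces the Dirichlet root (which exists iff $\alpha L>2$) to satisfy $\kappa_L<\alpha/2$, hence $E_1(\opID)>-\alpha^2/4$; while $\coth>1$ forces the Neumann root (which exists for every $L>0$) to satisfy $\kappa_L>\alpha/2$, hence $E_1(\opIN)<-\alpha^2/4$.

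To upgrade these strict inequalities into the exponential rates in (i)--(ii), I would rewrite the transcendental equations using $\tanh(\kappa L)=1-2e^{-2\kappa L}/(1+e^{-2\kappa L})$ and $\coth(\kappa L)=1+2e^{-2\kappa L}/(1-e^{-2\kappa L})$. Once $L$ is large enough that $\kappa_L$ is trapped in, say, $[\alpha/3,2\alpha/3]$ (which can be arranged by fixing thresholds $L_{\rm D}=L_{\rm D}(\alpha)$ and $L_{\rm N}=L_{\rm N}(\alpha)$), a one-step fixed-point estimate applied to the implicit equation gives $|\alpha/2-\kappa_L|\le C\,e^{-c L}$ for constants $C,c>0$ depending only on $\alpha$. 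Factoring
\[
E_1(\cdot)+\tfrac{\alpha^2}{4}
=\bigl(\tfrac{\alpha}{2}-\kappa_L\bigr)\bigl(\tfrac{\alpha}{2}+\kappa_L\bigr)
\]
and inserting this bound concludes the proof of both (i) and (ii).

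The main obstacle is exactly this last book-keeping step: one must establish the $L$-independent entrapment of $\kappa_L$ near $\alpha/2$ \emph{before} plugging back into the implicit equation, in order to avoid a circular dependence on the quantity being estimated, and so produce constants $C_{\rm D},C_{\rm D}',C_{\rm N},C_{\rm N}'$ genuinely uniform in $L\ge L_{\rm D}$ (resp.\ $L\ge L_{\rm N}$). Everything else is elementary one-variable analysis of monotone functions.
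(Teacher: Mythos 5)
Your argument is correct, and since the paper does not prove Proposition~\ref{prop:mod1D} itself (it is quoted from~\cite{EY02}), there is no internal proof to compare against; what you give is essentially the standard argument from that reference: even/odd decomposition, the secular equations $2\kappa=\alpha\tanh(\kappa L)$ and $2\kappa=\alpha\coth(\kappa L)$, and the exponential convergence of $\kappa_L$ to $\alpha/2$. The ``entrapment'' step you flag is indeed routine and closes as you describe (e.g.\ $\alpha-2\kappa_L=2\alpha e^{-2\kappa_L L}/(1+e^{-2\kappa_L L})$ together with $\kappa_L\ge\alpha/3$ for $L$ large gives the Dirichlet bound at once). One simplification worth noting: for item~(iii) you do not need the parity decomposition at all, since each quadratic form $\frmID$, $\frmIN$ is nonnegative on the codimension-one subspace $\{\varphi\in\dom\frmIN\colon\varphi(0)=0\}$, so the min-max principle immediately yields $E_2(\opID),E_2(\opIN)\ge0$.
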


Further, we recall a result about another family of 
one-dimensional Schr\"{o}dinger operators. Let us introduce the interval $J := (1,+\infty)$ 
and the Hilbert space $(L^2(J),(\cdot,\cdot)_{J})$. 
Let $c>0$ be a positive constant and $V$ be the following potential $V(x) := x^{-2}$ on $J$. 
We consider the following closed, densely defined, symmetric, and semibounded sesquilinear forms
\begin{equation}\label{eq:op_KS}
\begin{split}
	\frmKSD [\varphi,\psi] 
		& = (\varphi^\pp,\psi^\pp)_{J} - c(V\varphi,\psi)_{J},
	\qquad 
	\dom\frmKSD := H^1_0(J),\\
	\frmKSN [\varphi,\psi] 
		& = (\varphi^\pp,\psi^\pp)_{J}
			 - c(V\varphi,\psi)_{J},
	\qquad 
	\dom\frmKSN := H^1(J),
\end{split}	
\end{equation}
in the Hilbert space $L^2(J)$.

By a compact perturbation argument one can show that
$\sess(\frmKSD) = \sess(\frmKSN) = [0,+\infty)$.
Below we provide a result on spectral asymptotics
of $\frmKSN$ and $\frmKSD$, essentially proven in~\cite{KS88} (\cf also~\cite{HM08} for further generalisations). 
\begin{thm}\cite[Thm. 1]{KS88}\label{th:KS88}
	Let $c > \frac14$. Then it holds that
	\begin{equation*}
		\N_{-E}(\frmKSD) \sim \N_{-E}(\frmKSN)
		\sim
		\frac{1}{2\pi}\, \sqrt{c - \frac14}\ |\ln E|,\qquad E \arr 0+.
	\end{equation*}
	In particular, we have $\#\sd(\frmKSD) = \#\sd(\frmKSN) =\infty$.
\end{thm}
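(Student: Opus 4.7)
The plan is to reduce the eigenvalue equation to the modified Bessel equation of purely imaginary index, and then count eigenvalues by counting zeros of a distinguished solution via Sturm's oscillation theorem. Concretely, set $\mu := \sqrt{c - 1/4} > 0$. For $E > 0$ and $\kappa := \sqrt{E}$, the substitution $u(x) = \sqrt{x}\, w(\kappa x)$ turns the eigenvalue equation $-u''(x) - c u(x)/x^2 = -\kappa^2 u(x)$ into
\[
    y^2 w''(y) + y w'(y) - (y^2 - \mu^2)\, w(y) = 0,
\]
the modified Bessel equation with index $i\mu$. Its unique (up to scalar) $L^2$-at-infinity solution is $K_{i\mu}$, so the Weyl solution at spectral parameter $-E$ of the operator associated with $\frmKSD$ (or $\frmKSN$) is $\psi_E(x) := \sqrt{x}\, K_{i\mu}(\kappa x)$.

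Next, I would invoke the classical small-argument asymptotic
\[
    K_{i\mu}(y) = -\sqrt{\tfrac{\pi}{\mu\sinh(\pi\mu)}}\, \sin\bigl(\mu\ln(y/2) - \arg\Gamma(1+i\mu)\bigr) + O(y^2),\qquad y \to 0+,
\]
which shows that $K_{i\mu}$ oscillates logarithmically near the origin with angular frequency $\mu$ in the variable $\ln y$: it has infinitely many simple zeros $\{y_n\}$ accumulating only at $0$, satisfying $\ln y_{n+1} - \ln y_n = \pi/\mu + o(1)$, and it has no zeros beyond some $Y_0 = Y_0(\mu) < \infty$. Since zeros of $\psi_E$ in $(1, +\infty)$ correspond one-to-one to zeros of $K_{i\mu}$ in $(\kappa, Y_0)$, their count is
\[
    \#\{x > 1 : \psi_E(x) = 0\}
    = \frac{\mu}{\pi}|\ln \kappa| + O(1)
    = \frac{\sqrt{c - 1/4}}{2\pi}\, |\ln E| + O(1),\qquad E \to 0+.
\]

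Finally, by Sturm's oscillation theorem for a half-line Sturm--Liouville operator in the limit-point case at $+\infty$, $\N_{-E}(\frmKSD)$ equals the number of zeros in $(1, +\infty)$ of the Dirichlet solution $\phi^{\rm D}_E$ with $\phi^{\rm D}_E(1) = 0$, and analogously $\N_{-E}(\frmKSN)$ equals the number of zeros of the Neumann solution $\phi^{\rm N}_E$ with $(\phi^{\rm N}_E)'(1) = 0$. By Sturm's separation principle, the zero counts of $\phi^{\rm D}_E$, $\phi^{\rm N}_E$ and $\psi_E$ in $(1, +\infty)$ differ pairwise by at most one, so the three share the same leading asymptotic, yielding the claim. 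The main obstacle is the rigorous application of Sturm's principle in the limit-point setting at $+\infty$, which I would handle by first truncating to $(1, R)$ with an auxiliary Dirichlet condition at $R$, applying the theorem on the regular interval, and letting $R \to \infty$; the exponential decay $K_{i\mu}(\kappa x) \sim \sqrt{\pi/(2\kappa x)}\, e^{-\kappa x}$ for $\kappa x \gg 1$ ensures that no spurious zeros appear in the tail. Uniform control of the $O(y^2)$ remainder in the Bessel asymptotic---required so that the actual zeros lie close to the predicted $y_n$---is classical.
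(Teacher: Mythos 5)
This theorem is not proved in the paper at all: it is quoted verbatim from Kirsch and Simon \cite[Thm.~1]{KS88}, so there is no in-paper argument to compare yours against. On its own merits, your proposal is correct and is a legitimate self-contained proof for this exactly solvable special case. The reduction $u(x)=\sqrt{x}\,w(\kappa x)$ to the modified Bessel equation of index $i\mu$, $\mu=\sqrt{c-1/4}$, is right (one checks $\kappa^2 x^2 w''+\kappa x w'-((\kappa x)^2-\mu^2)w=0$); the small-argument asymptotics of $K_{i\mu}$ (DLMF 10.45.7) do give zeros equally spaced by $\pi/\mu$ in $\ln y$ accumulating only at $0$, hence $\frac{\mu}{\pi}|\ln\kappa|+O(1)=\frac{\sqrt{c-1/4}}{2\pi}|\ln E|+O(1)$ zeros of $\psi_E$ in $(1,\infty)$; and the passage to $\N_{-E}$ via half-line oscillation theory below the essential spectrum, with the Dirichlet and Neumann solutions interlacing with $\psi_E$ by Sturm separation, is standard and correctly handled (your truncation-at-$R$ argument is the usual way to justify it in the limit-point case, and the at-most-one discrepancy is harmless for the leading term). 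Note that your route differs from the one in \cite{KS88}: Kirsch and Simon treat general potentials asymptotic to $-c/x^2$ and argue by Dirichlet--Neumann bracketing over a logarithmic subdivision, which is what makes their result stable under perturbations; your argument exploits the exact Bessel solvability of the pure $-c/x^2$ potential, which is shorter but rigid. Since the forms $\frmKSD$ and $\frmKSN$ in \eqref{eq:op_KS} involve exactly the potential $c\,x^{-2}$ on $(1,\infty)$, the special case you prove is all the paper needs.
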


\subsection{A lower bound on the counting function of $\Op$}
\label{ssec:l_bnd}

In this subsection we prove Proposition~\ref{prop:lower_bnd}.

\begin{proof}[Proof of Proposition~\ref{prop:lower_bnd}]
	Thanks to Corollary~\ref{cor:spec}\,(i) and unitary equivalence of the forms
	$\frmLf$ (in \eqref{eqn:not_axy}) and $\frmLfr$  (in \eqref{eqn:fq_turn})
	it is sufficient to prove
	\[
		\liminf_{E\arr 0+}
			\frac{\N_{-\aa^2/4 - E}(\frmLfr)}{|\ln E|} \ge \frac{\cot\tt}{4\pi}.
	\]		
	We split the proof of this inequality into three steps.

	\myemph{Step 1.}
	Let $R>0$, we define the intervals 
	$I_1 := ((\sin\tt)^{-1}+R,+\infty)$, $I_2 := (-R\tan\tt,R\tan\tt)$,
	and the half-strip 
	\[
		\Pi :=
			\big\{(s,t)\in\Omega_\tt \colon s > (\sin\tt)^{-1} + R, |t| < R\tan\tt
			\big\} = I_1\times I_2\subset\Omega_\tt;
	\]
	where the $(s,t)$-variables and $\Omega_\tt$ are related to the physical domain 		
	through the change of variables~\eqref{def:m} (\cf Figure~\ref{fig:defsigma}). 
	We 	introduce the quadratic form $\frmLfr^{\Pi}$, 
	defined as
	\begin{equation*}
	\begin{split}	
		\frmLfr^{\Pi}[u] & := 
		\int_{\Pi}(|\p_s u|^2 + |\p_t u|^2)(s\sin\theta+t\cos\theta)\dd s \dd t 
		- \alpha\int^{\infty}_{(\sin\tt)^{-1} + R}|u(s,0)|^2s\sin\theta\dd s,\\
		\dom\frmLfr^\Pi & := 
		\big\{u\in\dom\frmLfr \colon u = 0 \text{ on } 
		\Omega_\tt\setminus	\ov{\Pi}\big\}.
	\end{split}	
	\end{equation*}
	\begin{figure}[h!]
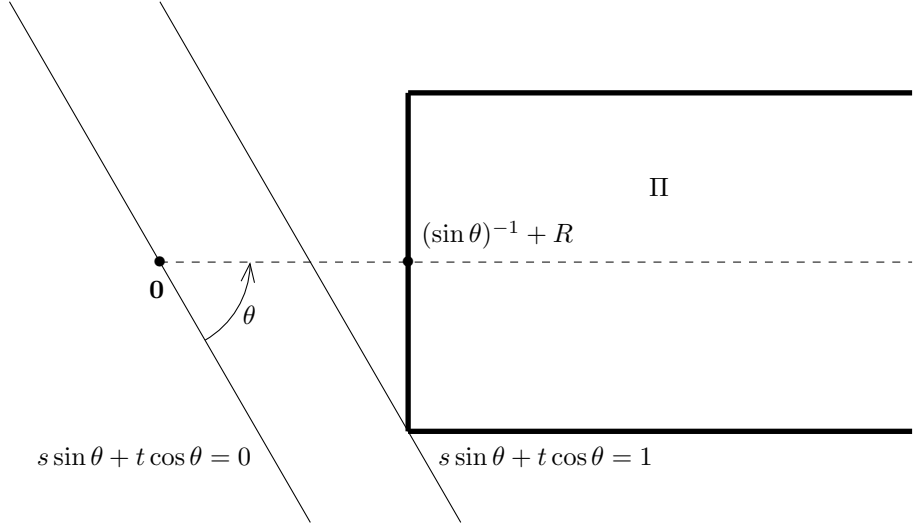

\figinit{1cm}
\figpt 0:(0,0)
\figpt 1:(10,0)

\figpt 2:(0,3.46410161)

\figpt 3:(-2,3.46410161)
\figpt 4:(2,-3.46410161) 

\figpt 5:(0.25,-0.4330127)
\figpt 6:(0.5,0)

\figpt 7:(0.6,-0.7)

\figpt 8:(1.5,-2.598076)

\figpt 9:(5.45,0)
\figpt 10:(7,0)

\figpt 11:(5.5,-0.05)
\figpt 12:(5.5,1.5)

\figpt 13:(0,3.46410161)
\figpt 14:(4,-3.46410161)
\figpt 15:(3.5,-2.598076)

\figpt 16:(3.3,-2.251666)
\figpt 17:(3.3,2.251666)
\figpt 18:(10,-2.251666)
\figpt 19:(10,2.251666)
\figpt 20:(3.3,0)
\figpt 21:(6,1)
\figdrawbegin{}
\figdrawline [3,4]
\figdrawline [13,14]
\figdrawarrowcircP 0;1.2[5,6]
\figset (width=2)
\figdrawline [16,17]
\figdrawline [16,18]
\figdrawline [17,19]
\figset(width=0)
\figset (dash=8)
\figdrawline [0,1]
\figdrawend

\figvisu{\figBoxA}{}{
\figwritee 7:{$\theta$} (0.5)
\figwritee 21:{$\Pi$} (0.5)
\figwritew 8:{$s\sin\tt + t\cos\tt =0$} (0.2)
\figwritee 15:{$s\sin\tt + t\cos\tt =1$} (0.2)
\figset write(mark=$\bullet$)
\figwrites 0:{$\mathbf{0}$} (0.25)
\figwritene 20:{$(\sin\tt)^{-1} + R$} (0.25)
}

\centerline{\box\figBoxA}
\caption{The inclined half-plane $\Omega_\tt$ and the half-strip $\Pi$. The dashed line is the support
of $\delta$-interaction.}
\label{fig:defsigma}
\end{figure}

	%

	Any $u\in\dom\frmLfr^{\Pi}$ can be extended by zero, 
	defining $u_0\in\dom\frmLfr$ such that $\frmLfr^{\Pi}[u] = \frmLfr[u_0]$. 
	Then, the min-max principle yields
	\begin{equation}
		\N_{-\alpha^2/4 - E}(\frmLfr^{\Pi})\leq\mathcal{N}_{-\alpha^2/4 - E}(\frmLfr).
	\label{eqn:low_bound1}
	\end{equation}

	\myemph{Step 2.}
	Let us define the unitary transform
	\[
		\sfU \colon L^2(\Pi; (s\sin\tt + t\cos\tt) \dd s \dd t) 
		\arr L^2(\Pi),\qquad
		(\sfU u)(s,t) := \sqrt{s\sin\tt + t\cos\tt}u(s,t).
	\]
	By straightforward computation, the form $\frmLfr^\Pi$ is unitarily
	equivalent, \textit{via} $\sfU$, to the form
	\begin{equation}
	\begin{split}
		\frmLfrt^{\Pi}[u] & 
		:= 
		\int_{\Pi}
		\Big(|\p_s u|^2 + |\p_t u|^2 - 
		\frac1{4(s\sin\theta+t\cos\theta)^2}|u|^2\Big)
		\dd s \dd t - 
		\alpha\int^\infty_{(\sin\tt)^{-1} + R}|u(s,0)|^2\dd s,\\
		\dom\frmLfrt^{\Pi} & := H^1_0(\Pi).
	\end{split}	
	\end{equation}
	Next, we bound $(s\sin\theta+t\cos\theta)^2$ from above by 
	$\sin^2\theta(s+R)^2$, obtaining
	\begin{equation}
		\frmLfrt^{\Pi}[u]\leq
		\int_{\Pi}\Big(
		|\p_s u|^2 + |\p_t u|^2 - 
		\frac1{4\sin^2\theta(s+R)^2}|u |^2\Big) \dd s \dd t - 
		\alpha\int ^\infty _{(\sin\tt)^{-1} + R}|u(s,0)|^2\dd s.
	\label{eqn:up_bound_tens}
	\end{equation}
	The right hand side of~\eqref{eqn:up_bound_tens} has two blocks with 
	separated variables. Since the Hilbert space $L^2(\Pi)$ decomposes as 
	$L^2(I_1) \myotimes L^2(I_2)$,
	the form on the right hand side of~\eqref{eqn:up_bound_tens} 
	admits the respective representation
	\begin{equation}\label{eq:tensor}
		\frq_{1,R} \otimes \fri_2 + \fri_1 \otimes \frq_{2,R},
	\end{equation}
	where $\fri_k$, $k=1,2$, is the form of the identity operator on 
	$L^2(I_k)$;	the forms $\frq_{k, R}$, $k=1,2$,
	are defined in the Hilbert spaces $L^2(I_k)$, $k=1,2$, 
	as
	\begin{subequations}\label{??}
	\begin{align*}
		\frq_{1,R}[\varphi, \psi]  & :=
		(\varphi^\pp, \psi^\pp)_{I_1} -(V_R\varphi,\psi)_{I_1},
		&
		\dom\frq_{1, R} := H^1_0(I_1),\\
		\frq_{2,R}[\varphi, \psi] & :=
		(\varphi^\pp, \psi^\pp)_{I_2} - \alpha\varphi(0)\ov{\psi(0)},
		&	\dom\frq_{2,R} := H^1_0(I_2);
	\end{align*}	
	\end{subequations}
	where the potential 
	$V_R$ is given by $V_R(s) := \frac{1}{4\sin^2\tt(s+R)^2}$.
	Using the unitary operator 
	\[	
		\sfV\colon L^2(1,+\infty)\arr L^2(I_1),
		\qquad
		(\sfV\varphi)(s) := \tfrac{1}{\sqrt{2 R + (\sin\tt)^{-1}}}
				\varphi\Big(\tfrac{s + R}{2 R + (\sin\tt)^{-1}}\Big),
	\]
	one finds that
	the forms $\frq_{1,R}$ and $(2 R + (\sin\tt)^{-1})^{-2}\frmKSD$	with $c = 1/(4\sin^2\tt)$ are
	unitarily equivalent (\cf Subsection~\ref{ssec:PI}).
	
	Thanks to \eqref{eqn:low_bound1}, \eqref{eqn:up_bound_tens}, \eqref{eq:tensor}, and the min-max principle, we obtain
	\begin{equation*}
	\begin{array}{lcl}
		\N_{-\alpha^2/4 - E}(\frmLfr) &
		\geq& 
		\#
		\big\{(k,j)\in\dN^2 \colon 
		E_k(\frq_{1,R}) + E_j(\frq_{2,R})\leq-\alpha^2/4-E
		\big\}\\  [0.6ex]
		&=&\displaystyle\sum_{j=1}^\infty
		\#\{k\in\dN \colon E_k({\frq_{1,R}}) \leq 
		-\alpha^2/4-E-E_j(\frq_{2,R})\}\\ [0.6ex]
		&\geq&\#
		\big\{k\in\dN \colon 
		E_k({\frq_{1,R}})\leq -\alpha^2/4-E-E_1(\frq_{2,R})\big\}.
	\end{array}
	\end{equation*}
	This inequality yields
	\begin{equation}
		\N_{-\alpha^2/4 - E}(\frmLfr) \geq 
		\N_{-\alpha^2/4 -E - E_1(\frq_{2,R})}(\frq_{1,R}).
	\label{eqn:cnt_ineq3}
	\end{equation}

    \myemph{Step 3.}
	Now, we choose $R$ depending on the spectral parameter $E>0$ as follows
	\begin{equation*}
		R = R(E) := M |\ln E |,\qquad M > 0,
	\end{equation*}
	in particular, we have $R(E) \arr +\infty$ as $E\arr 0+$.
	Let the constants $C_{\rm D}$, $C_{\rm D}'$ and $L_{\rm D}$ be as in Proposition~\ref{prop:mod1D}\,(i).
	Next, we choose	$M > 0$ sufficiently large such that
	$C_{\rm D}'M\tan\tt > 1$. Then for $E > 0$ 
	sufficiently small such that $\ln E < 0$ and $M |\ln E| \tan \tt > L_{\rm D}$, by Proposition~\ref{prop:mod1D}\,(i) we have
	\[
		|\aa^2/4 + E_1(\frq_{2,R(E)})| 
		\le C_{\rm D} \exp(C_{\rm D}' M\tan\tt \ln E)
		= 
		C_{\rm D} E^{C_{\rm D}'M\tan\tt} = o(E), \qquad E\arr 0+.
	\]
	Hence,
	\begin{equation}\label{eq:f(E)}
		f(E) 
		:= 
		\big(\alpha^2/4  + E + E_1(\frq_{2,R(E)})\big) \big(2 R(E) + (\sin\tt)^{-1}\big)^2
		= 
		4 M^2 E|\ln E|^2 + o(E|\ln E|^2), 	\qquad E\arr 0+.
	\end{equation}
	Using~\eqref{eqn:cnt_ineq3}, unitary equivalence of $\frq_{1,R}$ 
	and $(2 R + (\sin\tt)^{-1})^{-2}\frmKSD$ and Theorem~\ref{th:KS88} we get
	\[
	\begin{split}
		\liminf_{E \arr 0+}
		\frac{ \N_{-\aa^2/4 - E}(\frmLfr) }{ |\ln(E)|}
		& 
		\ge 
		\liminf_{E\arr 0+}
		\frac{\N_{- \aa^2/4 - E - E_1(\frq_{2,R(E)})}(
		\frq_{1,R(E)})}
		{ |\ln(E)|}\\
		& =
		\liminf_{E\arr 0+}
		\frac{ \N_{-f(E)}(\frmKSD)}
		{ |\ln(E)| }
		=
		\frac{\cot\tt}{4\pi}
		\liminf_{E \arr 0+}\frac{| \ln f(E) | }{|\ln E|}
		= \frac{\cot\tt}{4\pi},
	\end{split}	
	\]		
	where we used that $\frac{|\ln f(E)|}{|\ln E|} \arr 1$ as $E\arr 0+$ (\cf \eqref{eq:f(E)}).
	It ends the proof of Proposition \ref{prop:lower_bnd}.
\end{proof}


\subsection{An upper bound on the counting function of $\Op$}
\label{ssec:u_bnd}

The aim of this subsection is to prove Proposition~\ref{prop:upper_bnd}.
First, we provide an auxiliary lemma, whose proof is postponed to the end of the subsection.
To formulate this lemma, for $K > 0$, we define the domain
\[
	\Omega_\tt^{K} := 
	\big\{ (s,t) \in\dR^2\colon s\sin\tt + t\cos\tt > 2K \big\} \subset \Omega_\tt,
\]
and introduce the following symmetric quadratic form on the Hilbert space $L^2(\Omega_\tt^{K})$
\begin{equation}\label{eq:frm_aux}
\begin{split}
	\frmLWot [u] 
	& := 
	\int_{\Omega_\tt^{K}} |\p_s u|^2 + |\p_t u|^2 - \frac{|u|^2}{4(s\sin\tt + t\cos\tt)^2}\dd s \dd t 
	- 
	\alpha\int_{2K(\sin\tt)^{-1}}^\infty |u(s,0)|^2 \dd s,\\
	\dom\frmLWot & := H^1(\Omega_\tt^{K}).
\end{split}	 
\end{equation}
One can check that the form $\frmLWot$ is closed, densely defined and semibounded in $L^2(\Omega_\tt^{K})$. 

\begin{lem}\label{lem:bnd_aux}
	Let $\aa > 0$ and $\tt\in(0,\pi/2)$. For all $K > 0$ sufficiently large,
	the counting functions of $\Op$ and $\frmLWot$ satisfy
	\[
		\limsup_{E\arr 0+}
		\frac{\N_{-\aa^2/4 - E}(\Op)}{|\ln E|} 
		\le 
		\limsup_{E\arr 0+}
		\frac{\N_{-\aa^2/4 - E}(\frmLWot)}{|\ln E|}.
	\]
\end{lem}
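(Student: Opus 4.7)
The plan is to perform a Neumann bracketing of $\frmLfr$ along the line $\{(s,t)\in\Omega_\tt\colon s\sin\tt+t\cos\tt = 2K\}$ in the rotated coordinates, which separates $\Omega_\tt$ into the strip $S_K := \Omega_\tt\setminus\overline{\Omega_\tt^K}$ and the half-plane $\Omega_\tt^K$. By Corollary~\ref{cor:spec}\,(i) and the unitary equivalence of $\frmLf$ and $\frmLfr$ (\cf \eqref{eqn:fq_turn}), one has $\N_{-\alpha^2/4-E}(\Op) = \N_{-\alpha^2/4-E}(\frmLfr)$, so the lemma reduces to comparing $\frmLfr$ with $\frmLWot$. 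Introduce the quadratic forms $\frq_{\rm n}$ and $\frq_{\rm f}$ on $L^2(S_K;(s\sin\tt+t\cos\tt)\dd s\dd t)$ and $L^2(\Omega_\tt^K;(s\sin\tt+t\cos\tt)\dd s\dd t)$ respectively, defined by the integral expression of $\frmLfr$ on each region, with form domains carrying no continuity requirement across the cut. Standard Neumann bracketing then yields
\[
	\N_{-\alpha^2/4 - E}(\frmLfr) \le \N_{-\alpha^2/4 - E}(\frq_{\rm n}) + \N_{-\alpha^2/4 - E}(\frq_{\rm f}).
\]

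The weight-flattening unitary $\sfU u := \sqrt{s\sin\tt+t\cos\tt}\,u$, already used in Step~2 of the proof of Proposition~\ref{prop:lower_bnd}, identifies $\frq_{\rm f}$ with $\frmLWot$, so $\N_{-\alpha^2/4 - E}(\frq_{\rm f}) = \N_{-\alpha^2/4 - E}(\frmLWot)$. The remaining task is to show that $\N_{-\alpha^2/4}(\frq_{\rm n}) < \infty$, which would make the near contribution a bounded constant $C_K$ and hence negligible against $|\ln E|$. To see this, undoing the rotation \eqref{def:m}, $\frq_{\rm n}$ is unitarily equivalent to a form on $L^2((0,2K)\times\dR;\,r\dd r\dd z)$ whose unperturbed part is the free Laplacian on the half-infinite strip with Neumann boundary condition at $r=2K$, perturbed by a $\delta$-coupling of strength $\alpha$ supported on the compact piece $\Gamma_\tt\cap\{r<2K\}$ of the cone. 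Since the $\delta$-perturbation has compact support, it is relatively form-compact with respect to the free form (\cf \cite[Sec.~2]{BEKS94}) and does not affect the essential spectrum; separation of variables in $z$ identifies the essential spectrum of the unperturbed form with $[\lambda_0,+\infty)$, where $\lambda_0$ is the lowest eigenvalue of the transverse weighted Laplacian $-r^{-1}\partial_r(r\partial_r)$ on $(0,2K)$ with Neumann boundary condition at $r=2K$. As constants belong to its form domain and yield $\lambda_0 = 0$, one concludes $\sess(\frq_{\rm n})\subseteq[0,+\infty)$, so only finitely many eigenvalues of $\frq_{\rm n}$ lie below $-\alpha^2/4<0$.

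Combining everything gives $\N_{-\alpha^2/4-E}(\Op)\le C_K + \N_{-\alpha^2/4-E}(\frmLWot)$ for all sufficiently small $E>0$, and dividing by $|\ln E|$ and passing to the $\limsup$ as $E\arr 0+$ proves the lemma. The main obstacle is precisely the essential-spectrum bound on the near form: because the strip $S_K$ is unbounded in the axial direction, $\frq_{\rm n}$ does not have compact resolvent, and one must exploit the compactness of the support of the $\delta$-coupling together with the non-negativity of the transverse weighted Neumann Laplacian to push the essential spectrum above $-\alpha^2/4$. The Neumann bracketing itself and the identification of $\frq_{\rm f}$ with $\frmLWot$ via $\sfU$ are by contrast routine.
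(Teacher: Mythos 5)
Your argument is correct in outline but takes a genuinely different route from the paper. The paper never cuts $\Omega_\tt$ along a single line: it uses an IMS partition of unity $\chi_0^2+\chi_1^2\equiv 1$ in the variable $r$, which produces two localized forms with \emph{Dirichlet} conditions on the artificial interfaces at the price of an extra potential $-W^K$ of size $O(K^{-2})$; the ``outer'' form is rotated and weight-flattened (the Dirichlet condition at $r=K$ killing all boundary terms), then Neumann-bracketed at $s\sin\tt+t\cos\tt=2K$ into $\frmLWot$ plus a transitional piece, and every remaining ``near'' contribution is reduced, by further Dirichlet/Neumann splittings, to forms on \emph{bounded} domains with compact resolvent. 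This keeps the whole proof at the level of the min-max principle. Your single Neumann cut replaces Steps 1--4 by the observation that the near form on the infinite strip $\{0<r<2K\}$ has essential spectrum in $[0,+\infty)$ because the $\delta$-coupling lives on a compact piece of $\Gt$ and is relatively form-compact. This is conceptually shorter, but it imports a Weyl-type stability theorem and a compact-trace argument that must be verified in the weighted space $L^2((0,2K)\times\dR;r\,\dd r\,\dd z)$, including at the apex $r=0$ where the segment meets the axis; the paper's route deliberately avoids both ingredients.

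One step needs correcting. The weight-flattening unitary does \emph{not} identify $\frq_{\rm f}$ with $\frmLWot$: writing $w=s\sin\tt+t\cos\tt$, the identity $(|\p_s u|^2+|\p_t u|^2)w=|\nabla\wt u|^2-\tfrac14 w^{-2}|\wt u|^2-\tfrac12\p_\nu(w^{-1}|\wt u|^2)$ with $\nu=(\sin\tt,\cos\tt)$ produces, upon integration over $\Omega_\tt^K$, a boundary contribution $+(4K)^{-1}\int_{\{w=2K\}}|\wt u|^2\,\dd\sigma$, which does not vanish because functions in $\dom\frq_{\rm f}$ satisfy no condition on the cut. Hence $\frq_{\rm f}$ is unitarily equivalent to $\frmLWot$ \emph{plus} a nonnegative boundary form, and the equality $\N_{-\aa^2/4-E}(\frq_{\rm f})=\N_{-\aa^2/4-E}(\frmLWot)$ you assert is false. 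Fortunately the error goes in your favour: the extra term only increases the form, so $\N_{-\aa^2/4-E}(\frq_{\rm f})\le\N_{-\aa^2/4-E}(\frmLWot)$ by the min-max principle, which is all the lemma requires. This is precisely why the paper flattens the weight only on a region carrying a Dirichlet condition on its inner boundary and performs the Neumann cut at $w=2K$ afterwards, in the already flat metric.
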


Now we have all the tools to prove Proposition \ref{prop:upper_bnd}.




\begin{proof}[Proof of Proposition~\ref{prop:upper_bnd}]
	According to Lemma~\ref{lem:bnd_aux} it is sufficient to prove that for a fixed $K > 0$ sufficiently large, we have the bound
	\[
		\limsup_{E\arr 0+}
		\frac{\N_{-\aa^2/4 - E}(\frmLWot)}{|\ln E|} \le \frac{\cot\tt}{4\pi}
	\]	
	As in the proof of Proposition~\ref{prop:lower_bnd}, we split the proof of this inequality  
	into three steps.

	\myemph{Step 1.}
	Let us introduce the parameters: 
	$R > 0$, $m := \lfloor\sqrt{R}\rfloor$, $r := 2K (\sin\tt)^{-1}$ and the sequences $r_k := 3r + kR / m$, $d_k := (r_k\tan\tt)/2$  
	(for $k=0,1,2, \dots, m$). For the sake of convenience we set $r_{m+1} = +\infty$. We introduce the domains
	\[
		\Lambda_k := \big\{ (s,t) \in\dR^2 \colon s \in (r_k, r_{k+1}), 
		t \in (-d_k, d_k)\big\},
		\qquad	k = 0,1,\dots, m.
	\]	
	The inclusions
	$\Lambda_k \subset \Omega_\tt^{K}$ hold for all $k = 0,1,\dots,m$. 
	Indeed, for any $(s,t) \in \Lambda_k$ we have
	\[
		s\sin\tt + t\cos\tt > r_k \sin\tt - d_k\cos\tt = \frac{r_k\sin\tt}{2} 
		\ge \frac{3r\sin\tt}{2} = \frac{6K}{2} = 3K > 2K.
	\]	
	We also define the domains $\Lambda_{m+1}, \Lambda_{m+2} \subset \Omega_\tt^{K}$ 
	(\cf Figure~\ref{fig:subdomlambda}) as
	\begin{equation*}
		\Lambda_{m+1} := 
		\big\{(s,t)\in \Omega_\tt^{K} \colon s < 6K (\sin\tt)^{-1}, |t| < K (\cos\tt)^{-1} \big\},
		\qquad	
		\Lambda_{m+2}  := 
		\Omega_\tt^{K}\setminus \ov{\cup_{k=0}^{m+1}\Lambda_k}.
	\end{equation*}

	\begin{figure}[h!]
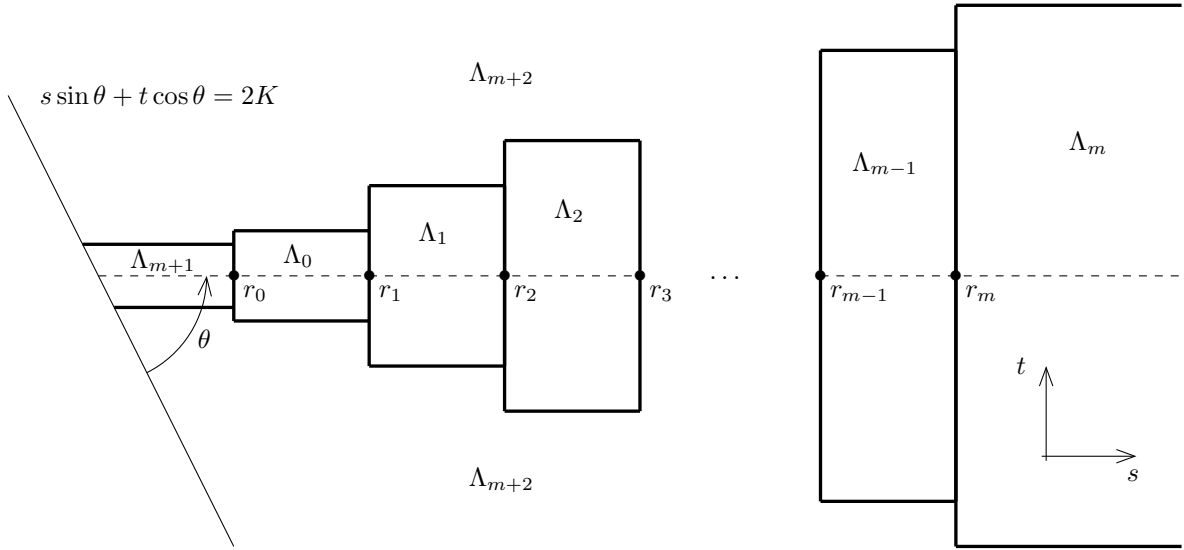

\figinit{1.2cm}

\figpt 0:(0,0)

\figpt 1:(-1,2)
\figpt 2:(1.5,-3)

\figpt 3:(6,0)

\figpt 4:(-0.175,0.35)
\figpt 5:(0.175,-0.35)
\figpt 401:(1.5,0.35)
\figpt 501:(1.5,-0.35)

\figpt 6:(1.5,0.5)
\figpt 7:(1.5,-0.5)

\figpt 8:(3,0.5)
\figpt 9:(3,-0.5)

\figpt 10:(3,1)
\figpt 11:(3,-1)
\figpt 12:(1.5,0.5)
\figpt 13:(1.5,-0.5)

\figpt 14:(4.5,1)
\figpt 15:(4.5,-1)

\figpt 16:(4.5,1.5)
\figpt 17:(4.5,-1.5)
\figpt 18:(6,1.5)
\figpt 19:(6,-1.5)

\figpt 20:(8,2.5)
\figpt 21:(8,-2.5)
\figpt 22:(9.5,2.5)
\figpt 23:(9.5,-2.5)

\figpt 24:(9.5,3)
\figpt 25:(9.5,-3)
\figpt 26:(12,3)
\figpt 27:(12,-3)

\figpt 28:(8,0)
\figpt 29:(12,0)

\figpt 30:(7,0)

\figpt 31:(1.5,0)
\figpt 32:(3,0)
\figpt 33:(4.5,0)
\figpt 34:(9.5,0)

\figpt 35:(2.25,0.25)
\figpt 36:(3.75,0.5)
\figpt 37:(5.25,0.75)
\figpt 38:(8.75,1.25)
\figpt 39:(11,1.5)

\figpt 40:(0.75,0.175)
\figpt 41:(4.5,2.25)
\figpt 42:(4.5,-2.25)

\figpt 43:(10.45,-2)
\figpt 44:(11.5,-2)

\figpt 45:(10.5,-2.05)
\figpt 46:(10.5,-1)

\figpt 47:(0.85,0)
\figpt 48:(0.35,-0.7)
\figpt 49:(1.1,-0.7)
\figdrawbegin{}
\figdrawline[1,2]
\figdrawarrow[43,44]
\figdrawarrow[45,46]
\figdrawarrowcircP 0;1.2[48,47]
\figset (dash=8)
\figdrawline[0,3]
\figdrawline[28,29]
\figset (dash=1)
\figset (width=1.25)
\figdrawline[4,401]
\figdrawline[5,501]
\figdrawline[6,7]
\figdrawline[10,11]
\figdrawline[12,8]
\figdrawline[9,13]
\figdrawline[10,14]
\figdrawline[11,15]
\figdrawline[14,15]
\figdrawline[16,18]
\figdrawline[17,19]
\figdrawline[16,17]
\figdrawline[18,19]
\figdrawline[20,22]
\figdrawline[21,23]
\figdrawline[20,21]
\figdrawline[22,23]
\figdrawline[24,26]
\figdrawline[25,27]
\figdrawline[24,25]
\figdrawend

\figvisu{\figBoxA}{}{
\figwriten 35:{$\Lambda_0$} (-0.15)
\figwriten 36:{$\Lambda_1$} (-0.15)
\figwriten 37:{$\Lambda_2$} (-0.15)
\figwriten 38:{$\Lambda_{m-1}$} (-0.15)
\figwriten 39:{$\Lambda_m$} (-0.15)
\figwriten 40:{$\Lambda_{m+1}$} (-0.15)
\figwriten 41:{$\Lambda_{m+2}$} (-0.15)
\figwriten 42:{$\Lambda_{m+2}$} (-0.15)
\figwrites 44:{$s$} (0.15)
\figwritew 46:{$t$} (0.15)
\figwritee 49:{$\tt$} (0)
\figwritee 1:{$s\sin\tt + t\cos\tt = 2 K$} (0.35)
\figwrites 30:{$\dots$} (0)
\figset write(mark=$\bullet$)
\figwritese 31:{$r_0$}(0.15)
\figwritese 32:{$r_1$}(0.15)
\figwritese 33:{$r_2$}(0.15)
\figwritese 3:{$r_3$}(0.15)
\figwritese 28:{$r_{m-1}$}(0.15)
\figwritese 34:{$r_m$}(0.15)
}

\centerline{\box\figBoxA}
\caption{Sketch of the different subdomains $\Lambda_k$ of $\Omega_\tt^{K}$ ($k\in\{0,\dots,m+2\}$). The dashed line is the support of the $\delta$-interaction.}
\label{fig:subdomlambda}
\end{figure}

	We introduce the notation $u_k = \restric{u}{\Lambda_k}$ ($k = 0,1,\dots,(m+2)$) and 
	we consider the following closed, densely defined, symmetric and semibounded quadratic 
	form $\frmLWone$ in $L^2(\Omega_\tt^{K})$, 
	defined as
	\begin{equation*}
	\begin{split}
		\frmLWone[u] 
		& := 
		\sum_{k=0}^{m+2} \|\nabla u_k\|^2_{\Lambda_k} -
		\int_{\Omega_\tt^{K}} \frac{|u|^2}{4(s\sin\tt + t\cos\tt)^2} \dd s \dd t
		-
		\alpha\int_{2K (\sin\tt)^{-1}}^\infty|u(s,0)|^2\dd s,\\
		\dom\frmLWone & :=\bigoplus_{k=0}^{m+2} H^1(\Lambda_k).
	\end{split}	 
	\end{equation*}
	This form admits a natural decomposition into parts corresponding to the sub-domains $\Lambda_k$
	\begin{equation*}
		\frmLWone[u] = \sum_{k=0}^{m+2} Q_{\alpha,\Lambda_k}[ u_k ],
	\end{equation*}
	where, for $k=0,1,\dots,(m+2)$, 
	the quadratic forms $Q_{\alpha,\Lambda_k}$ 
	have domains $\dom Q_{\alpha,\Lambda_k} := H^1(\Lambda_k)$ and are given by
	\begin{equation*}
	\begin{split}	
		Q_{\alpha,\Lambda_k}[u]  & :=
		\|\nabla u\|^2_{\Lambda_k} - \int_{\Lambda_k}\frac{|u|^2}{4(s\sin\tt + t\cos\tt)^2} \dd s \dd t
		- 
		\alpha\int_{r_k}^{r_{k+1}}|u(s,0)|^2\dd s, \quad k=0, \dots m,\\
		Q_{\alpha,\Lambda_{m+1}}[u]
		& := 
		\|\nabla u\|^2_{\Lambda_{m+1}}- \int_{\Lambda_{m+1}}\frac{|u|^2}{4(s\sin\tt + t\cos\tt)^2}\dd s \dd t
		- \alpha\int_{2K(\sin\tt)^{-1}}^{r_0}|u(s,0)|^2\dd s,\\
		Q_{\alpha,\Lambda_{m+2}}[u]&
		:= \|\nabla u\|^2_{\Lambda_{m+2}} - \int_{\Lambda_{m+2}}\frac{|u|^2}{4(s\sin\tt + t\cos\tt)^2}\dd s \dd t.
	\end{split}
	\end{equation*}
	As $\dom\frmLWot \subset \dom\frmLWone$, for any $u\in\dom\frmLWot$ 
	we have $\frmLWot[u] = \frmLWone[u]$ and we get the form ordering 
	$\frmLWone\prec \frmLWot$. 
	The min-max principle yields, for all $E>0$, the bound
	\begin{equation}\label{eq:cnt_upp_bnd}
		\N_{-\alpha^2/4 - E}(\frmLWot) 
		\leq 
		\sum_{k=0}^{m+2}
		\N_{-\alpha^2/4 - E}(Q_{\alpha,\Lambda_k}).
	\end{equation}

	\myemph{Step 2.}
	In this step we obtain bounds on the functions $E\mapsto \N_{-\aa^2/4 - E}(Q_{\alpha,\Lambda_k})$ 
	$(k = 0,1,2,\dots, (m+2))$. 
	First, we bound from above the functions 
	\[
		E\mapsto \N_{-\alpha^2/4 - E}(Q_{\alpha,\Lambda_{m+1}})
		\quad\text{and}\quad
		E\mapsto \N_{-\alpha^2/4 - E}(Q_{\alpha,\Lambda_{m+2}}).
	\]

	Because $H^1(\Lambda_{m+1})$ is compactly embedded into $L^2(\Lambda_{m+1})$
	the quadratic form $Q_{\alpha,\Lambda_{m+1}}$ is associated with an operator with compact resolvent.
	Therefore, since the domain $\Lambda_{m+1}$ does not depend on $R$, 
	there exists a constant $\frc_\tt = \frc_\tt(\aa, K) > 0$, 
	which depends on $\tt$, $\aa$ and $K$ (but \textit{not} on $R$),   	
	such that, for any $E>0$
	\begin{equation}\label{eq:bnd_m+1}
		\N_{-\alpha^2/4 - E }(Q_{\alpha,\Lambda_{m+1}}) 
		\leq 
		\N_{-\alpha^2/4}(Q_{\alpha,\Lambda_{m+1}})  = \frc_\tt.
	\end{equation}

	Further, for any $u\in\dom Q_{\alpha,\Lambda_{m+2}}$, we have
	\begin{equation*}
		Q_{\alpha,\Lambda_{m+2}}[u] \ge \|\nabla u\|_{\Lambda_{m+2}}^2  -\frac{1}{16K^2}\|u\|_{\Lambda_{m+2}}^2.
	\end{equation*}
	Consequently, for a fixed $K > 0$ such that $\frac{1}{16K^2} < \aa^2/4$, 
	the min-max principle yields $\N_{-\alpha^2/4}(Q_{\alpha,\Lambda_{m+2}}) = 0$. Hence, for any $E>0$, the following equation holds
	\begin{equation}\label{eq:bnd_m+2}
		\N_{-\alpha^2/4 - E }(Q_{\alpha,\Lambda_{m+2}})	\leq \N_{-\alpha^2/4}(Q_{\alpha,\Lambda_{m+2}}) = 0.
	\end{equation}

	Next, we obtain upper bounds for the functions
	\begin{equation*}
		E \mapsto \sum_{k=0}^{m-1}\N_{-\alpha^2/4 - E}(Q_{\alpha,\Lambda_k})
		\quad\text{and}\quad
		E \mapsto \N_{-\alpha^2/4 - E}(Q_{\alpha,\Lambda_m}).
	\end{equation*}
	To this end we define the non-increasing sequence
	\begin{equation}\label{eq:eps_k}      	
		\eps_k := \sup_{(s,t)\in \Lambda_k} \frac{1}{4(s\sin\tt + t\cos\tt)^2} 
			=  \frac{1}{r_k^2\sin^2\tt}, \qquad k = 0,1,\dots, m.
	\end{equation}
	We observe that for any $u\in\dom Q_{\alpha,\Lambda_k}$
	\begin{equation*} 
		Q_{\alpha,\Lambda_k}[u] \geq 
		\|\nabla u\|_{\Lambda_k}^2 
		- \eps_k\|u\|_{\Lambda_k}^2-
		\alpha \int_{r_k}^{r_{k+1}} |u(s,0)|^2\dd s  
		, \qquad k = 0,1,\dots, m.
	\end{equation*}
	Then, we define the intervals $I_1^k = (r_k,r_{k+1})$, $I_2^k = (-d_k,d_k)$,
	the potential $V_m(s) = \frac{1}{4\sin^2\tt (s - r_m/2)^2}$ and
	the following symmetric sesquilinear forms
	\begin{align*}
		\frq_{1,R}^k[\varphi,\psi] & :=  
		(\varphi',\psi')_{I_1^k} - \eps_k(\varphi,\psi)_{I_1^k}, &	
		\dom\frq_{1,R}^k & := H^1(I_1^k),\qquad k = 0,1,\dots, (m-1),\\
		\frq_{1,R}^m[\varphi,\psi] & 
		:=  
		(\varphi',\psi')_{I_1^m} - (V_m\varphi,\psi)_{I_1^m},
		& 	\dom\frq_{1,R}^m  & := H^1(I_1^m),\\
		\frq_{2,R}^k[\varphi,\psi] & :=
		(\varphi',\psi')_{I_2^k}  - \alpha \varphi(0)\ov{\psi(0)},&
		\dom\frq_{2,R}^k & := H^1(I_2^k),\qquad k = 0,1,\dots, m.
	\end{align*}
	One can check that all the forms are closed, densely defined and semibounded in $L^2$-spaces
	over their respective intervals. As $L^2(\Lambda_k) = L^2(I_1^k) \myotimes L^2(I_2^k)$, we introduce the
	quadratic forms 
	\begin{equation*}
		\wt Q_{\alpha, \Lambda_k} 
		:= 
		\frq_{1,R}^k \otimes \fri_2^k + \fri_1^k \otimes \frq_{2,R}^k, \qquad k=0,1,\dots, m,
	\end{equation*}
	and the form orderings $\wt{Q}_{\alpha,\Lambda_k} \prec Q_{\alpha,\Lambda_k}$
	hold for all $k = 0,1,\dots, m$. Here, for $j=1,2$ and $k = 0,2,\dots, m$, $\fri_j^k$ denote the forms of the identity
	operators in $L^2(I_j^k)$.  Hence, we arrive at
	the bound
	\[
	\begin{split}
		\N_{-\alpha^2/4-E}(Q_{\alpha,\Lambda_k})
		&\leq 
		\N_{-\alpha^2/4-E}(\wt Q_{\alpha,\Lambda_k}) 
		= 
		\# 
		\big\{(l,j)\in\dN^2 
		\colon E_l(\frq_{1,R}^k) + E_j(\frq_{2, R}^k) \leq -\alpha^2/4-E \big\}\\ 
		& =  
		\sum_{j=1}^\infty
		\#\big\{l\in \dN \colon 	E_l(\frq_{1,R}^k) \leq -\aa^2/4 - E - E_j( \frq_{2, R}^k )\big\}.
	\end{split}	
	\]
	Now, we choose $K > 0$ sufficiently large such that
	$\eps_1 < \aa^2/4$. Thanks to Proposition~\ref{prop:mod1D}\,(iii), 
	we know that all the summands, for $j>1$, in the above sum equal to zero. Thus, we get the same bound in a simplified form
	\begin{equation}\label{eq:intermediate_bnd}
		\N_{-\alpha^2/4-E}( Q_{\alpha,\Lambda_k} )
		\leq 
		\#\big\{l \in \dN \colon E_l(\frq_{1,R}^k) \leq - \aa^2/4 - E - E_1( \frq_{2, R}^k ) \big\}.
	\end{equation}
	For $k = 0,1, \dots, (m-1)$ we deduce from~\eqref{eq:intermediate_bnd} using~\eqref{eq:eps_k}
	and Proposition~\ref{prop:mod1D}\,(ii) that for $R > 0$ sufficiently large
	\begin{equation}\label{eq:Lambda_k_bnd}
	\begin{split}
		\N_{-\alpha^2/4-E}( Q_{\alpha,\Lambda_k} )
		&\leq 
		\#\big\{ l \in\dN_0 \colon m^2 \pi^2 l^2 R^{-2}                   	
		\leq -\alpha^2/4 + \eps_k - E_1(\frq_{2, R}^k)\big\}\\
		&\leq 
		1 + C_1 \sqrt{R} \sqrt{e^{-C_2 r_k} + r_k^{-2}}
		\leq 
		1 + \frac{C_3\sqrt{R}}{r_k},
	\end{split}
	\end{equation}
	where the positive constants $C_1, C_2$ and $C_3$ do not depend on $R$. 
	Summing the estimates~\eqref{eq:Lambda_k_bnd} over $k$,
	we end up with 
	\begin{equation}\label{eq:bnd_0_m-1}
	\begin{split}
		\sum_{k=0}^{m-1}\N_{-\alpha^2/4-E}(Q_{\alpha,\Lambda_k})
		& \le
		m + C_3\sqrt{R}\sum_{k=0}^{m-1}\frac{1}{r_k}\\
		&\le
		m + C_3\sqrt{R}\frac{1}{r_0} + C_3\int_0^R \frac{\dd x}{3r + x}\leq C_4 \sqrt{R}\\
	\end{split}	
	\end{equation}	
	for all $R > 0$ sufficiently large and a positive constant $C_4$ which does not depend on $R$.

	Further, for $k = m$ we obtain from~\eqref{eq:intermediate_bnd}
	\begin{equation}\label{eq:bnd_m}
	\begin{split}
		\N_{-\alpha^2/4-E}(Q_{\alpha,\Lambda_m}) &
		\leq 
		\#\big\{ l\in\dN \colon E_l(\frq_{1,R}^m) \leq -\alpha^2/4 - E-  E_1(\frq_{2,R}^m)\big\}\\
		&\leq
		\N_{-\alpha^2/4- E - E_1(\frq_{2,R}^m)}(\frq_{1,R}^m).
	\end{split}
	\end{equation}
	Using the unitary operator
	\begin{equation*}
		\sfU \colon L^2(1,+\infty) \arr L^2(I_1^m),
		\qquad 
		(\sfU \psi)(s) := \sqrt{\frac{2}{r_m}}\psi\bigg(\frac{2s}{r_m} - 1\bigg),
	\end{equation*}
	one finds by direct computations that the forms 
	$\frq_{1,R}^m$ and $\frac{4}{r_m^2} \frmKSN$ 
	with $c = 1/(4\sin^2\tt)$ are unitarily equivalent; 
	where the form $\frmKSN$ is defined in~\eqref{eq:op_KS}.
	
	Combining the bound~\eqref{eq:cnt_upp_bnd} and the estimates
	\eqref{eq:bnd_m+1}, \eqref{eq:bnd_m+2}, \eqref{eq:bnd_0_m-1}, \eqref{eq:bnd_m}
	we obtain		 
	\begin{equation}\label{eq:main_bnd}
		\N_{-\alpha^2/4-E}(Q_{\alpha,\Omega_\tt^{K}})
		\le
		\frc_\tt + C_4\sqrt{R} + \N_{-\aa^2/4 - E - E_1(\frq_{2,R}^m)}
		(\frq_{1,R}^m).
    	\end{equation}

	\myemph{Step 3.}
	Now, we choose $R$ depending on the spectral parameter $E>0$ as follows
	\begin{equation*}
		R = R(E) := M |\ln E |,\qquad M > 0,
	\end{equation*}
	in particular, we have $R(E) \arr +\infty$ as $E\arr 0+$.
	Let the constants $C_{\rm N}$, $C_{\rm N}'$ and $L_{\rm N}$ be as in Proposition~\ref{prop:mod1D}\,(ii).
   	Next, we choose $M > 0$ sufficiently large such that
	$C_{\rm N}'M\tan\tt > 2$. Then, for $E > 0$ 
	sufficiently small such that $\ln E < 0$ and $M|\ln E| \tan \tt/2 > L_{\rm N}$, by Proposition~\ref{prop:mod1D}\,(ii) we have 
	\[
		|\aa^2/4 + E_1(\frq_{2,R(E)}^m)| 
		\le C_{\rm N} \exp((C_{\rm N}' \tan\tt/2) (3r + M|\ln E|))
		= 
		\wt{C}_{\rm N} E^{(C_{\rm N}'M\tan\tt)/2} = o(E), \qquad E\arr 0+,
	\]
	where $\wt{C}_{\rm N}>0$. Hence,
	\begin{equation}\label{eq:f(E)_2}
		f(E) 
		:= 
		(r_m(E)^2/4)
		\big(\alpha^2/4  + E + E_1(\frq_{2,R(E)})\big)
		= 
		M^2 E|\ln E|^2/4 + o(E|\ln E|^2), 	\qquad E\arr 0+.
	\end{equation}
	Using~\eqref{eqn:cnt_ineq3}, unitary equivalence of $\frq_{1,R}^m$ 
	and $\frac{4}{r_m^2}\frmKSN$ and Theorem~\ref{th:KS88} we get
	\begin{equation}
	\begin{split}
		\limsup_{E\arr 0+}\frac{\N_{-\alpha^2/4 - E}(\frmLWot)}{|\ln E|}
		&\leq 
		\limsup_{E\arr 0+}\bigg(\frac{\frc_\tt}{|\ln E|} + 
		\frac{C_4\sqrt{M}}{\sqrt{|\ln E|}}
		+ 
		\frac{\N_{-\aa^2/4 - E - E_1(\frq_{2,R(E)}^m)}(\frq_{1,R(E)}^m )}{|\ln E|}
		\bigg)
		\\
		&\leq
		\limsup_{E\arr 0+}
		\frac{\N_{-f(E)}(\frmKSN)}{|\ln E|}
		 =
		\frac{\cot\tt}{4\pi}\limsup_{E\arr 0+}
		\frac{|\ln f(E)|}{|\ln E|}
		= 
		\frac{\cot\tt}{4\pi}.	 
	\label{eqn:maj_nbvpln}
	\end{split}
	\end{equation}

	where we used that $\frac{|\ln f(E)|}{|\ln E|}
	\arr 1$ as $E\arr 0+$ (\cf ~\eqref{eq:f(E)_2}).
	This ends the proof of Proposition \ref{prop:upper_bnd}.
\end{proof}




Now we provide the proof of Lemma~\ref{lem:bnd_aux}.

\begin{proof}[Proof of Lemma~\ref{lem:bnd_aux}]
	Thanks to Corollary~\ref{cor:spec}\,(i) it is sufficient to prove that
	\[
		\limsup_{E\arr 0+}
		\frac{\N_{-\aa^2/4 - E}(\frmLf)}{|\ln E|} 
		\le 
		\limsup_{E\arr 0+}
		\frac{\N_{-\aa^2/4 - E}(\frmLWot)}{|\ln E|},
	\]
	where the forms $\frmLf$ and $\frmLWot$
	are defined as in~\eqref{eqn:not_axy} and in~\eqref{eq:frm_aux},
	respectively.
	
	\myemph{Step 1.}
	Using an IMS formula we split the quadratic form of $\frmLf$
	into two forms, one acting on a strip-shaped 
	geometrical domain attached to
	the boundary $\p\dR_+^2$ of $\dR_+^2$, the other one acting away from it. 
	For this purpose, let us introduce a $\cC^\infty$-smooth cut-off 
	function $\chi_0 \colon \dR_+ \arr [0,1]$ such that
	\begin{equation*}
		\chi_0(r) := 
		\begin{cases}
			1, &r\leq1,\\
			0, &r\geq2.
		\end{cases}
	\end{equation*}
	We also introduce the function $\chi_1\colon \dR_+\arr [0,1]$ 
	such that $\chi^2_0 + \chi_1^2 \equiv 1$. Now, for $K > 0$, 
	we define $\chi_j^K(r) := \chi_j (K^{-1} r)$, $j=0,1$,
	and introduce the following bounded function
	\begin{equation*}
		W^K(r) := |(\chi^K_0)' (r)|^2 + |(\chi^K_1)'(r)|^2 	
		= K^{-2}(|\chi_0^\pp(K^{-1} r)|^2 + |\chi_1^\pp(K^{-1} r)|^2).
	\end{equation*}
	We set $W := W^1$ (for $K = 1$) and observe that $\|W^K\|_\infty =  K^{-2}\|W\|_\infty$. 
	Moreover, for any $u\in\dom\frmLf$, a simple computation (\cf \cite[Sec. 3.1]{CFKS87}) yields
	\begin{equation*}
		\frmLf[u] = 
		\frmLf[\chi_0^K u] + \frmLf[\chi_1^K u] - 
		\int_{\dR_+^2} W^K(r) |u(r,z)|^2 r \dd r \dd z.
	\end{equation*}
	Next we introduce the sub-domains 
	\[
		\Omega_0 := \{ (r,z) \in \dR_+^2 \colon  r \le 2K \},
		\qquad
		\Omega_1 := \{ (r,z) \in \dR_+^2 \colon r  \ge K \},
	\]
	of the meridian domain $\dR^2_+$ (note that $\supp W^K \subset \Omega_0$).
	For $j=0,1$, we set $\Gamma_j = \Gamma_\theta \cap \Omega_j$ and define
	$I_0 := (0, 2K(\sin\tt)^{-1})$, $I_1 := (K(\sin\tt)^{-1}, +\infty)$,
	$\Sigma_0 = \{2K\}\times \dR$, and $\Sigma_1 = \{K\} \times \dR$.
	Then we consider the quadratic forms $\frmLWj$, $j=0,1$, defined as
	\begin{equation}
	\begin{split}
		\frmLWj[u] & 
		:= 
		\int_{\Omega_j}
		\big(|\p_r u|^2 + |\p_z u|^2 - W^K(r)|u|^2\big) r \dd r \dd z 
		- 
		\alpha\int_{I_j}| u(s\sin\tt, s\cos\tt)|^2 s\sin\tt \dd s,\\
		\dom \frmLWj & := \big\{ \restric{u}{\Omega_j} \colon u \in \dom\frmLf, \restric{u}{\Sigma_j} = 0 \big\}.
	\end{split}	
	\label{eqn:dec_IMS1}
	\end{equation}
	Thanks to~\eqref{eqn:dec_IMS1}, we get that, for $j=0,1$ any $u \in \dom \frmLf$, 
	$\chi_j^K u \in \dom \frmLWj$ and we have the relation 
	\begin{equation*}
		\frmLf [u] = \frmLWz [\chi_0^K u] + \frmLWo [\chi_1^K u].
	\end{equation*}
	Using \cite[Lem. 5.2]{DLR12} we find
	\begin{equation}
		\N_{-\alpha^2/4 -E}(\frmLf) 
		\leq    	
		\N_{-\alpha^2/4 -E}(\frmLWz) + \N_{-\alpha^2/4 -E}(\frmLWo).
	\label{eqn:IMS_major}
	\end{equation}

	\myemph{Step 2.}  
	In this step we prove that for any $K > 0$ sufficiently large,
	there exists a constant $\wh\frc_\tt = \wh\frc_\tt (\aa, K) >0$ such that 
	\begin{equation}\label{eq:Q0_bnd}
		\N_{-\alpha^2/4 -E}(\frmLWz) \leq \wh\frc_\tt
	\end{equation}
	for all $E>0$. First, we introduce the quadratic form $\frmLz$, defined as
	\[
		\frmLz[u]  := 
		\int_{\Omega_0} \big( |\p_r u|^2 + |\p_z u|^2 \big) r \dd r\dd z - 
		\alpha\int_{I_0} | u(s\sin\tt,s\cos\tt)|^2 s\sin\tt \dd s,
		\qquad
		\dom \frmLz := \dom \frmLWz.
	\]
	One can check that the above form
	is closed, symmetric, densely defined and semibounded in $L^2(\Omega_0;r \dd r \dd z)$.
	Now, for any $u \in\dom \frmLWz $ we have
	\begin{equation}
		\frmLWz[u] \geq \frmLz[u] - K^{-2}\| W \|_\infty \|u\|^2_{L^2(\Omega_0;r \dd r \dd z)}.
	\label{eqn:bound_bel_epsilon}
	\end{equation}
	Consequently, we get that
	\begin{equation}
		\N_{-\alpha^2/4-E}(\frmLWz)
		\leq
		\N_{-\alpha^2/4 - E + K^{-2}\|W\|_\infty}(\frmLz).
	\label{eqn:choice_epsilon}
	\end{equation}
	Next, we choose $K > 0$ sufficiently large such that $-\alpha^2/4 + K^{-2} \|W\|_\infty \leq -\alpha^2/8$. 
	Combining \eqref{eqn:bound_bel_epsilon} and \eqref{eqn:choice_epsilon} 
	we obtain
	\begin{equation}\label{eq:Q0_Q0p_bnd}
		\N_{-\alpha^2/4-E}(\frmLWz) \leq \N_{-\alpha^2/8}(\frmLz).
	\end{equation}
	Secondly, let us split the domain $\Omega_0$ into two disjoint sub-domains 
	(\cf Figure \ref{fig:sigma_0})
	\begin{equation*}
		\Omega_{00} :=  \{(r,z)\in \Omega_0 \colon z \in  (0,2K\cot\theta)\},
		\qquad 
		\Omega_{01} :=  \{(r,z)\in \Omega_0 \colon z \notin  [0,2K\cot\theta]\}.
	\end{equation*}
	For $j= 0,1$, we define $\Sigma_{0j} := \Sigma_0 \cap \p\Omega_{0j}$ and we consider the quadratic forms $\frmLzz$ and $\frmLzo$ defined as
	\[
	\begin{split}
		\frmLzz[u] & := 
		\int_{\Omega_{00}}\big (|\p_r u|^2 + |\p_z u|^2\big) r \dd r \dd z - 
		\alpha\int_{I_0}|u(s\sin\tt,s\cos\tt)|^2 s \sin\tt \dd s,\\
		\frmLzo[u] & := 
		\int_{\Omega_{01}} (|\p_r u|^2 + |\p_z u|^2) r \dd r \dd z,\\
		\dom\frmLzj & := 
		\{u \colon u, \p_r u, \p_z u \in L^2(\Omega_{0j}, 
		r \dd r \dd z), u|_{\Sigma_{0j}} = 0\},\quad j=0,1.
	\end{split}
	\]
	One can check that the above forms
	are closed, symmetric, densely defined and semibounded in $L^2(\Omega_{00};r \dd r \dd z)$
	and in $L^2(\Omega_{01};r \dd r \dd z)$, respectively.
	\begin{figure}[h!]
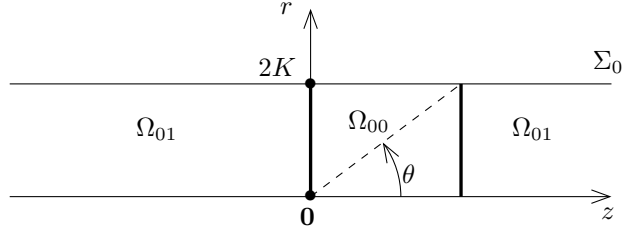

\figinit{1cm}
\figpt 0:(0,0)
\figpt 1:(-4,0)
\figpt 2:(4,0)
\figpt 3:(-4,1.5)
\figpt 4:(4,1.5)
\figpt 5:(0,1.5)
\figpt 6:(0,2.5)
\figpt 7:(2,1.5)
\figpt 8:(2,0)

\figpt 9:(1,0.65)
\figpt 10:(-2,0.75)
\figpt 11:(3,0.75)
\figpt 12:(0.5,1)

\figdrawbegin{}
\figdrawline [3,4]
\figdrawarrow [1,2]
\figdrawarrow [0,6]
\figdrawarrowcircP 0;1.2[2,7]
\figset (dash=8)
\figdrawline [0,7]
\figset (dash=1)
\figset (width=1.25)
\figdrawline [7,8]
\figdrawline [0,5]
\figdrawend

\figvisu{\figBoxA}{}{
\figwritese 9:{$\theta$} (0.31)
\figwriten 10:{$\Omega_{01}$} (0)
\figwriten 11:{$\Omega_{01}$} (0)
\figwritee 12:{$\Omega_{00}$} (0)
\figwrites 2:{$z$} (0.15)
\figwritew 6:{$r$} (0.15)
\figwriten 4:{$\Sigma_0$} (0.15)
\figset write(mark=$\bullet$)
\figwrites 0:{$\mathbf{0}$} (0.15)
\figwritenw 5:{$2K$} (0.15)
}

\centerline{\box\figBoxA}
\caption{The domain $\Omega_0$ and the sub-domains $\Omega_{00}$ and $\Omega_{01}$. The dashed line is the support of the $\delta$-interaction.}
\label{fig:sigma_0}
\end{figure}
	For $u\in \dom\frmLz$ and $j=0,1$, we define $u_j = \restric{u}{\Omega_{0j}}$ and get
	\begin{equation*}
		\frmLz[u] = \frmLzz[u_0] +\frmLzo[u_1].
	\end{equation*}
	The above equality and the min-max principle yield
	\begin{equation}\label{eq:Q0p_Q00p_Q01p_bnd}
		\N_{-\alpha^2/8}(\frmLz) \leq 
		\N_{-\alpha^2/8}(\frmLzz) + 
		\N_{-\alpha^2/8}(\frmLzo).
	\end{equation}

	Note that for all $u\in\dom \frmLzo$, we have
	$\frmLzo[u] \geq 0$.	Consequently, we get 
	\begin{equation}\label{eq:Q01p_bnd}
		\N_{-\alpha^2/8}(\frmLzo) = 0.
	\end{equation}
	Moreover, the quadratic form $\frmLzz$ is associated with the lowest fiber operator of a 
	three-dimensional Schr\"odinger operator
	with a surface $\delta$-interaction acting on a
	bounded domain with mixed boundary conditions 
	(Neumann and Dirichlet). This operator 
	has compact resolvent and its sequence of eigenvalues goes to infinity. 
	Hence, we obtain
	\begin{equation}\label{eq:Q00p_bnd}
		\N_{-\alpha^2/8}(\frmLzz) = \wh\frc_\tt < \infty,
	\end{equation}
	with some constant $\wh\frc_\tt = \wh\frc_\tt(\aa,K) > 0$.
	Combining \eqref{eq:Q0_Q0p_bnd}, \eqref{eq:Q0p_Q00p_Q01p_bnd}, \eqref{eq:Q01p_bnd} and \eqref{eq:Q00p_bnd}
	we obtain \eqref{eq:Q0_bnd}.

	\myemph{Step 3.}
	We remark that the domain 
	$\wt\Omega_1 = \{(s,t) \in \dR^2 \colon s\sin\tt + t\cos\tt > K\} 
	\subset\Omega_\tt$ is the image of 
	$\Omega_1$ under Rotation~\eqref{def:m}
	and we consider the unitary transform
	\begin{equation*}
		\sfU \colon L^2(\Omega_1; r \dd r \dd z) \arr  L^2(\wt\Omega_1),
		\qquad 
		(\sfU u)(s,t) :=  
		u(s\sin\tt + t\cos\tt,s\sin\tt - t \cos\tt)\sqrt{s\sin\tt +t\cos\tt}.
	\end{equation*}
	A straightforward computation yields that the quadratic form $\frmLWo$ 
	is unitarily equivalent, \textit{via} $\sfU$, to the form 
	\[
	\begin{split}
		\wt Q_1 [u] & := 
		\|\nabla u\|^2_{\wt\Omega_1} - 
		\int_{\wt\Omega_1} 
		\frac{|u|^2}{4(s\sin\tt + t\cos\tt)^2} + W^K(s\sin\tt + t\cos\tt)|u|^2
		\dd s \dd t
		-\aa\int_{K (\sin\tt)^{-1} }^\infty|u(s,0)|^2 \dd s\\
		\dom \wt Q_1 & := H^1_0(\wt\Omega_1).
	\end{split}	
	\]
	We introduce the sub-domains of $\wt\Omega_1$
	\[	
		\wt\Omega_{10} := \{(s,t)\in\dR^2 \colon 
		K < s\sin\tt + t \cos\tt < 2K\},
		\qquad 
		\wt\Omega_{11} := \{(s,t)\in \dR^2
		\colon  s\sin\tt + t \cos\tt > 2K\},
	\]
	and the forms
	\[
	\begin{split}
		&\wt Q_{10} [u]  
			:= 
			\|\nabla u\|^2_{\wt\Omega_{10}} - 
			\int_{\wt\Omega_{10}} 
			\frac{|u|^2}{4(s\sin\tt + t\cos\tt)^2} + 
			W^K(s\sin\tt + t\cos\tt)|u|^2\dd s \dd t
			-\aa\int_{K(\sin\tt)^{-1}}^{2K(\sin\tt)^{-1}}|u(s,0)|^2 \dd s,\\
		&\wt Q_{11} [u]  
		:= 
		\|\nabla u\|^2_{\wt\Omega_{11}} - 
		\int_{\wt\Omega_{11}} \frac{|u|^2}{4(s\sin\tt + t\cos\tt)^2}\dd s \dd t
			-\aa\int_{2K(\sin\tt)^{-1}}^{+\infty} |u(s,0)|^2 \dd s,\\	
		&\dom\wt Q_{10}   := 
		\big\{u\in H^1(\wt \Omega_{10}) \colon 
		u|_{\p \wt\Omega_1} = 0\big\},\qquad
		\dom\wt Q_{11} := H^1(\wt\Omega_{11}).
	\end{split}
	\]
	The above forms
	are closed, symmetric, densely defined and semibounded in $L^2(\wt\Omega_{10})$
	and in $L^2(\wt\Omega_{11})$, respectively.
	Once again, we get by the min-max principle
	\begin{equation}\label{eq:Q1_bnd}
		\N_{-\aa^2/4 - E}(\frmLWo) = 
		\N_{-\aa^2/4 - E}(\wt Q_1) \le 
		\N_{-\aa^2/4 - E}( \wt Q_{10} ) + \N_{-\aa^2/4 - E}( \wt Q_{11} ).
	\end{equation}

	\myemph{Step 4.} In this step we prove that for any $K>0$ sufficiently large, there exists a constant 
	$\wt \frc_\tt = \wt\frc_\tt(\aa, K) > 0$ such that	%
	\begin{equation}\label{eq:wtQ10_bnd}
		\N_{-\aa^2/4 - E}(\wt Q_{10}) \le \wt\frc_\tt,
	\end{equation}
	for all $E>0$. To do so, we introduce the quadratic form $\wt Q_{10}'$ defined as
	\begin{equation}\label{eqn:maj_Qprim}
	\wt Q_{10}' [u]  
			:= 
			\|\nabla u\|^2_{\wt\Omega_{10}}
			-\aa\int_{K(\sin\tt)^{-1}}^{2K(\sin\tt)^{-1}}|u(s,0)|^2 \dd s,\quad \dom \wt Q_{10}' := \dom\wt Q_{10}.
	\end{equation}
	One can check that the above form is closed, symmetric,
	densely defined and semibounded in $L^2(\wt\Omega_{10})$. Now, for any $u\in\dom\wt Q_{10}$ we have
	\begin{equation}\label{eqn:min_fqQprim}
	\wt Q_{10}[u] \ge
	\wt Q_{10}'[u] -
	K^{-2}(\|W\|_\infty + 1/4)\|u\|_{\wt\Omega_{10}}^2.
	\end{equation}
	Consequently, we get
	\[
	\N_{-\aa^2/4 - E}(\wt Q_{10}) \leq \N_{-\aa^2/4 - E + K^{-2}(\|W\|_\infty + 1/4)}(\wt Q_{10}').
	\]
	Next, we choose $K>0$ sufficiently large such that $-\aa^2/4 + K^{-2}(\|W\|_\infty + 1/4) < -\aa^2/8$. 
	Combining \eqref{eqn:maj_Qprim} and \eqref{eqn:min_fqQprim} we have
	\begin{equation}\label{eqn:bnd_1}
	\N_{-\aa^2/4 - E}(\wt Q_{10}) \leq
	\N_{-\aa^2/8}(\wt Q_{10}').
	\end{equation}
	Then, let us split the domain $\wt\Omega_{10}$ into two disjoint sub-domains
	\[
	\wt \Omega_{100} = 
	\{(s,t)Ê\in \wt \Omega_{10} : |t| < 1\},
	\quad
	\wt \Omega_{101} =
	\{(s,t)Ê\in \wt \Omega_{10} : |t| > 1\}.
	\]
	We denote by $\wt\Sigma_0$ the image of $\Sigma_0$ under Rotation \eqref{def:m} and, for $j=0,1$, let us define $\wt\Sigma_{0j} = \wt\Sigma_0\cap\p\Omega_{10j}$ ($j=0,1$). Then, we consider the quadratic forms $\wt Q_{100}'$ and $\wt Q_{101}'$ defined as
	\[
	\begin{split}
	\wt Q_{100}'[u] & :=
	\|\nabla u\|_{\wt\Omega_{100}}^2 -
	\alpha \displaystyle\int_{K(\sin\tt)^{-1}}^{2K(\sin\tt)^{-1}}|u(s,0)|^2\dd s,\\
	\wt Q_{101}'[u] & :=
	\|\nabla u\|_{\wt\Omega_{101}}^2,\\
	\dom\wt Q_{10j}'& :=
	\{uÊ\in H^1(\wt\Omega_{10j}) : \restric{u}{\wt\Sigma_{0j}} = 0\},\quad j=0,1.
	\end{split}
	\]
	The above forms
	are closed, symmetric, densely defined and semibounded in $L^2(\wt\Omega_{100})$
	and in $L^2(\wt\Omega_{101})$, respectively.
	For $u\in\dom\wt Q_{10}'$, we define $u_j = \restric{u}{\wt\Omega_{10j}}$ ($j=0,1$) and get
	\[
	\wt Q_{10}'[u] =
	\wt Q_{100}'[u_0] + \wt Q_{101}'[u_1].
	\]
	The above equality and the min-max principle yield
	\begin{equation}\label{eqn:bnd_2}
	\N_{-\aa^2/8}(\wt Q_{10}') \leq \N_{-\aa^2/8}(\wt Q_{100}') + \N_{-\aa^2/8}(\wt Q_{101}').
	\end{equation}
	For all $u\in\dom\wt Q_{101}'$, $\wt Q_{101}'[u]\ge0$ and we get
	\begin{equation}\label{eqn:bnd_3}
 	\N_{-\aa^2/8}(\wt Q_{101}') = 0.
 	\end{equation}
	The quadratic form $\wt Q_{100}'$ is the quadratic form of a Schr\"{o}dinger operator with a $\delta$-interaction supported on a line segment. 
	It acts on a bounded domain with mixed boundary conditions (Neumann and Dirichlet) thus, this operator has compact resolvent and its sequence of 
	eigenvalues goes to infinity. Hence, we have
	\begin{equation}\label{eqn:bnd_4}
	\N_{-\aa^2/8}(\wt Q_{100}') = \wt \frc_\tt < \infty,
	\end{equation}
	where $\wt \frc_\tt = \wt \frc_\tt(\aa,K) >0$. 
	Combining \eqref{eqn:bnd_1}, \eqref{eqn:bnd_2}, \eqref{eqn:bnd_3} and \eqref{eqn:bnd_4} we obtain \eqref{eq:wtQ10_bnd}.
	
	\myemph{Step 5.}
	To conclude, inserting~\eqref{eq:Q0_bnd},~\eqref{eq:Q1_bnd}~
	and~\eqref{eq:wtQ10_bnd} 
	into~\eqref{eqn:IMS_major} we get
	\[
	\begin{split}
		\limsup_{E\arr 0+}\frac{\N_{-\aa^2/4 - E}(\frmLf)}{|\ln E|}
		& 
		\le
        \limsup_{E\arr 0+}
		\bigg(
		\frac{\N_{-\aa^2/4 - E}(\frmLWz)}{|\ln E|} + 
		\frac{\N_{-\aa^2/4 - E}(\frmLWo)}{|\ln E|}
		\bigg)\\
		&
		\le
		\limsup_{E\arr 0+}\bigg(
		\frac{\N_{-\aa^2/4 - E}(\wt Q_{11})}{|\ln E|} 
		+ \frac{\wh\frc_\tt}{|\ln E|} + 
		\frac{\wt\frc_\tt}{|\ln E|}\bigg)
		=
		\limsup_{E\arr 0+}\frac{\N_{-\aa^2/4 - E}(\wt Q_{11})}{|\ln E|}.
 	\end{split}
	\]
	Finally, it remains to note that the form $\wt Q_{11}$ 
	is the form $Q_{\aa,\Omega^{K}_\tt}$ in~\eqref{eq:frm_aux}. 
	This ends the proof of Lemma \ref{lem:bnd_aux}.
\end{proof}

\appendix
\section{Quadratic forms $Q_{\aa, \Gamma_\tt}^{[l]}$}
\label{app:appB}
The aim of this appendix is to prove the following proposition about the
forms $Q_{\aa, \Gamma_\tt}^{[l]}$ in \eqref{eqn:def_fq}
and the spaces $\cC_0^\infty(\ov{\R_+^2})$, $\cC_{0,0}^\infty(\ov{\R_+^2})$ defined in Notation~\ref{notn:def_space}.
\begin{prop}
	Let $d\geq3$, $l\geq0$ and the quadratic forms $\frmLf^{[l]}$ be defined as in \eqref{eqn:def_fq}.
	Then the following statements hold:
	\begin{myenum}
		\item for $d = 3$ and any $l > 0$, $\cC_{0,0}^\infty(\ov{\R_+^2})$ is a form core for $Q_{\aa,\Gamma_\tt}^{[l]}$;
		\item for $d = 3$ and $l = 0$ or $d \geq 4$ and $l \geq 0$, $\cC_0^\infty(\ov{ \R_+^2 })$ is a form core for $Q_{\aa,\Gamma_\tt}^{[l]}$.
	\end{myenum}
	\label{prop:core_fdcyl}
\end{prop}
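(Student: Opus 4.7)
The plan is to exploit the fiber identification from Section~\ref{sec:Fib_dec} and reduce the density question to the classical density of $\cC_0^\infty(\R^d)$ in $H^1(\R^d)$. The derivation of~\eqref{eqn:def_fq} from $\Frm^\mathsf{cyl}$ shows that the map $T_{l,k}\colon u(r,z)\mapsto u(r,z)Y_{l,k}^{d-2}(\phi)$ is, up to the normalisation of $Y_{l,k}^{d-2}$, an isometry from $\dom\frmLf^{[l]}$ equipped with its natural (weighted Sobolev) norm onto the closed subspace $\Pi_{l,k}(H_{\mathsf{cyl}}^1(\R^d))$ of $H_{\mathsf{cyl}}^1(\R^d)$. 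Since the surface trace on $\Gt$ is continuous on this weighted Sobolev space, convergence in this norm automatically implies convergence in the form graph norm of $\frmLf^{[l]}$.

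Using this identification, given $u\in\dom\frmLf^{[l]}$, I set $v := T_{l,k}u$ and, by the classical density of $\cC_0^\infty(\R^d)$ in $H^1(\R^d)$, pick $v_n\in\cC_0^\infty(\R^d)$ with $v_n\to v$ in $H^1(\R^d)$. Applying the continuous projector $\Pi_{l,k}$ from~\eqref{eq:Pikl} and using $\Pi_{l,k}v=v$, I obtain $\Pi_{l,k}v_n = u_n\otimes Y_{l,k}^{d-2}\to v$ in $H^1(\R^d)$, where $u_n := \pi_{l,k}v_n$. Pulling back by $T_{l,k}^{-1}$ gives $u_n\to u$ in $\dom\frmLf^{[l]}$.

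It remains to verify that each $u_n$ belongs to the claimed function space. Compact support in $\ov{\R_+^2}$ is inherited from the support of $v_n$. For the behaviour at $r=0$, I use the Taylor expansion of $v_n$ in the Cartesian variables $(x_1,\dots,x_{d-1})$ around a point on the axis $\{x_1=\dots=x_{d-1}=0\}$: writing each homogeneous polynomial term in $(r,\phi)$-coordinates and integrating against $\ov{Y_{l,k}^{d-2}}$, all terms of degree strictly less than $l$ are killed by orthogonality of spherical harmonics of distinct degrees. This yields a factorisation $u_n(r,z) = r^l g_n(r,z)$ on $\ov{\R_+^2}$ with $g_n$ a smooth, compactly supported function of $(r,z)$ (in fact a smooth function of $(r^2,z)$). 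Hence $u_n\in\cC_0^\infty(\ov{\R_+^2})$, and when $l>0$ the function $u_n$ vanishes on $\{r=0\}$, so $u_n\in\cC_{0,0}^\infty(\ov{\R_+^2})$. This establishes both items~(i) and~(ii).

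The main technical obstacle is the smooth factorisation $u_n(r,z)=r^l g_n(r,z)$ with $g_n$ smooth down to $r=0$, and not merely continuous: the orthogonality argument immediately gives a vanishing of order at least $l$, but showing that the resulting quotient $g_n$ is genuinely smooth at $r=0$ relies on the classical analysis of the angular integrals of Cartesian-smooth functions against spherical harmonics (equivalently, that axisymmetric smooth functions on $\R^d$ correspond to smooth functions on the half-plane that are even in the radial variable after an appropriate factor of $r^l$).
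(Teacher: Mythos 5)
Your proposal is correct and follows essentially the same route as the paper's Appendix~\ref{app:appB}: lift $\wh u$ to $u=\wh u\,Y_{l,k}^{d-2}\in H^1_{\mathsf{cyl}}(\R^d)$, approximate in $H^1(\R^d)$ by $\cC_0^\infty(\R^d)$ functions, apply the projector $\Pi_{l,k}$, and pull back, using the equivalence of the $H^1$-norm with the form norm to control the trace term. The only substantive difference is how the approximants are placed in $\cC_{0,0}^\infty(\ov{\R_+^2})$ for $d=3$, $l>0$: you derive the order-$l$ vanishing $u_n=r^l g_n$ from orthogonality of spherical harmonics (which requires the somewhat delicate smoothness of $g_n$ down to $r=0$ that you flag), whereas the paper's Lemma~\ref{lem:reg_core} gets the needed vanishing at $r=0$ more cheaply from the fact that $r^{-1}\wh u_n\in L^2(\R_+^2;r\,\dd r\,\dd z)$ forces a smooth function to vanish on the axis.
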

Before proving Proposition~\ref{prop:core_fdcyl}, we need to introduce a few notation. 
For any function $u\in L_\mathsf{cyl}^2(\R^d)$ we denote by $\wt{u}\in L^2(\R^d)$ 
the function $\wt{u}(x_1,\dots,x_d) = u(r,z,\phi)$ 
in the physical coordinates (\cf the change of variables \eqref{eqn:var_cyl}). 
Let us fix the dimension $d\geq3$ and $l\geq0$. We choose $M>0$ large enough such that 
for any $\wt{u}\in\dom Q_{\aa,\cC}$ and any $\wh{u}\in \dom Q_{\aa,\Gamma_\tt}^{[l]}$ we have
\[
	Q_{\aa,\cC}[\wt{u}] \geq - M \|\wt{u}\|_{\R^d}^{2},
	\qquad 
	Q_{\aa,\Gamma_\tt}^{[l]}[\wh{u}] \geq -M \|\wh{u}\|_{L^2(\R_+^2;r^{d-2}\dd r\dd z)}^{2}.
\]
We introduce the following norms associated with the quadratic forms 
$Q_{\aa,\cC}$ and $Q_{\aa,\Gamma_\tt}^{[l]}$ defined, for $\wt{u}\in\dom Q_{\aa,\cC}$ and 
$\wh{u}\in \dom Q_{\aa,\Gamma_\tt}^{[l]}$, by
\[
	\|\wt{u}\|_{Q_{\aa,\cC}}^2 := Q_{\aa,\cC}[\wt{u}] + (M+1)\|\wt{u}\|_{\R^d}^2,
	\qquad 
	\|\wh{u}\|_{Q_{\aa,\Gamma_\tt}^{[l]}}^2 
	:= 
	Q_{\aa,\Gamma_\tt}^{[l]}[u] + ( M + 1) \|\wh{u}\|_{L^2(\R_+^2;r^{d-2}\dd r\dd z)}^2.
\]
Now, we state three lemmas that are proven in the end of the appendix.
\begin{lem}
	Let $d\geq3$ and $l\geq0$. 
	Then the following set inclusions hold:
	\begin{myenum}	
		\item for $d=3$ and any $l>0$, $\cC_{0,0}^\infty(\ov{\R_+^2}) \subset \dom Q_{\aa,\Gamma_\tt}^{[l]}$;
		\item for $d=3$ and $l=0$ or $d\geq4$ and $l\geq0$, $\cC_0^\infty(\ov{ \R_+^2 }) \subset \dom Q_{\aa,\Gamma_\tt}^{[l]}$.
	\end{myenum}
	\label{lem:inclu_core}
\end{lem}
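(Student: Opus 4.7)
The plan is to prove both inclusions by directly checking, term by term, that each element of the test function space satisfies every membership condition appearing in the definition of $\dom Q_{\alpha,\Gamma_\tt}^{[l]}$. Since the test functions are smooth with compact support in $\overline{\R_+^2}$, the first three conditions $u,\partial_r u,\partial_z u\in L^2(\R_+^2,r^{d-2}\,\dd r\,\dd z)$ are immediate: the integrands are bounded and the supports are bounded. Likewise, the trace term $\int_{\R_+}|u(s\sin\tt,s\cos\tt)|^2 s^{d-2}\sin^{d-2}\tt\,\dd s$ is finite because the integrand is continuous and compactly supported on $\R_+$. Thus the only genuine issue is the extra requirement $r^{-1}u\in L^2(\R_+^2,r^{d-2}\,\dd r\,\dd z)$, which appears precisely when $l\neq 0$.

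For case (ii), this extra condition is either absent ($d=3,\ l=0$) or immediate: when $d\geq 4$ we have
\[
    \int_{\R_+^2}\frac{|u|^2}{r^2}\, r^{d-2}\,\dd r\,\dd z = \int_{\R_+^2}|u|^2 r^{d-4}\,\dd r\,\dd z,
\]
and the weight $r^{d-4}$ is bounded on the compact support of $u$ (equal to $1$ when $d=4$), so the integral is finite. Hence $\cC_0^\infty(\overline{\R_+^2})\subset\dom Q_{\alpha,\Gamma_\tt}^{[l]}$ in this regime.

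For case (i), $d=3$ and $l>0$, the weight becomes $r^{d-4}=r^{-1}$, which is singular at $r=0$ and no longer integrable against a generic bounded function. This is the step where the choice of test space $\cC_{0,0}^\infty(\overline{\R_+^2})$ is essential. For $u\in \cC_{0,0}^\infty(\overline{\R_+^2})$ one has $u|_{r=0}=0$, and since $u$ is smooth a Taylor expansion in $r$ yields $|u(r,z)|\leq C\,r$ uniformly on its compact support. Therefore $|u|^2/r\leq C^2 r$ is bounded on the support of $u$, so $r^{-1}u\in L^2(\R_+^2,r\,\dd r\,\dd z)$, which closes this case.

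The only delicate point is indeed the behaviour near $r=0$ treated in case (i); everything else is a routine verification using compact support and smoothness of the test functions. I would therefore present the argument as a short case analysis, highlighting the Taylor estimate $|u(r,z)|\leq Cr$ near $r=0$ for $u\in \cC_{0,0}^\infty(\overline{\R_+^2})$ as the key ingredient justifying the choice of form core in the exceptional case $(d,l)=(3,l)$ with $l>0$.
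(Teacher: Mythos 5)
Your proposal is correct and follows essentially the same route as the paper: the only non-trivial point is the condition $r^{-1}u\in L^2(\R_+^2;r\,\dd r\,\dd z)$ for $d=3$, $l>0$, which both you and the authors settle by a Taylor expansion at $r=0$ (the paper writes $u(r,z)=r\,g(r,z)$ with $g\in\cC_0^\infty(\ov{\R_+^2})$, which is your estimate $|u|\le Cr$). The remaining verifications you spell out are treated as immediate in the paper, so there is no substantive difference.
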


\begin{lem} 
	Let $d=3$ and $ l> 0$.   
	Then the following set inclusion
	$\cC_0^\infty(\ov{\R_+^2}) \cap \dom Q_{\aa,\Gamma_\tt}^{[l]}\subset\cC_{0,0}^\infty(\ov{\R_+^2})$
	holds.
\label{lem:reg_core}
\end{lem}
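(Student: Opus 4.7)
The plan is to show that any function $u$ in the intersection $\cC_0^\infty(\ov{\R_+^2}) \cap \dom Q_{\aa,\Gamma_\tt}^{[l]}$ must vanish on the axis $\{r = 0\}$. Since (for $d=3$ and $l > 0$) membership in $\dom Q_{\aa,\Gamma_\tt}^{[l]}$ forces the condition $r^{-1}u \in L^2(\R_+^2, r\,\dd r\,\dd z)$, equivalently
\[
\int_{\R_+^2} \frac{|u(r,z)|^2}{r}\,\dd r\,\dd z < \infty,
\]
the proof reduces to exploiting the incompatibility between this integrability and smoothness of $u$ up to the boundary $\{r=0\}$ when $u(0,\cdot) \not\equiv 0$.

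First I would take $u \in \cC_0^\infty(\ov{\R_+^2}) \cap \dom Q_{\aa,\Gamma_\tt}^{[l]}$ and argue by contradiction: assume $u(0,z_0) \neq 0$ for some $z_0 \in \R$. By continuity of $u$ on $\ov{\R_+^2}$, there exist constants $c,\delta,\eps > 0$ such that
\[
|u(r,z)| \ge c \quad \text{for all } (r,z) \in [0,\delta] \times [z_0 - \eps, z_0 + \eps].
\]
Next I would estimate the $L^2(\R_+^2, r\,\dd r\,\dd z)$-norm of $r^{-1}u$ from below on this rectangle:
\[
\int_{\R_+^2} \frac{|u(r,z)|^2}{r}\,\dd r\,\dd z
\ge c^2 \int_{z_0-\eps}^{z_0+\eps}\!\int_0^\delta \frac{\dd r}{r}\,\dd z
= 2\eps\, c^2 \int_0^\delta \frac{\dd r}{r} = +\infty,
\]
which contradicts the requirement $r^{-1}u \in L^2(\R_+^2, r\,\dd r\,\dd z)$ coming from $u \in \dom Q_{\aa,\Gamma_\tt}^{[l]}$. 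Hence $u(0,z) = 0$ for every $z \in \R$, which together with $u \in \cC_0^\infty(\ov{\R_+^2})$ gives $u \in \cC_{0,0}^\infty(\ov{\R_+^2})$.

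There is no genuine obstacle here; the only subtle point is being careful about which definition of $\dom Q_{\aa,\Gamma_\tt}^{[l]}$ applies in the case $(d,l) = (3,l)$ with $l > 0$, namely the one from~\eqref{eqn:def_fq} that explicitly includes the weighted integrability condition on $r^{-1}u$. This extra condition is exactly the Hardy-type constraint that forces the boundary trace at $r=0$ to vanish for smooth functions, and the logarithmic divergence of $\int_0^\delta \dd r / r$ is the mechanism.
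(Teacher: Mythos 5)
Your proof is correct and rests on the same mechanism as the paper's: the condition $r^{-1}u\in L^2(\R_+^2, r\,\dd r\,\dd z)$ from \eqref{eqn:def_fq} is incompatible with a nonzero value of $u$ at $r=0$ because $\int_0^\delta \dd r/r$ diverges. The paper isolates the trace $u(0,z)$ via a first-order Taylor expansion in $r$ rather than your contradiction argument with a local lower bound, but this is only a cosmetic difference.
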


\begin{lem} 
	Let $d\geq3$, $l\geq0$ and $k\in\{1,\dots,c(d,l)\}$. 
	For any $u(r,z,\phi) = \wh{u}(r,z) Y_{l,k}^{d-2}(\phi) \in H_{\mathsf{cyl}}^1(\R^d)$ 
	with $\wh{u} \in L^2(\dR^2_+;r^{d-2}\dd r \dd z)$, 
	there exists $\wh{u}_n \in \cC_0^\infty( \ov{\R_+^2} )\cap \dom Q_{\aa,\Gamma_\tt}^{[l]}$
	such that 
	%
	%
	%
	\[
		\| \wh{u}_n - \wh{u}   \|_{Q_{\alpha, \Gamma_\tt }^{[l]}} \rightarrow 0,
		\qquad 
		n\rightarrow\infty.
	\]
\label{lem:cvg_normfq}
\end{lem}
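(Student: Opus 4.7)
The plan is to approximate the Cartesian representative $\wt{u}\in H^1(\R^d)$ of $u$ by smooth compactly supported functions, project onto the $Y_{l,k}^{d-2}$-component, and verify regularity and convergence of the resulting sequence.

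First, by density of $C_0^\infty(\R^d)$ in $H^1(\R^d)$, I pick a sequence $\wt{v}_n\in C_0^\infty(\R^d)$ with $\wt{v}_n\to\wt{u}$ in $H^1(\R^d)$. Continuity of the trace $H^1(\R^d)\to L^2(\cC)$ then yields $\|\wt{v}_n-\wt{u}\|_{Q_{\aa,\cC}}\to 0$. Pulling back to cylindrical coordinates, the corresponding sequence $v_n\in H_{\mathsf{cyl}}^1(\R^d)$ converges to $u$ in the form norm associated with $\Frm^{\mathsf{cyl}}$.

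Next, I apply the projector $\Pi_{l,k}$ from~\eqref{eq:Pikl} and write $\Pi_{l,k}v_n=\wh{v}_n\,Y_{l,k}^{d-2}$ with $\wh{v}_n:=\pi_{l,k}v_n$. The Parseval identity for spherical harmonics together with~\eqref{eqn:def_fq1} shows that $\Frm^{\mathsf{cyl}}$ decomposes as an orthogonal sum along the fibers; in particular, $\Pi_{l,k}$ is a contraction in the form norm, so $\Pi_{l,k}v_n\to\Pi_{l,k}u=u$ in that norm. Orthonormality of the family $\{Y_{l,k}^{d-2}\}$ then translates this into $\|\wh{v}_n-\wh{u}\|_{\frmLf^{[l]}}\to 0$.

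The remaining step, which is the main point requiring care, is to verify that $\wh{v}_n\in\cC_0^\infty(\ov{\R_+^2})\cap\dom\frmLf^{[l]}$. Starting from the integral representation
\[
\wh{v}_n(r,z)=\int_{\mathbb{S}^{d-2}}\wt{v}_n(r\phi,z)\,\overline{Y_{l,k}^{d-2}(\phi)}\,\dd\m_{d-2}(\phi),
\]
where $\phi\in\mathbb{S}^{d-2}\subset\R^{d-1}$ parametrizes the unit directions in the $(x_1,\dots,x_{d-1})$-hyperplane, smoothness of $\wh{v}_n$ on $\ov{\R_+^2}$ follows from the smoothness of the integrand and compactness of $\mathbb{S}^{d-2}$ (by differentiation under the integral sign), and compact support in $\ov{\R_+^2}$ is inherited from that of $\wt{v}_n$. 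Membership in $\dom\frmLf^{[l]}$ is immediate when the form domain in~\eqref{eqn:def_fq} imposes no condition at $r=0$, i.e., for $d=3,l=0$ or $d\geq 4$. The main obstacle is the case $d=3,l>0$, where one must check $r^{-1}\wh{v}_n\in L^2(\R_+^2,r\,\dd r\,\dd z)$: the crucial observation is that $\wt{v}_n(0,0,z)$ is independent of $\phi$ while $Y_{l,k}^1$ is orthogonal to constants on $\mathbb{S}^1$, which forces $\wh{v}_n(\cdot,z)$ to vanish at least to order $r$ as $r\to 0$ and delivers the required integrability.
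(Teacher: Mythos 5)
Your proposal is correct and follows essentially the same route as the paper: approximate the Cartesian representative $\wt{u}$ by $\cC_0^\infty(\R^d)$ functions, project with $\Pi_{l,k}$, check that the projected functions are smooth with compact support and lie in $\dom Q_{\aa,\Gamma_\tt}^{[l]}$, and transfer the convergence to the fiber form norm. The only (harmless) variations are that you obtain convergence of $\Pi_{l,k}v_n$ by viewing $\Pi_{l,k}$ as a contraction for the shifted form norm rather than estimating the $\p_r$, $\p_z$ and surface-gradient terms separately as the paper does, and that for $d=3$, $l>0$ you get $r^{-1}\wh{v}_n\in L^2(\R_+^2;r\,\dd r\,\dd z)$ from the orthogonality of $Y_{l,k}^{1}$ to constants instead of from the $H^1_{\mathsf{cyl}}$ identity.
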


Now, we have all the tools to prove Proposition~\ref{prop:core_fdcyl}.

\begin{proof}[Proof of Proposition \ref{prop:core_fdcyl}] 
	Let $d\geq3$, fix $l\in\dN_0$ and $k\in\{1,\dots,c(d,l)\}$. 
	Let $\wh{u} \in \dom Q_{\aa,\Gamma_\tt}^{[l]}$, 
	we define $u(r,z,\phi) = \wh{u}(r,z) Y_{l,k}^{d-2}(\phi)\in L_{\mathsf{cyl}}^2(\R^d)$. 
	One can show by direct computations that
	\[
	\begin{split}                                
		\|\p_r u\|_{L^2_{ \mathsf{cyl} }(\R^d)}^2 
		+  
		\|\p_z u\|_{L^2_{ \mathsf{cyl} }(\R^d)}^2 
		+ 
		\|r^{-1}\nabla_{\dS^{d-2}} u\|_{L^2_{\mathsf{cyl}}(\R^d)}^2 
		&= 
		\|\p_r \wh{u}\|_{L^2(\R_+^2;r^{d-2}\dd r\dd z)}^2 + \|\p_z \wh{u}\|_{L^2(\R_+^2;r^{d-2}\dd r\dd z)}^2 \\
		&\qquad\qquad\qquad\qquad 
		+  
		l(l + d - 3)\|r^{-1}\wh{u}\|_{L^2(\R_+^2;r^{d-2}\dd r\dd z)}^2.
	\end{split}
	\]
	As $\wh{u} \in \dom Q_{\aa,\Gamma_\tt}^{[l]}$ 
	the right hand side is finite and hence $u\in H_{\mathsf{cyl}}^1(\R^d)$. 
	Now, thanks to Lemma~\ref{lem:cvg_normfq} we know that there exists a sequence 
	$\wh{u}_n \in \cC_0^\infty(\ov{\R_+^2}) \cap \dom Q_{\alpha,\Gamma_\tt}^{[l]}$ 
	such that $\| \wh{u}_n - \wh{u} \|_{Q_{\aa,\Gamma_\tt}^{[l]}} \arr 0$ as $n\arr\infty$. 
	Because $\wh{u}_n \in \cC_0^\infty(\ov{\R^2_+})$ and thanks to Lemma~\ref{lem:inclu_core}\,(ii) 
	we obtain  item~(ii) in Proposition~\ref{prop:core_fdcyl}. 
	To get item~(i) in Proposition~\ref{prop:core_fdcyl}, 
	we use Lemma~\ref{lem:reg_core}, which yields $\wh{u}_n\in\cC_{0,0}^\infty(\ov{\R_+^2})$. 
	We conclude using Lemma~\ref{lem:inclu_core}\,(i).
\end{proof}

\begin{proof}[Proof of Lemma~\ref{lem:inclu_core}] 
	By definition of the form domains in~\eqref{eqn:def_fq}, 
	it is only necessary to check that for $d=3$, $l>0$ 
	and any $\wh{u}\in\cC_{0,0}^\infty(\ov{\R_+^2})$ 
	we have $r^{-1}\wh{u} \in L^2(\R_+^2;r\dd r \dd z)$. 
	Let us fix $\wh{u}\in\cC_{0,0}^\infty(\ov{\R_+^2})$, 
	using a Taylor expansion in the $r$-variable, 
	we can write $\wh{u}(r,z) = r g(r,z)$ with $g\in\cC_0^\infty(\ov{\R_+^2})$. 
	Consequently, we get
	\[
		\|r^{-1} \wh{u}\|_{L^2(\R_+^2;r\dd r \dd z)} = \|g\|_{L^2(\R_+^2;r\dd r \dd z)} < \infty,
	\]
	which concludes the proof.
\end{proof}

\begin{proof}[Proof of Lemma~\ref{lem:reg_core}] 
	Let $d=3$ and $l>0$. 
	We take $\wh{u}\in\cC_0^\infty(\ov{\R_+^2})\cap\dom Q_{\alpha,\Gamma_\tt}^{[l]}$ and, 
	using a Taylor expansion in the $r$-variable, 
	we can write $\wh{u}(r,z) = \wh{u}(0,z) +r g(r,z)$ with $g\in\cC_0^\infty(\ov{\R_+^2})$. 
	Because $r^{-1}\wh{u}\in L^2(\dR^2_+;r\dd r \dd z)$, we obtain
	that $r^{-1}\wh{u}(0,z) = r^{-1} \wh{u}(r,z) - g(r,z) \in L^2(\dR^2_+; r\dd r \dd z)$.
	Hence, we get
	\[
		\int_{\R_+^2}\frac{|\wh{u}(0,z)|^2}{r}\dd r \dd z  < \infty.
	\]
	Finiteness of the last integral necesserily implies $\wh{u}(0,z) = 0$ for any $z\in\R$. 
	Hence, $\wh{u}(r,z) = r g(r,z)$ and $\wh{u} \in \cC_{0,0}^\infty(\ov{\R_+^2})$.
\end{proof}

\begin{proof}[Proof of Lemma~\ref{lem:cvg_normfq}] 
	Let $d \geq 3$, $l \in \dN_0$ and $k \in \{1, \dots, c(d,l)\}$. 
	Let the orthogonal projector $\Pi_{l, k}$ in $L^2_{\mathsf{cyl}}(\dR^d)$
	be defined as in~\eqref{eq:Pikl}.
	Since $\wt{ u } \in H^1(\R^d)$, there exists a sequence $\wt{ v }_{ n } \in \cC_0^\infty(\R^d)$ 
	such that $\| \wt{ v }_n - \wt{ u } \|_{H^1(\R^d)} \arr 0$ as $n\rightarrow\infty$.

	Further, define the modified sequence 
	\[
		u_n := \Pi_{l,k}(v_n) \in L^2_{\mathsf{cyl}}(\R^d), \qquad n\in\dN.
	\] 
	We remark that 
	\[
		\wh{u}_n(r,z) := \ps{ u_n(r,z,\cdot) }{ Y_{l,k}^{d-2}(\cdot) }_{ \dS^{d-2} } \in \cC_0^\infty(\ov{\R^2_+}).
	\] 
	Thus, we can write $u_n(r,z,\phi) = \wh{u}_n(r, z) Y_{l,k}^{d-2}(\phi)\in H_\mathsf{cyl}^1(\R^d)$.
	

	Next, we prove that 
	\begin{equation}
		\|u_n - u\|_{H_{\mathsf{cyl}}^1(\R^d)} \rightarrow 0,\quad n\rightarrow\infty.
		\label{eqn:cvg_projH}
	\end{equation}
	By orthogonality, we have
	\[
		\|v_n - u\|_{L_{\mathsf{cyl}}^2(\R^d)}^2 
		= 
		\|u_n - u\|_{L_{\mathsf{cyl}}^2(\R^d)}^2 + \| (I - \Pi_{k,l})v_n \|_{L_{\mathsf{cyl}}^2(\R^d)}^2,
	\]
	where the left hand side tends to zero as $n\arr\infty$. Hence, we get
	\begin{equation}
		\|u_n - u\|_{L_{\mathsf{cyl}}^2(\R^d)} \rightarrow 0,\qquad n\rightarrow\infty.
	\label{eqn:cvgH1}
	\end{equation}
	Because $v_n$ is smooth and compactly supported, we have the following useful commutation relations
	\[
		\p_r u_n = \Pi_{l,k}(\p_r v_n)
		\quad\text{and}\quad 
		\p_z u_n = \Pi_{l,k}(\p_z v_n),
	\]
	which yield
	\begin{equation}
	\begin{split}
		\|\p_r (u_n - u)\|_{L_{\mathsf{cyl}}^2(\R^d)} 
		&= 
		\|\Pi_{l,k}( \p_r (v_n - u) )\|_{L_{\mathsf{cyl}}^2(\R^d)}  
		\leq 
		\| \p_r (v_n - u) \|_{L_{\mathsf{cyl}}^2(\R^d)} \arr 0, \qquad n\arr\infty,\\
		\| \p_z (u_n - u) \|_{L_{\mathsf{cyl}}^2(\R^d)} 
		&= 
		\| \Pi_{l,k} ( \p_z (v_n - u) )\|_{L_{\mathsf{cyl}}^2(\R^d)}    
		\leq 
		\| \p_z ( v_n - u )\|_{L_{\mathsf{cyl}}^2(\R^d)} \arr 0,\qquad n\arr\infty. 
	\end{split}
	\label{eqn:cvgH2}
	\end{equation}
	Using that spherical harmonics are eigenfunctions of the
	self-adjoint Laplace-Beltrami operator $-\Delta_{\dS^{d-2}}$ associated with the quadratic form
	$H^1(\dS^{d-2}) \ni \psi \mapsto \| \nabla_{\dS^{d-2}} \psi\|^2_{\dS^{d-2}}$ on $L^2(\dS^{d-2})$,
	we have for any fixed $(r, z)\in\R_+^2$
	\begin{equation*}
	\begin{split}
		\ps{ \nabla_{\dS^{d-2}} (u_n - u) }{  \nabla_{\dS^{d-2}}(v_n - u_n)}_{\dS^{d-2}}
		& = \ps{(\wh{u}_n - \wh{u}) (-\Delta_{\dS^{d-2}} Y_{l,k}^{d-2}) }{  (I - \Pi_{l,k})v_n }_{\dS^{d-2}}\\
		& = l(l+ d-3) \ps{u_n - u}{  (I - \Pi_{l,k})v_n }_{\dS^{d-2}}\\
		& = l(l+ d-3) \ps{\Pi_{l,k}(v_n - u_n)}{  (I - \Pi_{l,k})v_n }_{\dS^{d-2}} = 0.
	\end{split}
	\label{eqn:orth_rel2}
	\end{equation*}
	The above relation gives
	\[
		\|\nabla_{\dS^{d-2}} (v_n - u)\|_{\dS^{d-2}}^2 
		= 
		\|\nabla_{\dS^{d-2}} (u_n - u)\|_{\dS^{d-2}}^2 
		+ 
		\|\nabla_{\dS^{d-2}} (v_n - u_n)\|_{\dS^{d-2}}^2.
	\]
	Multiplying the latter equality by $r^{-2}$ and integrating in $(r,z)$ we get
	\begin{equation}
		\|r^{-1}\nabla_{\dS^{d-2}}(u_n - u) \|_{L_{\mathsf{cyl}}^2(\R^d)}^2 
		\leq 
		\|r^{-1}\nabla_{\dS^{d-2}}(v_n - u)\|_{L_{\mathsf{cyl}}^2(\R^d)}^2 
		\leq 
		\|v_n - u\|_{H_{\mathsf{cyl}}^1(\R^d)}^2\rightarrow 0,\qquad n\rightarrow\infty.
		\label{eqn:cvgH3}
	\end{equation}
	Finally, combining~\eqref{eqn:cvgH1},~\eqref{eqn:cvgH2},~\eqref{eqn:cvgH3} 
	and the fact that $u\in H_\mathsf{cyl}^1(\R^d)$ we get~\eqref{eqn:cvg_projH}.

	According to \cite[Prop. 3.1 and its proof]{BEL14_RMP} and Notation~\ref{notn:def_space}, 
	the $H^1_{\mathsf{cyl}}(\dR^d)$-norm is equivalent to the norm $\|\cdot\|_{Q_{\aa,\cC}}$
	after suitable identification of functions. Hence,~\eqref{eqn:cvg_projH} yields
	\[
		\| \wt{u}_n - \wt{u} \|_{Q_{\alpha,\cC}} \rightarrow 0,\quad n\rightarrow\infty.
	\]
	By direct computation, we get
	\[
		\|\wt{u}_n - \wt{u}\|_{Q_{\alpha,\cC}} = \|\wh{u}_n - \wh{u}\|_{Q_{\alpha,\Gamma_\tt}^{[l]}},
	\]
	which concludes the proof of Lemma~\ref{lem:cvg_normfq}.
\end{proof}


\subsection*{Acknowledgements}
V.~Lotoreichik is supported by the Czech Science Foundation (GA\v{C}R) under the project 14-06818S.
He is grateful for the stimulating research stay and the hospitality at the
Basque Center for Applied Mathematics in May 2015 where a part of this paper was written.
T.~Ourmi\`eres-Bonafos is supported by the Basque Government through the BERC 2014-2017 program and by Spanish Ministry of Economy and Competitiveness MINECO: BCAM Severo Ochoa excellence accreditation SEV-2013-0323.

\bibliographystyle{abbrv}

\end{document}